\documentclass[11pt,a4paper]{article}

\title{\bf Rank Contributions of Vertices in Rigidity Matroids of Clique Covered Graphs}

\author{
Bill Jackson\thanks{Queen Mary University of London, London, E1 4NS, UK, and
the HUN-REN-ELTE Egerv\'ary Research Group
on Combinatorial Optimization, P\'azm\'any P\'eter s\'et\'any 1/C, 1117 Budapest, Hungary.
e-mail: \texttt{b.jackson@qmul.ac.uk}}
\and
Tibor Jord\'an\thanks{Department of Operations Research, ELTE E\"otv\"os Lor\'and University, and the HUN-REN-ELTE Egerv\'ary Research Group
on Combinatorial Optimization, P\'azm\'any P\'eter s\'et\'any 1/C, 1117 Budapest, Hungary.
e-mail: \texttt{tibor.jordan@ttk.elte.hu}}
\and 
Soma Vill\'anyi\thanks{Department of Operations Research, ELTE E\"otv\"os Lor\'and University, and the HUN-REN-ELTE Egerv\'ary Research Group
on Combinatorial Optimization, P\'azm\'any P\'eter s\'et\'any 1/C, 1117 Budapest, Hungary. e-mail: \texttt{soma.villanyi@ttk.elte.hu}}}

\usepackage[utf8]{inputenc}
\usepackage[english]{babel}
\usepackage{amsmath}
\usepackage{amsthm}
\usepackage{amssymb}
\usepackage{geometry}
\usepackage{a4wide}
\usepackage{bm}
\usepackage{microtype}
\usepackage{mathtools}
\usepackage{csquotes}
\usepackage[shortlabels]{enumitem}
\usepackage[nottoc,numbib]{tocbibind}
\usepackage{changepage}
\usepackage{cleveref}
\usepackage{enumitem}
\usepackage{xcolor}
\setlist[enumerate]{itemsep=0mm,parsep=2mm}

\theoremstyle{definition}
\newtheorem{theorem}{Theorem}[section]

\newtheorem{lemma}[theorem]{Lemma}
\newtheorem*{lemma*}{Lemma}
\newtheorem*{conjecture*}{Conjecture}
\newtheorem*{lemma''*}{``Lemma''}
\newtheorem{claim}[theorem]{Claim}
\newtheorem*{claim*}{Claim}
\newtheorem{corollary}[theorem]{Corollary}

\newtheorem{conjecture}[theorem]{Conjecture}

\newtheorem{example}[theorem]{Example}
\newtheorem{proposition}[theorem]{Lemma}

\DeclareMathOperator{\rr}{r}
\DeclareMathOperator{\rc}{rc}
\DeclareMathOperator{\cl}{cl}

\DeclareMathOperator{\rank}{rank}

\DeclareMathOperator{\val}{val}
\DeclareMathOperator{\cval}{cval}

\newcommand{\cR}{\mathcal{R}}

\newcommand{\cX}{\mathcal{X}}
\newcommand{\cM}{\mathcal{M}}
\newcommand{\cH}{\mathcal{H}}

\newcommand{\dof}{{\rm dof}}

\newcommand{\R}{\mathbb{R}}

\newcommand{\XX}{\mathcal{X}}
\newcommand{\hXX}{\hat{\mathcal{X}}}

\newcommand{\cC}{\mathcal{C}}
\newcommand{\cP}{\mathcal{P}}

\newcommand{\HH}{\mathcal{H}}

\newcommand{\E}{\mathbb{E}}
\newcommand{\Prob}{\mathbb{P}}

\date{28 July, 2026}

\begin{document}
\maketitle
\begin{abstract}
The problems of characterizing the graphs $G$ which are generically rigid in $\R^d$, or more generally, determining the rank function of the $d$-dimensional rigidity matroid $\cR_d(G)$ of an arbitrary  graph $G$, have been solved when $d\leq 2$ but  are major open problems in discrete geometry when $d\geq 3$. In this paper we shall concentrate on the case when $d=3$. 
We first revisit a conjecture of Dress from 1987 that the rank of the ${\cal R}_3$-closure of a graph $G$ is determined by its maximal complete subgraphs of size at least five.
We show that his conjectured value for the rank of the closure gives an upper bound on the actual value. We also deduce
that the truth of this conjecture would imply a good characterization of the rank of ${\cal R}_3(G)$ for all graphs $G$. The rank formula in Dress's conjecture leads us to consider 
the 
family of $K_t$-covered graphs, i.e., graphs in which every edge belongs to a complete subgraph $K_t$, for some $t\geq 3$.
This family 
contains several well-studied graph classes such as body-pin graphs, combinatorial zeolites, and molecular graphs.

We introduce a new notion of 
rank contributions of vertices in an arbitrary matroid on the edge set of a graph $G$, and use it to obtain lower bounds on the rank contributions of vertices in $\cR_3(G)$ and $\cC^1_2(G)$ when $G$ is $K_t$-covered. We use these bounds to show that a conjectured min-max formula for the rank of body-pin graphs in ${\cal R}_3$ holds for the $C_2^1$-cofactor matroid  (which is
conjectured by Whiteley to 
be equal to 
${\cal R}_3$), 
and to 
obtain new sufficient connectivity conditions for the (global) rigidity of $K_4$- and $K_5$-covered graphs in $\R^3$.
\smallskip

\noindent Keywords: Combinatorial rigidity; Dress conjecture; rigidity matroid; cofactor matroid; clique covered graph.

\end{abstract}

\section{Introduction}

The problems of characterizing the graphs $G$ which are generically rigid in $\R^d$, or more generally, determining the rank function $r_d$ of the $d$-dimensional rigidity matroid $\cR_d(G)$ of an arbitrary  graph $G$, have been solved when $d\leq 2$ but  are major open problems in discrete geometry when $d\geq 3$. 
The rank function of $\cR_2$ was characterised
by Lov\'asz and Yemini \cite{LY} in terms of edge partitions. Their result implies the following:

\begin{theorem} 
\cite{LY}
\label{thm:lamanrank1} Let $G$ be an $\mathcal{R}_2$-closed graph,
$\mathcal{X}$ be the set of maximal cliques in $G$ of size at least four and $F$ be the set of edges of $G$ which are not covered by $\XX$. 
Then $$\rr_2(G)=|F|+\sum_{X\in \mathcal{X}}(2|X|-3).$$
\end{theorem}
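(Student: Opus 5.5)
We deduce the statement from the Lovász–Yemini rank formula for $\cR_2$: for any graph $G=(V,E)$,
$$\rr_2(G)=\min\Big\{\sum_{i}(2|V(E_i)|-3)\Big\},$$
where the minimum is taken over all partitions $\{E_1,\dots,E_k\}$ of $E$. Equivalently, we may minimise over covers of $E$ by subsets, since merging or shrinking parts never hurts. The upper bound is then immediate. Take the partition consisting of the single edges of $F$, each contributing $2\cdot 2-3=1$, together with the edge sets of the cliques in $\XX$, after resolving overlaps. Two distinct maximal cliques of an $\cR_2$-closed graph share at most one vertex: if they shared two vertices $u,v$, their union would be rigid (two rigid graphs glued along two vertices), so closedness would make the union a clique, contradicting maximality. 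Hence the cliques in $\XX$ are pairwise edge-disjoint. The cover is therefore a genuine partition, and it has value $|F|+\sum_{X\in\XX}(2|X|-3)$.

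For the lower bound, we fix a minimising partition $\{E_1,\dots,E_k\}$. Standard arguments from the Lovász–Yemini theory let us assume that each $E_i$ spans a rigid subgraph $G[E_i]$, and that each $E_i$ with $|E_i|\ge 2$ is an $\cR_2$-closed rigid set. Indeed, replacing $E_i$ by the closure of $V(E_i)$ adds no cost, and rigid pieces can be merged freely. Since $G$ is $\cR_2$-closed, a rigid vertex set $V(E_i)$ induces a complete graph in $G$. So each part is either a single edge or the edge set of a clique $K$ of $G$. A clique $K$ lies inside some maximal clique. If $|K|=3$, the triangle costs $3$, exactly the same as its three edges counted separately, so we may split it into single edges. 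If $|K|\ge 4$, then $K\subseteq X$ for some $X\in\XX$.

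Now fix $X\in\XX$. The parts meeting $E(X)$ cover $E(X)$, and the sum of $2|V(E_i)\cap X|-3$ over these parts is at least $\rr_2(K_{|X|})=2|X|-3$, by the formula applied to the clique $X$. So the total cost is at least
$$\sum_{X\in\XX}(2|X|-3)+|F|,$$
because edges of $F$ lie in no $X$ and each costs at least $1$. We need to check that no part is double-counted across different $X$. A part that is a clique lies inside a single $X$, and different cliques in $\XX$ share no edges, so each part is charged to at most one $X$.

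The main obstacle is the reduction step, namely justifying that a minimising partition can be taken to consist of closed rigid pieces, that is, cliques. My plan is to argue this directly: if some $G[E_i]$ is not rigid, replace it by the parts of its own optimal decomposition, which lowers or preserves the cost. Replacing $E_i$ by the complete graph on $V(E_i)$ keeps the cost the same, and any other parts whose edges become contained in it can be deleted from the cover.
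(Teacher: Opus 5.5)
Your proposal is correct and follows the route the paper indicates: the paper gives no separate proof but derives the statement from the Lov\'asz--Yemini formula (equivalently from Theorem~\ref{thm:lamanrank}(b), since a maximal clique $X$ with $|X|\in\{2,3\}$ contributes $2|X|-3=\binom{|X|}{2}$, i.e.\ exactly its edges, and these edges are precisely the edges of $F$), and your argument is a self-contained deduction of that closed-graph form from the partition formula. The reduction you flag as the main obstacle is in fact immediate: for a minimising partition, $\rr_2(G)\le\sum_i\rr_2(E_i)\le\sum_i(2|V(E_i)|-3)=\rr_2(G)$ forces every $G[E_i]$ to be rigid, so each $V(E_i)$ is a clique of $G$ lying in a unique maximal clique, which is all your charging argument needs (and you never actually rely on your inaccurate remarks that merging parts ``never hurts'' or can be done ``freely'').
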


In this paper we shall concentrate on the 3-dimensional rigidity matroid $\cR_3$. 
Our starting point is the following conjecture made by Andreas Dress 
at a conference in Montreal 
in 1987, see \cite{GSS} and also \cite{CDT,Taycon}, which proposes a similar expression for the rank of  an $\cR_3$-closed graph, i.e., a graph $G\subseteq K_n$  whose edge set is a closed set in $\cR_3(K_n)$. 
Given  a collection $\cal X$  of subsets of a ground set $V$ and  $h=\{u,v\}\subseteq  V$, we use
$\deg_{\cal X}(h)$ to denote the number of sets $X\in {\cal X}$ with $u,v\in X$. %
In addition, we say that $\{u,v\}$ is a {\em hinge} of $\cX$ if $\deg_{\cal X}(h)\geq 2$, and denote the set of all hinges of $\cX$ by $\cH(\cX)$.

\begin{conjecture}[The Dress Conjecture]\label{con:dress}
Let $G$ be an $\cR_3$-closed graph, $\hXX$ be the set of maximal cliques in $G$ of size at least five and $F$ be the set of edges of $G$ which are not covered by $\hXX$. Then 
\begin{equation}\label{eq:dress}
r_3(G)=|F|+\sum_{X\in \hXX}(3|X|-6)-\sum_{h\in \mathcal{H(\hXX)}}(\deg_{\hXX}(h)-1).
\end{equation}
\end{conjecture}
\noindent
It is not obvious that this conjecture would lead to a good characterisation of $r_3$. Another conjecture of Dress et al.\ \cite{Dress1} which would have given a good characterisation was shown to be false in \cite{JJdress}.

Clinch, Jackson and Tanigawa \cite{CJT} recently provided substantial evidence in favour of Conjecture \ref{con:dress} by showing that the analogous conjecture  holds when we replace the 3-dimensional rigidity matroid $\cR_3$ by the cofactor matroid $C_2^1$ (which we will define in Section \ref{sec:cofdefn}).

\begin{theorem}\label{thm:cjt}
\cite[Theorem 6.3]{CJT}
Let $G$ be a $\cC_2^1$-closed graph, $\hXX$ be the set of maximal cliques in $G$ of size at least five and $F$ be the set of edges of $G$ which are not covered by $\hXX$. Then the rank of $\cC_2^1(G)$ is given by
$$r_2^1(G)=|F|+\sum_{X\in \hXX}(3|X|-6)-\sum_{h\in \cH(\hXX)}(\deg_{\hXX}(h)-1).$$
\end{theorem}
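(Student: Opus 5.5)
Both inequalities rest on the following structural facts about $\cC_2^1$: it is an abstract $3$-rigidity matroid; complete graphs $K_n$ are independent for $n\le 4$ and rigid of rank $3n-6$ for $n\ge 5$; a $0$-extension preserves independence; and the clique gluing property holds, namely if two $\cC_2^1$-rigid graphs meet in at least $2$ vertices then their union has rank equal to the sum of the ranks minus the rank of the intersection.

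For the upper bound, take the subgraph $G'=(V,F\cup \bigcup_{X\in\hXX} E(K_X))$, which equals $G$ as an edge set. Submodularity gives $r(G)\le |F|+r(\bigcup_X K_X)$, so it suffices to bound the rank of a union of cliques. Since $G$ is closed and each $K_X$ with $|X|\ge 5$ is rigid and maximal, two distinct members of $\hXX$ meet in at most two vertices. If they met in three or more vertices, their union would be rigid by gluing, hence a clique in the closed graph $G$, contradicting maximality. So the pairwise intersections are single vertices or hinges. Now add the cliques one at a time. When $K_X$ is added, its intersection with the current graph is a set of hinge edges (these are in $X$ and in earlier cliques) and vertices. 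Each hinge edge $h$ already spanned is shared with earlier cliques, so it lowers the increment below $3|X|-6$ by $1$. Summing over the sequence, $h$ is counted $\deg(h)-1$ times, since only its first occurrence contributes. This gives $r\le$ the Dress value.

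For the lower bound, I would use the closedness more seriously via the hypergraph structure. Form the bipartite incidence structure between cliques and hinges; when the family of cliques is a "tree of cliques glued along hinges", the gluing lemma gives equality directly. The main obstacle is cycles in the clique–hinge structure: several $K_{\ge5}$'s arranged cyclically, pairwise sharing hinges, might create extra dependencies, so that the rank drops below the count, or might force an additional rigid union that contradicts closedness. I would try to exhibit an independent set of the claimed size. For each $X$, choose a spanning basis of $K_X$ of size $3|X|-6$ containing all hinges of $X$, and then delete one copy of each hinge for each surplus occurrence. Independence would be proven by induction on $|V|$, using vertex splitting or $1$-extension type operations. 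Concretely, I would remove a vertex $v$ lying in a single clique, if one exists, or in few cliques. I would check that its rank contribution, computed via $0$- and $1$-extensions for which $\cC_2^1$ is known to be closed (Whiteley), equals the decrease in the formula, namely $3$ per clique containing $v$, minus the hinges at $v$ with multiplicity. Finally, I would add the edges of $F$ using closedness: an edge not in the span of the cliques stays independent, since $G$ is closed and $F$-edges lie in no $K_5$.

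The hard step is this last induction, ensuring that no cyclic arrangement of cliques creates unexpected dependence. I would expect to need a $\cC^1_2$-specific tool here, such as the vertex splitting lemma or an explicit cofactor realisation on a conic, rather than purely abstract matroid axioms.
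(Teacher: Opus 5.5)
The paper does not prove this statement; it imports it from \cite[Theorem~6.3]{CJT}. Judged on its own, your proposal has a genuine gap at its core: the lower bound $\rr_2^1(G)\ge\cval(G)$ is only sketched. Apart from the unspecified ``cofactor realisation on a conic'', every tool you invoke (gluing, submodularity, $0$- and $1$-extensions, vertex splitting) holds verbatim for $\cR_3$. For $\cR_3$ the statement is exactly Conjecture~\ref{con:dress}, which Section~\ref{sec:DW} shows is equivalent to Whiteley's Conjecture~\ref{con:walter}. So a completion using only these tools would settle that open problem.

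In your outline the missing input sits in two steps that are asserted rather than proved.
\begin{itemize}
\item Your justification for the edges of $F$ (``$G$ is closed and they lie in no $K_5$'') restates Theorem~\ref{thm:CJT:K5}, a deep result of \cite{CJT}; it does not prove it.
\item Your induction needs, at every stage, a vertex $v$ with $\rr(G)-\rr(G-v)=\cval(G)-\cval(G-v)=3|\hXX_v|-\sum_{e\in E_v}(\deg_{\hXX}(e)-1)$, by Lemma~\ref{lem:circuit:NGv}(a). You give no reason why a vertex lying in a single clique exists. For a vertex in several cliques the target is larger: it is $6$ if $v$ lies in exactly two members of $\hXX$ meeting only at $v$. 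Such increments cannot be certified by $0$- and $1$-extensions.
\end{itemize}
Suppose you had both facts: Theorem~\ref{thm:CJT:K5}, and that every $\cC_2^1$-closed, $\cC_2^1$-bridgeless graph with an edge has a non-isolated simplicial vertex (the $\cR_3$ analogue for $K_4$-covered graphs is Corollary~\ref{coro:simplicial}). Then your induction goes through and gives both inequalities at once. First delete the edges of $F$ one at a time; each is a bridge, the graph stays closed, and $\rr$ and $\cval$ both drop by $1$. Then delete a simplicial vertex. It lies in a single member of $\hXX$, so $\rr$ and $\cval$ both drop by exactly $3$, and $G-v$ is again closed.

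Your separate upper-bound argument also has a flaw, although it is unnecessary if the induction above is available. When $K_X$ is added to the union $U$ of the earlier cliques, submodularity gives only
\[
\rr(U\cup K_X)\le\rr(U)+3|X|-6-\rr(P_X),
\]
where $P_X$ is the set of hinges of $X$ already in $U$. Your count tacitly assumes $\rr(P_X)=|P_X|$, and this can fail in a closed graph. Let $|X|=5$ and glue a copy of $K_5$ along each of the ten pairs in $X$, using three new vertices each time.
\begin{itemize}
\item Each pendant meets the rest of the graph in just two vertices, so the graph is $\cC_2^1$-closed.
\item $\hXX$ consists of $X$ and the ten pendants, and all ten edges of $K_X$ are hinges.
\item $\rr_2^1(G)=\cval(G)=11\cdot 9-10=89$.
\end{itemize}
If $K_X$ is added last, your rule assigns it the increment $9-10<0$. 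Submodularity gives only $9-\rr_2^1(K_X)=0$, so this order proves merely $\rr_2^1(G)\le 90$. Adding $X$ first works here, but you give no argument that a good order always exists, and your reasoning uses nothing about $G$ beyond $|X\cap Y|\le 2$. The same example shows your candidate independent set is ill-defined: no base of $K_X$ (nine edges) contains all ten hinges of $X$.
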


Conjecture \ref{con:dress} would follow from Theorem \ref{thm:cjt} if the following long-standing conjecture of  Whiteley \cite{Wlong} is valid.

\begin{conjecture}\label{con:walter}
$\cR_3(K_n)=\cC_2^1(K_n)$ for all $n\geq 2$.
\end{conjecture}

We will show in Section \ref{sec:DW} that
the right hand side of (\ref{eq:dress}) 
gives an upper bound 
on  
$r_3(G)$ when $G$ is $\cR_3$-closed, and that
Conjectures \ref{con:dress} and \ref{con:walter} are in fact equivalent. Hence Conjecture \ref{con:dress} would imply that the NP$\cap$CoNP characterization of the rank function of $\cC_2^1(K_n)$ given in \cite{CJT} also holds for $\cR_3(K_n)$.

The Dress Conjecture motivates our primary concern in this paper: the 3-dimensional rigidity and $C_2^1$-cofactor matroids of  {\em $K_t$-covered graphs}, i.e., graphs with the property that each of their edges  belongs to a copy of $K_t$. A closely related motivation for our study is the result of \cite{CJT} that every 
graph whose edge set is a bridgeless closed set in the $C_2^1$-cofactor matroid is
$K_5$-covered. In addition, the family of $K_4$-covered graphs  
includes several important graph classes such as 
body-pin graphs ($K_4$-covered graphs in which every vertex belongs to at most two copies of $K_4$), 
3-dimensional combinatorial zeolites (line graphs of 4-regular graphs), and molecular graphs (squares of graphs of minimum degree at least $3$).

We will show that a conjectured expression for the rank function of the 3-dimensional rigidity matroid of a body-pin graph determines its rank in the $C_2^1$-cofactor matroid (Theorem \ref{thm:body-pin_C12}), and obtain several new sufficient connectivity conditions for the rigidity and global rigidity of
$K_t$-covered graphs in $\R^3$ when $t=4,5$.
Our results refine  previous  sufficient connectivity conditions \cite{LY,JJconnrig}
for an arbitrary graph to be rigid or globally rigid in $\R^3$ and
provide supporting evidence for several other conjectures on 3-dimensional rigidity. More precisely, we show that: every 5-connected $K_5$-covered graph is globally rigid in $\R^3$ (Theorem \ref{thm:5c5c:GR}); every  $K_4$-covered graph is $C^1_2$-rigid if it is 6-connected (Theorem \ref{thm:zeo:general}), and is rigid in  $\R^3$ if it is either  7-connected (Theorem \ref{thm:7c4c_rigid}) or  5-connected and $\cR_3$-bridgeless (Theorem \ref{thm:K4cyclic}).

We prove our results on
$K_t$-covered graphs by considering the `rank contributions' of the vertices in a matroid defined on the edge set of a graph. This concept was implicitly
used by Lov\'asz and Yemini \cite{LY}
to show that 6-connected graphs are rigid in $\R^2$. The power of this approach, when combined with the probabilistic method, became apparent in  a recent paper of the third author \cite{vill}, where it was used to obtain the best possible sufficient connectivity condition for a graph to be (globally) rigid in $\R^d$, confirming
a long-standing conjecture from \cite{LY}. We will develop a general 
theory of 
rank contributions of vertices in an arbitrary matroid on the edge set of a graph $G$, and then apply it to $\cR_2(G)$, $\cR_3(G)$ and $\cC^1_2(G)$. Further applications of rank contributions to $\cR_d(G)$ are given in \cite{JLV}.

The structure of the paper is as follows. 
Section 2 contains preliminary results on rigid graphs and the rigidity matroid.
The conjectures of Dress and Whiteley are discussed in Section 3. 
Rank contributions of vertices in matroids on the edge set of a complete graph
are introduced in Section 4. The cases when the matroid is equal to 
$\cR_2$, or  $\cR_3$ or $C_2^1$, are investigated in Sections 5 and 6, respectively.
Our results on the
$C^1_2$-rigidity of $K_4$-covered graphs and body-pin frameworks are in Section 7,
while the results on
the rigidity and global rigidity of 
$K_t$-covered graphs in $\R^3$
are in Section 8. We close by stating two conjectures on the rigidity of $K_4$-covered graphs in $\R^3$ in Section 9.

\section{Preliminaries}

\subsection{The $d$-dimensional rigidity matroid}

Let $d$ be a positive integer. Given a graph  $G=(V,E)$ and a map $p:V\to \R^d$, the {\em rigidity matrix} of the {\em framework} $(G,p)$ is the $|E|\times d|V|$ matrix $R(G,p)$ in which the row indexed by an edge $uv\in E$ has $p(u)-p(v)$ in the $d$ columns indexed by $u$, $p(v)-p(u)$ in the $d$ columns indexed by $v$, and zeros elsewhere.
The {\it $d$-dimensional rigidity matroid} of the graph $G=(V,E)$, denoted by ${\cal R}_d(G)$,
is  the row matroid of $R(G,p)$ for any generic $p:V\to \R^d$.
This matroid encodes several fundamental rigidity properties of 
generic realisations of 
$G$ in $\R^d$. 
For example, the rank of ${\cal R}_d(G)$, denoted by $\rr_d(G)$, satisfies
\begin{equation}
\label{gl}
\rr_d(G)\leq d|V|-\binom{d+1}{2} \mbox{ whenever $|V|\geq d+1$,}
\end{equation}
and equality holds if and only if every generic realisation $p$ of $G$ in $\R^d$ is rigid, i.e., every continuous motion of the vertices of $(G,p)$ in $\R^d$ which preserves the lengths of its edges results in a congruent realisation.
We say that $G$ is {\it rigid in $\R^d$} (or simply {\it $d$-rigid}) if either $|V|\leq d$ and
$G$ is complete, or $|V|\geq d+1$ and equality holds in (\ref{gl}).
The number of
{\it degrees of freedom} of $G$ in $\R^d$, denoted by $\dof_d(G)$, is defined to be the
difference between $\rr_d(K_{|V|})$ and $\rr_d(G)$.
Thus, if $|V|\geq d+1$, then $\dof_d(G)=d|V|-\binom{d+1}{2}-\rr_d(G)$.
A non-adjacent pair $u,v\in V$ is {\it ${\cal R}_d$-linked} (resp.
{\it ${\cal R}_d$-loose}) in $G$ 
if $\rr_d(G+uv)=\rr_d(G)$ (resp. $\rr_d(G+uv)=\rr_d(G)+1$) holds.
The {\it ${\cal R}_d$-closure} of $G$, denoted by $\cl_d(G)$, is 
obtained from $G$
by adding an edge $uv$ between all non-adjacent ${\cal R}_d$-linked pairs $u,v$ of $G$.
The graph $G$ is called
{\it ${\cal R}_d$-closed} if $\cl_d(G)=G$ holds.

We call an edge $e\in E$ an {\itshape ${\cal R}_d$-bridge} of $G$
if $\rr_d(G-e)=\rr_d(G)-1$. An edge which is not an ${\cal R}_d$-bridge is said to be {\em ${\cal R}_d$-redundant}. We say that $G$ is 
{\itshape ${\cal R}_d$-independent} (resp. 
an 
{\itshape ${\cal R}_d$-circuit}), 
if $E$ is independent (resp.
$E$ is a circuit) in ${\cal R}_d(G)$. Thus an edge is
${\cal R}_d$-redundant in $G$ if and only if it is  contained
in an ${\cal R}_d$-circuit in $G$. Note that (\ref{gl}) tells us that a necessary condition for $G$ to be ${\cal R}_d$-independent is that 
\begin{equation}
\label{ind}
|E(H)\leq d|V(H)|-\binom{d+1}{2} \mbox{ for all $H\subseteq G$ with $|V(H)|\geq d+1$.}
\end{equation}

We say that $G$ is {\itshape ${\cal R}_d$-connected} if $\cR_d(G)$ is a connected matroid, i.e., every pair of edges in $E$ belong to an $\cR_d$-circuit in $G$. 
A $d$-dimensional framework $(G,p)$ is called {\it globally rigid} in $\R^d$
if every realisation $(G,q)$ of $G$ in $\R^d$ which has the same edge lengths as $(G,p)$ is congruent to $(G,p)$.
It is known that the global rigidity of a generic framework $(G,p)$ depends only on $G$. Thus we call a graph $G$ {\it globally rigid in $\R^d$} if every, or equivalently,
if some generic framework $(G,p)$ in $\R^d$ is globally rigid.
Garamv\"olgyi, Gortler and Jord\'an \cite{GGJ} have recently shown that the 
${\cal R}_d$-connectivity is a necessary condition for 
the global rigidity of a graph on at least $d+2$ vertices.
We will need the following result which shows that $\cR_d$-connectedness is a hereditary property.

\begin{theorem}\cite[Theorem 5.1]{GGJ}
\label{dimdrop}
Suppose that $G$ is ${\cal R}_{d+1}$-connected. Then $G$ is 
${\cal R}_{d}$-connected.
\end{theorem}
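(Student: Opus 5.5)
The aim is to prove that if $G$ is $\cR_{d+1}$-connected then it is $\cR_d$-connected. I would go through the coning correspondence and a generic projection argument. By Whiteley's coning theorem, $\cR_{d+1}$ of the cone $G*v$ restricted to the edges of $G$ behaves like $\cR_d$ of $G$. However, the hypothesis concerns $G$ in dimension $d+1$, not its cone, so coning alone is not enough. I would instead use the fact that a matroid is connected if and only if every pair of elements lies in a common circuit. The task is then to show that for any two edges $e,f\in E$ which lie in a common $\cR_{d+1}$-circuit $C\subseteq G$, they also lie in a common $\cR_d$-circuit of $G$.

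\textbf{Step 1.} Show that every $\cR_{d+1}$-circuit $C$ is $\cR_d$-connected, or at least that its edge set is contained in a single $\cR_d$-connected component. Since $\cR_d$ is a weak map image of $\cR_{d+1}$ (a generic $d$-dimensional realisation is a limit of generic $(d+1)$-dimensional ones), $C$ is $\cR_d$-dependent. Moreover, every edge of $C$ is $\cR_{d+1}$-redundant in $C$ and hence, by the weak map, $\cR_d$-redundant in $C$. So every edge of $C$ lies in some $\cR_d$-circuit inside $C$.

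\textbf{Step 2.} Rule out that $C$ splits into several $\cR_d$-components $C_1,\dots,C_k$ with $k\ge 2$. Here I would use stresses. A $d$-dimensional generic framework $(C,p)$ has a stress space equal to the direct sum of the stress spaces of the $C_i$. Lifting $p$ to a generic $(d+1)$-dimensional $q$ that is close to $p$, the unique $(d+1)$-stress $\omega_q$ of $C$, which is nowhere zero, converges after normalisation to a $d$-stress of $(C,p)$ supported on all of $C$. I would then exploit the rank additivity $r_d(C)=\sum r_d(C_i)$ together with the count $r_{d+1}(C_i)\le r_d(C_i)+|V(C_i)|-1$. If the components overlap in few vertices, this count should contradict the fact that $C$ is an $\cR_{d+1}$-circuit: the $C_i$ give a separation $C=C'\cup C''$ with $r_{d+1}(C)=r_{d+1}(C')+r_{d+1}(C'')$, which cannot happen for a circuit.

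\textbf{Step 3.} Conclude. Since $G$ is $\cR_{d+1}$-connected, any two edges are joined by a chain of $\cR_{d+1}$-circuits with consecutive members sharing an edge. By Step 2 each such circuit lies inside a single $\cR_d$-component, so transitivity puts all edges in one $\cR_d$-component.

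The main obstacle is Step 2. A limit stress being nowhere zero does not by itself prevent the $\cR_d$-matroid of $C$ from being disconnected, since a sum of stresses from distinct components can be nowhere zero. The decisive point is to show that an $\cR_d$-separation of $C$ lifts to an $\cR_{d+1}$-separation. My first attempt would be a rank submodularity argument on the vertex sets of the $C_i$, using that $r_{d+1}-r_d$ is controlled by vertex counts. If that fails, I would fall back on the dimension-lifting (coning plus projection) technique of Garamv\"olgyi--Gortler--Jord\'an \cite{GGJ}.
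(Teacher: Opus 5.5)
\textbf{There is a genuine gap: your Step 2 is the whole content of the theorem, and the proposal does not supply it.} The paper itself gives no proof; it imports the result from \cite{GGJ}. Concretely:
\begin{itemize}
\item The normalised limit of the $(d+1)$-stresses $\omega_q$ as $q\to p$ need not be nonzero on every edge of $C$, since individual coordinates can tend to $0$. As you observe yourself, even a nowhere-zero $d$-stress would not exclude an $\mathcal{R}_d$-separation, because stresses of different components can be added.
\item The rank-count fallback does not help. The inequality $r_{d+1}(C_i)\le r_d(C_i)+|V(C_i)|-1$ only bounds $r_{d+1}$ from above. That is perfectly compatible with $C$ being an $\mathcal{R}_{d+1}$-circuit, and the $C_i$ may share many vertices. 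The identity $r_{d+1}(C)=r_{d+1}(C')+r_{d+1}(C'')$ you want to contradict is exactly what has to be derived from the $\mathcal{R}_d$-separation; the counts do not give it.
\item A smaller point in Step 1: redundancy is not preserved by weak maps in general. In $U_{2,3}$ on $\{a,b,c\}$ every element lies in a circuit, yet the rank-$2$ matroid with $a,b$ parallel and $c$ a coloop is a weak-map image of it. So ``hence, by the weak map, $\mathcal{R}_d$-redundant'' also needs an argument.
\end{itemize}

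The missing idea is to \emph{project} a generic $(d+1)$-dimensional framework rather than perturb a $d$-dimensional one. Let $q$ be generic in $\R^{d+1}$ and let $\omega$ be the (up to scaling unique) stress of $(C,q)$, which is nonzero on every edge of $C$. For any linear $L:\R^{d+1}\to\R^d$ and any vertex $i$ we have $\sum_j\omega_{ij}(Lq_i-Lq_j)=L\bigl(\sum_j\omega_{ij}(q_i-q_j)\bigr)=0$. So $\omega$ is a stress of $(C,L\circ q)$, and if $L$ has rational entries and rank $d$, then $L\circ q$ is generic in $\R^d$. This already proves your Step 1 correctly.

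Now suppose $(C',C'')$ is a separation of $\mathcal{R}_d(C)$. At the generic configuration $L\circ q$, rank additivity $r_d(C)=r_d(C')+r_d(C'')$ says the row spaces of $C'$ and $C''$ in $R(C,L\circ q)$ meet only in $0$. Hence $\omega|_{C'}$ is itself a stress of $(C,L\circ q)$; that is, every vector $u_i=\sum_{j:\,ij\in C'}\omega_{ij}(q_i-q_j)\in\R^{d+1}$ lies in $\ker L$. Apply this to two rational projections with different kernels, e.g.\ deleting the first, respectively the last, coordinate. This gives $u_i=0$ for all $i$. So $\omega|_{C'}$ is a nonzero $(d+1)$-stress of $(C,q)$ supported on the nonempty proper subset $C'$ of $C$, contradicting that $C$ is an $\mathcal{R}_{d+1}$-circuit.

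With this in place your Step 3 goes through. No chain of circuits is needed, since in a connected matroid any two elements already lie in a common circuit.
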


\subsection{Graph operations that preserve rigidity}\label{sec:op}

Let $G =(V,E)$ be a graph. The $d$-dimensional $0$-$extension$ operation adds a new vertex $v$ to $G$ and $d$ new edges incident with $v$. The $1$-$extension$ operation removes an edge $uw$, and adds a new vertex $v$ and $d+1$ new edges, including $vu$, $vw$.
The following lemmas are well-known, for a proof, see e.g.\ \cite{Wlong}.

\begin{lemma}
\label{lem:1ext}
Let $G,H$ be  graphs.\\
(a) If $H$ is rigid in $\R^d$ and $G$ is obtained from $H$ by a $0$- or
$1$-extension then $G$ is rigid in $\R^d$.
(b) If $G$ is obtained from $H$ by a $0$-extension operation which adds a new vertex $v$ adjacent to $d$ vertices  $v_1,v_2,\ldots,v_d$ of $H$ and $p$ is a realisation of $G$ in $\R^d$ such that $p(v),p(v_1),\ldots,p(v_d)$ are affinely independent, then $\rank R(G,p)=\rank R(H,p|_H)+d$.
\end{lemma}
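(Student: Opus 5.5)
We prove part (b) first, since part (a) follows from it together with a separate argument for $1$-extensions.

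\emph{Part (b).} Order the columns of $R(G,p)$ so that the $d$ columns of $v$ come last, and order the rows so that the $d$ new edges $vv_1,\dots,vv_d$ come last. Since $v\notin V(H)$, the rows of $E(H)$ are zero in the columns of $v$. Hence $R(G,p)$ is block lower triangular:
\[
R(G,p)=\begin{pmatrix} R(H,p|_H) & 0\\ * & M\end{pmatrix},
\]
where $M$ is the $d\times d$ matrix with rows $p(v)-p(v_i)$, $i=1,\dots,d$. Affine independence of $p(v),p(v_1),\dots,p(v_d)$ says precisely that the vectors $p(v)-p(v_i)$ are linearly independent, so $M$ is nonsingular. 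Therefore the rank of $R(G,p)$ equals $\rank R(H,p|_H)+d$.

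\emph{Part (a), $0$-extension.} Assume $|V(H)|\ge d$; the small complete cases are checked directly. Take $p$ generic. The $d+1$ points $p(v),p(v_1),\dots,p(v_d)$ are then affinely independent, so part (b) gives $\rr_d(G)=\rr_d(H)+d$. Since $|V(G)|=|V(H)|+1$, the rigidity count $d|V|-\binom{d+1}{2}$ also increases by exactly $d$, so equality in (\ref{gl}) transfers from $H$ to $G$.

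\emph{Part (a), $1$-extension.} This is the main step. Let $G$ be obtained from $H$ by deleting $uw$ and adding $v$ adjacent to $u$, $w$ and $d-1$ further vertices $v_1,\dots,v_{d-1}$. I would use a special-position argument. Choose $p$ on $V(H)$ generic, and place $p(v)$ in the relative interior of the segment between $p(u)$ and $p(w)$, in general position with respect to the other points. The rows of $vu$ and $vw$ are then parallel in the columns of $v$. A suitable linear combination of these two rows equals the row of $uw$ in $R(H+v,p)$, extended by zeros. So the row space of $R(G,p)$ contains the row space of $R(H-uw,p)$ plus the row of $uw$, that is, it contains the row space of $R(H,p)$.

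It remains to count the contribution of $v$. Take the rows $vu,vv_1,\dots,vv_{d-1}$. Their restrictions to the columns of $v$ are linearly independent, because $p(u),p(v_1),\dots,p(v_{d-1})$ together with $p(v)$ are affinely independent for this choice of position. Exactly as in part (b), these rows add $d$ to the rank. This gives $\rank R(G,p)\ge \rr_d(H)+d$, and genericity only increases the rank, so $\rr_d(G)\ge \rr_d(H)+d$. Rigidity of $G$ then follows by the same count as for $0$-extensions.

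The obstacle I expect is making the combined count rigorous. One has to show that the combination of the rows $vu,vw$ producing the row of $uw$ can be used simultaneously with the $d$ independent rows at $v$. The clean way is to row-reduce: replace the row $vw$ by the combination giving the row of $uw$. The resulting matrix is again block lower triangular, with $R(H,p)$ in the top-left block and the nonsingular $d\times d$ block of the rows $vu,vv_i$ in the columns of $v$. From this the rank bound is immediate.
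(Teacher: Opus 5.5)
Your proof is correct: part (b) and the $0$-extension case follow from the block-triangular rank count, and for the $1$-extension you place $p(v)$ on the open segment $p(u)p(w)$ and apply an invertible row operation that turns the row of $vw$ into the row of $uw$, which yields $\rank R(G,p)=\rr_d(H)+d$ as required. The paper does not prove this lemma itself but calls it well known and cites Whiteley's survey, and your argument is essentially the classical one for vertex addition and edge splitting.
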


\begin{lemma} (Gluing lemma)
\label{gluing}
Let $G_1=(V_1,E_1)$ and $G_2=(V_2,E_2)$ be $d$-rigid graphs with
$|V_1\cap V_2|\geq d$. Then $G_1\cup G_2$ is $d$-rigid.
\end{lemma}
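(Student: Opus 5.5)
Working in $\R^d$, the plan is to pass to a single generic realisation and use the rigidity-matrix characterisation: $G$ is $d$-rigid if and only if $\rank R(G,p)=d|V|-\binom{d+1}{2}$ for generic $p$ (with the small complete case handled separately). Equivalently, for $|V|\ge d+1$ and $p$ in general position, the kernel of $R(G,p)$ equals the space of infinitesimal isometries of $\R^d$ restricted to $p(V)$; this space has dimension $\binom{d+1}{2}$ when $p(V)$ affinely spans $\R^d$. Put $G=G_1\cup G_2$ and choose $p:V_1\cup V_2\to\R^d$ generic. Then the restrictions $p|_{V_1}$ and $p|_{V_2}$ are generic, so each $(G_i,p|_{V_i})$ is infinitesimally rigid, since $G_i$ is $d$-rigid. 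I will treat $|V_i|\le d$ ($G_i$ complete) by the same kernel description, which still holds for generic placements of complete graphs on at most $d$ vertices.

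Take any infinitesimal motion $m:V_1\cup V_2\to\R^d$ of $(G,p)$, so that $(m(u)-m(v))\cdot(p(u)-p(v))=0$ for every edge $uv$. Its restriction to $V_i$ is an infinitesimal motion of $(G_i,p|_{V_i})$, hence it agrees on $V_i$ with a trivial motion $T_i(x)=A_ix+b_i$, where $A_i$ is skew-symmetric. Then $T_1-T_2$ is a trivial motion vanishing on $p(V_1\cap V_2)$, which consists of at least $d$ generic points. These points affinely span a hyperplane (or all of $\R^d$).

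The main obstacle is to show that a nonzero infinitesimal isometry cannot vanish on $d$ affinely independent points. This is where $d$ rather than $d+1$ shared vertices must suffice. Write $S=A_1-A_2$ and $c=b_1-b_2$; the condition is $Sq_j+c=0$ for all $j$. Subtracting gives $S(q_j-q_1)=0$, so $S$ kills a $(d-1)$-dimensional subspace $W$. Since $S$ is skew-symmetric, its rank is even and its image lies in $W^{\perp}$, which is one-dimensional. Hence $\rank S\le 1$, and evenness forces $S=0$; then $c=0$.

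Therefore $T_1=T_2$, so $m$ is trivial on all of $V_1\cup V_2$. The kernel of $R(G,p)$ then has dimension $\binom{d+1}{2}$ (since $|V_1\cup V_2|\ge d+1$, or $G$ is complete). This gives the rank equality, and hence $G_1\cup G_2$ is $d$-rigid.
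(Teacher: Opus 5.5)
Your proof is correct. The paper gives no proof of its own here; it quotes the lemma as well known and refers to \cite{Wlong}. Your argument is the standard infinitesimal-motion proof. You restrict a motion of $(G_1\cup G_2,p)$ to each $V_i$ and obtain two trivial motions that agree on $d$ affinely independent points. They must then coincide: a skew-symmetric matrix whose kernel contains a $(d-1)$-dimensional subspace has image in a line, so it has rank at most one, and by parity of skew-symmetric rank it is zero. Your handling of the degenerate cases is also fine, namely $|V_i|\le d$, and $V_1=V_2$ of size $d$, where $G_1\cup G_2$ is complete.

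For comparison, there is a purely matroidal route that fits the framework used in the rest of the paper. Since $G_i$ is rigid, $K(V_i)\subseteq \cl(G_i)\subseteq \cl(G_1\cup G_2)$. The graph $K(V_1)\cup K(V_2)$ is obtained from $K(V_2)$ by adding the vertices of $V_1\setminus V_2$ one at a time. Each new vertex is joined to the at least $d$ vertices of $V_1\cap V_2$, and to the earlier added vertices, so each step is a 0-extension plus extra edges, and Lemma \ref{lem:1ext}(a) makes $K(V_1)\cup K(V_2)$ rigid. That argument uses only the closure operator and the 0-extension property. It therefore carries over verbatim to the cofactor matroids, giving Lemma \ref{cgluing} from Lemma \ref{lem:c1ext}.

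Your argument instead uses the explicit identification of the kernel of $R(G,p)$ with infinitesimal isometries. This makes it more self-contained from the definition of the rigidity matrix. It also shows transparently why $d$, rather than $d+1$, common vertices suffice: a reflection fixes a hyperplane pointwise but is not an infinitesimal isometry. To adapt it to $\cC^{d-2}_{d-1}$, however, you would first have to identify the corresponding space of ``trivial motions'' for the cofactor matrix.
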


Given a graph $G$, we define the {\em cone graph} of $G$ to be the graph $H$ obtained from $G$ by adding 
a new vertex $w$ to $G$ and joining $w$ to every vertex of $G$. 
We will denote $H$ by $G*w$ and refer to $w$ as the {\em cone vertex} of $G*w$. 
The following theorem is a fundamental result of Whiteley \cite{Whcone,Wlong}. %

\begin{theorem} 
\label{thm:coning} 
Let $G*w$ be the cone graph of a graph $G$. Then  
$G$ is  $d$-rigid if and only if $G*w$ is $(d+1)$-rigid.
\end{theorem}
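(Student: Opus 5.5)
The statement is Whiteley's coning theorem: $G$ is $d$-rigid if and only if $G*w$ is $(d+1)$-rigid. My plan is to prove a stronger rank identity and read off both directions. For generic $p:V\to\R^d$ I would show
\[
\rank R(G*w,\hat p)=\rank R(G,p)+d|V|,
\]
where $\hat p$ is a $(d+1)$-dimensional realisation of $G*w$, and the edges $wv$ with $v\in V$ supply exactly $d|V|$ extra rank beyond that of $G$. Since $\rr_{d+1}(K_{n+1})=\rr_d(K_n)+dn$ for $n=|V|$ (a routine binomial check, which also covers the small complete cases), equality $\rr_d(G)=\rr_d(K_n)$ holds if and only if $\rr_{d+1}(G*w)=\rr_{d+1}(K_{n+1})$, which is the theorem.

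To obtain the identity I would use the standard projective argument rather than a direct matrix computation. Place $w$ at the origin and each $v\in V$ at $\hat p(v)=(p(v),1)\in\R^{d+1}$. Infinitesimal motions of $(G*w,\hat p)$ with $w$ pinned satisfy $\langle \hat p(v),\hat u(v)\rangle=0$ for the cone edges, so $\hat u(v)$ lies in the hyperplane $\hat p(v)^\perp$.

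The core step is a linear bijection between these motions and infinitesimal motions of $(G,p)$. Given a flex $u$ of $(G,p)$, the natural candidate is
\[
\hat u(v)=(u(v),0)-\langle u(v),p(v)\rangle\,\hat p(v)/|\hat p(v)|^2,
\]
that is, the component of $(u(v),0)$ orthogonal to $\hat p(v)$. I would then check that $G$-edge constraints correspond: for $\hat p(a)-\hat p(b)=(p(a)-p(b),0)$ and orthogonally projected motions, the edge equation $\langle \hat p(a)-\hat p(b),\hat u(a)-\hat u(b)\rangle=0$ reduces to the $d$-dimensional one. This algebra is the main obstacle. The cleaner route is to verify it at the level of the row spaces, by scaling each position vector $\hat p(v)$ by a nonzero scalar $\lambda_v$, which preserves the rank of the cone framework, and then projecting radially to the hyperplane $x_{d+1}=1$. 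Scaling $v$ along the ray through $w$ multiplies the cone-edge row by a scalar and changes the $G$-edge rows only by combinations of cone-edge rows. So the row space of the whole matrix is invariant, and the quotient by the cone rows is the rigidity matrix of $G$ in the hyperplane.

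Given the bijection, the space of pinned flexes of $G*w$ is isomorphic to the space of all flexes of $G$. This yields nullity of $R(G*w,\hat p)$ equal to the nullity of $R(G,p)$ plus $d+1$, the translations freed by unpinning $w$. Comparing column counts $(d+1)(n+1)$ versus $dn$ gives the rank identity.

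Finally, since $p$ is generic, $\hat p$ differs from a generic placement only by this projective normalisation (put $w$ at the origin and scale each point onto the hyperplane). Both operations preserve rank by the row-space argument, so the rank computed equals $\rr_{d+1}(G*w)$.
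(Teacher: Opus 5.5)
The paper gives no proof of this statement (it is quoted from Whiteley). Your plan is the standard argument behind it: pin $w$ at the origin, put $V$ on the hyperplane $x_{d+1}=1$, identify pinned flexes of the cone with flexes of $(G,p)$, and reach this position from a generic one by sliding vertices along rays through $w$. However, several of your concrete claims are false. First, the count. $G*w$ has only $|V|$ more edges than $G$, so the rank cannot increase by $d|V|$. The correct identity is $\rr_{d+1}(G*w)=\rr_d(G)+|V|$, and so $\rr_{d+1}(K_{n+1})=\rr_d(K_n)+n$; this is just the identity for $G=K_n$, since $K_n*w=K_{n+1}$. For example, $\rr_3(K_4)=6=\rr_2(K_3)+3$, while your formula gives $9$. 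Your own nullity comparison in fact yields $(d+1)(n+1)-\bigl(\dim\ker R(G,p)+d+1\bigr)=\rank R(G,p)+n$. Because the same constant appears on both sides, your final read-off survives once $d|V|$ is replaced by $|V|$.

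More seriously, the core step fails as written. Your orthogonal projection $\hat u(v)=(u(v),0)-c_v\hat p(v)$, with $c_v=\langle u(v),p(v)\rangle/|\hat p(v)|^2$, does not send flexes to flexes. The equation for a $G$-edge $ab$ acquires the extra term $-c_a\langle p(a)-p(b),p(a)\rangle+c_b\langle p(a)-p(b),p(b)\rangle$. For example, take $d=1$, $p(a)=0$, $p(b)=1$ and the translation $u\equiv 1$. You get $\hat u(a)=(1,0)$ and $\hat u(b)=(\tfrac12,-\tfrac12)$, so $\langle \hat p(a)-\hat p(b),\hat u(a)-\hat u(b)\rangle=-\tfrac12$.

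The map that works projects along $e_{d+1}$ instead: $\hat u(v)=(u(v),-\langle p(v),u(v)\rangle)$. It satisfies the cone equations. Since every $G$-edge vector $(p(a)-p(b),0)$ is orthogonal to $e_{d+1}$, the $G$-edge equations for $\hat u$ are literally those for $u$, and the inverse deletes the last coordinate. This works for every $p$, so $\rank R(G*w,\hat p)=\rank R(G,p)+|V|$ for all $p:V\to\R^d$.

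Your fallback is also wrong: the row space is not invariant under $\hat p(v)\mapsto\lambda_v\hat p(v)$. For a coned single edge $ab$, the new $ab$-row lies in the old row space only if $\lambda_a=\lambda_b$. What does hold is rank invariance. For $\hat u$ satisfying the cone equations, the $ab$-equation is equivalent to $\langle\hat p(a),\hat u(b)\rangle+\langle\hat p(b),\hat u(a)\rangle=0$. Hence $\hat u\mapsto(\lambda_v\hat u(v))_v$ is a bijection between the pinned flex spaces.

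Together with translation invariance, this gives $\rank R(G*w,q)=\rank R(G,p')+|V|$ for any $q$ with $q(v)_{d+1}\neq q(w)_{d+1}$ for all $v\in V$, where $(p'(v),1)=(q(v)-q(w))/(q(v)_{d+1}-q(w)_{d+1})$. Taking $q$ generic gives $\rr_{d+1}(G*w)\le\rr_d(G)+|V|$. Taking $q=\hat p$ with $p$ generic gives the reverse inequality. This is the genericity transfer your last paragraph needs, and it requires no appeal to row spaces.
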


\subsection{The 2-dimensional rigidity matroid}

It is not difficult to check that condition (\ref{ind}) is both necessary and sufficient for independence in $\cR_1$. Pollaczek-Geiringer \cite{PG},
and independently Laman \cite{laman}
showed that it also characterises independence in $\cR_2$.

\begin{theorem} \cite{laman,PG} 
\label{hilda} A graph $G$ is ${\cal R}_2$-independent if and only if 
$|E(H)|\leq 2|V(H)|-3$ for all $H\subseteq G$ with $|V(H)|\geq 2$.
\end{theorem}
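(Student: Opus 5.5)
Necessity is already covered by (\ref{ind}) with $d=2$: if $|E(H)|>2|V(H)|-3$ for some $H\subseteq G$ with $|V(H)|\geq 2$, then the rows of $R(H,p)$ lie in a space of dimension at most $2|V(H)|-3$ and so are dependent. (For $|V(H)|=2$ the count $2|V(H)|-3=1$ is still the right bound on the rank.) The work is in sufficiency. Call a graph \emph{Laman-sparse} if it satisfies the count. I will show by induction on $|V|$ that every Laman-sparse graph $G$ is $\cR_2$-independent.

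First reduce to the tight case. It is enough to treat graphs with $|E|=2|V|-3$. This follows from a standard matroid/augmentation fact about the count matroid: the sets satisfying $|E(H)|\le 2|V(H)|-3$ form the independent sets of a matroid on $E(K_n)$, because the function $2|V(F)|-3$ is submodular and increasing on intersecting edge sets. So every sparse set extends to a sparse set of size $2n-3$. Alternatively, I can run the induction directly on sparse graphs without reducing.

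The inductive step goes as follows. Since $2|E|\le 4|V|-6$, some vertex $v$ has degree at most $3$. Isolated and degree-1 vertices are trivial to remove.

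If $\deg v=2$, then $G-v$ is sparse and hence independent by induction. $G$ is then a $0$-extension of $G-v$, and Lemma \ref{lem:1ext}(b) says the rank increases by $2$ for generic $p$. So $G$ is independent.

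If $\deg v=3$ with neighbours $a,b,c$, I delete $v$ and add one of the edges $ab$, $bc$, $ca$, choosing an edge not already present that keeps the graph sparse. Then $G$ is a $1$-extension of that graph. The standard argument behind Lemma \ref{lem:1ext}(a) handles this: place $p(v)$ on the line through $p(a),p(b)$. The rows of $va$ and $vb$ then combine to simulate the row of $ab$, while $vc$ contributes an independent row. This shows the rank rises by $2$ relative to $G-v+ab$, which is independent by induction. Genericity then transfers the conclusion, since rank is lower semicontinuous.

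The main obstacle is the existence of a valid edge to add in the degree-3 case. Suppose all three choices fail. Then for each pair, say $a,b$, either $ab\in E$ or there is a tight set $X_{ab}\subseteq V-v$ (meaning $|E(X_{ab})|=2|X_{ab}|-3$) containing $a$ and $b$. An adjacent pair is itself tight, so in either case each pair lies in a tight set. Two tight sets sharing at least two vertices have a tight union, by submodularity of the count. So I combine the sets pairwise. If two of them share two vertices, their union is a tight set containing $a,b,c$. Adding $v$ with its three edges then gives $2(|X|+1)-3+1$ edges, which violates sparsity.

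Otherwise the three tight sets pairwise meet in exactly one vertex among $\{a,b,c\}$. In that case I count $|E(X_{ab}\cup X_{bc}\cup X_{ca})|$ by inclusion–exclusion, using the fact that the pairwise intersections have no induced edges beyond those counted. The union then has at least $2|U|-3$ edges, and adding $v$ again contradicts sparsity. I expect this three-set bookkeeping to be the delicate step and would do it carefully.
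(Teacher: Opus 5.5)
The paper gives no proof of this theorem; it is quoted from \cite{laman,PG}. Your argument is correct and is essentially the classical Henneberg-type proof from those sources: necessity follows from the count (\ref{gl}), and sufficiency is proved by induction using $0$-extensions and $1$-extensions, placing $p(v)$ collinearly with $p(a),p(b)$ and using the union of tight sets to find an addable edge among $ab$, $bc$, $ca$.

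The three-set case you flag as delicate is in fact immediate. The pairwise intersections are the singletons $\{a\},\{b\},\{c\}$ and the triple intersection is empty. So the three induced edge sets are disjoint, $|U|=\sum|X|-3$, and $|E(U)|\geq\sum(2|X|-3)=2|U|-3$. Hence $U\cup\{v\}$ spans at least $2|U|$ edges, which violates sparsity.
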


This result implies that ${\cal R}_2$-circuits and ${\cal R}_2$-connected graphs are 2-rigid, and that the maximal cliques in the
$\cR_2$-closure of a graph $G$
correspond to the maximal $\cR_2$-rigid subgraphs of $G$.
Lov\'asz and Yemini \cite{LY} showed that Theorem \ref{hilda} gives rise to a characterisation of $r_2(G)$. 
We shall use the following version of their result (see \cite{Jmemoirs} for more details).
A collection ${\cal X}=\{X_1,X_2,\dots,X_t\}$ of vertex sets of size at least two of a graph $G=(V,E)$
is called a {\it cover} of $G$ if each edge in $E$ is induced by at least one
member of $\cal X$ in $G$.

\begin{theorem} \cite{LY}
\label{thm:lamanrank} 
Let $G=(V,E)$ be a graph. Then 
\begin{enumerate}
\item[(a)] $r_2(G) = \min \{ \sum_{X_i\in {\cal X}} (2|X_i|- 3)\ : {\cal X} \ \hbox{is a cover of}\  G\}.$
\item[(b)] If $G$ is  $\mathcal{R}_2$-closed  and $\cX$ consists of the vertex sets of the maximal cliques of $G$ of size at least two, then $\rr_2(G)=\sum_{X_i\in {\mathcal X}} (2|X_i|-3).$
\end{enumerate}
\end{theorem}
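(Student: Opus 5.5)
The statement is the Lovász–Yemini rank formula for $\cR_2$, and I will prove it from Theorem \ref{hilda}. The plan has two parts. First, the minimum over covers is an upper bound: this follows from submodularity and the rank of cliques. Second, for an $\cR_2$-closed graph the clique cover attains $r_2(G)$; this is (b). Part (a) then follows from (b) applied to $\cl_2(G)$.

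\emph{Upper bound.} For any cover $\cX$ of $G$, each edge lies in some $G[X_i]$. Since the rank function is subadditive on unions of edge sets,
\[ r_2(G)\le \sum_i r_2(G[X_i])\le \sum_i r_2(K_{|X_i|})=\sum_i(2|X_i|-3), \]
using $r_2(K_m)=2m-3$ for $m\ge 2$. Hence $r_2(G)$ is at most the minimum in (a).

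\emph{Proof of (b).} Let $G$ be $\cR_2$-closed and let $\cX=\{X_1,\dots,X_t\}$ be the vertex sets of its maximal cliques of size at least two. I proceed in three steps.

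\begin{enumerate}
\item \emph{Cliques are rigid subgraphs.} By Theorem \ref{hilda}, every $\cR_2$-circuit is 2-rigid. Hence every $\cR_2$-connected component of $G$ is 2-rigid, and so is every maximal rigid subgraph. By closedness, a rigid subgraph on vertex set $Y$ spans all pairs in $Y$, so $G[Y]$ is a clique. Conversely, every clique is rigid. So the $X_i$ are exactly the vertex sets of the maximal rigid subgraphs.
\item \emph{Pairwise intersections are small.} Two maximal rigid subgraphs share at most one vertex. Otherwise they share at least two vertices, and by the gluing lemma (Lemma \ref{gluing}) with $d=2$ their union is rigid, contradicting maximality.
\item \emph{The spanning tree is independent.} Choose a spanning tree $T_i$ of Laman-basis type inside each $K_{X_i}$, namely a basis $B_i$ of $\cR_2(G[X_i])$ of size $2|X_i|-3$. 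Since the $X_i$ pairwise meet in at most one vertex, the $B_i$ are edge-disjoint. The remaining task is to show that $B=\bigcup_i B_i$ is $\cR_2$-independent, which gives $r_2(G)\ge\sum_i(2|X_i|-3)$.
\end{enumerate}

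\emph{Main obstacle: independence of $B$.} Suppose $B$ is dependent. Then $B$ contains a circuit $C$. By Theorem \ref{hilda}, $C$ is 2-rigid, so $V(C)$ lies inside some maximal rigid set $X_j$.

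$C$ uses an edge of some $B_k$ with $k\ne j$. If it did not, $C\subseteq B_j$, contradicting the independence of $B_j$. That edge then has both endpoints in $X_j\cap X_k$. This forces $|X_j\cap X_k|\ge 2$, contradicting step 2.

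This argument is clean once I know that circuits are rigid, which is the content extracted from Laman's theorem. I would verify that carefully by a counting argument: a circuit $C$ has $|E(C)|=2|V(C)|-2$, so removing any edge leaves a Laman-tight, hence rigid, graph. A direct count is also available as a fallback: if $Y=V(C)$ with $C\subseteq B$, the sets $X_i\cap Y$ of size at least two pairwise share at most one vertex. A Lovász–Yemini-type counting argument then bounds $|B\cap E(Y)|$ by $2|Y|-3$ unless $Y$ lies in a single $X_i$. I expect the circuit-rigidity route to suffice.

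\emph{Proof of (a).} Apply (b) to $\cl_2(G)$, which has the same rank as $G$. Its maximal cliques form a cover of $G$ achieving the upper bound, so the minimum in (a) equals $r_2(G)$.
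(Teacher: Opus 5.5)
Your argument is correct. The paper does not prove this theorem: it quotes it from \cite{LY} and points to \cite{Jmemoirs}. Its later remark that part (b) follows from Lemmas \ref{2dimlocal} and \ref{lem:rc_sum} is not an independent proof, because the proof of Lemma \ref{2dimlocal} itself invokes (b). So the relevant comparison is with the classical route. That route obtains a min-over-edge-partitions formula from the general rank formula for the matroid induced by the intersecting submodular function $F\mapsto 2|V(F)|-3$, and then passes to covers; it is more general but heavier machinery.

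Your route needs only three ingredients already available in the paper:
\begin{itemize}
\item $\cR_2$-circuits are rigid, a consequence of Theorem \ref{hilda} noted right after it;
\item the gluing lemma;
\item closedness.
\end{itemize}
The key step is short and clean. A circuit $C\subseteq\bigcup_i B_i$ spans a clique of the closed graph, so it lies in some $X_j$. It cannot lie in the independent set $B_j$, so it uses an edge of some $B_k$ with $k\neq j$, and the ends of that edge lie in $X_j\cap X_k$. Deducing (a) from (b) via $\cl_2(G)$, together with the subadditivity upper bound, is also fine.

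Three small points.
\begin{enumerate}
\item $B_i$ is a basis of $\cR_2(G[X_i])$, i.e.\ a Laman graph with $2|X_i|-3$ edges, not a spanning tree; drop that phrase.
\item Your fallback counting claim is false as stated, because pairwise intersections of size at most one do not suffice. For example, the six $2$-subsets of a $4$-set meet pairwise in at most one vertex, yet give $6>2\cdot 4-3$. Any correct count must use maximality in the closed graph, which is exactly what your circuit argument already exploits, so just omit the fallback.
\item The fact that circuits are rigid deserves its two-line proof. Let $C$ be a circuit and $e\in E(C)$. Then $C-e$ satisfies Laman's count, while some $H\subseteq C$ violates it. Such an $H$ must contain $e$ and have exactly $2|V(H)|-2$ edges. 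Since $E(H)$ is dependent, minimality of $C$ gives $E(H)=E(C)$, hence $r_2(C)\geq r_2(C-e)=2|V(C)|-3$.
\end{enumerate}
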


\subsection{Cofactor matroids}\label{sec:cofdefn}

For each integer $d\geq 1$, let $h_d:\R^2\to \R^d$ be defined by putting
$$h_d(x,y)=(x^{d-1},x^{d-2}y,\ldots,y^{d-1}).$$ Then the {\it $C^{d-2}_{d-1}$-cofactor matroid} of a graph $G=(V,E)$, denoted by $\cC^{d-2}_{d-1}(G)$,
is defined by first choosing a generic $p:V\to \R^2$ and a reference ordering for the vertices of $G$, and then taking $\cC^{d-2}_{d-1}(G)$ to be the row matroid of the $|E|\times d|V|$ matrix in which the row indexed by an edge $uv\in E$ with $u<v$ has $h_d(p(u)-p(v))$ in the $d$ columns indexed by $u$, $-h_d(p(u)-p(v))$ in the $d$ columns indexed by $v$, and zeros elsewhere. It is known that the rank of $\cC^{d-2}_{d-1}(G)$, denoted by $\rr^{d-2}_{d-1}(G)$, satisfies a similar  inequality to (\ref{gl}): 

\begin{equation}
\label{cl}
\rr^{d-2}_{d-1}(G)\leq d|V|-\binom{d+1}{2} \mbox{ whenever $|V|\geq d+1$.}
\end{equation}
By analogy, we say that $G$ is {\em $\cC^{d-2}_{d-1}$-rigid} if either equality holds in (\ref{cl}), or  $G$ is a complete graph when $|V|\leq d$.

We will need the following results of Whiteley which show that the fundamental properties of rigidity matroids given in Section \ref{sec:op} also hold for cofactor matroids (see \cite[Sections 10.1 and 10.2]{Wlong} for $d=3$ and \cite[Section 11.3]{Wlong} for general $d$).

\begin{lemma}
\label{lem:c1ext}
Let $G,H$ be  graphs. If $H$ is $\cC^{d-2}_{d-1}$-rigid  and $G$ is obtained from $H$ by a $0$- or
$1$-extension then $G$ is $\cC^{d-2}_{d-1}$-rigid.
\end{lemma}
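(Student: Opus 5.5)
We prove the two cases separately, working with the matrix $C(G,p)$ that defines $\cC^{d-2}_{d-1}(G)$ for a generic $p:V\to\R^2$. Rigidity here means that the rank equals $d|V|-\binom{d+1}{2}$. We may assume $|V(H)|\geq d+1$, since the small cases, where $H$ is a complete graph on at most $d$ vertices, can be handled directly or by first adding vertices by 0-extensions.

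\emph{0-extension.} Let $v$ be the new vertex, adjacent to $v_1,\dots,v_d$. Order the rows of $C(G,p)$ so that the $d$ new rows come first and the columns of $v$ come first. The only nonzero entries in the $v$-columns come from the new rows, and those rows restricted to the $v$-columns form the $d\times d$ matrix with rows $\pm h_d(p(v)-p(v_i))$. This matrix is the matrix of a binary form of degree $d-1$ evaluated at $d$ vectors $p(v)-p(v_i)$. If these vectors are pairwise non-parallel, the vectors $h_d(\cdot)$ are linearly independent, because the matrix is a Vandermonde-type matrix in the slopes after scaling. This holds for generic $p$. The matrix of $G$ is block triangular, so $\rank C(G,p)=\rank C(H,p|_H)+d$. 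Since $|V(G)|=|V(H)|+1$, $G$ is rigid.

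\emph{1-extension.} Let $G$ be obtained from $H$ by deleting $uw$ and adding $v$ with neighbours $u,w,v_1,\dots,v_{d-1}$. The plan is Whiteley's standard specialisation argument. Place $p(v)$ in the interior of the segment $p(u)p(w)$, keeping all other points generic. Then $p(v)-p(u)$ and $p(v)-p(w)$ are parallel to $p(u)-p(w)$, and since $h_d$ is homogeneous of degree $d-1$,
\[
h_d(p(u)-p(v))=\lambda\,h_d(p(u)-p(w)),\qquad h_d(p(v)-p(w))=\mu\,h_d(p(u)-p(w))
\]
for nonzero scalars $\lambda$ and $\mu$. A suitable linear combination of the rows $vu$ and $vw$ then has zero entries in the $v$-columns and equals a nonzero multiple of the row $uw$ of $C(H,p)$. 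Keep one of the rows $vu$, $vw$ together with the rows $vv_1,\dots,vv_{d-1}$. Their $v$-column blocks are $h_d$ evaluated at $d$ pairwise non-parallel directions, since $p(v_i)$ is generic and the direction of $uw$ is fixed. These rows are therefore independent and contribute $d$ to the rank on top of the rows of $H-uw$ together with the combined row equal to $uw$. Hence the rank of $C(G,p')$ at this special position is at least $\rank C(H,p)+d$, which is the required value. The rank cannot drop under generic perturbation, since the rank of a matrix is lower semicontinuous, so $G$ is rigid at generic $p$ as well.

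\textbf{Main obstacle.} The delicate point is the special position in the 1-extension. We need the genericity of $p|_H$ to be preserved, so that $C(H,p)$ still has maximum rank. This holds because only $p(v)$ is specialised and $p|_H$ stays generic. We also need the $d$ kept $v$-rows to remain independent, which reduces to checking that the directions are pairwise non-parallel. Finally, the combination of the rows $vu$ and $vw$ must cancel exactly in the $v$-columns. This is where the homogeneity of $h_d$ and the sign convention from the reference ordering must be tracked carefully.
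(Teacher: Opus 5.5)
Your proposal is correct and matches the standard argument behind this result; the paper gives no proof of its own and cites Whiteley. The 0-extension case uses block-triangularity plus nonsingularity of the Vandermonde-type matrix of $h_d$ at pairwise non-parallel directions, and the 1-extension case places $p(v)$ on the segment $p(u)p(w)$, so that by homogeneity of $h_d$ the rows $vu,vw$ combine to a multiple of row $uw$, then finishes with lower semicontinuity of rank. The one detail worth stating explicitly is that $p(v)-p(v_i)$ and $p(v)-p(v_j)$ are non-parallel only if $p(v)$ lies off the line through $p(v_i),p(v_j)$; this holds automatically for generic $p|_{V(H)}$ once the position of $p(v)$ on the segment is fixed (e.g.\ at a rational ratio).
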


\begin{lemma} (Cofactor Gluing Lemma)
\label{cgluing}
Let $G_1=(V_1,E_1)$ and $G_2=(V_2,E_2)$ be $\cC^{d-2}_{d-1}$-rigid graphs with
$|V_1\cap V_2|\geq d$. Then $G_1\cup G_2$ is $\cC^{d-2}_{d-1}$-rigid.
\end{lemma}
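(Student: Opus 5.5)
The plan is to work directly with the defining matrix of $\cC^{d-2}_{d-1}$ and imitate the classical linear-algebra proof of the rigidity gluing lemma. Write $M(G,p)$ for the cofactor matrix of $G$ at a generic $p$. Its rows are ``edge-type'' vectors: the row of $uv$ has $\pm h_d(p(u)-p(v))$ in the blocks of $u$ and $v$.

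\emph{Step 1: identify the trivial motions.} The right kernel of $M(G,p)$ plays the role of infinitesimal motions. I would first show that for $|V|\ge d$ and generic $p$, the kernel of $M(K_{|V|},p)$ is a fixed space $T_V$ of dimension $\binom{d+1}{2}$. It is spanned by assignments $v\mapsto A\,h_d(p(v))$-type vectors. Concretely, $T_V$ consists of the restrictions to $V$ of a fixed $\binom{d+1}{2}$-dimensional space $T$ of functions $\R^2\to\R^d$ with the property that $m(a)-m(b)$ is orthogonal to $h_d(a-b)$ for all $a,b$. These are polynomial maps of degree at most $d-1$, analogous to the affine skew maps in rigidity. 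This is exactly what makes (\ref{cl}) hold. The key extra fact I need is that the restriction map $T\to(\R^d)^{U}$ is injective whenever $U$ is a set of at least $d$ generic points in $\R^2$. Equivalently, a trivial motion vanishing on $d$ generic points is zero.

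\emph{Step 2: glue kernels.} Suppose $G_1$ and $G_2$ are $\cC^{d-2}_{d-1}$-rigid. If $|V_i|\ge d+1$, then equality in (\ref{cl}) means $\ker M(G_i,p|_{V_i})=T_{V_i}$. If $|V_i|\le d$, then $G_i$ is complete, and so $|V_i|=d$ because $|V_1\cap V_2|\ge d$. Take $m\in\ker M(G_1\cup G_2,p)$. Then $m|_{V_1}$ equals the restriction of some $t_1\in T$, and $m|_{V_2}$ equals the restriction of some $t_2\in T$. The map $t_1-t_2$ vanishes on $V_1\cap V_2$, which has at least $d$ generic points, so $t_1=t_2$ by injectivity. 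Hence $m\in T_{V_1\cup V_2}$, the kernel has dimension $\binom{d+1}{2}$, and the rank is $d|V_1\cup V_2|-\binom{d+1}{2}$. This means $G_1\cup G_2$ is $\cC^{d-2}_{d-1}$-rigid.

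\emph{Step 3: the main obstacle.} The hardest step is establishing the injectivity in Step 1, and also handling the case $|V_i|=d$, where the kernel of $K_d$ has dimension $d^2-\binom d2$ rather than $\binom{d+1}{2}$.

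For the small case, I will use the following check. The complete graph $K_d$ is independent in $\cC^{d-2}_{d-1}$, since $K_{d+1}$ has $\binom{d+1}{2}$ edges and attains the bound. So the $K_d$ part contributes no constraint beyond what the other graph already imposes on the shared vertices. In this case $V_1\subseteq V_2$ whenever $|V_1|=d\le|V_1\cap V_2|$. Then $G_1\cup G_2$ is $G_2$ plus some edges, and it is trivially rigid. The case where both graphs have exactly $d$ vertices is the same, since then $V_1=V_2$ and the union is $K_d$.

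For injectivity, I would try first to use the already-known rigidity of $K_{d+1}$ (equality in (\ref{cl})). The kernel of $M(K_{d+1},p)$ has dimension $\binom{d+1}{2}$ and equals $T_{V(K_{d+1})}$. Now apply a $0$-extension count (Lemma \ref{lem:c1ext}): adding a vertex joined to $d$ generic vertices raises the rank by exactly $d$. This shows that the trivial motions of $K_d$ extend uniquely to the new vertex, so a trivial motion zero on $d$ vertices is zero on the new one. Iterating this over all vertices gives the injectivity.
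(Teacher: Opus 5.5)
The paper gives no proof of this lemma; it quotes it from Whiteley~\cite{Wlong}. Your overall strategy is the standard one: identify the kernel of each rigid piece with a common space of ``trivial'' vectors, and show that such a vector is determined by its values on $d$ shared vertices. Your Step~2 is fine, and so is your disposal of the case $|V_i|=d$ (then $V_i\subseteq V_{3-i}$).

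The gap is Step~1, which carries all the weight. You assert, without construction or proof, a single $\binom{d+1}{2}$-dimensional space $T$ of maps $\R^2\to\R^d$ with $m(a)-m(b)\perp h_d(a-b)$, whose restriction to every generic $V$ is $\ker M(K_V,p)$. But (\ref{cl}) only gives $\dim\ker M(K_V,p)\ge\binom{d+1}{2}$ for each finite $V$ separately. Also, your description via $v\mapsto A\,h_d(p(v))$ does not contain the constant maps.

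Your injectivity sketch does not close this gap:
\begin{itemize}
\item The sentence ``$\ker M(K_{d+1},p)=T_{V(K_{d+1})}$'' already presupposes $\dim T_{V(K_{d+1})}=\binom{d+1}{2}$, which is the injectivity you are after.
\item A rank count (``the rank goes up by exactly $d$'', which is what Lemma~\ref{lem:c1ext} provides) does not by itself make extensions unique.
\end{itemize}
What actually does the work is this: for generic $p$ and distinct $w,u_1,\dots,u_d$, the $d\times d$ matrix with rows $h_d(p(w)-p(u_i))$ is nonsingular. Writing $p(w)-p(u_i)=(x_i,y_i)$, its determinant is $\prod_i x_i^{d-1}\prod_{i<j}(y_j/x_j-y_i/x_i)$. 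Finally, the ``obstacle'' for $K_d$ is illusory, since $d^2-\binom{d}{2}=\binom{d+1}{2}$.

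The cleanest repair is to drop $T$ and work inside $K:=\ker M(K_W,p)$ with $W=V_1\cup V_2$. By your small-case argument you may assume $|V_1|,|V_2|\ge d+1$.
\begin{itemize}
\item Fix $U\subseteq V_1\cap V_2$ with $|U|=d$. Every $w\in W\setminus U$ is adjacent in $K_W$ to all of $U$, so by the nonsingularity above, an element of $K$ vanishing on $U$ is zero.
\item Hence restriction $K\to(\R^d)^{V_i}$ is injective, with image in $\ker M(G_i,p|_{V_i})$. Both spaces have dimension $\binom{d+1}{2}$. For $\ker M(G_i,p|_{V_i})$ this holds because $G_i$ is rigid. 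For $K$ it holds because $K_W$ is $\cC^{d-2}_{d-1}$-rigid: add the vertices of $W$ one at a time; by the same nonsingularity, a vertex with $j$ earlier neighbours raises the rank by at least $\min(j,d)$, and then (\ref{cl}) gives equality.
\item So $\ker M(G_i,p|_{V_i})=K|_{V_i}$, and your Step~2 runs verbatim with $T$ replaced by $K$. If $m\in\ker M(G_1\cup G_2,p)$ and $m|_{V_i}=t_i|_{V_i}$ with $t_i\in K$, then $t_1-t_2\in K$ vanishes on $U$. So $t_1=t_2$ and $m=t_1\in K$.
\item Thus $\dim\ker M(G_1\cup G_2,p)\le\binom{d+1}{2}$, and (\ref{cl}) gives equality.
\end{itemize}
Your global $T$ does exist: for a ternary form $\Phi$ of degree $d-1$, let $m(a)$ be the coefficient vector of the binary form $z\mapsto\Phi\big((a,1)\times(z,0)\big)$. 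The argument does not need it, though.
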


\begin{theorem} 
\label{thm:coning_cof} 
Let $G*w$ be the cone graph of a graph $G$. Then  
$G$ is  $\cC^{d-2}_{d-1}$-rigid if and only if $G*w$ is $\cC^{d-1}_{d}$-rigid.
\end{theorem}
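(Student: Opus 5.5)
The statement is the cofactor analogue of Whiteley's coning theorem: $G$ is $\cC^{d-2}_{d-1}$-rigid if and only if $G*w$ is $\cC^{d-1}_{d}$-rigid. I would follow the algebraic proof of Theorem \ref{thm:coning}, using a projective change of coordinates adapted to the maps $h_d$. Write $n=|V|$ and let $q:V\cup\{w\}\to\R^2$ be generic. Since cofactor matroids are invariant under translation of the point configuration, I may place $q(w)$ at the origin and take $q(v)=(x_v,y_v)$ with $x_v\neq 0$ for all $v\in V$.

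First I would identify the kernel. Let $M=M(G*w)$ be the cofactor matrix for $\cC^{d-1}_d(G*w)$, whose edge rows carry $h_{d+1}(q(u)-q(v))$. The vectors $z\in\R^{(d+1)(n+1)}$ with $Mz=0$ are the ``infinitesimal motions''. Translating $z$ by a common vector $-z_w$ (this is the trivial motion $z_u=c$ for all $u$, which lies in the kernel) lets me assume $z_w=0$. The cone edges $wv$ then impose exactly $h_{d+1}(q(v))\cdot z_v=0$. So the kernel is the trivial $(d+1)$-space plus the space of $z$ on $V$ with $z_v\perp h_{d+1}(q(v))$ for each $v$ and $(z_u-z_v)\cdot h_{d+1}(q(u)-q(v))=0$ for each $uv\in E$.

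The key step is a linear bijection between this space and the kernel of $M(G)$ for the configuration $p(v)=(y_v/x_v,\,1/x_v)$. This configuration is generic because it is a projective image of a generic one. The identity I would exploit is
\[
h_{d+1}(a)=\bigl(a_1\,h_d(a),\;a_2^{d}\bigr),
\]
together with the homogeneity $h_k(\lambda a)=\lambda^{k-1}h_k(a)$. Writing $q(v)=x_v(1,\;y_v/x_v)$, the tangency condition $z_v\perp h_{d+1}(q(v))$ says that $z_v$ is determined by a vector $\zeta_v\in\R^d$, via a fixed invertible linear map depending on $q(v)$. Substituting into the edge condition and expanding $h_{d+1}(q(u)-q(v))$ using the above identity, I expect each edge equation to factor as a nonzero scalar times $(\zeta_u-\zeta_v)\cdot h_d(p(u)-p(v))$. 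The nonzero scalar should be a product of powers of $x_u$, $x_v$ and $\det(q(u),q(v))$. This is the cofactor analogue of the central projection used for bar frameworks.

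Given the bijection, the count follows. The kernel of $M(G*w)$ has dimension $(d+1)+\dim\ker M(G)$, and $\ker M(G)$ always contains a $\binom{d}{2}+d$-dimensional trivial part for $n\ge d$. Combining this with (\ref{cl}), equality $\rr^{d-1}_d(G*w)=(d+1)(n+1)-\binom{d+2}{2}$ holds if and only if $\rr^{d-2}_{d-1}(G)=dn-\binom{d+1}{2}$. The small cases $n\le d$, where rigidity means being complete, I would handle directly, since $K_n*w=K_{n+1}$.

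I expect the main obstacle to be the explicit algebraic factorisation in the key step: finding the correct linear substitution $z_v\leftrightarrow\zeta_v$ so that the $h_{d+1}$ edge condition collapses to the $h_d$ condition. I would first verify the case $d=2$, where $h_3$ corresponds to quadratic forms, and then guess the general change of variables from polynomial (dual-space) interpretations. In that interpretation, $h_k(a)$ pairs with homogeneous polynomials of degree $k-1$ evaluated at $a$, and the cone vertex at the origin corresponds to dehomogenising.
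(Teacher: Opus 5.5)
\textbf{There is a genuine gap at the key step.} The scaffolding of your argument is sound:
\begin{itemize}
\item normalising $z_w=0$ using the constant motions;
\item reading the cone edges as $h_{d+1}(q(v))\cdot z_v=0$;
\item deducing from $\dim\ker M(G*w)=(d+1)+\dim\ker M(G)$ that $\rr^{d-1}_d(G*w)=\rr^{d-2}_{d-1}(G)+|V|$, which with (\ref{cl}) and the trivial cases $|V|\le d$ gives the equivalence.
\end{itemize}
However, the step carrying all the content is only something you ``expect''. That step is the linear bijection between the tangential kernel and $\ker M(G)$, and you leave the substitution $z_v\leftrightarrow\zeta_v$ undetermined. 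An arbitrary parametrisation of the hyperplane $h_{d+1}(q(v))^{\perp}$ will not make the edge equations factor. The identity $h_{d+1}(a)=(a_1h_d(a),a_2^d)$ amounts to splitting off multiplication by the \emph{fixed} linear form $X$, whereas the splitting you need depends on $v$. The detour through the projective image $p(v)=(y_v/x_v,1/x_v)$ is also unnecessary. If you keep it, you must additionally prove projective invariance of $\cC^{d-2}_{d-1}$. (The paper gives no proof of this theorem; it cites Whiteley.)

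\textbf{The missing idea is to read the vectors as binary forms.} Identify $z_v\in\R^{d+1}$ with $f_v(X,Y)=\sum_{i=0}^{d}z_{v,i}X^{d-i}Y^i$, so that $h_{d+1}(a)\cdot z_v=f_v(a)$. The cone condition then says $f_v(q(v))=0$. Since $q(v)\neq 0$, this holds if and only if $f_v=L_v g_v$ for a unique form $g_v$ of degree $d-1$, where $L_v(X,Y)=y_vX-x_vY$. The map $g_v\mapsto L_vg_v$ is the invertible linear map you were looking for. Because $L_u(q(u))=L_v(q(v))=0$, we get
\[
L_u\bigl(q(u)-q(v)\bigr)=L_v\bigl(q(u)-q(v)\bigr)=x_uy_v-x_vy_u=\det\bigl(q(u),q(v)\bigr).
\]
So the edge equation $(f_u-f_v)(q(u)-q(v))=0$ becomes
\[
\det\bigl(q(u),q(v)\bigr)\,(g_u-g_v)\bigl(q(u)-q(v)\bigr)=0 .
\]
For generic $q$ the determinant is nonzero, so this is exactly the row condition $h_d(q(u)-q(v))\cdot(\zeta_u-\zeta_v)=0$ of $\cC^{d-2}_{d-1}(G)$. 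This holds for the \emph{same} configuration $q|_V$, which is generic after your translation, so no projective change of coordinates is needed. With this bijection in place, your counting argument completes the proof.
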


We will also need the following result on $\cC^1_2$-cofactor matroids which was a key result in the characterisation of these matroids given in \cite{CJT}.

\begin{theorem}\cite{CJT}\label{thm:CJT:K5}
Let $G=(V,E)$ be a $\mathcal{C}_2^1$-closed graph and $e\in E$. Then $e$ is a $\mathcal{C}_2^1$-bridge in $G$ if and only if $e$ is not contained in a copy of $K_5$ in $G$.
\end{theorem}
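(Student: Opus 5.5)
The equivalence has an easy direction and a hard direction, and I would treat them separately.

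\emph{If $e$ lies in a copy of $K_5$, then $e$ is $\cC_2^1$-redundant.} First, $K_5-f$ is $\cC^1_2$-rigid for any edge $f$. It is obtained from $K_4$ by a $0$-extension, and $K_4$ is $\cC^1_2$-rigid because its $6=3\cdot 4-6$ rows are independent for generic $p$. Lemma \ref{lem:c1ext} then applies. So $K_5-f$ is independent with $9=3\cdot5-6$ edges. By (\ref{cl}), $K_5$ has rank $9$ with $10$ edges, hence $K_5$ is a $\cC^1_2$-circuit. Every edge of it, in particular $e$, is therefore redundant in $G$.

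\emph{If $e=uv$ is redundant in the closed graph $G$, then $e$ lies in a $K_5$.} This is the substantial part. Choose a $\cC^1_2$-circuit $C\subseteq G$ containing $e$. Since $G$ is closed, $\cl(C)\subseteq G$, so it suffices to prove the following.

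\emph{Claim:} for every $\cC^1_2$-circuit $C$, every edge of $C$ lies in a copy of $K_5$ in the $\cC^1_2$-closure of $C$.

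This is modelled on the double banana. There each half is $K_5$ minus an edge, the closure adds the missing hinge edge, and every edge ends up in a $K_5$.

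I would prove the Claim by induction on $|V(C)|$.
\begin{enumerate}
\item Since $|E(C)|=3|V(C)|-5$, the average degree is below $6$. Every vertex of a circuit has degree at least $4$, since a vertex of degree at most $3$ gives a $0$-extension and its edges would be bridges. Hence $C$ has a vertex $w$ of degree $4$ or $5$.
\item If $\deg w=4$, delete $w$ and add an edge among its neighbours. This is an inverse $1$-extension, valid in $\cC^1_2$ by Lemma \ref{lem:c1ext} together with Whiteley's independence-preserving version. We get a smaller circuit $C'$ with $\cl(C')\subseteq \cl(C)-w$.
\item If $\deg w=5$, use an inverse $X$-replacement or vertex-splitting reduction, which Whiteley showed preserves $\cC^1_2$-independence.
\item Apply induction to $C'$. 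Then lift the $K_5$'s: for an edge of $C$ incident to $w$, show that $w$ together with four suitable neighbours spans a $K_5$ in $\cl(C)$.
\item For the lifting, show that the neighbourhood of $w$ is rigid in $\cl(C)-w$; the Cofactor Gluing Lemma \ref{cgluing} is the tool here. Then $w$ plus four neighbours is rigid in $\cl(C)$, hence complete in the closure.
\end{enumerate}

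\emph{Main obstacle.} I expect the hard step to be the reduction together with the lift when no admissible reduction edge exists. This happens when every candidate added edge is already present or lies in the closure of $C-w$, so it would fail to give a circuit. In that case I would argue directly that $N(w)$ is already contained in a rigid, hence complete, subgraph of $\cl(C-w)$. Then $w$ and its neighbours lie in a rigid subgraph whose closure is a clique of size at least $5$ containing the edges at $w$.

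Controlling these degenerate configurations is where the detailed combinatorics of $\cC^1_2$ from \cite{CJT} is needed, including the counting of hinges between maximal $K_5$-covered closed pieces. I would organise this case analysis around maximal cliques of size at least five, as in Theorem \ref{thm:cjt}.
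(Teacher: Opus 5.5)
Your easy direction is correct: $K_5-f$ is a $0$-extension of $K_4$, so $K_5$ is a $\cC^1_2$-circuit. The paper gives no proof of this statement; it is quoted from \cite{CJT}. Your hard direction is not a proof, because it hands its decisive case to \cite{CJT}, and the sketched induction breaks exactly there.

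Some earlier steps are misstated but repairable.
\begin{itemize}
\item A circuit only satisfies $|E(C)|\le 3|V(C)|-5$; the double banana has $3|V|-6$ edges. This still gives a vertex $w$ with $\deg_C(w)\in\{4,5\}$.
\item For $\deg_C(w)=4$, the $1$-extension property only gives that $C-w+xy$ is dependent for every non-edge $xy$ inside $N_C(w)$. In other words, $N_C(w)$ is a rigid cluster of $C-w$. It does not make $C-w+xy$ a circuit through your edge. In the double banana, with $w$ a degree-$4$ vertex of one half, the only circuit in $C-w+ab$ is the $K_5$ of the other half.
\item Use the rigid cluster instead. Then $\{w\}\cup N_C(w)$ spans a $K_5$ of $\cl(C)$. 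A rank count, using that $w$ contributes $4$ in $C-f$, shows that every edge $f$ of $C-w$ not spanned by $N_C(w)$ is redundant in $C-w+K(N_C(w))\subseteq\cl(C)$.
\item So you should induct on ``every redundant edge of $G$ lies in a $K_5$ of $\cl(G)$'', not on circuits. The same argument works when $\deg_C(w)=5$ and $N_C(w)$ is a rigid cluster of $C$.
\end{itemize}

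The genuine gap is a degree-$5$ vertex $w$ whose neighbourhood is not a rigid cluster of $C$, when no better vertex is available.
\begin{itemize}
\item Here $w$ contributes $4$, so $\dof_{C-w}(N_C(w))=\dof_C(N_C(w))+1\ge 2$.
\item Your Step 5 claim that $N_C(w)$ is rigid in $\cl(C)-w$ is false by definition in this case.
\item Worse, any two $4$-subsets of $N_C(w)$ share three vertices. By the Cofactor Gluing Lemma \ref{cgluing}, at most one of them spans a clique of $\cl(C)$. Hence some edge $wx$ can lie only in a $K_5$ of $\cl(C)$ that contains a vertex not adjacent to $w$ in $C$. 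No local choice of ``four suitable neighbours'' can work.
\item Even granting that X-replacement and vertex splitting preserve $\cC^1_2$-independence, these are forward statements. They give no smaller circuit $C'$ that contains your edge and lies inside $\cl(C)$. Without that, the $K_5$'s the induction hypothesis finds in $\cl(C')$ cannot be transferred back to $\cl(C)$.
\item The naive $2$-extension inequality that would exclude such vertices fails in general. The example after Lemma \ref{lem:01ext:dof} ($K_5$ and $K_2$ glued at a vertex) has a degree-$5$ vertex contributing only $4$, although $\dof_{G-v}(N_G(v))\ge 2$.
\end{itemize}
What is missing is a $\cC^1_2$-specific argument that handles or rules out such vertices. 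Your ``main obstacle'' paragraph names this case but leaves it to \cite{CJT}.
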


It is known that $\cC^{d-2}_{d-1}(K_n)=\cR_{d}(K_n)$ when $d\in \{1,2\}$, and that 
$\cC^{d-2}_{d-1}(K_n)\neq \cR_{d}(K_n)$ when $d\geq 4$ and $n\geq 2d+2$. Conjecture \ref{con:walter} states that the two matroids are identical also when $d=3$. We refer the reader to \cite{Wlong} for more information on cofactor matroids.

\subsection{Operations which preserve $\mathcal{R}_3$-independence}
\label{sec:operations}

Our first result describes a well-known special case in which the so called `X-replacement operation' preserves independence in the 3-dimensional rigidity matroid. 

\begin{lemma}\label{lem:coplanar}
Let $G=(V,E)$ be a graph and $e=x_ey_e,f=x_fy_f$ be two non-adjacent edges of $G$ which are bridges in $\mathcal{R}_3(G)$. Suppose that $x_e$ and $y_e$ have a common neighbour $z$ with $x_f\neq z\neq y_f$. Let $G'$ be the graph obtained from $G-e-f$ by adding a new vertex $v$ and connecting $v$ to $x_e,y_e,x_f,y_f, z$.
Then $\rr_3(G')=\rr_3(G)+3$.
\end{lemma}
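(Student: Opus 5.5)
The plan is to use the standard ``special position'' argument for X-replacement: place $v$ at a non-generic point where the rank of the rigidity matrix is easy to compute, and then pass to generic positions by lower semicontinuity of rank. Fix a generic realisation $p$ of $G$ in $\R^3$. Since $e$ and $f$ are bridges in $\cR_3(G)$, and in fact $\{e,f\}$ consists of coloops, we have $\rr_3(G-e-f)=\rr_3(G)-2$. Let $P$ be the plane spanned by $p(x_e),p(y_e),p(z)$. By genericity, the line $L$ through $p(x_f),p(y_f)$ meets $P$ in a single point, and that point does not lie on the line through $p(x_e),p(y_e)$. We define $p'$ on $V\cup\{v\}$ by extending $p$ with $p'(v)=L\cap P$. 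It suffices to show that $\rank R(G',p')\geq \rr_3(G)+3$. Lower semicontinuity then gives $\rr_3(G')\geq\rr_3(G)+3$. The reverse inequality is immediate: $G'$ has two fewer edges' worth of rank from $G-e-f$ plus at most $3$ new degrees of freedom per vertex, so $\rr_3(G')\leq \rr_3(G-e-f)+5=\rr_3(G)+3$.

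For the lower bound we argue via stresses, or equivalently row dependencies. The point $p'(v)$ lies in the plane $P$, and it lies on the line through $x_e,y_e$ only if degenerate, which we have excluded. The vectors $p'(v)-p(x_e)$, $p'(v)-p(y_e)$ and $p'(v)-p(z)$ span only a 2-dimensional space, but among them we can find a combination which, restricted to the columns of $x_e,y_e$, reproduces the row of the edge $x_ey_e$. Concretely, because $p'(v),p(x_e),p(y_e)$ are collinear-free and coplanar with $z$, the three edges $vx_e,vy_e,vz$ together with a suitable multiple of the ``virtual'' edge $x_ey_e$ form a planar equilibrium configuration. Similarly, $p'(v)$ lies on the line $x_fy_f$, so the rows of $vx_f$ and $vy_f$ combine to give the row of $x_fy_f$ when we project away the $v$-columns. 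The key step is therefore a row-operation argument. Consider the rows of $R(G',p')$ for the five edges at $v$. They have rank $3$ in the $v$-columns, since the directions to $x_f$ and $x_e,y_e,z$ span $\R^3$ by genericity. Their two-dimensional space of combinations that vanish on the $v$-columns is spanned by combinations equal to nonzero multiples of the rows of $x_ey_e$ and $x_fy_f$ in $R(G,p)$. From this we get $\rank R(G',p')=3+\rank R(G-e-f,p)+\delta$, where $\delta$ is the rank increase from adjoining those two combination rows. Since they equal the rows of $e$ and $f$, and $\{e,f\}$ are coloops of $\cR_3(G)$, we get $\delta=2$, giving $\rr_3(G)+3$.

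The main obstacle is verifying precisely that the combinations vanishing on the $v$-columns are exactly the rows of $e$ and $f$ up to nonzero scalars. For $f$, this follows from collinearity: write $p'(v)=\lambda p(x_f)+(1-\lambda)p(y_f)$ with $\lambda\neq 0,1$, and an explicit combination works. For $e$, the vanishing combination of $vx_e,vy_e,vz$ would a priori involve $z$-columns. We must check that the $z$ coefficient can be made zero. This is where we need care, since generally it cannot be. The first approach I would try is instead to choose $p'(v)$ on the line through $p(x_e),p(y_e)$ intersected with the plane through $p(x_f),p(y_f),p(z)$. That is symmetric, so it hits the same issue. If the issue persists, I would rely on the combination involving $vz$ and the row of $z$ being absorbed: the combination gives the row of $e$ plus a multiple of a $vz$-like term. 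I would then handle this via the 0-extension Lemma~\ref{lem:1ext}(b), treating $v$ as added by a 0-extension on $x_f,y_f,z$, which are affinely independent with $p'(v)$, followed by replacing edges. Ensuring this bookkeeping gives exactly $+2$ is the delicate point.
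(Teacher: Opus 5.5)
Your set-up is sound. This covers the placement $p'(v)=Q$, the upper bound (though the correct reason is simply that $G'$ is $G-e-f$ plus five edges, not ``$3$ degrees of freedom per vertex''), and lower semicontinuity. It also covers the count $\rank R(G',p')=3+\dim(\text{row space of } R(G-e-f,p)+K)$, where $K$ is the $2$-dimensional space of combinations of the five rows at $v$ that vanish on the $v$-columns. Your treatment of $f$ via collinearity is correct. The gap is the step you flag yourself, and none of your remedies closes it. The combination of the rows of $vx_e,vy_e,vz$ that vanishes on the $v$-columns has a nonzero $z$-block $\gamma(p(z)-Q)$, so it is never a multiple of the row of $e$. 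Your claim that $K$ is spanned by multiples of the rows of $e$ and $f$ is therefore false. Your fallback contains an error: $p'(v)$ lies on the line through $p(x_f),p(y_f)$, so $p'(v),p(x_f),p(y_f),p(z)$ are \emph{not} affinely independent, and Lemma~\ref{lem:1ext}(b) cannot be applied to $x_f,y_f,z$. Swapping the roles (putting $v$ on the line $x_ey_e$) is worse, since nothing in the hypotheses gives edges from $z$ to $x_f,y_f$.

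The missing idea is the hypothesis you never use: $z$ is a common neighbour of $x_e$ and $y_e$. Since $z\notin\{x_f,y_f\}$, the edges $zx_e,zy_e$ differ from $f$ and so lie in $G-e-f$. The points $Q,p(x_e),p(y_e),p(z)$ are coplanar with no three collinear, by genericity. Hence the complete graph on $\{v,x_e,y_e,z\}$ has a self-stress $\omega$ at $p'$ that is nonzero on every edge; take $\omega_{ij}=\lambda_i\lambda_j$, where $\lambda$ is the affine dependence of the four points. So the vanishing combination $\omega_{vx_e}r_{vx_e}+\omega_{vy_e}r_{vy_e}+\omega_{vz}r_{vz}$ equals $-(\omega_e r_e+\omega_{x_ez}r_{x_ez}+\omega_{y_ez}r_{y_ez})$. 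This is a nonzero multiple of $r_e$ modulo the row space of $R(G-e-f,p)$: the $z$-block need not vanish, because it is absorbed by rows already present. Thus the row space of $R(G-e-f,p)$ plus $K$ equals the row space of $R(G,p)$, and your count gives $\rank R(G',p')=3+\rr_3(G)$. This is the paper's argument in linear-algebra form. The paper adds $e,f$ back to get $G''=G'+e+f$ and obtains $\rank R(G'',p')\geq \rr_3(G)+3$ from Lemma~\ref{lem:1ext}(b) applied to $x_e,x_f,z$, which are affinely independent with $Q$. It then notes that $e$ is redundant in $R(G'',p')$ because of the coplanar $K_4$ on $v,x_e,y_e,z$, and that $f$ is redundant in $R(G''-e,p')$ because of the collinear triangle $v,x_f,y_f$.
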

\begin{proof}
Clearly, $\rr_3(G')\leq \rr_3(G-e-f)+5= \rr_3(G)+3$. To prove $\rr_3(G')\geq \rr_3(G)+3$, let $(G,p)$ be a generic realization of $G$. Let $A$ be the plane of $p(x_e),p(y_e)$ and $p(z)$, $B$ the line through $p(x_f)$ and $p(y_f)$, and $Q$ the point where $A$ and $B$ intersect. Let $p':V\cup \{v\}\to \R^3$ be an extension of $p$ with $p'(v)=Q$.  We will show that $\rank R(G',p')\geq \rr_3(G)+3$. Let $G''=G'+e+f$. Since $p(x_e),p(x_f),p(z)$ and $Q$ are affinely independent, we have $\rr_3(G'', p')\geq \rr_3(G)+3$ by Lemma \ref{lem:1ext}(b).
It follows from the coplanarity of $p(x_e),p(y_e)$, $p(z)$ and $Q$ that $e$ is not a bridge  in the row matroid of $R(G'', p')$. Moreover, the collinearity of $p(x_f)$, $p(y_f)$ and $Q$ implies that $f$ 
is not a bridge 
in the row matroid of $R(G''-e, p')$. Thus  \begin{equation*}
\rr_3(G')\geq \rank R(G',p')= \rank R(G'',p') \geq \rr_3(G)+3.
\qedhere
\end{equation*}
\end{proof}

Our next lemma describes a  special case in which the so called `V-replacement operation' preserves independence in the 3-dimensional rigidity matroid. 
Suppose $z$ is a vertex of a graph $H$. 
The {\em $d$-dimensional vertex splitting operation} constructs a new graph $G$ from $H$  by deleting $z$ and then adding two new vertices $u$ and $v$ with $N_G(u)\cup N_G(v)=N_H(z)\cup \{u,v\}$ 
and $|N_G(u)\cap N_G(v)|= d-1$.
We can view vertex splitting as an inverse operation to contracting the edge $uv$.
Whiteley \cite{Wvsplit,Wlong} showed that 
this operation preserves ${\cal R}_d$-independence.

\begin{lemma}\label{lem:2ext:split}
Let $G=(V,E)$ be a graph, $y\in V$ and $x,z,y_1,y_2$ be distinct neighbours of $y$ in $G$. 
Suppose that 
$e=xy,f=zy$ 
are both bridges in $\mathcal{R}_3(G)$. 
Let $G'$ be the graph obtained from $G-e-f$ by adding a new vertex $v$ and connecting $v$ to $x,y,y_1,y_2, z$.
Then $\rr_3(G')=\rr_3(G)+3$.
\end{lemma}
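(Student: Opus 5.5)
The upper bound is immediate. $G'$ is obtained from $G-e-f$ by adding one vertex of degree five, so
$$\rr_3(G')\leq \rr_3(G-e-f)+5=\rr_3(G)-2+5=\rr_3(G)+3,$$
where $\rr_3(G-e-f)=\rr_3(G)-2$ because $e$ and $f$ are both bridges. (If $e$ is a bridge of $G$ and $f$ is a bridge of $G$, then $f$ is still a bridge of $G-e$.) For the lower bound I would use a vertex splitting argument, in the same spirit as the special position argument in Lemma \ref{lem:coplanar}.

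Consider $G'' = G' + xy + zy$, where $y$ is the old vertex and $v$ is the new one. In $G''$, the vertices $y$ and $v$ are adjacent and have common neighbours $x, z, y_1, y_2$. Contracting the edge $vy$ in $G''$ gives back $G$: the neighbours of $v$ other than $y$ are $x,z,y_1,y_2$, all of which are already neighbours of $y$. Thus $G''$ arises from $G$ by a vertex split of $y$ in which $v$ receives exactly the neighbours $x,z,y_1,y_2$, plus extra edges. The standard splitting argument would then place $v$ at $p(y)$, or infinitesimally close to it along a generic direction. Placing $v$ exactly at $p(y)$ kills the edge $vy$, so I would use the limit argument of Whiteley. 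The standard 3-dimensional vertex split has $|N(u)\cap N(v)|=2$ and adds $3$ to the rank; here there are four common neighbours, so I must check carefully which extra edges are dependent.

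I would instead take the cleaner route that the lemma's hypotheses (bridges $xy$ and $zy$) suggest. Start from a generic $(G,p)$. Extend it by placing $v$ at a point $Q$ of the line through $p(y)$ and a generic point, so that the edges $vx$ and $vz$ take over the roles of $yx$ and $yz$. Concretely, the rank of $G''$ is at least $\rr_3(G)+3$ by a $0$-extension: $v$ is attached to $y,y_1,y_2$ together with $x$ or $z$, which gives the rank increase in Lemma \ref{lem:1ext}(b) for a generic position of $v$. It then suffices to find a position $p'(v)$ with $v,y,y_1,y_2$ affinely independent such that $xy$ is not a bridge of $R(G'',p')$ and $zy$ is not a bridge of $R(G''-xy,p')$. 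For this I choose $p'(v)$ on the line through $p(y)$ and $p(x)$, at a point other than $p(y)$ and $p(x)$. Collinearity of $p(x),p(y),p(v)$ makes the triangle $xyv$ dependent, so $xy$ lies in a circuit with $xv,yv$ and is redundant.

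The hard step is the second dependency. Once $xy$ is removed, I need $zy$ to be redundant as well, which requires a second degenerate condition that must be compatible with the first. Since $p(v)$ is already on line $xy$, I cannot also put it on line $zy$. I would try instead to exploit the framework $\{y,v,y_1,y_2,z\}$: five vertices joined by the edges $vy,vy_1,vy_2,vz,yz,yy_1,yy_2$. With $v$ on line $xy$, $y$ is collinear with $x,v$, and in that position the stress on the $K_{1,4}$-like structure at $y$ may make $yz$ dependent. Concretely, if $p(v)$ lies on the line $xy$, then the plane spanned by $y,v,y_1$ contains $x$, and a Lemma \ref{lem:coplanar}-style coplanarity argument may apply. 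If this fails, my fallback is to give up exact special position and use a limiting argument: let $p_t(v)\to p(y)$ along direction $p(x)-p(y)$, rescale the row of $vy$, and show that the limit row space contains the rows of $xy$ and $zy$. Verifying this limit computation is where I expect the real difficulty to lie.
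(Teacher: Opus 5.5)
\textbf{There is a gap: the lower bound is not proved.} Your upper bound is correct and is the same as the paper's. The lower bound $\rr_3(G')\ge \rr_3(G)+3$, however, is left open: none of your three routes is completed, and the first goes wrong at a specific point.

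\emph{The vertex split is applied to the wrong graph.} You split at $G''=G'+xy+zy$. There $y$ and $v$ have the four common neighbours $x,z,y_1,y_2$, so this is not a 3-dimensional vertex split. The two edges you put back are exactly the ones you then cannot control.

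\emph{The special-position route does not carry over from Lemma~\ref{lem:coplanar}.} The collinear-triangle trick makes $xy$ redundant only if $p'(v)$ lies on the line through $p(x),p(y)$. The same trick for $zy$ needs the line through $p(z),p(y)$. These two lines meet only at $p(y)$, which is the degenerate position whose limit analysis is itself Whiteley's vertex splitting theorem.

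\emph{The fallback limit computation is not carried out.} It would amount to reproving that theorem.

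\textbf{The missing idea.} Nothing should be added back: $G'$ itself contains a 3-dimensional vertex split of $G$ at $y$. In $G'$ we have $N(v)=\{x,z,y_1,y_2,y\}$ and $N(y)=(N_G(y)\setminus\{x,z\})\cup\{v\}$. Hence $N(y)\cap N(v)=\{y_1,y_2\}$ has exactly $2=d-1$ elements (with $d=3$), $N(y)\cup N(v)=N_G(y)\cup\{y,v\}$, and contracting $vy$ returns $G$.

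Vertex splitting preserves independence, not rank, so you should apply it to a basis. Let $G_0$ be a maximal $\cR_3$-independent subgraph of $G$ containing the star $\{e,f,yy_1,yy_2\}$. It contains $e,f$ anyway, since they are bridges. You need $yy_1,yy_2\in E(G_0)$ so that $y_1,y_2$ are neighbours of $y$ in $G_0$; this is precisely where the hypothesis $y_1,y_2\in N_G(y)$ enters. Now split $y$ in $G_0$ so that $v$ takes over $x$ and $z$. This gives the independent graph $(G_0-e-f)+vx+vz+vy+vy_1+vy_2\subseteq G'$ with $\rr_3(G)+3$ edges, which is exactly the paper's argument.
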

\begin{proof}
Clearly, $\rr_3(G')\leq \rr_3(G-e-f)+5= \rr_3(G)+3$. To prove $\rr_3(G')\geq \rr_3(G)+3$, let
$G_0$ be a maximal $\mathcal{R}_3$-independent subgraph of $G$ such that  
$e,f,yy_1,yy_2$ are edges of $G_0$. Consider the graph $G_0'=(G_0-e-f)+v+vy+vy_1+vy_2+vx+vz$. Then $G_0'$ can be obtained from $G_0$ by a 3-dimensional vertex splitting operation.
Since vertex splitting preserves $\mathcal{R}_3$-independence, 
$G_0'$ is $\mathcal{R}_3$-independent. Since $G_0'$ is a subgraph of $G'$, we have $\rr_3(G')\geq \rr_3(G_0') = \rr_3(G_0)+3=\rr_3(G)+3$.
\end{proof}

\section{The conjectures of Dress and Whiteley}\label{sec:dress}
\label{sec:DW}

We will show that
the right hand side of (\ref{eq:dress}) 
gives an upper bound 
on  
$r_3(G)$, and then use this to deduce that
Conjectures \ref{con:dress} and \ref{con:walter} are equivalent.

For an arbitrary  graph $G=(V,E)$, we define:
$$F=F(G):=\{e\in E: e \text{ is not contained in a copy of } K_5 \text{ in }G\},$$
$$\hXX=\hat{\mathcal{X}}(G):=\{V(C): C \text{ is a maximal clique of $G$ with $|V(C)|\geq 5$}\}.$$
For an edge $uv\in E$, let $\deg_{\hXX}(uv)=\big|\{X\in \hXX:\{u,v\}\subseteq X\}\big|$. An edge $h\in E$ is called a {\it hinge} if  $\deg_{\hXX}(h)\geq 2$. Let
$$\mathcal{H}=\mathcal{H}(G):=\{uv\in E: uv \text{ is a hinge edge of } G\}.$$
For $v\in V$, let $E_v$ (resp.\ $F_v$, $\hXX_v$, $\mathcal{H}_v)$ denote the set of those elements of $E$ (resp.\ $F$, $\hXX, \mathcal{H}$) that are incident with $v$. 
We use the function given on the right-hand side of the equalities in Conjecture \ref{con:dress} and Theorem \ref{thm:cjt} to define the {\em clique value} of the graph $G$: 
\begin{align}
\cval(G):&
=|F|+\sum_{X\in \hXX}(3|X|-6)-\sum_{h\in \mathcal{H}}(\deg_{\hXX}(h)-1)\\
&=\sum_{X\in \hXX}(3|X|-6)-\sum_{e\in E}(\deg_{\hXX}(e)-1),
\end{align}
where the last equality uses the identity 
$$\sum_{e\in E}(\deg_{\hXX}(e)-1)=\sum_{h\in \mathcal{H}}(\deg_{\hXX}(h)-1)-|F|.$$

We first verify a useful recursive property of the function $\cval$. 

\begin{lemma}\label{lem:circuit:NGv}
Let $G$ be a graph and $v\in V$. Suppose that any two distinct maximal cliques of $G$ intersect in at most 3 vertices. Then: 
\begin{enumerate}[(a)]
\item $\cval(G)-\cval(G-v)= 3|\hXX_v|-\sum_{e\in E_v}(\deg_{\hXX}(e)-1)$; 

\item  $\cval(G)-\cval(G-v)=\cval(G[N_G(v)\cup \{v\}])-\cval(G[N_G(v)])$.
\end{enumerate}
\end{lemma}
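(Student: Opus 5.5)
We use the second expression for the clique value,
\[
\cval(G)=\sum_{X\in \hXX(G)}(3|X|-6)-\sum_{e\in E}(\deg_{\hXX(G)}(e)-1),
\]
and compare the maximal cliques of size at least five in $G$ and in $G-v$.

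For part (a), there are two kinds of cliques $X\in\hXX(G)$. If $v\notin X$, then $X$ is still a maximal clique of $G-v$. If $v\in X$, then $X-v$ is a clique of size at least four in $G-v$. When $|X-v|\ge 5$ we must check that $X-v$ is maximal in $G-v$, and that no new maximal cliques of size at least five appear in $G-v$.

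Suppose $X-v$ extends to a larger clique $Y$ in $G-v$. Then $Y$ lies in some maximal clique $Z$ of $G$ with $Z\neq X$, since $Y\not\subseteq X$. Also $X\cap Z\supseteq X-v$, which has at least four vertices. This contradicts the hypothesis that two distinct maximal cliques meet in at most three vertices.

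Conversely, let $Y$ be a maximal clique of $G-v$ with $|Y|\ge 5$. Then $Y$ or $Y+v$ is a maximal clique of $G$. If $Y+v$ is a clique, it is maximal in $G$, because any larger clique would contain $Y$ plus a vertex other than $v$, contradicting maximality of $Y$ in $G-v$. Otherwise, any maximal clique of $G$ containing $Y$ avoids $v$ and so equals $Y$. Thus the map $X\mapsto X-v$ is a bijection from $\hXX(G)$ onto $\hXX(G-v)$ together with some sets of size four. The latter occur exactly for $X\ni v$ with $|X|=5$.

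From this bijection:
\begin{itemize}
\item each $X\ni v$ with $|X|\ge 6$ contributes $3|X|-6-(3|X-v|-6)=3$ to the difference of the first sums;
\item each $X\ni v$ with $|X|=5$ contributes $9$ to that difference;
\item cliques avoiding $v$ contribute $0$.
\end{itemize}
For the second sums, an edge $e$ not incident with $v$ has $\deg_{\hXX(G)}(e)=\deg_{\hXX(G-v)}(e)$, except for edges inside the discarded 5-cliques. An edge $e$ inside a 4-set $X-v$ with $|X|=5$ loses exactly one from its degree. Each such 4-set has $6$ edges, so the second sum for $G-v$ is larger by $6$ per such clique. The net contribution of a 5-clique through $v$ is therefore $9-6=3$.

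Hence the total difference is $3|\hXX_v|$ minus the sum over edges $e\in E_v$ of $(\deg_{\hXX}(e)-1)$, which is exactly (a). The main care needed is this bookkeeping for 5-cliques; the rest is mechanical.

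For part (b), we apply (a) to the graph $H=G[N_G(v)\cup\{v\}]$ and the vertex $v$, noting that $H-v=G[N_G(v)]$. The hypothesis is inherited by $H$: a maximal clique of $H$ is a clique of $G$ containing $v$, hence equal to a maximal clique of $G$ that contains $v$. Two distinct such cliques extend to distinct maximal cliques of $G$, so they meet in at most three vertices.

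The same observation gives $\hXX(H)_v=\hXX(G)_v$ and $E_v(H)=E_v(G)$. For $e=vu$, every maximal clique of $G$ containing $e$ contains $v$, so it lies in $H$ and is maximal there. Conversely, every $X\in\hXX(H)$ is a maximal clique of $G$. So the degrees agree: $\deg_{\hXX(H)}(e)=\deg_{\hXX(G)}(e)$.

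Therefore the right-hand side of (a) is identical for $G$ and for $H$, and (b) follows.
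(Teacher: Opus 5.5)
Your argument is correct and is essentially the paper's proof. The paper also compares $\hXX(G-v)$ with $\{X-\{v\}:X\in\hXX(G)\}$, uses the three-vertex intersection hypothesis to see that the only extra sets are 4-sets contributing $3\cdot 4-6-6=0$, and obtains (b) by applying (a) to $G[N_G(v)\cup\{v\}]$, where $\hXX_v$, $E_v$ and the relevant degrees coincide.

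One wording slip: the edge sum for $G-v$ is smaller, not larger, by $6$ per discarded 4-set, and an edge may lie in several such 4-sets; your double count and the net $9-6=3$ are nonetheless right.
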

\begin{proof} To verify (a), let $\mathcal{X}_0= \hXX(G-v)$ and $\mathcal{X}_1=\{X-\{v\}:X\in \hXX(G)\}$.
Then $\mathcal{X}_1\supseteq \mathcal{X}_0$. Furthermore, every set $X\in \mathcal{X}_1- \mathcal{X}_0$ induces a clique of four vertices in $G-v$ and, for each such set $X$, we have $3|X|-6-\sum_{e\in E(G-v)}\deg_{ \{X\}}(e)=6-6=0$.
Hence, we obtain
\begin{align}
\cval(G-v)
&=\sum_{X\in \mathcal{X}_0}(3|X|-6)-\sum_{e\in E(G-v)}(\deg_{\mathcal X_0}(e)-1)\\
&=\sum_{X\in \mathcal{X}_1}(3|X|-6)-\sum_{e\in E(G-v)}(\deg_{\mathcal X_1}(e)-1)\\
&=\sum_{X\in \hXX(G)}(3|X-\{v\}|-6)-\sum_{e\in E(G-v)}(\deg_{\mathcal \hXX(G)}(e)-1).
\end{align}
It follows that $\cval(G)-\cval(G-v)= 3|\hXX_v|-\sum_{e\in E_v}{(\deg_{\hXX(G)}(e)-1)},$ which proves (a).
Part (b) follows by applying (a) to both $G$ and $G[N_G(v)\cup \{v\}]$ and noting that $\hXX_v$ and $E_v$ are the same in both graphs.   			
\end{proof}

Our next three results hold for both the 3-dimensional rigidity and $\cC_2^1$-cofactor matroids of a graph $G$. To unify the proofs we will use the notation that  $\cM\in \{\cR_3,\cC_2^1\}$ and $\rr$ is the rank function of $\cM(G)$. 
Our first result follows immediately from the coning theorems (Theorems \ref{thm:coning} and \ref{thm:coning_cof})
and the fact that $\cR_2=\cC_1^0$.

\begin{lemma}\label{lem:cone0}
Let  $G=(V,E)$ be a  graph, $w\in V$ such that $N_G(w)=V-\{w\}$ 
and $\cM\in \{\cR_3,\cC_2^1\}$. Then 
$G$ is $\cM$-independent (respectively $\cM$-closed, $\cM$-rigid) if and only if $G-w$ is $\cR_2$-independent (respectively $\cR_2$-closed, $\cR_2$-rigid).
\end{lemma}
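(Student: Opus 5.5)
The plan is to reduce all three properties to the coning theorems. Since $w$ is adjacent to every other vertex, $G=(G-w)*w$. For $\cM=\cR_3$, Theorem \ref{thm:coning} with $d=2$ gives $\cR_2$-rigidity of $G-w$ if and only if $\cR_3$-rigidity of $G$. For $\cM=\cC_2^1$, Theorem \ref{thm:coning_cof} with $d=2$ gives $\cC_1^0$-rigidity of $G-w$ if and only if $\cC_2^1$-rigidity of $G$. We then use the known identity $\cC_1^0(K_n)=\cR_2(K_n)$ to replace $\cC_1^0$ by $\cR_2$. This settles the rigidity part directly.

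For independence, we upgrade the rigidity statement to a rank statement:
\[
\rr(H*w)=\rr_2(H)+|V(H)|
\]
for every graph $H$ and $\cM\in\{\cR_3,\cC_2^1\}$. Here $\rr$ denotes the rank in $\cM$.

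For the upper bound $\le$, take a base $B$ of $\cR_2(H)$. The edges of $(H*w)$ other than the $|V(H)|$ cone edges lie in the $\cM$-closure of $(B*w)$, since each edge is spanned by a circuit. To see this, apply the coning theorem to the rigid subgraphs spanning each circuit; more precisely, if $e\in \cl_2(B)$ then $e$ lies in an $\cR_2$-circuit $C\subseteq B+e$. Circuits are rigid, so $C*w$ is $\cM$-rigid on its vertex set, and therefore $e$ is spanned there.

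For the lower bound $\ge$, it suffices to show that $B*w$ is $\cM$-independent whenever $B$ is $\cR_2$-independent. Extend $B$ to an $\cR_2$-base $B'$ of $K_n$. Then $B'$ is $\cR_2$-rigid with $2n-3$ edges, so $B'*w$ is $\cM$-rigid by coning. It has $3(n+1)-6$ edges, so it is an $\cM$-base of $K_{n+1}$, and its subgraph $B*w$ is independent.

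With the rank identity in hand, independence of $G$ means $|E(G)|=\rr_2(G-w)+(|V|-1)$. Since $|E(G)|=|E(G-w)|+|V|-1$, this is equivalent to $|E(G-w)|=\rr_2(G-w)$, which is independence of $G-w$. For closedness, adding a non-edge $uv$ of $G$ (necessarily with $u,v\neq w$) corresponds to adding $uv$ to $G-w$. The rank identity shows that the rank increases in $\cM$ if and only if it increases in $\cR_2$, so $G$ is $\cM$-closed if and only if $G-w$ is $\cR_2$-closed.

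The main subtle step is the upper bound in the rank identity, namely that edges spanned in $\cR_2(H)$ stay spanned after coning. An alternative, which I would try first, is to note that the coning theorems are proved via a rank-preserving projective correspondence of the matrices, $\rank R(H*w)=\rank R(H)+|V(H)|$. However, since only the rigidity versions are quoted, I use the circuit-plus-extension argument above. It relies only on $\cR_2$-circuits being rigid (Theorem \ref{hilda}) and on the monotonicity of rigidity under adding edges.
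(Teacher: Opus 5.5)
Your argument is correct and follows the paper's route. The paper only says that the lemma follows from the coning theorems (Theorems \ref{thm:coning} and \ref{thm:coning_cof}) together with $\cC_1^0=\cR_2$, and your derivation of the rank identity $\rr(H*w)=\rr_2(H)+|V(H)|$ from the rigidity-only versions is a sound way to fill in the independence and closedness cases that the paper leaves implicit.

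One precision is needed in your spanning step. Rigidity of $C*w$ alone does not show that $e$ is spanned by the other edges. What you need, and do have, is that $C-e$ is $\cR_2$-rigid: $e$ lies in the circuit $C$, so $\rr_2(C-e)=\rr_2(C)$. Hence $(C-e)*w\subseteq B*w$ is $\cM$-rigid on $V(C)\cup\{w\}$ and spans $e$.
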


Part (b) of our next result implies a result of Tay \cite{Taycon} that Conjecture \ref{con:dress} 
holds for cone graphs. Note that, by Lemma \ref{lem:cone0}, a cone graph $G$ with cone vertex $w$ is $\cM$-closed for some $\cM\in \{\cR_3,\cC_2^1\}$ if and only if $G-w$ is $\cR_2$-closed.

\begin{lemma}\label{lem:cone:rcdress}
Let  $G=(V,E)$ be an $\cM$-closed graph for some  $\cM\in \{\cR_3,\cC_2^1\}$. Suppose that $w\in V$ such that $N_G(w)=V-\{w\}$. 
Then the following hold.
\begin{enumerate}[(a)]
\item An edge $e\in E$ is $\mathcal{M}$-redundant if and only if $e\notin F(G)$.
\item $\rr(G)=\cval(G)$. %
\item $\rr(G-w)=\cval(G-w)$.
\end{enumerate}
\end{lemma}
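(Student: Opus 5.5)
Throughout, write $H=G-w$. By Lemma \ref{lem:cone0}, $H$ is $\cR_2$-closed, and every $\cM$-rigid subgraph of $G$ corresponds to an $\cR_2$-rigid subgraph of $H$. By Theorem \ref{thm:lamanrank}(b), the maximal cliques $Y_1,\dots,Y_k$ of $H$ of size at least two satisfy $\rr_2(H)=\sum_i(2|Y_i|-3)$. Since $H$ is $\cR_2$-closed, two distinct maximal cliques of $H$ share at most one vertex; otherwise their union would be $2$-rigid by the gluing lemma and hence a clique. Consequently the maximal cliques of $G$ are exactly the sets $Y_i\cup\{w\}$, and any two of them meet in at most $2\le 3$ vertices. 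So Lemma \ref{lem:circuit:NGv} applies to $G$, and also to $H$.

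\emph{Part (a).} Take $e\in E$. If $e$ lies in a copy of $K_5$, then that $K_5$ is an $\cM$-circuit, since $K_5$ is the smallest $\cR_3$-circuit and the analogous statement holds for $\cC_2^1$. Hence $e$ is redundant. Conversely, suppose $e\in F(G)$.
\begin{itemize}
\item If $e=wu$, then $u$ lies only in cliques $Y_i$ of size at most $3$. Coning preserves independence (Lemma \ref{lem:cone0}), so it suffices to show that $H$ with $u$ made a cone-less vertex is not rigid on $\{u\}\cup N_H(u)$. More directly: $\rr(G)-\rr(G-wu)$ equals the drop in $\cR_2$-rank of the graph $H$ after deleting the vertex $u$ from the coning. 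This identification of the coned matroid is the step I am least sure how to phrase cleanly.
\item If $e=uv\subseteq V-w$, then $e$ lies in a maximal clique $Y_i$ of $H$ with $|Y_i|\le 3$. Since $H$ is $\cR_2$-closed with cliques pairwise sharing at most one vertex, $e$ is an $\cR_2$-bridge of $H$: a circuit through $e$ would be $2$-rigid, so its vertex set would be a clique, hence inside a single $Y_j$ of size at least $4$. By the coning equivalence of independence in Lemma \ref{lem:cone0}, $e$ is then an $\cM$-bridge of $G$.
\end{itemize}
For the edges $wu$, I will argue in the same way via rank: $e=wu$ is redundant if and only if $\rr(G-e)=\rr(G)$. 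To handle this I will instead compute ranks directly, as in part (b). Then redundancy of $wu$ amounts to $u$ having degree-of-freedom contribution matching the cone, and I expect this bookkeeping to be the main obstacle.

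\emph{Part (b).} By coning, $\rr(G)=\rr_2(H)+|V|-1=\sum_i(2|Y_i|-3)+|V(H)|$. Next I compute $\cval(G)$.
\begin{itemize}
\item The members of $\hXX(G)$ are the sets $Y_i\cup\{w\}$ with $|Y_i|\ge 4$. Each contributes $3(|Y_i|+1)-6=3|Y_i|-3$.
\item Hinges of $G$ only occur on edges $wu$. There, $\deg_{\hXX}(wu)$ is the number of large $Y_i$ containing $u$. Edges inside $H$ have degree at most $1$.
\end{itemize}
Using the second expression for $\cval$,
\[
\cval(G)=\sum_{|Y_i|\ge4}(3|Y_i|-3)-\sum_{e\in E(H)}(\deg(e)-1)-\sum_{u\in V(H)}(d_u-1),
\]
where $d_u$ is the number of large $Y_i$ containing $u$. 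Here $\sum_{e\in E(H)}(\deg(e)-1)=-\sum_{|Y_i|\le3}\binom{|Y_i|}{2}$ and $\sum_u d_u=\sum_{|Y_i|\ge4}|Y_i|$. Substituting gives
\[
\cval(G)=\sum_{|Y_i|\ge4}(2|Y_i|-3)+\sum_{|Y_i|\le3}\binom{|Y_i|}{2}+|V(H)|.
\]
For $|Y_i|\in\{2,3\}$ we have $\binom{|Y_i|}{2}=2|Y_i|-3$, so this equals $\rr(G)$.

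\emph{Part (c).} Since $\hXX(H)$ consists of the cliques of size at least $5$ in $H$, these are $\cR_2$-rigid subgraphs, and Theorem \ref{thm:lamanrank} gives $\rr_2(H)$. However, I need $\rr(H)$ in $\cM$. I would use part (b) together with Lemma \ref{lem:circuit:NGv}(a) applied at $v=w$:
\[
\cval(G)-\cval(H)=3|\hXX_w|-\sum_{u}(d_u-1).
\]
On the other side, $\rr(G)-\rr(H)$ should be computed by reattaching $w$. To do this, I would build $G$ from $H$ by adding $w$ and argue that each large clique $Y_i$ of $H$ becomes a $\cM$-rigid $Y_i+w$; I would then compare ranks using Lemma \ref{lem:circuit:NGv}(b) and part (a). Proving $\rr(H)=\cval(H)$ directly is the main obstacle. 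I would attempt it by noting that $H$ is a union of cliques pairwise sharing at most one vertex. Then $\rr(H)$ can be computed by successively gluing cliques along at most one vertex, where each gluing adds rank additively: the union of $\cM$-rigid pieces sharing at most one vertex has rank equal to the sum of the ranks, by a separation argument. This gives
\[
\rr(H)=\sum_i \rr(K_{|Y_i|})=\sum_{|Y_i|\ge5}(3|Y_i|-6)+\sum_{|Y_i|\le4}\binom{|Y_i|}{2}=\cval(H).
\]
The additivity of rank over one-vertex gluing is the one fact I must justify, for instance by rigid-motion counting or by a matroid direct-sum argument on $1$-separations.
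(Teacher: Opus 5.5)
Your part (b) is correct, and it takes a more direct route than the paper. The paper inducts on $|F(G)|$, using (a) to delete $F$-edges until $F(G)=\emptyset$, and then counts. Your computation instead absorbs the cliques of size $2$ and $3$ through $\binom{m}{2}=2m-3$ for $m\in\{2,3\}$, and does not need (a) at all.

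Part (a), however, is incomplete for edges $wu$. The suggested identification of $\rr(G)-\rr(G-wu)$ with a drop in the $\cR_2$-rank of $H$ has no meaning as stated, since $G-wu$ is no longer a cone over $H$. You then defer the case to unspecified bookkeeping. The missing step is short:
\begin{itemize}
\item By the $0$-extension property, an $\cM$-circuit has minimum degree at least $4$.
\item So a circuit through $wu$ also contains an edge $uy$ with $y\neq w$, and this edge is redundant.
\item By your second bullet, $uy$ lies in some $Y_j$ with $|Y_j|\ge 4$.
\item Then $Y_j\cup\{w\}$ is a clique of size at least five containing $wu$.
\end{itemize}
This is what the paper does.

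Part (c) has a genuine gap. Subadditivity only gives $\rr(H)\le\sum_i\rr(K_{|Y_i|})$. The reverse inequality says that no $\cM$-circuit of $H$ uses edges from two different maximal cliques, and your gluing argument does not prove this. Maximal cliques that pairwise meet in at most one vertex need not admit an order in which each clique meets the union of the earlier ones in at most one vertex.

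For example, take a cyclic chain of five copies of $K_4$, with consecutive copies sharing one vertex and non-consecutive copies disjoint. This graph is $\cR_2$-closed (it has two planar degrees of freedom), so it can occur as $H$. But it is $2$-connected, and whichever clique comes last meets the others in two vertices, so no $1$-separation argument applies. (Additivity does hold for it, but not for the reason you give.)

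Additivity holds only because $H$ is $\cR_2$-closed, and exploiting that needs a real theorem:
\begin{itemize}
\item For $\cR_3$, the paper uses Theorem~\ref{dimdrop}. An $\cR_3$-circuit $C$ of $H$ is $\cR_3$-connected, hence $\cR_2$-connected and therefore $\cR_2$-rigid. So $V(C)$, which has at least five vertices, is a clique of the $\cR_2$-closed graph $H$. Hence the non-trivial $\cR_3$-components of $H$ are exactly its maximal cliques of size at least five, and there are no hinges.
\item For $\cC_2^1$, the paper applies Theorem~\ref{thm:cjt} directly to $H$, which is $\cC_2^1$-closed as an induced subgraph of $G$.
\end{itemize}
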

\begin{proof}
(a) Sufficiency follows from the fact that $K_5$ is an $\cM$-circuit. 

To prove necessity first suppose that $e=xy$ is $\cM$-redundant with $w\notin \{x,y\}$. 
Then, by 
Lemma \ref{lem:cone0}, $e$ is $\mathcal{R}_2(G-w)$-redundant, and hence there exists an $\mathcal{R}_2$-circuit $C$ in  $G-w$ such that $e\in E(C)$.
We have $|V(C)|\geq 4$, and, since ${\cal R}_2$-circuits are rigid,
$C$ is $\mathcal{R}_2$-rigid. Therefore, 
Lemma \ref{lem:cone0} implies that $C*w$ is $\mathcal{M}$-rigid. Since $G$ is  $\mathcal{M}$-closed, it follows that $V(C)\cup \{w\}$ induces a clique in $G$ of size at least five that contains $e$. Hence, $e\notin F(G)$.

Next, suppose that  $e=xw$ is an $\cM$-redundant edge. Then there is a vertex $y\in V-\{w\}$ such that $xy$ is also $\cM$-redundant. 
From the previous paragraph, we know that $xy$ is contained in a clique %
of size at least five. Since $w$ is a cone vertex, $e$ is also contained in a clique 
of size at least five and hence $e\notin F(G)$.
\medskip

(b) We proceed by induction on $|F(G)|$. First, suppose $|F(G)|=0.$ Note that $\hXX=\hXX_w$ and $\mathcal{H}=\mathcal{H}_w$. 
By 
Lemma \ref{lem:cone0} and part (a), $G-w$ is $\mathcal{R}_2$-closed and $\cR_2$-bridgeless, and hence every edge of $G-w$ is contained in a clique of $G-w$ of size four. 
Lemma \ref{lem:cone0} and Theorem~\ref{thm:lamanrank}(b) now give
\begin{equation}\label{eq:conerank}
\rr(G)=|V|-1+\rr_2(G-w)=|V|-1+\sum_{X\in \hXX}(2|X|-5).
\end{equation}
Since $G$ is $\cM$-closed, the appropriate gluing lemma
(Lemma~\ref{gluing} or Lemma~\ref{cgluing}) implies that
$|X\cap X'|\leq 2$ for all pairs $X,X'\in \hXX$.
A straightforward counting argument now gives $|V|-1=\sum_{X\in \hXX}(|X|-1) - \sum_{h\in \HH}(\deg_{\hXX}(h)-1).$ 	
Substituting this into \eqref{eq:conerank} gives
$$\rr(G)=\sum_{X\in \hXX}(3|X|-6)-\sum_{h\in \mathcal{H}}(\deg_{\hXX}(h)-1)=\cval(G),$$
as required.

Next, suppose that $|F(G)|\geq 1$. By (a),
each edge $e\in F(G)$ is an $\cM$-bridge.
If there is an edge $e\in F(G)$ that is not incident with $w$, then we can use induction to deduce that $\rr(G)=\rr(G-e)+1=\cval(G-e)+1=\cval(G)$.
Thus, we may assume that each edge $e\in F(G)$ is incident with $w$. Choose $wx\in F(G)$. Then, it follows from (a) 
that $\deg_G(x)=1$. Hence, by induction, we have $\rr(G)=\rr(G-x)+1=\cval(G-x)+1=\cval(G)$.\medskip

(c) The hypothesis that $G$ is $\cM$-closed implies that every induced subgraph of $G$ is $\cM$-closed. In particular, $G-w$ is $\cM$-closed, and (c) will follow from  Theorem \ref{thm:cjt} if $\cM=\cC_2^1$.   Hence we may assume that $\cM=\cR_3$.
In addition, Lemma \ref{lem:cone0}
and the hypothesis that $G$ is $\cR_3$-closed imply that $G-w$ is $\cR_2$-closed.

Consider an $\cR_3$-circuit $C$ of $G-w$. 
By Theorem \ref{dimdrop},
$C$ is $\cR_2$-connected, and hence $\cR_2$-rigid.
Thus the facts that $|V(C)|\geq 5$ and $G-w$ is $\cR_2$-closed imply that
$C$ is contained in a clique $(G-w)[X]$ for some $X\in \hXX(G-w)$.
This in turn implies that the set of non-trivial $\cR_3$-components of $G-w$ is precisely the set $\{(G-w)[X]:X\in \hXX\}$ and ${\cal H}(G-w)=\emptyset$. Since $r_3(G-w)$ is equal to the sum of the ranks of its $\cR_3$-components, this gives $\rr_3(G-w)=|F(G-w)|+\sum_{X\in \hXX(G-w)}(3|X|-6)=\cval(G-w)$, as required.
\end{proof}

We say that a graph  $G=(V,E)$ is {\it locally $\cR_2$-closed} if $G[N_G(v)]$ is $\cR_2$-closed for every $v\in V$. For $\cM\in \{\cR_3,\cC_2^1\}$, it follows from Lemma \ref{lem:cone0} that $G$ is locally $\cR_2$-closed if and only if $G[N_G(v)\cup \{v\}]$ is $\cM$-closed for every $v\in V$. This, and the observation that every induced subgraph of an $\cM$-closed graph is $\cM$-closed, imply that every $\cM$-closed graph is locally $\cR_2$-closed.
\begin{theorem}\label{thm:rcdress:ineq}
Let  $G=(V,E)$ be a locally $\cR_2$-closed graph and $\cM\in \{\cR_3,\cC_2^1\}$.  Then   $\rr(G)\leq \cval(G).$
\end{theorem}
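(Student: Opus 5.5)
The plan is induction on $|V|$, removing one vertex at a time. Submodularity of $\rr$ compares $G$ with $G-v$ and with the closed neighbourhood of $v$. That neighbourhood is a cone graph, where the cone lemma gives exact ranks, and Lemma~\ref{lem:circuit:NGv} converts the result into the recursion for $\cval$.

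\emph{Base case and heredity.} If $|V|\le 1$ there are no edges, so $\rr(G)=0=\cval(G)$. For the inductive step I first check that being locally $\cR_2$-closed passes to induced subgraphs. For $u\neq v$ we have $N_{G-v}(u)=N_G(u)-\{v\}$. An induced subgraph $H'=H[U]$ of an $\cR_2$-closed graph $H$ is $\cR_2$-closed: if $xy$ with $x,y\in U$ is $\cR_2$-linked in $H'$, it is linked in $H$, hence an edge of $H$, hence an edge of $H'$. So $G-v$ is locally $\cR_2$-closed, and induction gives $\rr(G-v)\le \cval(G-v)$.

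\emph{Submodularity step.} Fix $v\in V$ and write $N=N_G(v)$. We have $E(G-v)\cup E(G[N\cup\{v\}])=E(G)$ and $E(G-v)\cap E(G[N\cup \{v\}])=E(G[N])$. Submodularity of the rank function of $\cM(G)$ therefore gives
$$\rr(G)\le \rr(G-v)+\rr(G[N\cup\{v\}])-\rr(G[N]).$$
The graph $G[N\cup\{v\}]$ has $v$ as a cone vertex. Since $G[N]$ is $\cR_2$-closed, Lemma~\ref{lem:cone0} shows that $G[N\cup\{v\}]$ is $\cM$-closed. Lemma~\ref{lem:cone:rcdress}(b),(c) then give $\rr(G[N\cup\{v\}])=\cval(G[N\cup\{v\}])$ and $\rr(G[N])=\cval(G[N])$.

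\emph{Converting to $\cval$.} I will apply Lemma~\ref{lem:circuit:NGv}(b), which requires that any two distinct maximal cliques $X,X'$ of $G$ meet in at most three vertices. Suppose instead that $|X\cap X'|\ge 4$, and pick $u\in X\cap X'$. Then $X-\{u\}$ and $X'-\{u\}$ are cliques in $G[N_G(u)]$ sharing at least three vertices. Cliques are $\cR_2$-rigid, so by the gluing lemma (Lemma~\ref{gluing} with $d=2$) their union is $\cR_2$-rigid. Since $G[N_G(u)]$ is $\cR_2$-closed, $(X\cup X')-\{u\}$ induces a clique there. Hence $X\cup X'$ is a clique in $G$, contradicting maximality. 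With this in hand, Lemma~\ref{lem:circuit:NGv}(b) gives
$$\cval(G[N\cup\{v\}])-\cval(G[N])=\cval(G)-\cval(G-v).$$
Combining this with the inductive bound yields $\rr(G)\le \cval(G-v)+\cval(G)-\cval(G-v)=\cval(G)$.

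The main point needing care is the intersection hypothesis of Lemma~\ref{lem:circuit:NGv}, handled by the gluing argument above. The rest is bookkeeping, provided Lemma~\ref{lem:cone:rcdress}(c) is applied to $G[N]$ viewed as $G[N\cup\{v\}]-v$.
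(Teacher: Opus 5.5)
Your proposal is correct and follows essentially the same route as the paper: induction on $|V|$, then submodularity comparing $G$ with $G-v$ and with the closed neighbourhood $G[N_G(v)\cup\{v\}]$, then exact evaluation of the two cone-graph ranks via Lemma~\ref{lem:cone:rcdress}(b),(c), and finally conversion to $\cval$ via Lemma~\ref{lem:circuit:NGv}(b). Your explicit checks that local $\cR_2$-closedness passes to $G-v$, and that distinct maximal cliques of $G$ meet in at most three vertices (via the $2$-dimensional gluing argument in $G[N_G(u)]$), are both correct and supply details the paper leaves implicit.
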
			
\begin{proof} We proceed by  induction on $|V|$.  Choose $v\in V$. We have
\begin{align*}
\rr(G) - \rr(G-v) \leq \rr(G[N_G(v)\cup \{v\}]) - \rr(G[N_G(v)]) &= \cval(G[N_G(v)\cup \{v\}]) - \cval(G[N_G(v)])\\ 
&= \cval(G) - \cval(G-v),
\end{align*}
where the inequality follows from the submodularity of $\rr$, the first equality from Lemma \ref{lem:cone:rcdress}(b),(c), and the second equality from Lemma \ref{lem:circuit:NGv}(b). Since $\rr(G-v)\leq \cval(G-v)$ by induction, this gives $\rr(G)\leq \cval(G)$.
\end{proof}

Theorem \ref{thm:rcdress:ineq} shows that, for any $\cR_3$-closed graph $G$, $\cval(G)$  gives an upper bound on $\rr_3(G)$. 
This implies that the right-hand side of the formula in Conjecture
\ref{con:dress} is indeed an upper bound on $\rr_3(G)$.
We close this section by using this result to deduce that Conjectures \ref{con:dress} and \ref{con:walter} are equivalent.

\begin{theorem}
Conjectures \ref{con:dress} and \ref{con:walter}
are equivalent.
\end{theorem}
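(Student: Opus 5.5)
We prove the two implications separately, using the known inequality $r_3 \geq r_2^1$, i.e.\ that every $\cC_2^1$-independent set is $\cR_3$-independent. This is a standard fact of Whiteley \cite{Wlong}; it can be deduced from a specialisation argument showing that the cofactor matrix is a limit of rigidity matrices. If we prefer not to use it, the argument below can be rearranged to use only Theorem \ref{thm:rcdress:ineq} and Theorem \ref{thm:cjt}; we indicate where.

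\emph{Conjecture \ref{con:walter} implies Conjecture \ref{con:dress}.} If $\cR_3(K_n)=\cC_2^1(K_n)$ for all $n$, then $\cR_3$-closed graphs are exactly $\cC_2^1$-closed graphs and $r_3=r_2^1$. Theorem \ref{thm:cjt} then gives (\ref{eq:dress}) directly.

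\emph{Conjecture \ref{con:dress} implies Conjecture \ref{con:walter}.} Assume Conjecture \ref{con:dress}. It suffices to show that $\cR_3(K_n)$ and $\cC_2^1(K_n)$ have the same closed sets, or equivalently the same rank function. Take any graph $G$ on $n$ vertices and let $H=\cl_3(G)$ be its $\cR_3$-closure. By Conjecture \ref{con:dress}, $r_3(G)=r_3(H)=\cval(H)$.

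Since $H$ is $\cR_3$-closed, it is locally $\cR_2$-closed. Theorem \ref{thm:rcdress:ineq} applied with $\cM=\cC_2^1$ therefore gives $r_2^1(H)\le \cval(H)=r_3(H)$. The reverse inequality $r_2^1\ge r_3$ also needs to be established.

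\emph{Establishing $r_2^1\ge r_3$.} Work with the $\cC_2^1$-closure $H'=\cl_2^1(G)$. Theorem \ref{thm:cjt} gives $r_2^1(G)=\cval(H')$. To compare this with $r_3(G)$, we must show that $H'$ is $\cR_3$-closed, or at least that $r_3(H')\le \cval(H')$. For the latter, note that $H'$ is $\cC_2^1$-closed and hence locally $\cR_2$-closed. Theorem \ref{thm:rcdress:ineq} with $\cM=\cR_3$ then gives $r_3(G)\le r_3(H')\le \cval(H')=r_2^1(G)$.

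Combining the two directions gives $r_3(G)=r_2^1(G)$ for every $G$. In fact this second argument alone gives $r_3\le r_2^1$ unconditionally. Together with $r_2^1(H)\le r_3(H)$, which uses Conjecture \ref{con:dress}, applied to the $\cR_3$-closure and with $r_3(G)=r_3(H)$ and $r_2^1(G)\le r_2^1(H)$, this yields $r_2^1(G)\le r_3(G)\le r_2^1(G)$. Hence the two matroids on $K_n$ have equal rank functions and so coincide. This bypasses the external inequality entirely.

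\emph{Main obstacle.} The step requiring most care is checking that Theorem \ref{thm:rcdress:ineq} applies to both closures. Its hypothesis is "locally $\cR_2$-closed", and we need this for the $\cC_2^1$-closure as well as for the $\cR_3$-closure. This follows from Lemma \ref{lem:cone0}, since induced subgraphs of $\cM$-closed graphs are $\cM$-closed, as noted before Theorem \ref{thm:rcdress:ineq}. It also needs the monotonicity of rank under taking closures, which is standard.
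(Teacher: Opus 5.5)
Your final argument is correct. For one of the two inequalities it takes a different route from the paper. Both proofs obtain $r_2^1(G)\le r_3(G)$ in the same way: apply Conjecture \ref{con:dress} to the $\cR_3$-closure, then Theorem \ref{thm:rcdress:ineq} with $\cM=\cC_2^1$.

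For the reverse inequality $r_3\le r_2^1$, the paper simply cites the result of \cite{CJT1} that $\cC_2^1$ is the unique maximal abstract 3-rigidity matroid. You instead apply Theorem \ref{thm:rcdress:ineq} with $\cM=\cR_3$ to the $\cC_2^1$-closure $H'$ of $G$, and then use Theorem \ref{thm:cjt} to identify $\cval(H')$ with $r_2^1(G)$. This is valid for three reasons:
\begin{itemize}
\item ``locally $\cR_2$-closed'' is a condition involving only $\cR_2$;
\item $H'$ satisfies it because it is $\cC_2^1$-closed;
\item Theorem \ref{thm:rcdress:ineq} for $\cM=\cR_3$ is proved (via Lemma \ref{lem:cone:rcdress} and Theorem \ref{dimdrop}) without any reference to $\cC_2^1$.
\end{itemize}
Your route makes the equivalence rest only on Theorem \ref{thm:cjt} and the paper's own Theorem \ref{thm:rcdress:ineq}. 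As a by-product it derives the fact that every $\cR_3$-independent set is $\cC_2^1$-independent, instead of importing it. The paper's route is shorter, at the cost of an appeal to an external structural theorem.

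You should delete your opening paragraph, because the inequality you call known is the wrong way round. You state $r_3\ge r_2^1$, i.e.\ every $\cC_2^1$-independent set is $\cR_3$-independent. What is known, from \cite{CJT1}, is $r_3\le r_2^1$. Given that, $r_3\ge r_2^1$ is equivalent to Conjecture \ref{con:walter} itself, so invoking it would be circular. No specialisation argument of the kind you describe is known, since it would settle that conjecture. Moreover, even if it were available, it points in the same direction as the inequality you already extract from Conjecture \ref{con:dress}, so it could never have supplied the missing half. Your final paragraph correctly bypasses it, and with that remark removed the proof stands.
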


\begin{proof}
As noted in the Introduction, Theorem \ref{thm:cjt} tells us that Conjecture \ref{con:dress} would follow from Conjecture \ref{con:walter}. To see the other direction, let us assume that Conjecture \ref{con:dress} is true. Choose an arbitrary graph $G$ and let $\bar G$ be its $\cR_3$-closure. Then 
$\rr_3(G)=\rr_3(\bar G)=\cval(\bar G)$ since Conjecture \ref{con:dress} holds. On the other hand, $\bar G$ is locally $\cR_2$-closed so $\rr_2^1(\bar G)\leq\cval(\bar G)$ by Theorem \ref{thm:rcdress:ineq}.
This gives 
$\rr_2^1(G)\leq \rr_2^1(\bar G)\leq \cval(\bar G)=\rr_3(G).$
We can now use the fact that $\cC_2^1$ is the unique maximal abstract 3-rigidity matroid by \cite{CJT1} to deduce that $\rr_2^1(G)=\rr_3(G).$
\end{proof}

\section{Rank contributions of vertices in matroids on the edge set of a complete graph}\label{sec:rc}

Let $\cM=(E(K_n),r)$ be a matroid defined on the edge set of the complete graph $K_n$ and  $G=(V,E)$ be a subgraph of $K_n$. We will denote the restriction of $\cM$ to $E$ by $\cM(G)$ and the rank of $\cM(G)$ by $\rr(G)$. Given an  ordering $\pi$ of $V$ and a  vertex $v\in V$, let $T_v^\pi$ denote the set of those vertices which precede $v$ in $\pi$. 
We define the {\it rank contribution of $v$ in $\cM(G)$ with respect to $\pi$}  to be $$\rc(G, v, \pi) = \rr(G[T_v^\pi\cup \{v\}])-\rr(G[T_v^\pi]).$$

We now suppose that $\pi$ has been chosen from the uniformly random distribution of orderings of $V$ and define the {\it rank contribution of $v$ in $\cM(G)$} to be

$$\rc(G,v)= \E\big(\rc(G,v,\pi)\big).$$

We first show that the rank of $\cM(G)$ is equal to the sum of the rank contributions of the vertices of $G$. As a warm up observation, note that this equality holds if $G$ is $\cM$-independent, since in this case $\rc(G,v)=\frac{\deg_G(v)}{2}$ for all $v\in V$.

\begin{lemma} \label{lem:rc_sum}
For every subgraph $G$ of $K_n$, we have ${\displaystyle \rr(G)=\sum_{v\in V} \rc(G,v)}.$ 
\end{lemma}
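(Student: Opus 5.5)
The plan is to prove the identity for each fixed ordering $\pi$ by telescoping, and then average over $\pi$ using linearity of expectation.

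Fix an ordering $\pi=(v_1,v_2,\dots,v_m)$ of $V$, where $m=|V|$. For each $i$ we have $T_{v_i}^\pi=\{v_1,\dots,v_{i-1}\}$. Write $G_i=G[\{v_1,\dots,v_i\}]$, with $G_0$ the empty graph. Then
$$\rc(G,v_i,\pi)=\rr(G_i)-\rr(G_{i-1}).$$
Summing over $i=1,\dots,m$, the sum telescopes to $\rr(G_m)-\rr(G_0)$. Here $G_m=G$, and $\rr(G_0)=0$ because $G_0$ has no edges and the rank of the empty set in $\cM$ is zero. So for every ordering $\pi$,
$$\sum_{v\in V}\rc(G,v,\pi)=\rr(G).$$

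Next let $\pi$ be a uniformly random ordering of $V$ and take expectations of both sides. The sample space is finite, so linearity of expectation applies and gives
$$\sum_{v\in V}\E\big(\rc(G,v,\pi)\big)=\E\big(\rr(G)\big)=\rr(G).$$
By definition the left-hand side is $\sum_{v\in V}\rc(G,v)$, which proves the lemma.

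There is essentially no obstacle. The only points to check are that $\rc(G,v,\pi)$ is defined via induced subgraphs, so that the successive graphs $G_i$ are nested and the telescoping is exact, and that the empty graph has rank zero.
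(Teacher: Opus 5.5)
Your proposal is correct and follows the paper's proof exactly: the identity $\rr(G)=\sum_{v\in V}\rc(G,v,\pi)$ holds for every fixed ordering by telescoping over the nested induced subgraphs, and linearity of expectation then gives the result. The paper leaves the telescoping implicit ("the definition of $\rc(G,v,\pi)$ implies\ldots"); you spell it out, including the check that $\rr(G[\emptyset])=0$.
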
 
\begin{proof} The definition of $\rc(G, v, \pi)$ implies that $\rr(G)=\sum_{v\in V}\rc(G, v, \pi)$ for each ordering $\pi$ of $V$. This gives
$$ \rr(G)=\E(\rr(G))=\E\left(\sum_{v\in V}\rc(G, v, \pi)\right)=\sum_{v\in V}\E\left(\rc(G, v, \pi)\right)= \sum_{v\in V} \rc(G,v).$$
\end{proof}

Our next lemma 
quantifies the effect of adding an edge to $G$ on the rank contribution of a vertex. 
\begin{lemma}\label{lem:new_edge}
Let $G=(V,E)$ be a graph and $v, x,y$ be distinct vertices of $G$. Then 
\begin{enumerate}[(a)]
\item $\rc(G,v)\geq \rc(G+xy,v)$ and 
\item $\rc(G,v)\leq \rc(G+xv,v)$. 
\end{enumerate}
\end{lemma}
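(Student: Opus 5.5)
Since $\rc(G,v)$ is the expectation of $\rc(G,v,\pi)$ over a uniform random ordering $\pi$ of $V$, and $G$ and $G+xy$ (resp.\ $G+xv$) have the same vertex set, it suffices to prove both inequalities pointwise: for every fixed ordering $\pi$ of $V$ we show $\rc(G,v,\pi)\geq \rc(G+xy,v,\pi)$ in (a) and $\rc(G,v,\pi)\leq \rc(G+xv,v,\pi)$ in (b), and then take expectations. Throughout we write $T=T_v^\pi$, $A=E(G[T])$ and $B=E(G[T\cup\{v\}])$, so $A\subseteq B$ and $\rc(G,v,\pi)=\rr(B)-\rr(A)$. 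Each inequality is a direct consequence of submodularity of the matroid rank function $\rr$ of $\cM$, after a case split on where $x$ and $y$ sit relative to $v$ in $\pi$.

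For (a), first suppose that not both of $x,y$ lie in $T\cup\{v\}$. Then $xy$ is not induced by $T\cup\{v\}$, so neither induced subgraph changes and equality holds. If exactly one of $x,y$ lies in $T$, then both lie in $T\cup\{v\}$ only if the other is $v$, which is excluded since $v,x,y$ are distinct. The remaining case is $x,y\in T$. Then adding $xy$ replaces $A$ by $A+xy$ and $B$ by $B+xy$. Submodularity, in the form of diminishing marginal returns $\rr(B+xy)-\rr(B)\leq \rr(A+xy)-\rr(A)$ for $A\subseteq B$, rearranges to
\[
\rr(B+xy)-\rr(A+xy)\leq \rr(B)-\rr(A),
\]
which is exactly the claim.

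For (b), the edge $xv$ lies in $G[T\cup\{v\}]$ precisely when $x\in T$, and it never lies in $G[T]$. If $x\notin T$, nothing changes and equality holds. If $x\in T$, then $\rc(G+xv,v,\pi)=\rr(B+xv)-\rr(A)\geq \rr(B)-\rr(A)$ by monotonicity of $\rr$.

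No step is a genuine obstacle. The only point needing care is the bookkeeping in (a): one must check that the added edge enters both sets $A$ and $B$ simultaneously, or enters neither, so that submodularity applies in the right direction. Averaging the pointwise inequalities over $\pi$ then completes the proof.
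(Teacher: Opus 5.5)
Your proof is correct and follows the paper's proof: you prove the pointwise inequality $\rc(G,v,\pi)\geq \rc(G+xy,v,\pi)$ by submodularity of $\rr$ for (a) and by monotonicity for (b), then average over $\pi$. The paper states the submodular inequality in one line without a case split; your case analysis on whether $x,y\in T_v^\pi$ is the bookkeeping that justifies that line, since it shows the union and intersection come out as required.
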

\begin{proof}
We first prove (a). By the submodularity of $\rr$, we have
$$ \rr(G[T_v^\pi\cup \{v\}])+\rr((G+xy)[T_v^\pi])\geq \rr(G[T_v^\pi]) + \rr((G+xy)[T_v^\pi\cup  \{v\}])$$ for each ordering $\pi$ of $V$. This implies that $\rc(G,v,\pi)\geq \rc(G+xy,v,\pi)$ and hence  $\rc(G,v)\geq \rc(G+xy,v).$ 

Part (b) follows easily from the monotonicity of $\rr$.
\end{proof}

For $G\subseteq K_n$, let $G^+$ be the spanning subgraph of $K_n$ obtained by adding the vertices in $V(K_n)\setminus V(G)$ to $G$ as isolated vertices. It is straightforward to check that $\rc(G^+,v)=\rc(G,v)$ %
for all  vertices $v$ of $G$. Combined with Lemma \ref{lem:new_edge}(a) this gives

\begin{corollary}\label{cor:new_edge}
Suppose $G\subseteq H\subseteq K_n$, $v$ is a vertex of $G$ and $N_G(v)=N_H(v)$. Then $\rc(G,v)\geq \rc(H,v)$. 
\end{corollary}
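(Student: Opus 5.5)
We derive the statement from Lemma \ref{lem:new_edge}(a) by adding the edges of $H$ not in $G$ one at a time, after first placing both graphs on the common vertex set $V(K_n)$.

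\emph{Step 1: pass to spanning subgraphs.} Consider $G^+$ and $H^+$, the spanning subgraphs of $K_n$ obtained by adding the missing vertices as isolated vertices. By the remark preceding the corollary, $\rc(G^+,v)=\rc(G,v)$ and $\rc(H^+,v)=\rc(H,v)$. So it suffices to prove $\rc(G^+,v)\geq \rc(H^+,v)$. These two graphs have the same vertex set, and $E(G^+)=E(G)\subseteq E(H)=E(H^+)$.

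\emph{Step 2: add the missing edges one at a time.} Let $e_1,\dots,e_k$ be the edges of $E(H)\setminus E(G)$. Since $N_G(v)=N_H(v)$ and $E(G)\subseteq E(H)$, every edge of $H$ incident with $v$ already lies in $G$. Hence each $e_i$ has the form $x_iy_i$ with $v\notin\{x_i,y_i\}$. Define $G_0=G^+$ and $G_i=G_{i-1}+e_i$, so that $G_k=H^+$. Each $G_i$ has vertex set $V(K_n)$, and $v,x_i,y_i$ are distinct vertices of $G_{i-1}$. Lemma \ref{lem:new_edge}(a) therefore gives $\rc(G_{i-1},v)\geq \rc(G_i,v)$ for each $i$. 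Chaining these inequalities yields $\rc(G^+,v)\geq \rc(H^+,v)$, and the result follows by Step 1.

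The only point needing care is the invariance claim $\rc(G^+,v)=\rc(G,v)$. An isolated vertex contributes no edges, so $\rc(G,v,\pi)$ depends only on the set of vertices of $G$ that precede $v$. Restricting a uniformly random ordering of $V(K_n)$ to $V(G)$ gives a uniformly random ordering of $V(G)$, so the two expectations agree. The paper states this as straightforward, so I take it as given. There is then no real obstacle; the essential observation is that the hypothesis $N_G(v)=N_H(v)$ forces every added edge to avoid $v$.
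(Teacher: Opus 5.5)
Your proposal is correct and follows the paper's own argument. You pass to the spanning graphs $G^+$ and $H^+$, whose rank contributions at $v$ are unchanged, and then apply Lemma~\ref{lem:new_edge}(a) once for each edge of $H$ not in $G$, each of which avoids $v$ because $N_G(v)=N_H(v)$; your justification of $\rc(G^+,v)=\rc(G,v)$, by restricting a uniformly random ordering to $V(G)$, is the ``straightforward check'' the paper leaves to the reader.
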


Given a vertex $v$ of a graph $G=(V,E)$ and an ordering   $\pi$ of $V$, put $N_v^\pi=T_v^\pi\cap N_G(v)$.
We next describe the distribution of $N_v^\pi$  over all  orderings $\pi$, chosen uniformly at random. Let $k=\deg_G(v)$. 
First note that for any $i\in\{0,1,\dots, k\}$, 
\begin{equation}\label{eq:b0}
\Prob(|N_v^\pi|=i)=\frac{1}{k+1}
\end{equation}
since $\pi$ induces a uniformly random ordering on the set $N_G(v)\cup\{v\}.$, and $\frac{1}{k+1}$ is the probability of the event that $v$ takes the $(i+1)$'th position in this ordering. 
Given that $|N_v^\pi|=i$, the set $N_v^\pi$ is uniformly distributed across the subsets of $N_G(v)$ of size $i$.  Thus, for any $U\subseteq N_G(v)$, we have $$\Prob\bigg(N_v^\pi=U \;\bigg|\; |N_v^\pi|=|U|\bigg)=\frac{1}{\binom{k}{|U|}} $$ and hence $$ \Prob\big(N_v^\pi=U\big)=\frac{1}{k+1}\cdot\frac{1}{\binom{k}{|U|}}\;.$$

\subsection*{Matroids with the 0- and 1-extension properties}

Although the rank contribution of vertices was not explicitly defined in \cite{vill}, the following lemmas (in slightly weaker forms) are {implicitly contained in the proof of 
\cite[Lemma 3.2]{vill}}. We reproduce their proofs for the sake of completeness.

We say that a matroid $\cM$ on $E(K_n)$ has the {\em $d$-dimensional 0-extension property} if, for every graph $G\subseteq K_n$ and every vertex $v$ of $G$ of degree at most $d$, $G$ is $\cM$-independent whenever $G-v$ is $\cM$-independent.

A vertex $v$ of a graph $G$ is said to be {\em simplicial} if $N_G(v)$ induces a clique in $G$.

\begin{proposition}\label{prop:rc:probsum}
Let $\cM$ be a matroid on $E(K_n)$ which has the $d$-dimensional $0$-extension property, $ G=(V,E)$ be a subgraph of $K_n$, and $v\in V$ with $\deg_G(v)=k$.
Then
$$\rc(G, v)= \begin{cases}
\frac{k}{2} &  \text{ if } k\leq d, \\
d-\frac{1}{k+1}{\binom{d+1}{2}} +\sum_{i=d+1}^k \Prob(\rc(G,v,\pi)\geq i) &\mbox{ if $k\geq d$.}
\end{cases}
$$
In addition, if $k\geq d$, every copy of $K_{d+2}$ is a circuit in $\cM$ and $v$ is a simplicial vertex of $G$, then  $\rc(G, v)= d-\frac{1}{k+1}{\binom{d+1}{2}}$.
\end{proposition}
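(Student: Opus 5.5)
The whole computation rests on two facts: the upper bound $\rc(G,v,\pi)\leq |N_v^\pi|$, and the lower bound $\rc(G,v,\pi)\geq \min(|N_v^\pi|,d)$ coming from the 0-extension property. Together with the distribution of $|N_v^\pi|$ in \eqref{eq:b0}, they determine $\rc(G,v)$ up to the tail probabilities in the statement.

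First I establish the lower bound. Write $H=G[T_v^\pi]$ and $H'=G[T_v^\pi\cup\{v\}]$. Take a maximal $\cM$-independent edge set $I$ of $H$, and choose any $\min(|N_v^\pi|,d)$ edges from $v$ to $N_v^\pi$. The graph $(V(H),I)$ plus $v$ with these edges is independent by the $d$-dimensional 0-extension property. Hence $\rr(H')\geq |I|+\min(|N_v^\pi|,d)$. The upper bound $\rc(G,v,\pi)\leq |N_v^\pi|$ holds because adding $v$ adds only $|N_v^\pi|$ edges.

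Now the first case. If $k\leq d$, then $\min(|N_v^\pi|,d)=|N_v^\pi|$, so $\rc(G,v,\pi)=|N_v^\pi|$ exactly. Since $|N_v^\pi|$ is uniform on $\{0,\dots,k\}$ by \eqref{eq:b0}, we get $\rc(G,v)=\frac1{k+1}\sum_{i=0}^k i=\frac k2$.

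For $k\geq d$, I use the tail-sum formula for the nonnegative integer random variable $\rc(G,v,\pi)$:
$$\rc(G,v)=\sum_{i\geq1}\Prob(\rc(G,v,\pi)\geq i).$$
For $i\leq d$ the lower bound gives $\rc(G,v,\pi)\geq i$ exactly when $|N_v^\pi|\geq i$, which has probability $\frac{k+1-i}{k+1}$. Summing over $i=1,\dots,d$ gives
$$d-\frac{1}{k+1}\sum_{i=1}^d i = d-\frac{1}{k+1}\binom{d+1}{2}.$$
For $i>k$ the probability is $0$ by the upper bound. This is exactly the claimed formula. When $k=d$ the sum over $i$ from $d+1$ to $k$ is empty, and the value agrees with $\frac{k}{2}$, so the two cases are consistent.

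For the simplicial addendum, it suffices to show $\rc(G,v,\pi)\leq d$ for every $\pi$, since then all terms with $i\geq d+1$ vanish. This step needs the most care. Let $m=|N_v^\pi|$ and suppose $m>d$. Order $N_v^\pi$ as $u_1,\dots,u_m$. For each $j>d$, the vertices $v,u_1,\dots,u_d,u_j$ induce a copy of $K_{d+2}$ in $H'$, because $v$ is simplicial. This copy is a circuit of $\cM$, so $vu_j$ lies in the closure of the remaining edges of that copy. All of those edges are either edges of $H$ or the edges $vu_1,\dots,vu_d$. Hence $\rr(H')=\rr(H+vu_1+\dots+vu_d)\leq \rr(H)+d$, and so $\rc(G,v,\pi)\leq d$, which gives $\rc(G,v)=d-\frac{1}{k+1}\binom{d+1}{2}$.
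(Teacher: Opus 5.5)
Your proposal is correct and follows essentially the same route as the paper. Both use the tail-sum formula, the $0$-extension property together with \eqref{eq:b0} to get $\Prob(\rc(G,v,\pi)\geq i)=\Prob(|N_v^\pi|\geq i)$ for $i\leq d$, and the fact that each $K_{d+2}$ is a circuit to force $\rc(G,v,\pi)\leq d$ when $v$ is simplicial. You also spell out two steps the paper only asserts: the lower bound $\rc(G,v,\pi)\geq\min(|N_v^\pi|,d)$, and the closure argument giving the upper bound $d$.
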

\begin{proof} If $k\leq d$ then the assertion that $\rc(G, v)=\frac{k}{2}$ follows from the $0$-extension property and the fact that $v$ is equally likely to occur in every position of a uniformly random ordering of $N_G(v)\cup \{v\}$.
So let us assume that $k\geq d$.

We use  
the fact that, for a non-negative integer valued random variable $X$, we have $\E X=\sum_{i=1}^\infty \Prob(X\geq i)$. This implies that 
\begin{equation}\label{eq:egy}
\rc(G,v)=\sum_{i=1}^k \Prob(\rc(G,v,\pi)\geq i)=\sum_{i=1}^d \Prob(\rc(G,v,\pi)\geq i)+\sum_{i=d+1}^k \Prob(\rc(G,v,\pi)\geq i).
\end{equation}
We can now use the hypothesis that $\cM$ has the 0-extension property and (\ref{eq:b0}) to deduce that,
$$\sum_{i=1}^d \Prob(\rc(G,v,\pi)\geq i)=\sum_{i=1}^d \Prob(|N_v^\pi|\geq i)%
=\sum_{i=1}^d \frac{k-i+1}{k+1}=d-\frac{1}{k+1}{\binom{d+1}{2}}.$$
Substituting this equation into (\ref{eq:egy}) completes the proof of the first part of the lemma. The second part follows since the hypotheses that every copy of $K_{d+2}$ is a circuit in $\cM$ and $N_G(v)$ induces a clique in $G$ imply that $\rc(G,v,\pi)\leq d$ for all orderings $\pi$ of $V$.
\end{proof}

We say that a matroid $\cM$ on $E(K_n)$ has the {\em $d$-dimensional 1-extension property} if, for every graph $G\subseteq K_n$ and every vertex $v$ of $G$ of degree $d+1$, $G$ is $\cM$-independent whenever $G-v+uw$ is $\cM$-independent for some distinct $u,w\in N_G(v)$ with $uw\notin E(G)$. The next statement gives an improved bound on the rank contribution of a vertex when $\cM$ has the $d$-dimensional 1-extension property.

\begin{lemma}
\label{lem:uglyboundnew}
Let $\cM$ be a matroid on $E(K_n)$ which has the $d$-dimensional $0$- and $1$-extension properties, $ G=(V,E)$ be a subgraph of $K_n$ and $v\in V$ with $\deg_G(v)=k\geq {d}$.
Suppose that 
\begin{enumerate}[(i)] 
\item the graph $G-v$ is $\cM$-closed; 
\item $N_G(v)$ does not induce a clique in $G$, and $|V(H_1) \cap V(H_2)| \leq d - 2$  for any two distinct 
maximal cliques $H_1$ and $H_2$ of $G[N_G(v)]$.
\end{enumerate}
Then $\rc(G,v)\geq d-\frac{1}{k}{\binom{d+1}{2}}+\frac{1}{2}$.
\end{lemma}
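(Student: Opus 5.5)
The plan is to start from the formula of Lemma~\ref{prop:rc:probsum}. Since $k\geq d$, it gives
\[
\rc(G,v)=d-\tfrac{1}{k+1}\tbinom{d+1}{2}+\sum_{i=d+1}^{k}\Prob(\rc(G,v,\pi)\geq i).
\]
So it suffices to show that
\[
\sum_{i\geq d+1}\Prob(\rc(G,v,\pi)\geq i)\;\geq\; \tfrac12-\tbinom{d+1}{2}\Big(\tfrac1k-\tfrac1{k+1}\Big)=\tfrac12-\tfrac{d(d+1)}{2k(k+1)} .
\]
Hence everything reduces to lower bounding the probability that $v$ contributes more than $d$ to the rank.

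The first key step is a deterministic claim: \emph{if $N_v^\pi$ contains two non-adjacent vertices $u,w$ and $|N_v^\pi|\geq d+1$, then $\rc(G,v,\pi)\geq d+1$.} To prove it, let $T=T_v^\pi$ and choose a basis $B$ of $\cM(G[T])$. Since $G-v$ is $\cM$-closed by (i) and $uw\notin E(G)$, the pair $uw$ is not spanned by $E(G-v)$, so it is not spanned by $B$ either, and $B+uw$ is independent. Now pick $d-1$ further vertices of $N_v^\pi$ and add $v$ joined to $u$, $w$ and these vertices. The $d$-dimensional 1-extension property then gives an independent set of size $|B|+d+1$ inside $G[T\cup\{v\}]$. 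It follows that $\rc(G,v,\pi)\geq d+1$ exactly when $|N_v^\pi|\geq d+1$ and $N_v^\pi$ is not a clique. If the final count needs it, I would try to push the same argument further, performing several such extensions, or to use (ii) to get $\rc\geq d+2$ when $N_v^\pi$ meets two maximal cliques in large parts.

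The second step is probabilistic. By \eqref{eq:b0} and the uniformity statement following it, for each $i$ we have
\[
\Prob(\rc\geq d+1)\geq\sum_{i=d+1}^k\frac1{k+1}\Big(1-\frac{c_i}{\binom ki}\Big),
\]
where $c_i$ is the number of $i$-cliques of $G[N_G(v)]$. Condition (ii) says that every $(d-1)$-subset lies in at most one maximal clique, and each maximal clique has at most $k-1$ vertices because $N_G(v)$ is not a clique. Double counting $(d-1)$-subsets then gives $c_i/\binom ki\leq (k-i)/(k-d+1)$. In particular, the term $i=k$ contributes the full $\frac1{k+1}$.

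The main obstacle is that this estimate alone, using only the threshold $d+1$, does not reach $\tfrac12-\tfrac{d(d+1)}{2k(k+1)}$ for large $k$. So the terms $\Prob(\rc\geq i)$ with $i>d+1$ must also be controlled. My first attempt would be a greedy extension: go through the neighbours in $N_v^\pi$ and, whenever a new neighbour is non-adjacent to an earlier one, apply the closedness of $G-v$ together with a 1-extension step to gain an extra unit of rank. I would then compare the resulting expectation with $\tfrac{k-d}{2}$-type sums, using the disjointness structure in (ii) to bound how often all neighbours fall into a single clique.
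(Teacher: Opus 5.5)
\textbf{The gap.} Your first step is correct and matches the paper's. Closedness of $G-v$ makes $B+uw$ independent, and the 1-extension property then gives $\rc(G,v,\pi)\ge d+1$ whenever $|N_v^\pi|\ge d+1$ and $N_v^\pi$ is not a clique (``whenever'', not ``exactly when'').

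The problem is your diagnosis of what comes next. The shortfall you found is not caused by using only the threshold $d+1$. It comes from your double-counting estimate $c_i/\binom{k}{i}\le (k-i)/(k-d+1)$. That estimate loses a factor $k/(k-d+1)$, because it lets the $(d-1)$-subsets be packed into ``$k/(k-d+1)$ cliques of size $k-1$''. The paper uses only the event above, together with a sharper estimate taken from \cite{vill}: under (ii), $c_i\le\binom{k-1}{i}$ for every $i\ge d$, i.e.\ $\Prob(N_v^\pi\text{ is a clique}\mid |N_v^\pi|=i)\le (k-i)/k$. Plugging this into your own formula gives
\[
\Prob(\rc(G,v,\pi)\geq d+1)\;\ge\;\sum_{i=d+1}^{k}\frac{1}{k+1}\cdot\frac{i}{k}\;=\;\frac12-\frac{d(d+1)}{2k(k+1)},
\]
which is exactly your target. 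No information about $\Prob(\rc(G,v,\pi)\ge i)$ for $i\ge d+2$ is needed. The estimate is tight: take $G[N_G(v)]$ to be $K_{k-1}$ plus one vertex joined to $d-2$ of its vertices.

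\textbf{Proving the missing count.} It follows by induction on $k$, simultaneously for all $d$. Let $H$ be a non-complete graph on $k$ vertices whose distinct maximal cliques pairwise share at most $d-2$ vertices, and fix a vertex $x$. Then $c_i(H)=c_i(H-x)+c_{i-1}(H[N_H(x)])$.
\begin{itemize}
\item If $H-x$ is complete, then $V(H)-x$ is a maximal clique, and the intersection condition forces $|N_H(x)|\le d-2$. So no $i$-clique contains $x$, and $c_i(H)=\binom{k-1}{i}$.
\item Otherwise $H-x$ is non-complete and inherits the intersection condition, since its maximal cliques have the form $X-x$. Hence $c_i(H-x)\le\binom{k-2}{i}$.
\item In that second case, also $c_{i-1}(H[N_H(x)])\le\binom{k-2}{i-1}$. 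This is trivial if $\deg_H(x)\le k-2$. If $x$ is adjacent to all other vertices, every maximal clique of $H$ contains $x$, so the maximal cliques of $H-x$ pairwise share at most $d-3$ vertices, and induction with $(d-1,i-1)$ applies.
\end{itemize}
Pascal's rule then gives $c_i(H)\le\binom{k-1}{i}$.

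\textbf{Why your proposed remedy fails.} With only the $0$- and $1$-extension properties of an abstract matroid, nothing lets you certify $\rc(G,v,\pi)\ge d+2$. A 1-extension exchanges a single non-edge $uw$ for a vertex of degree $d+1$. It cannot be applied a second time at the same vertex $v$ to absorb another non-adjacent pair. That would be a 2-extension (X- or V-replacement), which is not among the hypotheses; even for $\cR_3$ the paper has such steps only in the special configurations of Lemmas~\ref{lem:coplanar} and~\ref{lem:2ext:split}. So, as written, the proposal stops short of the bound, and the suggested route to finish it is unavailable. The missing ingredient is the sharper clique count $c_i\le\binom{k-1}{i}$.
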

\begin{proof}
The hypotheses that $\mathcal{M}$ has the 1-extension property and $G-v$ is $\cM$-closed imply that, for each ordering $\pi$ of $V$, $\rc(G,v,\pi)\geq d+1$ whenever $|N_v^\pi|\geq d+1$ and $G[N_v^\pi]$ is not a clique. In addition, condition (ii) 
implies that this event occurs with probability at least $ \frac{1}{2}\left(1-\frac{d(d+1)}{k(k+1)}\right),$ see the proof of \cite[Lemma 3.2]{vill} for more details.
Lemma~\ref{prop:rc:probsum} now gives
\begin{equation*}
\rc(G,v)
\geq  d-\frac{1}{k+1}{\binom{d+1}{2}}+\frac{1}{2}\left(1-\frac{d(d+1)}{k(k+1)}\right)
=d+\frac{1}{2}-\frac{1}{k}{\binom{d+1}{2}}.
\qedhere	
\end{equation*}
\end{proof}

\begin{corollary}\label{coro:rc:geq:d}
Let $ G=(V, E)$ be a graph, $v\in V$ with $\deg_G(v)=k\geq d$, and
		$\cM\in\{\cR_d,\cC_{d-1}^{d-2}\}$.
        Suppose that $G+K(N_G(v))$ is $\mathcal{M}$-rigid but $G$ is not.
		Then $\rc(G,v)\geq d+\frac{1}{2}-\frac{1}{k}{\binom{d+1}{2}}.$
\end{corollary}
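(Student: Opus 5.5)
The plan is to reduce the statement to Lemma~\ref{lem:uglyboundnew}. I would replace $G$ by a graph $H$ with the same neighbourhood of $v$ in which $H-v$ is $\cM$-closed. Corollary~\ref{cor:new_edge} then gives $\rc(G,v)\ge \rc(H,v)$, and it remains to check that $H$ satisfies hypotheses (i) and (ii) of Lemma~\ref{lem:uglyboundnew}.

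For the matroids $\cM\in\{\cR_d,\cC^{d-2}_{d-1}\}$, the $d$-dimensional $0$- and $1$-extension properties are the classical facts that $0$- and $1$-extensions preserve independence. These are due to Whiteley and are the independence versions of Lemmas~\ref{lem:1ext} and~\ref{lem:c1ext}; I would take them as known.

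\textbf{Construction of $H$.} Let $\bar G$ be the $\cM$-closure of $G$ in $K_n$. Let $H$ be obtained from $\bar G$ by deleting every edge $vu$ with $u\notin N_G(v)$. Then $G\subseteq H\subseteq \bar G$ and $N_H(v)=N_G(v)$, so Corollary~\ref{cor:new_edge} gives $\rc(G,v)\ge\rc(H,v)$. Also $\rr(H)=\rr(G)$, since $H$ lies in the closure of $G$.

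\textbf{Hypothesis (i).} We have $H-v=\bar G-v$, which is an induced subgraph of the closed graph $\bar G$. If an edge $xy$ with $x,y\neq v$ is spanned by $E(\bar G-v)$, then it is spanned by $E(\bar G)$, so $xy\in\bar G$ and hence $xy\in \bar G-v$. Thus $H-v$ is $\cM$-closed.

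\textbf{Hypothesis (ii), non-clique part.} Suppose $N_G(v)$ induced a clique in $H$. Then $H\supseteq G+K(N_G(v))$. The latter graph is $\cM$-rigid and spanning (on $V$), so $H$ would be $\cM$-rigid. Since $\rr(H)=\rr(G)$, $G$ would then be $\cM$-rigid, a contradiction.

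\textbf{Hypothesis (ii), intersection part.} This is the main point. Suppose $H_1\neq H_2$ are maximal cliques of $H[N_G(v)]$ with $|V(H_1)\cap V(H_2)|\ge d-1$. Since $v$ is adjacent in $H$ to every vertex of $N_G(v)$, the graphs $H_1+v$ and $H_2+v$ are complete subgraphs of $H$, hence $\cM$-rigid. They share at least $d$ vertices, so the gluing lemma (Lemma~\ref{gluing} or Lemma~\ref{cgluing}) shows that their union is $\cM$-rigid.

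A rigid graph spans every edge of the complete graph on its vertex set. Hence every pair in $V(H_1)\cup V(H_2)$ is spanned by $E(H)\subseteq E(\bar G)$, and therefore lies in $\bar G$. As none of these pairs contains $v$, they all lie in $\bar G-v=H-v$. So $V(H_1)\cup V(H_2)\subseteq N_G(v)$ induces a clique in $H[N_G(v)]$. This contradicts the maximality of $H_1$ and $H_2$, because they are distinct.

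\textbf{Conclusion.} Lemma~\ref{lem:uglyboundnew} now applies to $H$ and gives
\[
\rc(G,v)\ \ge\ \rc(H,v)\ \ge\ d+\tfrac12-\tfrac1k\tbinom{d+1}{2}.
\]

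I expect the delicate step to be the intersection condition in (ii). The key idea there is to use $v$ itself as the $d$-th shared vertex, so that the gluing lemma applies. One must also note that small complete graphs, including those with at most $d$ vertices, count as rigid by definition.
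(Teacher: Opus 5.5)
Your proposal is correct and is essentially the paper's proof. The paper uses the same auxiliary graph, obtained by adding to $G$ every $\cM$-closure edge not incident with $v$, notes that it satisfies the hypotheses of Lemma~\ref{lem:uglyboundnew}, and concludes via Corollary~\ref{cor:new_edge}; your check of (i) and (ii), using $v$ as the extra shared vertex in the gluing lemma, supplies the details the paper leaves implicit.
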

	\begin{proof}
	Let $G'$ be the supergraph of $G$ obtained by adding every edge $xy$ that belongs to the $\mathcal{M}$-closure of $G$ and is not incident with $v$.
    Then $G'$ satisfies the assumptions of Lemma \ref{lem:uglyboundnew}, and $\deg_{G'}(v)=\deg_G(v)$. Hence,
    $\rc(G',v)\geq d+\frac{1}{2}-\frac{1}{k}{\binom{d+1}{2}}$.
    The result now follows by applying 
    Corollary \ref{cor:new_edge}.
	\end{proof}

\section{Rank contributions in $\mathcal{R}_2$}

We will apply the results of the last section, taking $\cM$ to be the generic 2-dimensional rigidity matroid $\cR_2$, 
to derive a closed formula for the rank contribution  $\rc_2(G,v)$ of a vertex $v$ in a graph $G$
to $\cR_2(G)$. This will allow us to introduce new proof techniques, which we will subsequently apply to $\cR_3$ and $\cC_2^1$, in the more straightforward context of $\cR_2$.

Our first result gives an expression for the rank contribution of a vertex in an $\cR_2$-closed graph. It can be viewed as a local version of Theorem \ref{thm:lamanrank}(b). Indeed,  Theorem \ref{thm:lamanrank}(b)  follows from a  combination of this result and Lemma \ref{lem:rc_sum}.

\begin{lemma}
\label{2dimlocal}
Let $G=(V,E)$ be an $\mathcal{R}_2$-closed graph, $v$ be a non-isolated vertex of $G$, and $\mathcal{X}_v(G)$ be the vertex sets of the maximal cliques of $G$ that contain $v$.
Then $$\rc_2(G,v)=\sum_{X\in \mathcal{X}_v(G)}\Big(2-\frac{3}{|X|}\Big).$$
\end{lemma}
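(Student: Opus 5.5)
The plan is to compute $\rc_2(G,v,\pi)$ exactly for every ordering $\pi$, as a sum of local terms, one per maximal clique through $v$, and then take expectations term by term.

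\emph{Step 1: structure of maximal cliques.} Let $\mathcal{X}(G)$ be the vertex sets of the maximal cliques of $G$ of size at least two. Since $G$ is $\cR_2$-closed, every complete subgraph is $2$-rigid. If two distinct $X,X'\in\mathcal{X}(G)$ had $|X\cap X'|\geq 2$, then the Gluing Lemma (Lemma~\ref{gluing} with $d=2$) would make $G[X\cup X']$ rigid. Closedness would then force $X\cup X'$ to be a clique, contradicting maximality. Hence distinct maximal cliques share at most one vertex, and every edge of $G$ lies in a unique $X\in\mathcal{X}(G)$.

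Now fix $S\subseteq V$. The induced subgraph $G[S]$ is again $\cR_2$-closed: a non-adjacent pair of $S$ that is $\cR_2$-linked in $G[S]$ is also linked in $G$, so it would already be an edge. Every clique of $G[S]$ lies inside some $X\in\mathcal{X}(G)$, and by the intersection property this $X$ is unique if the clique has at least two vertices. Therefore the maximal cliques of $G[S]$ of size at least two are exactly the sets $X\cap S$ with $X\in\mathcal{X}(G)$ and $|X\cap S|\geq 2$. Theorem~\ref{thm:lamanrank}(b) then gives
\[
\rr_2(G[S])=\sum_{X\in\mathcal{X}(G)} f(|X\cap S|),\qquad f(m)=\begin{cases}2m-3 & m\geq 2,\\ 0 & m\leq 1.\end{cases}
\]
This identification of the maximal cliques of induced subgraphs is the step I expect to need the most care; the rest is bookkeeping.

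\emph{Step 2: pointwise rank contribution.} Apply Step~1 with $S=T_v^\pi\cup\{v\}$ and with $S=T_v^\pi$, and subtract. Only the sets $X$ containing $v$ change, so
\[
\rc_2(G,v,\pi)=\sum_{X\in\mathcal{X}_v(G)}\bigl(f(|X\cap T_v^\pi|+1)-f(|X\cap T_v^\pi|)\bigr).
\]
Each summand equals $0$, $1$ or $2$ according as $|X\cap T_v^\pi|$ is $0$, $1$, or at least $2$. Since $v$ is non-isolated, every maximal clique containing $v$ has size at least two, so $\mathcal{X}_v(G)$ is exactly the family in the statement.

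\emph{Step 3: expectation.} For $X\in\mathcal{X}_v(G)$, the ordering $\pi$ induces a uniformly random ordering of $X$. Hence $|X\cap T_v^\pi|$ is uniform on $\{0,\dots,|X|-1\}$, exactly as in \eqref{eq:b0}. The expected summand is therefore
\[
\frac{1}{|X|}\cdot 1+\Bigl(1-\frac{2}{|X|}\Bigr)\cdot 2=2-\frac{3}{|X|}.
\]
By linearity of expectation, $\rc_2(G,v)=\sum_{X\in\mathcal{X}_v(G)}\bigl(2-\frac{3}{|X|}\bigr)$, as claimed.
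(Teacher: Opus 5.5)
Your proof is correct, but it takes a different route from the paper.

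\textbf{The paper's route.} The paper never computes $\rc_2(G,v,\pi)$ for individual orderings. It first treats the case where $v$ is a cone vertex. There the gluing lemma makes every other vertex simplicial, so their contributions $2-3/|X|$ come from Lemma~\ref{prop:rc:probsum}. The contribution of $v$ is then obtained by subtraction from $\rr_2(G)$, using Lemma~\ref{lem:rc_sum} and Theorem~\ref{thm:lamanrank}(b). For a general vertex $v$ it passes to $G_v=G[N_G(v)\cup\{v\}]$ and gets only the inequality $\rc_2(G,v)\le\rc_2(G_v,v)$ from Corollary~\ref{cor:new_edge}. It then forces equality at every vertex by summing over $V$ and comparing with Theorem~\ref{thm:lamanrank}(b).

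\textbf{Your route.} You observe that $\cR_2$-closedness is inherited by induced subgraphs. You then show that the maximal cliques of $G[S]$ of size at least two are exactly the traces $X\cap S$ with $|X\cap S|\ge 2$; your verification of this, via the fact that distinct maximal cliques share at most one vertex, is sound. Hence Theorem~\ref{thm:lamanrank}(b) gives $\rr_2(G[S])$ for every $S$, and therefore an exact formula for $\rc_2(G,v,\pi)$ for each ordering.

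\textbf{What each approach buys.} Your argument is more direct and yields more: the full distribution of $\rc_2(G,v,\pi)$, not just its mean. It needs neither the cone-vertex case, nor Lemma~\ref{prop:rc:probsum}, nor the global summation squeeze.

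The paper's two-stage argument is chosen deliberately as a template for the $\cR_3$ and $\cC_2^1$ analogues. There one again bounds $\rc(G,v)$ above by $\rc(G_v,v)$, where $G_v$ is a closed cone graph whose rank is known (Theorem~\ref{thm:rcdress:ineq:rc}). For $\cC_2^1$ the squeeze against Theorem~\ref{thm:cjt} upgrades this bound to equality (Theorem~\ref{thm:rc:cofactor}). A pointwise computation like yours needs an exact rank formula for every induced subgraph of a closed graph. That is exactly what is unavailable for $\cR_3$, where it is the content of the Dress conjecture.
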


\begin{proof} %
We may assume, without loss of generality, that $G$ has no isolated vertices. Let $\cX(G)$ denote the vertex sets of the maximal cliques of $G$.

We first consider the case when  $v$ is a cone vertex of $G$, i.e., $uv\in E$ for all $u\in V-\{v\}$. 
Then $\mathcal{X}(G)=\mathcal{X}_v(G)$. 
Since $G$ is $\mathcal{R}_2$-closed, Lemma \ref{gluing} implies that $X\cap Y=\{v\}$ for all distinct $X,Y\in \XX(G)$, and there is no edge joining $X-\{v\}$ and $Y-\{v\}$. Thus, for each vertex $u\in V-\{v\}$, $N_G(u)$ induces a clique in $G$ and we have $\cX_u=\{X_u\}$ where $X_u=N_G(u)\cup \{u\}$. Lemma \ref{prop:rc:probsum} 
now implies that $\rc_2(G,u)=2-\frac{3}{|X_u|}$ for all $u\in V-\{v\}$. In addition,
we can use Lemma \ref{lem:rc_sum} to deduce that
$$\rc_2(G,v)=\rr_2(G)-\sum_{u\in V-\{v\}}\rc_2(G,u)=\rr_2(G)-\sum_{\substack{X\in \XX(G)\\ u\in X-\{v\}}}\Big(2-\frac{3}{|X|}\Big)=\sum_{X\in \mathcal{X}_v(G)}\Big(2-\frac{3}{|X|}\Big),$$
where the last equality follows from Theorem \ref{thm:lamanrank}(b) and the fact that $\mathcal{X}(G)=\mathcal{X}_v(G)$.

We next turn to the general case. Choose an arbitrary vertex $v\in V$. Let $G_v=G[N_G(v)\cup \{v\}]$.
Note that the maximal cliques containing $v$ in $G$ and in $G_v$ are
identical. Furthermore, we have
$\rc_2(G,v) \leq \rc_2(G_v,v)$ by Corollary \ref{cor:new_edge}. 
We can now use the argument of the previous paragraph to obtain
\begin{equation}\label{eq:rcNGV:aux}
\rc_2(G,v) \leq \rc_2(G_v,v) = \sum_{X\in \XX_v(G)}\Big(2-\frac{3}{|X|}\Big)
\qquad \text{ for every } v\in V.
\end{equation}
Lemma \ref{lem:rc_sum} now gives 
\begin{equation}
\label{eq:rcNGV:aux2}
\rr_2(G)=\sum_{v\in V}\rc_2(G,v)\leq \sum_{v\in V}\sum_{X\in \mathcal{X}_v(G)}\Big(2-\frac{3}{|X|}\Big)=\rr_2(G),
\end{equation}
where the last equality follows from Theorem \ref{thm:lamanrank}(b). Therefore,  the inequalities in \eqref{eq:rcNGV:aux}
and \eqref{eq:rcNGV:aux2}
must hold with equality for all $v\in V$. This completes the proof.
\end{proof}

Lov\'asz and Yemini \cite{LY} proved that every 6-connected graph is rigid in $\R^2$.
We close this section by showing that this  sufficient connectivity condition for $\cR_2$-rigidity can be improved for $K_3$-covered graphs. We also deduce a
stronger, globally rigid version.

\begin{theorem}
\label{K3}
Let $G=(V,E)$ be a 3-connected, $K_3$-covered graph. Then\\
(a) $G$ is rigid in $\R^2$,\\
(b) $G$ is globally rigid in $\R^2$.
\end{theorem}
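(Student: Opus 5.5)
The plan for (a) is to pass to the $\cR_2$-closure and show by counting that it must be complete. Let $H=\cl_2(G)$. Then $H$ is $\cR_2$-closed and, since it contains $G$, it is still $3$-connected.

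First I check that $H$ is still $K_3$-covered. Each edge of $H$ lies in a rigid subgraph of $G$ (either a triangle of $G$, or a rigid subgraph certifying that the pair is linked). The closure of such a subgraph is a clique of $H$ on at least three vertices. So every maximal clique $X\in\cX(H)$ has $|X|\ge 3$.

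By Theorem~\ref{thm:lamanrank}(b), equivalently by summing the local formula of Lemma~\ref{2dimlocal} over all vertices using Lemma~\ref{lem:rc_sum}, we have
\[
\rr_2(H)=\sum_{X\in\cX(H)}(2|X|-3).
\]
Suppose $H$ is not complete, so that $|\cX(H)|\ge 2$. For $v\in V$ let $m_v$ be the number of maximal cliques containing $v$. Writing $2|V|=\sum_{X}\sum_{v\in X}2/m_v$ gives
\[
\rr_2(H)-(2|V|-3)=3+\sum_{X\in\cX(H)}\Big(\sum_{v\in X}2\big(1-\tfrac1{m_v}\big)-3\Big).
\]
I claim each $X$ contains at least three vertices with $m_v\ge 2$. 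If $X$ has a vertex $v$ with $m_v=1$, then $N_H(v)\subseteq X$. The set of vertices of $X$ lying in other cliques then separates $v$ from the nonempty set $V-X$, so by $3$-connectivity it has size at least $3$. If $X$ has no such vertex, the claim follows from $|X|\ge3$.

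Each vertex with $m_v\ge2$ contributes at least $1$ to the inner sum, so each bracket is nonnegative. Hence $\rr_2(H)\ge 2|V|$, contradicting (\ref{gl}). Therefore $H$ is complete, and $G$ is rigid.

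For (b), I would use the Jackson--Jord\'an characterization: a $3$-connected graph is globally rigid in $\R^2$ if and only if it is redundantly rigid, or equivalently $\cR_2$-connected. So it suffices to show that $G-e$ is rigid for each edge $e=uv$. Fix a triangle $uvw$ containing $e$ and let $H'=\cl_2(G-e)$. Only the edges $uw$ and $vw$ can fail to lie in a triangle of $G-e$, so $H'$ has at most two maximal cliques of size $2$, namely $\{u,w\}$ and $\{v,w\}$. For such a clique both endpoints have $m\ge2$ if $H'$ is not complete, so its bracket is at least $-1$. All other brackets are nonnegative as before, so the total is still at least $3-2>0$, and the same contradiction arises.

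The main obstacle is the connectivity input for the cliques of size at least $3$ in $H'$. The graph $G-e$ need only be $2$-connected, so I must rule out a $2$-separation $\{a,b\}$ of $H'$ that cuts off the private vertices of some clique. Such a separation must place $u$ and $v$ on opposite sides, with $w\in\{a,b\}$, since $e$ is the only edge of $G$ across it. My first attempt would be to argue that $e$ is then an $\cR_2$-bridge of $G$. Since $G$ is rigid, the two sides together with $\{a,b\}$ glue along only two vertices, and $G$ is not $3$-connected across $\{a,b\}$ unless $e$ is present. I would then derive a contradiction from (a) applied to the rigid graph $G$ together with the fact that every vertex of $G$ has degree at least $3$. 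If this fails, I would redo the count for $H'$ while allowing exactly one clique to have only two attachment vertices, and absorb its deficit of $-1$ into the slack of $3$.
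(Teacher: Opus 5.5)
Your part (a) is correct, and it takes a different route from the paper. The paper chooses a minimal counterexample, which is therefore $\cR_2$-closed. It removes by induction any vertex lying in only one maximal clique, and otherwise uses the local formula of Lemma~\ref{2dimlocal} to get $\rc_2(G,v)\ge|\cX_v(G)|\ge 2$ for all $v$. You keep such vertices and instead use 3-connectivity of the closure to show that every maximal clique has at least three shared vertices. A discharging of Theorem~\ref{thm:lamanrank}(b) then gives $\rr_2(H)\ge 2|V|$ directly, with neither induction nor rank contributions.

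Part (b), however, has a genuine gap.

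\emph{The size-two cliques are not bounded.} The claim that only $uw$ and $vw$ can fail to lie in a triangle of $G-e$ is false. For every common neighbour $x$ of $u$ and $v$, the edges $ux$ and $vx$ may lie in no triangle of $G$ other than $uvx$. For instance, if $N_G(u)=\{v,w,x\}$ with $wx\notin E$, then neither $uw$ nor $ux$ lies in a triangle of $G-e$. Indeed the only relevant case is when the common neighbours of $u,v$ are pairwise non-adjacent, since two adjacent ones put $e$ in a $K_4$ and make $G-e$ rigid at once. So $H'$ may a priori have up to $2k$ maximal cliques of size two, where $k$ is the number of common neighbours, and the bound $3-2>0$ is unfounded.

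\emph{The connectivity obstacle is not resolved.} Saying that $e$ is an $\cR_2$-bridge of $G$ merely restates that $H'$ is not complete, and no contradiction is actually derived. Your fallback also allows only one clique with two shared vertices, whereas a priori there can be one in which $u$ is private and another in which $v$ is private.

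The count can be repaired. If $G-e$ is not rigid then $\rr_2(H')=2|V|-4$, so your brackets sum to exactly $-4$. It therefore suffices to show that their negative parts total less than $4$.
\begin{itemize}
\item Each negative bracket belongs to a clique containing exactly one of $u,v$. Either the clique has private vertices and exactly two shared ones (at least two by 2-connectivity of $G-e$), and then 3-connectivity of $G$ forces $u$ or $v$ to be private in it. Or it is $\{u,x\}$ or $\{v,x\}$ with $x$ a common neighbour.
\item The deficit collected at $u$ is at most $1$ if $m_u=1$. If $m_u\ge 2$, it is at most $\sum(2/m_u+2/m_x-1)\le 2k_u/m_u\le 2$, summed over the $k_u$ size-two cliques $\{u,x\}$. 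The same holds at $v$.
\item Equality at both $u$ and $v$ would give a common neighbour $x$ with $N_{H'}(x)=\{u,v\}$, contradicting $\deg_G(x)\ge 3$.
\end{itemize}

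The paper avoids $G-e$ altogether. For each triangle $abc$ of $G$ it adds a new vertex joined to $a,b,c$. The resulting graph $G^+$ is 3-connected and $K_3$-covered, hence rigid by (a). Each of its edges lies in a copy of $K_4$, an $\cR_2$-circuit, so $G^+$ is redundantly rigid and therefore globally rigid by the Jackson--Jord\'an theorem. Every new vertex has a triangle of $G$ as its neighbourhood, so global rigidity passes from $G^+$ to $G$.
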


\begin{proof}
(a) Suppose, for a contradiction, that the theorem is false and that $G$ is a counterexample with as few vertices as possible and, subject to this condition, as many edges as possible. Then
$|V|\geq 5$ and
$G$ is $\cR_2$-closed. For each $v\in V$, let ${\cal X}_v(G)$ be the set of maximal cliques of $G$ that contain  $v$.
If  $|{\cal X}_v(G)|=1$ for some $v\in V$,
then $G-v$ is a 3-connected, $K_3$-covered graph and we may apply  induction to deduce that $G-v$ is rigid. This would imply that  $G$ is also rigid and give the required contradiction.

Hence we may assume that $|{\cal X}_v(G)|\geq 2$ for all $v\in V$. 
The fact that $G$ is $K_3$-covered implies that $|X|\geq 3$ for all $X\in {\cal X}_v(G)$.
Hence we have
$$\rc_2(G,v)=\sum_{X\in \mathcal{X}_v(G)}\Big(2-\frac{3}{|X|}\Big)\geq |{\cal X}_v(G)|\geq 2$$
for all $v\in V$. This gives $\rr_2(G)\geq 2|V|$ by Lemma \ref{lem:rc_sum},
and contradicts (\ref{gl}). This completes the proof of (a).

(b)
Let $G^+$ be the graph obtained from $G$ by adding a new vertex $v$ and three new edges $va,vb,vc$, for each triple $\{a,b,c\}$ of $V$ that induces a triangle in $G$. Then $G^+$ is $3$-connected and redundantly rigid in $\R^2$
by (a). Thus $G^+$ is globally rigid in $\R^2$ by \cite{JJconnrig}. 
Since $G[N_{G^+}(v)]$ is
a complete subgraph of $G$ for
each $v\in V(G^+)-V(G)$, it follows that $G$ is also globally rigid in $\R^2$.
\end{proof}

Theorem \ref{K3}(b) implies that a 2-dimensional combinatorial zeolite is globally rigid if and only if its
underlying graph is 3-edge-connected, see \cite[Corollary 3.4]{Jzeo}.

\section{Rank contributions in $\mathcal{R}_3$ and $\mathcal{C}_2^1$ }

We will use Theorem \ref{thm:cjt} to derive a closed formula for the rank contribution $\rc_2^1(G,v)$ of a vertex $v$ in a $\mathcal{C}_2^1$-closed graph $G$ to $\cC_2^1(G)$, and show that a similar formula gives an upper bound on the rank contribution $\rc_3(G,v)$ of a vertex $v$ in a $\mathcal{R}_3$-closed graph $G$ to $\cR_3(G)$.

For an arbitrary  graph $G=(V,E)$ and vertex $v\in V$, recall the definitions of $F(G)$, $\hat{\mathcal{X}}(G)$, $\mathcal{H}(G)$, $\cval(G)$, $F_v(G)$,  $\hXX_v(G)$ and $\mathcal{H}_v(G)$ from Section \ref{sec:dress}.
We  define the {\em clique value of $G$ at $v$} to be 
$$\cval(G,v):=\frac{|F_v|}{2}+\sum_{X\in \hXX_v}\left(3-\frac{6}{|X|}\right)-\sum_{h\in \mathcal{H}_v}\left(\frac{\deg_{\hXX}(h)-1}{2}\right),$$
when $v$ is not an isolated vertex of $G$, and to be zero otherwise.
Then, a straightforward  calculation gives 
\begin{equation}\label{eq:kette}
\sum_{v\in V}\cval(G,v)=\cval(G).
\end{equation}

Theorem \ref{thm:rc:cofactor} below gives a local version of Theorem \ref{thm:cjt} by showing that $\rc_2^1(G,v)=\cval(G,v)$ for any vertex $v$ in a $\cC_2^1$-closed graph $G$. We will also show in Theorem \ref{thm:rcdress:ineq:rc} that  $\cval(G,v)$ gives an upper bound on  $\rc_3(G,v)$  for any vertex $v$ in an $\cR_3$-closed graph $G$.

Our next three results  hold for  a graph $G$ and each matroid  $\cM\in \{\cR_3(G),\cC_2^1(G)\}$. We will continue to use $\rr$ and $\rc$ for the rank and rank contribution functions of $\cM$. Our first lemma determines $\rr(G)$ and $\rc(G,v)$ for a special family of graphs,
illustrated in Figure \ref{fig:spec}.

\begin{figure}[!t]
        \begin{center}
            \vspace{-0.4cm}
            \includegraphics[scale=0.4]{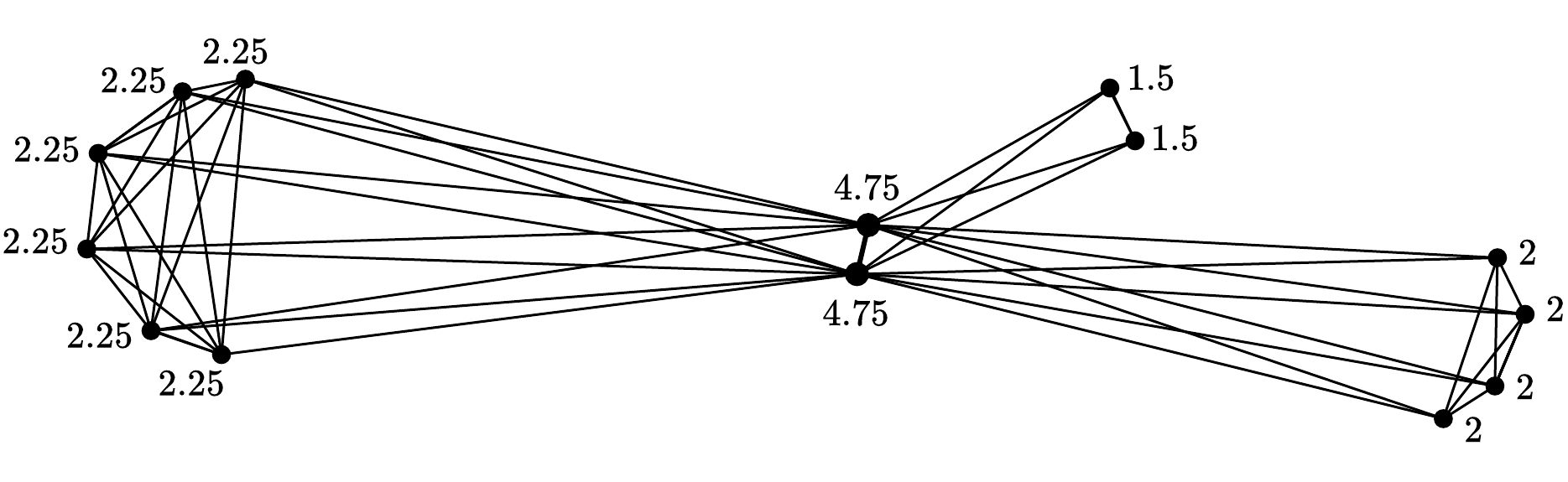}
            \vspace{-0.3cm}
\caption{Illustration of Lemma \ref{lem:cliques}. The graph has three maximal cliques of sizes \(4\), \(6\), and \(8\), sharing two common vertices. Its total rank in both \(\mathcal{C}_2^1\) and \(\mathcal{R}_3\) is \(34\), and each vertex is labeled with its rank contribution.}            \label{fig:spec}
        \end{center}
    \end{figure}

\begin{lemma}\label{lem:cliques}
Let  $G$ be a graph with  maximal cliques $C_1, C_2,$ $\dots,$ $C_k$, 
and  $\cM\in\{\cR_3(G),\cC_2^1(G)\}$. Suppose that $V(C_i)\cap V(C_j)=\{w_1,w_2\}$ for all $1\le i< j \leq k$.
Then $\rr(G)=\cval(G)$ and $\rc(G,v)=\cval(G,v)$ for all $v\in V$.
\end{lemma}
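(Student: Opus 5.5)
The plan is to compute $\rr(G)$ directly, and then compute the rank contribution of every vertex other than $w_1,w_2$ locally. The contributions of $w_1,w_2$ are then recovered from Lemma \ref{lem:rc_sum} together with \eqref{eq:kette} and the symmetry between $w_1$ and $w_2$. Write $X_i=V(C_i)$. Since every pair of cliques shares exactly $\{w_1,w_2\}$, we have $w_1w_2\in E$.

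\textbf{Step 1: the rank.} The graph $G$ is obtained by gluing the complete graphs $C_i$ along the common edge $w_1w_2$. In $\cM$ each $C_i$ is rigid with rank $3|X_i|-6$ (or $\binom{|X_i|}{2}$ when $|X_i|\le 4$). I would show that the rank is additive up to the shared edge:
\[
\rr(G)=\sum_i \rr(C_i)-(k-1).
\]
The upper bound is submodularity applied inductively, since $C_1\cup\dots\cup C_{j}$ and $C_{j+1}$ meet in the single edge $w_1w_2$.

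For the lower bound, take the bases $B_i$ of $C_i$ all containing $w_1w_2$, and check that their union is independent. One way to see this: $w_1$ is adjacent to every vertex, so $G$ is a cone with apex $w_1$. By Lemma \ref{lem:cone0}, $G$ is $\cM$-independent/rigid precisely when $G-w_1$ is so in $\cR_2$, and the analogous rank statement follows from the coning theorems, giving $\rr(G)=|V|-1+\rr_2(G-w_1)$. Now $G-w_1$ consists of cliques $X_i-w_1$ glued at the single vertex $w_2$, so $\rr_2$ is additive over them. Hence
\[
\rr_2(G-w_1)=\sum_i \rr_2(K_{|X_i|-1}).
\]

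Next compare with $\cval(G)$. The sets $X_i$ of size at least $5$ form $\hXX$. The clique edges in cliques of size at most $4$ are exactly the edges of $F$, except possibly $w_1w_2$; this edge is a hinge if at least two of the cliques are large, and lies in $F$ otherwise. A short case check on the number of large cliques then shows that the formula from the cone computation equals $\cval(G)$. I expect this bookkeeping, in particular the treatment of $w_1w_2$ and of cliques of size at most $4$ (where the rank is $\binom{|X|}{2}$ rather than $3|X|-6$, but these agree at $|X|=4$ and the edge count handles sizes $3$ and $2$), to be the main fiddly point.

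\textbf{Step 2: vertices outside $\{w_1,w_2\}$.} Such a vertex $v$ lies in exactly one clique $C_i$, so $v$ is simplicial of degree $|X_i|-1$. Both matroids have the $3$-dimensional $0$-extension property (Lemma \ref{lem:c1ext} and Lemma \ref{lem:1ext}), and $K_5$ is a circuit. So Lemma \ref{prop:rc:probsum} gives $\rc(G,v)=3-\frac{6}{|X_i|}$ when $|X_i|\ge 4$, and $\rc(G,v)=\frac{|X_i|-1}{2}$ when $|X_i|\le 4$. I would check that each of these equals $\cval(G,v)$. For $|X_i|\ge5$, $v$ lies in no hinge and has $F_v=\emptyset$. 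For $|X_i|\le 4$, all edges at $v$ are in $F$. At $|X_i|=4$ both formulas give $3/2$, consistent.

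\textbf{Step 3: $w_1$ and $w_2$.} Swapping $w_1$ and $w_2$ is an automorphism of $G$, so $\rc(G,w_1)=\rc(G,w_2)$ and $\cval(G,w_1)=\cval(G,w_2)$. By Lemma \ref{lem:rc_sum}, Step 1 and \eqref{eq:kette},
\[
\rc(G,w_1)+\rc(G,w_2)=\rr(G)-\sum_{v\neq w_1,w_2}\rc(G,v)=\cval(G)-\sum_{v\neq w_1,w_2}\cval(G,v)=\cval(G,w_1)+\cval(G,w_2).
\]
Halving this identity gives $\rc(G,w_j)=\cval(G,w_j)$ for $j=1,2$, which completes the proof.
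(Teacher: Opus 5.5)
Your proposal is correct and follows the paper's proof. The paper likewise establishes $\rr(G)=\cval(G)=3|V|-5-k$, calling it straightforward where you justify it by coning at $w_1$ and block-additivity of $\rr_2$; it then obtains the simplicial vertices from Lemma \ref{prop:rc:probsum} and recovers $w_1,w_2$ from Lemma \ref{lem:rc_sum}, \eqref{eq:kette} and the $w_1\leftrightarrow w_2$ symmetry, and your split at $|X_i|\le 4$ is harmless since $3-\frac{6}{m}=\frac{m-1}{2}$ exactly for $m\in\{3,4\}$, which is why the paper can use the single formula $3-\frac{6}{|V(C_i)|}$.
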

\begin{proof} The result is immediate when $|V|\leq2$. Hence we may
assume that every $C_i$ has at least three vertices. It is straightforward to check that $\cval(G)=3|V|-5-k=\rr(G)$. Then (\ref{eq:kette}) and Lemma \ref{lem:rc_sum} give 
\begin{equation}\label{eq:hingeexample}
\sum_{v\in V}\cval(G,v)=\cval(G)=3|V|-5-k=\rr(G)=\sum_{v\in V}\rc(G,v).
\end{equation}
If $v\in V(C_i)-\{w_1,w_2\}$, then $v$ is simplicial, and we can use Lemma \ref{prop:rc:probsum}  to deduce that  %
$\cval(G,v)=3-\frac{6}{|V(C_i)|}=\rc(G,v)$.
Together with \eqref{eq:hingeexample}, this implies that
$\sum_{i=1}^2\rc(G,w_i)=\sum_{i=1}^2\cval(G,w_i).$
By symmetry, $\rc(G,w_i)=\cval(G,w_i)$ follows for both $i=1,2$.
\end{proof}

Our next result concerns rank contributions in closed cone graphs.

\begin{lemma}\label{lem:cone:rcdress:rc}
Let  $G=(V,E)$ be an $\cM$-closed graph for some  $\cM\in \{\cR_3,\cC_2^1\}$. Suppose $N_G(w)=V-\{w\}$ for some $w\in V$. 
Then, for each $v\in V$:
\begin{enumerate}[(a)]
\item 
$\rc(G,v)=\rc(G[N_G(v)\cup \{v\}],v)$;
\item $\displaystyle\rc(G,v)=\cval(G,v)$.
\end{enumerate}
\end{lemma}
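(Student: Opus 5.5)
The plan mirrors the proof of Lemma \ref{2dimlocal}: first show that the local quantity can only overestimate $\rc(G,v)$, then use a global counting identity to force equality. Throughout put $G_v=G[N_G(v)\cup\{v\}]$.

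\textbf{Step 1: the cone vertex $w$.} For $v=w$ we have $G_w=G$, so (a) is trivial for $w$. By Lemma \ref{lem:cone:rcdress}(b), $\rr(G)=\cval(G)$. By Lemma \ref{lem:rc_sum} and (\ref{eq:kette}), $\sum_{u}\rc(G,u)=\sum_u \cval(G,u)$.

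\textbf{Step 2: the local upper bound $\rc(G,v)\le \cval(G,v)$ for $v\neq w$.} This is the core step, and I would prove it by induction on $|V|$. Since $v$ has the same neighbourhood in $G$ and in $G_v$, Corollary \ref{cor:new_edge} gives $\rc(G,v)\le \rc(G_v,v)$.

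\emph{Properties of $G_v$.} As an induced subgraph of $G$, the graph $G_v$ is $\cM$-closed. It is a cone graph with cone vertex $w$, and $v$ is a second cone vertex of it.

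\emph{Equal clique values.} The maximal cliques of size at least five containing $v$ are the same in $G_v$ and in $G$, and so are the edges at $v$ together with their $\hXX$-degrees. Hence $\cval(G_v,v)=\cval(G,v)$.

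\emph{When $G_v\ne G$.} We have $|V(G_v)|<|V|$, so induction applied to $G_v$ gives $\rc(G_v,v)=\cval(G_v,v)=\cval(G,v)$. This yields the inequality.

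\emph{When $G_v=G$.} Then $v$ is itself a cone vertex of $G$, so $G$ has two cone vertices $v,w$. I would now apply the same neighbourhood argument to every other vertex $u\notin\{v,w\}$. If some $G_u\ne G$, induction gives $\rc(G,u)\le\cval(G,u)$. If $G_u=G$ for every vertex $u$, then $G$ is complete. For complete $G$ every vertex is simplicial, so Lemma \ref{prop:rc:probsum} with $d=3$ gives $\rc(G,u)=3-6/|V|$, which agrees with $\cval(G,u)$ (the $|V|\le 4$ case, where $\rc(G,u)=\deg(u)/2=|F_u|/2$, is checked directly).

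\textbf{Step 3: forcing equality.} Combining the bounds for all $u\ne w$ with the sum identity of Step 1 gives $\rc(G,w)\ge\cval(G,w)$, but by itself this does not give equality everywhere. I see two ways to close the gap.

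\emph{First option: show $\rc(G,w)\le\cval(G,w)$ directly.} Then $\sum_u\rc(G,u)\le\sum_u\cval(G,u)$ holds termwise and is an equality overall, so every term must be an equality. To get this bound I would let $G'$ be $G$ together with a new cone vertex $w'$ adjacent to all of $V$ (including $w$). Then $G'$ is $\cM$-closed by Lemma \ref{lem:cone0}, since $G'-w'=G$ is a cone graph with $G-w$ being $\cR_2$-closed. The hope is to compare $\rc(G,w)$ with $\rc(G',w)$. This is the main obstacle, because it increases the vertex count, so it does not fit the induction as set up.

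\emph{Second option: avoid $w$ altogether.} Alternatively I would do strong induction and choose the roles cleverly. When $G$ has a unique cone vertex $w$, every other vertex $u$ has $G_u\ne G$. Then the argument above, via Corollary \ref{cor:new_edge} and induction on $G_u$, gives $\rc(G,u)\le\cval(G,u)$ for all $u\ne w$.

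\emph{Remaining need in this option.} I still need equality for the vertices $u\ne w$, that is, $\rc(G,u)\ge\cval(G,u)$. For a non-cone vertex $u$, I would try to prove this by writing $\rc(G,u)$ as an average over orderings. In the cone graph $G_u$, conditioned on the position of $w$ relative to $u$, the rank increments of $u$ should agree with those in $G$, since the closedness of $G$ localizes $\cR_2$-rank in $G-w$ to $N(u)$ via Theorem \ref{thm:lamanrank}(b). Establishing this matching of increments is the hard part of the whole argument. Once it holds, it gives (a) for all $u\ne w$, after which Step 1 forces (b) for $w$ as well.
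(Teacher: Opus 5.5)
Your proposal has a genuine gap, and it is the one you flag yourself. You never prove the ordering-by-ordering identity $\rc(G,v,\pi)=\rc(G_v,v,\pi_v)$, with $\pi_v$ the restriction of $\pi$ to $N_G(v)\cup\{v\}$. That identity is part (a) for $v\neq w$, and without it your inequality-plus-sum strategy cannot close.

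Corollary~\ref{cor:new_edge} only gives $\rc(G,v)\le\rc(G_v,v)$, and it is vacuous at $w$ since $G_w=G$. So your upper bounds at the vertices $u\neq w$, combined with Lemma~\ref{lem:rc_sum}, yield only $\rc(G,w)\ge\cval(G,w)$. The same defect already breaks Step~2 in the double-cone case $G_v=G$ with $G$ not complete. There, bounds at the vertices $u\notin\{v,w\}$ plus the sum identity give a \emph{lower} bound on $\rc(G,v)+\rc(G,w)$, not the upper bound on $\rc(G,v)$ you want. The analogy with Lemma~\ref{2dimlocal} fails because there the cone case was settled independently before the averaging argument. Here the cone case is exactly what is being proved.

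The missing step is short given the earlier lemmas. Every induced subgraph of $G$ is $\cM$-closed, so by the gluing lemma its maximal cliques meet pairwise in at most two vertices. Hence Lemma~\ref{lem:circuit:NGv}(b) applies to $G[T_v^\pi\cup\{v\}]$.

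\emph{Case $w\in T_v^\pi$.} The graphs $G[T_v^\pi\cup\{v\}]$, $G[T_v^\pi]$, $G[(N_G(v)\cap T_v^\pi)\cup\{v\}]$ and $G[N_G(v)\cap T_v^\pi]$ are closed cone graphs with cone vertex $w$. Their ranks therefore equal their clique values by Lemma~\ref{lem:cone:rcdress}(b).

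\emph{Case $w\notin T_v^\pi$.} The same conclusion follows from Lemma~\ref{lem:cone:rcdress}(c), applied to each of these sets with $w$ added. This case genuinely needs the three-dimensional input behind part (c): Theorem~\ref{dimdrop} for $\cR_3$ and Theorem~\ref{thm:cjt} for $\cC_2^1$. Your suggested localization of $\cR_2$-rank via Theorem~\ref{thm:lamanrank}(b) only covers orderings in which $w$ precedes $v$.

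In both cases Lemma~\ref{lem:circuit:NGv}(b) then equates the two increments for every $\pi$, which gives (a).

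For (b) with $v\neq w$, note that $G_v$ is a closed double cone with cone vertices $v,w$. So $G_v-v-w$ is $\cR_1$-closed, i.e.\ a disjoint union of cliques. Lemma~\ref{lem:cliques} then gives $\rc(G_v,v)=\cval(G_v,v)=\cval(G,v)$ outright. This also disposes of your double-cone case: all other vertices are simplicial, and $v$ and $w$ are interchangeable. Only after this does the sum identity settle the case $v=w$.
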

\begin{proof}
(a) Put $G_v=G[N_G(v)\cup \{v\}].$ 	
For an arbitrary ordering $\pi$ of $V$, let $\pi_v$ be the restriction of $\pi$ to $N_G(v)\cup \{v\}$.
We will show  that $\rc(G, v,\pi)=\rc(G_v, v,\pi_v)$,
i.e.,
$$ \rr(G[T_v^\pi\cup \{v\}])-\rr(G[T_v^\pi])=\rr(G[(N_G(v)\cap T_v^\pi)\cup \{v\}])-\rr(G[N_G(v)\cap T_v^\pi]).$$ 
If $w\in T_v^\pi$, then this follows by applying Lemma \ref{lem:cone:rcdress}(b) and Lemma \ref{lem:circuit:NGv}(b) to the graph $G[T_v^\pi\cup \{v\}]$.
If $w\notin T_v^\pi$, then it follows similarly from Lemma \ref{lem:cone:rcdress}(c) and Lemma \ref{lem:circuit:NGv}(b). 
The fact that, for every ordering $\pi$, we have $\rc(G, v,\pi)=\rc(G_v, v,\pi_v)$ implies that $\rc(G,v)=\rc(G_v,v)$, as required.

\medskip

(b) 
We first suppose that $v\neq w$. Let $G_v=G[N_G(v)\cup \{v\}]$. Observe that $\cval(G,v)=\cval(G_v,v)$. 
In addition, $G_v$ satisfies the hypotheses of Lemma \ref{lem:cliques}. (This follows since $G_v$ is an $\cM$-closed, double cone graph with cone vertices $v,w$ and hence $G_v-v-w$ is $\cR_1$-closed, so is a disjoint union of cliques.)
Lemma \ref{lem:cliques} now gives $\cval(G_v,v)=\rc(G_v,v)$. 
Hence, 
$$\cval(G,v)=\cval(G_v,v)=\rc(G_v,v)=\rc(G,v),$$ where the last equality follows from (a).

It remains to consider the case when $v=w$. Using Lemma \ref{lem:cone:rcdress}(b) and Lemma \ref{lem:rc_sum}, we obtain $$\sum_{v\in V}\cval(G,v)=\cval(G)=\rr(G)=\sum_{v\in V}{\rc(G,v)}.$$
Since $\cval(G,v)=\rc(G,v)$ for all $v\neq w$, it follows that $\cval(G,w)=\rc(G,w)$.
\end{proof}

\begin{theorem}\label{thm:rcdress:ineq:rc}
Let  $G=(V,E)$ be a locally $\cR_2$-closed graph, $v\in V$ and  $\cM\in \{\cR_3,\cC_2^1\}$.  
Then  $\rc(G,v)\leq \cval(G,v).$
\end{theorem}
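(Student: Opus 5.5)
The plan is to reduce to the closed neighbourhood of $v$ and then use the cone-graph computation in Lemma~\ref{lem:cone:rcdress:rc}. Put $G_v=G[N_G(v)\cup\{v\}]$.

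First, since $G_v\subseteq G$ and $N_{G_v}(v)=N_G(v)$, Corollary~\ref{cor:new_edge} gives
\[
\rc(G,v)\leq \rc(G_v,v).
\]
Second, because $G$ is locally $\cR_2$-closed, the graph $G[N_G(v)]$ is $\cR_2$-closed. Lemma~\ref{lem:cone0}, applied to the cone graph $G_v$ with cone vertex $v$, then shows that $G_v$ is $\cM$-closed. Hence Lemma~\ref{lem:cone:rcdress:rc}(b), with $w=v$, yields
\[
\rc(G_v,v)=\cval(G_v,v).
\]

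It remains to show that $\cval(G_v,v)=\cval(G,v)$. The quantity $\cval(\cdot,v)$ depends only on the following data at $v$: the edges at $v$ lying in no $K_5$, the maximal cliques of size at least five containing $v$ together with their sizes, and, for each edge $h$ at $v$, the number $\deg_{\hXX}(h)$ of such cliques containing $h$.

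Every clique of $G$ containing $v$ lies inside $N_G(v)\cup\{v\}$, so it is also a clique of $G_v$. Conversely, a clique of $G_v$ containing $v$ is a clique of $G$. Maximality is preserved as well. If a clique $X\ni v$ of $G_v$ were properly contained in a larger clique $Y$ of $G$, then $Y\ni v$, so $Y\subseteq N_G(v)\cup\{v\}$ and $Y$ would already be a clique of $G_v$, a contradiction. Therefore $\hXX_v(G)=\hXX_v(G_v)$ with the same sizes.

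For an edge $h=vu$, the sets in $\hXX(G)$ containing $h$ are exactly the sets in $\hXX_v(G)$ containing $u$. So $\deg_{\hXX}(h)$ is the same in both graphs, and in particular $F_v$ and $\mathcal{H}_v$ agree. This gives $\cval(G,v)=\cval(G_v,v)$, and combining the three displays yields $\rc(G,v)\leq\cval(G,v)$.

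The only point that needs care is applying Lemma~\ref{lem:cone:rcdress:rc}. Its hypothesis is that $G_v$ is $\cM$-closed, and this is exactly what the local $\cR_2$-closedness provides through Lemma~\ref{lem:cone0}, as noted in the remark preceding Theorem~\ref{thm:rcdress:ineq}. Once that is in place, the rest is bookkeeping. The argument mirrors the inequality step in the proof of Lemma~\ref{2dimlocal}.
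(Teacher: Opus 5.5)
Your proposal is correct and is essentially the paper's own proof: restrict to $G_v=G[N_G(v)\cup\{v\}]$, use Corollary~\ref{cor:new_edge} to get $\rc(G,v)\le\rc(G_v,v)$, and apply Lemma~\ref{lem:cone:rcdress:rc}(b) to $G_v$, which is $\cM$-closed by local $\cR_2$-closedness. Your explicit check that $\cval(G,v)=\cval(G_v,v)$ (the same $\hXX_v$, $F_v$ and hinge degrees at $v$) spells out a step that the paper only asserts.
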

\begin{proof} Let $G_v=G[N_G(v)\cup \{v\}]$. 
Since $G$ is locally $\cR_2$-closed, $G_v$ is $\cM$-closed. Thus, $\cval(G_v,v)=\rc(G_v,v)$ by Lemma \ref{lem:cone:rcdress:rc}(b). Corollary \ref{cor:new_edge} and (\ref{eq:kette}) now give
\begin{equation*}
    \cval(G,v)=\cval(G_v,v)=\rc(G_v,v)\geq \rc(G,v).\qedhere
\end{equation*}
\end{proof}

We close this section by using Theorem \ref{thm:cjt} to show that the upper bound on the rank contribution of a vertex in a $\cC_2^1$-closed graph given by Theorem \ref{thm:rcdress:ineq:rc} is tight.

\begin{theorem}\label{thm:rc:cofactor}
Let $G=(V,E)$ be a $\cC_2^1$-closed graph, $v\in V$. Then $\rc_2^1(G,v)= \cval(G,v).$
\end{theorem}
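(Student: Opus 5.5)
The plan is the same squeezing argument used in the proof of Lemma \ref{2dimlocal}: an upper bound holding vertex by vertex, combined with equality of the sums over all vertices.

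First, since $G$ is $\cC_2^1$-closed, every induced subgraph of $G$ is $\cC_2^1$-closed. In particular $G[N_G(v)\cup\{v\}]$ is $\cC_2^1$-closed for every $v\in V$. By the remark preceding Theorem \ref{thm:rcdress:ineq} (which uses Lemma \ref{lem:cone0}), $G$ is therefore locally $\cR_2$-closed. Theorem \ref{thm:rcdress:ineq:rc}, applied with $\cM=\cC_2^1$, then gives
\[
\rc_2^1(G,u)\leq \cval(G,u)\quad\text{for every } u\in V.
\]

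Second, sum over all vertices. Lemma \ref{lem:rc_sum} gives $\sum_{u\in V}\rc_2^1(G,u)=\rr_2^1(G)$. Theorem \ref{thm:cjt} applies to the $\cC_2^1$-closed graph $G$ and gives $\rr_2^1(G)=\cval(G)$. Finally, (\ref{eq:kette}) gives $\cval(G)=\sum_{u\in V}\cval(G,u)$. Together these yield
\[
\sum_{u\in V}\rc_2^1(G,u)=\rr_2^1(G)=\cval(G)=\sum_{u\in V}\cval(G,u).
\]
A sum of termwise inequalities $\rc_2^1(G,u)\le\cval(G,u)$ that holds with equality forces equality in each term. In particular $\rc_2^1(G,v)=\cval(G,v)$.

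The only point that needs care is the local closedness hypothesis of Theorem \ref{thm:rcdress:ineq:rc}. It comes from heredity of closedness to induced subgraphs together with Lemma \ref{lem:cone0}: $G[N_G(v)\cup\{v\}]$ is a cone with cone vertex $v$, so its $\cC_2^1$-closedness is equivalent to $G[N_G(v)]$ being $\cR_2$-closed. Isolated vertices cause no trouble, since both sides are zero there. The substantive input is Theorem \ref{thm:cjt}, which we take as given; everything else is bookkeeping.
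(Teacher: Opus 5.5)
Your proposal is correct and follows essentially the same route as the paper: the bound $\rc_2^1(G,u)\le \cval(G,u)$ from Theorem~\ref{thm:rcdress:ineq:rc}, using that $\cC_2^1$-closed graphs are locally $\cR_2$-closed, is combined with Lemma~\ref{lem:rc_sum}, Theorem~\ref{thm:cjt} and (\ref{eq:kette}) to force equality at every vertex. Your extra justification of local $\cR_2$-closedness, via heredity of closedness to induced subgraphs and Lemma~\ref{lem:cone0}, is exactly the observation the paper cites from Section~\ref{sec:dress}.
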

\begin{proof}
Every $\cC_2^1$-closed graph is locally $\cR_2$-closed,
as observed in Section~\ref{sec:dress}. 
By Theorem \ref{thm:rcdress:ineq:rc}, for each $v\in V$, we have $\rc_2^1(G,v)\leq \cval(G,v)$.
Theorem \ref{thm:cjt} now gives 
$$\rr_2^1(G)=\sum_{v\in V}\rc_2^1(G,v)\leq  \sum_{v\in V}\cval(G,v)=\cval(G)=\rr_2^1(G).$$
Hence each inequality $\rc_2^1(G,v)\leq \cval(G,v)$ must hold with equality.
\end{proof}

\section{$\mathcal{C}_2^1$-rigidity of $K_4$-covered graphs} 
\label{sec:cof}

We use rank contributions to analyse the $C_2^1$-cofactor matroid of a $K_4$-covered graph. We first show that every 6-connected,  $K_4$-covered graph $G$ is {\em $\cC_2^1$-rigid}, i.e. $r_2^1(G)=3|V|-6$.  We 
then show that the $C_2^1$-cofactor version of a conjectured characterisation of the rank of a body-pin graph in the 3-dimensional rigidity matroid is valid.
We will need the following result which uses Theorem \ref{thm:rc:cofactor} to derive new
formulas and bounds for the rank 
contribution of a vertex $v$ in a $\cC_2^1$-closed graph $G$ in terms of the set $\hat\cX_v$ defined in Section \ref{sec:DW}.

\begin{lemma}\label{lem:cof:bounds}
Let $G=(V,E)$ be a $\cC_2^1$-closed graph, $v\in V$, and put $H=G[N_G(v)]$.  Let $J$ be the set of edges of $H$ which are not contained in a copy of $K_4$ in $H$.\\
(a)        $$\displaystyle \rc_2^1(G,v)=\frac{\deg_G(v)}{2}+\sum_{X\in\hXX_v}\bigg(\frac{7}{2}-\frac{6}{|X|}-\frac{|X|}{2}\bigg).$$
(b) If $\deg_G(v)\geq3$, then
$$\rc_2^1(G,v)= \frac{2\deg_G(v)-\rr_2(H)+|J|}{4}+\sum_{X\in \hXX_v}\bigg(\frac{9}{4}-\frac{6}{|X|}\bigg)=\frac{\dof_2(H-J)+3}{4}+\sum_{X\in \hXX_v}\bigg(\frac{9}{4}-\frac{6}{|X|}\bigg).$$
(c) If $v$ is not a simplicial vertex of $G$, then $\rc_2^1(G,v)\geq 1+ \frac{21}{20} \cdot |\hXX_v|.$
\end{lemma}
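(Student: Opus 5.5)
Part (a) is a rewriting of Theorem~\ref{thm:rc:cofactor}, part (b) converts it using Theorem~\ref{thm:lamanrank}(b) applied to the neighbourhood graph $H$, and part (c) comes from (b) by bounding each term.

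\textbf{Part (a).} Since $G$ is $\cC_2^1$-closed, Theorem~\ref{thm:rc:cofactor} gives $\rc_2^1(G,v)=\cval(G,v)$, so it suffices to rewrite $\cval(G,v)$.
\begin{itemize}
\item Every edge $e\in E_v$ has $\deg_{\hXX}(e)=0$ if $e\in F_v$, and otherwise $\deg_{\hXX}(e)\geq 1$.
\item Hence $|F_v|-\sum_{h\in\mathcal{H}_v}(\deg_{\hXX}(h)-1)=\sum_{e\in E_v}(1-\deg_{\hXX}(e))$.
\item Double counting pairs $(e,X)$ with $e\in E_v$, $X\in\hXX_v$ and $e\subseteq X$ gives $\sum_{e\in E_v}\deg_{\hXX}(e)=\sum_{X\in\hXX_v}(|X|-1)$.
\item Therefore $\cval(G,v)=\frac{\deg_G(v)}{2}+\sum_{X\in\hXX_v}\big(3-\frac{6}{|X|}-\frac{|X|-1}{2}\big)$, which is (a).
\end{itemize}

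\textbf{Part (b).} Every $\cC_2^1$-closed graph is locally $\cR_2$-closed, so $H$ is $\cR_2$-closed. I would establish three facts about $H$.
\begin{itemize}
\item The maximal cliques of $H$ of size at least four are exactly the sets $X-\{v\}$ with $X\in\hXX_v$. A clique of $H$ together with $v$ is a clique of $G$, and conversely.
\item By the Gluing Lemma (Lemma~\ref{gluing}) and $\cR_2$-closedness, two distinct maximal cliques of $H$ share at most one vertex.
\item The remaining maximal cliques of $H$ are single edges and triangles. Their edges are exactly the edges of $J$, each lying in a unique maximal clique.
\end{itemize}
Theorem~\ref{thm:lamanrank}(b) then gives
$$\rr_2(H)=|J|+\sum_{X\in\hXX_v}\big(2(|X|-1)-3\big),$$
since a triangle contributes $3$ and an edge contributes $1$. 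Substituting this into (a), the identity $\frac{2\deg_G(v)-\rr_2(H)+|J|}{4}+\sum_{X\in\hXX_v}(\frac94-\frac{6}{|X|})=\rc_2^1(G,v)$ reduces to checking that $\frac{5-2|X|}{4}+\frac94-\frac6{|X|}=\frac72-\frac{|X|}{2}-\frac6{|X|}$, which is immediate.

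For the second expression I need $\rr_2(H-J)=\rr_2(H)-|J|$. Edges of $J$ lie in no clique of size at least four of the $\cR_2$-closed graph $H$, so the decomposition above exhibits them as $\cR_2$-bridges. Then, since $|V(H)|=\deg_G(v)\geq 3$, we get $\dof_2(H-J)=2\deg_G(v)-3-\rr_2(H)+|J|$.

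\textbf{Part (c).} If $\deg_G(v)\leq 2$, then non-simpliciality forces $\deg_G(v)=2$ with non-adjacent neighbours. In that case $\hXX_v=\emptyset$, and $\rc_2^1(G,v)=1$ by Lemma~\ref{prop:rc:probsum}.

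Otherwise use (b).
\begin{itemize}
\item Each $X\in\hXX_v$ has $|X|\geq 5$, so $\frac94-\frac6{|X|}\geq\frac94-\frac65=\frac{21}{20}$.
\item It remains to show $\dof_2(H-J)\geq 1$, i.e.\ that $H-J$ is not $\cR_2$-rigid.
\item If $H-J$ were rigid, then $H$ would be rigid. As $H$ is $\cR_2$-closed, $H$ would be complete.
\item That contradicts $v$ not being simplicial.
\end{itemize}

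The main obstacle is the bookkeeping in (b): verifying carefully that the maximal-clique decomposition of the $\cR_2$-closed graph $H$ matches $\hXX_v$ together with $J$, and that the edges of $J$ are $\cR_2$-bridges of $H$.
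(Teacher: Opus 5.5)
Your proposal is correct and follows essentially the same route as the paper. Part (a) rewrites $\cval(G,v)$ by the same edge-counting after invoking Theorem~\ref{thm:rc:cofactor}. Part (b) applies Theorem~\ref{thm:lamanrank}(b) to the $\cR_2$-closed graph $H$ and uses that the edges of $J$ are $\cR_2$-bridges. Part (c) bounds the two terms of (b) using that $H$ is not $\cR_2$-rigid. Your separate handling of $\deg_G(v)=2$ in (c), and your explicit check that the maximal cliques of $H$ correspond to $\hXX_v$ together with $J$, fill in details the paper leaves implicit.
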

\begin{proof}
(a) 
A simple counting argument based on the definition of $F_v$ in Section \ref{sec:DW} gives 
$$|F_v|=\deg_G(v)-\left|\bigcup_{X\in \hXX_v} (X-v)\right|=\deg_G(v)-\sum_{X\in \hXX_v}(|X|-1) + \sum_{h\in \HH_v}(\deg_{\hXX}(h)-1).$$
Theorem
\ref{thm:rc:cofactor} and the definition of $\cval(G,v)$ now give (a).\medskip

(b)  Since $G$ is $\cC_2^1$-closed,  $H$ is $\cR_2$-closed by Lemma \ref{lem:cone0}. We can now apply
Theorem \ref{thm:lamanrank}(b) to $H$ and obtain 
$\rr_2(H)=\sum_{Y\in\XX(H)}(2|Y|-3)=|J|+\sum_{X\in\hXX_v}(2|X-\{v\}|-3)$, where $\XX(H)$ is the set of maximal cliques of $H$ of size at least two.
Hence $$\frac{\rr_2(H)-|J|}{4}=\sum_{X\in\hXX_v}\Big(\frac{|X|}{2}-\frac{5}{4}\Big)$$ holds.
Combining this equality with (a) gives 
the first equality in 
(b).
Since $H$ is $\cR_2$-closed,
every edge in $J$ is an $\cR_2$-bridge of $H$ and hence 
$$\dof_2(H-J)=2|V(H)|-3-\rr_2(H-J)=2|V(H)|-3-\rr_2(H)+|J|.$$
Thus the second equality of (b) follows by combining the first equality with the above expression for  $\dof_2(H-J)$.

\medskip

(c) Suppose $v$ is not a simplicial vertex of $G$. Then the fact that $G$ is $\cC_2^1$-closed,
together with Theorem \ref{thm:coning_cof}, 
implies that $H$ is not $\mathcal{R}_2$-rigid. Hence $2\deg_G(v)-\rr_2(H)\geq 4$. 
Thus the first term of the RHS of the first equality of (b) is at least $1$. Since $|X|\geq 5$ for
all $X\in \hXX_v$, the contribution of each set $X$ to the second term is at least
$\frac{9}{4}-\frac{6}{5}=\frac{21}{20}$. Hence
(c) follows from the first equality in (b).
\end{proof}

\subsection{\boldmath $C_2^1$-rigidity of 6-connected $K_4$-covered graphs}
We derive a sufficient condition for the $\cC_2^1$-rigidity of a $K_4$-covered graph and give an example which shows that our condition is best possible.

\begin{theorem}\label{thm:zeo:general}
Every 6-connected, $K_4$-covered graph is $\cC_2^1$-rigid.
\end{theorem}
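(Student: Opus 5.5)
The plan is to argue by contradiction using rank contributions. Let $G$ be a 6-connected, $K_4$-covered graph that is not $\cC_2^1$-rigid, and replace $G$ by its $\cC_2^1$-closure $\bar G$. Adding edges preserves 6-connectivity and the $K_4$-covered property, and rigidity is unchanged. So we may assume $G$ is $\cC_2^1$-closed and $r_2^1(G)<3|V|-6$. By Lemma \ref{lem:rc_sum} it then suffices to show
$$\sum_{v\in V}\rc_2^1(G,v)\geq 3|V|-6,$$
using the exact local formulas of Lemma \ref{lem:cof:bounds}, which hold because $G$ is closed.

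I would split the vertices by $|\hXX_v|$ and by whether they are simplicial, writing $H_v=G[N_G(v)]$.

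\textbf{Non-simplicial, $|\hXX_v|=0$.} Lemma \ref{lem:cof:bounds}(a) gives $\rc_2^1(G,v)=\deg_G(v)/2\geq 3$.

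\textbf{Non-simplicial, $|\hXX_v|\geq 2$.} Lemma \ref{lem:cof:bounds}(c) gives $\rc_2^1(G,v)\geq 1+\frac{21}{10}>3$. The surplus over $3$ grows with the sizes of the cliques, via the terms $\frac94-\frac6{|X|}$ in (b).

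\textbf{Non-simplicial, $|\hXX_v|=1$.} Let $\hXX_v=\{X\}$. Every vertex of $N_G(v)\setminus X$ lies in a $K_4$ through $v$, hence in a triangle of $H_v$ that lies in no $K_4$ of $H_v$. I would use this, together with closedness and Lemma \ref{cgluing} (which forbids the extra vertices from attaching to $X$ too strongly), to bound $\dof_2(H_v-J)$ from below in Lemma \ref{lem:cof:bounds}(b). The goal is to get $\rc_2^1(G,v)\geq 3$, or at least a controlled deficit.

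\textbf{Simplicial.} Here $N_G(v)\cup\{v\}$ is a clique $X$ with $|X|\geq 7$, since $\deg_G(v)\geq 6$. Lemma \ref{prop:rc:probsum} gives $\rc_2^1(G,v)=3-\frac{6}{|X|}$, a deficit of $6/|X|$. Hence the simplicial vertices of a single maximal clique $X$ have total deficit at most $6$.

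The final step is a discharging argument. If $G$ is itself a clique it is rigid, so each clique $X$ containing simplicial vertices is a proper subgraph. Its non-simplicial vertices then form a vertex cut, so by 6-connectivity there are at least $6$ of them. Each such vertex lies in $X$ and in some other clique or $K_4$ structure, so it has positive surplus. For example, a vertex with $|\hXX_v|\geq2$ gets at least $\frac94-\frac6{|X|}\geq\frac{9}{7}$ from $X$ alone, on top of the rest of its bound. I would charge the deficit $6/|X|$ of each simplicial vertex of $X$ to these at least $6$ attachment vertices. The aim is to show the total deficit of the simplicial vertices over all such cliques is at most $6$ plus the total surplus, which gives $r_2^1(G)\geq 3|V|-6$, a contradiction.

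I expect the main obstacle to be twofold. First, the case $|\hXX_v|=1$, where the bound from Lemma \ref{lem:cof:bounds}(b) is only about $3+\frac m2-\frac6{|X|}$, with $m$ the number of neighbours of $v$ outside $X$; this needs the structural argument about triangles outside $X$ to push $m$ up. Second, arranging the charging so that each surplus vertex is not over-charged when it attaches to several cliques containing simplicial vertices. I would first try charging per incidence (vertex, clique in $\hXX_v$), since both the deficits and the surpluses come clique by clique.
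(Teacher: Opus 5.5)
Your reduction to the closure and the cases $|\hXX_v|=0$ and $|\hXX_v|\ge 2$ are fine. The case $|\hXX_v|=1$, however, has a genuine gap, and it is where the real difficulty of the theorem lies. Your formula $\rc_2^1(G,v)=3+\frac m2-\frac6{|X|}$ is correct, and the gluing argument gives $m\ge 2$, but no more. Consider a vertex $v$ with $|X|=5$, $\deg_G(v)=6$ and $G[N_G(v)]$ equal to a $K_4$ and a $K_3$ sharing a single vertex $v^*$. It satisfies all your local constraints (closedness, Lemma~\ref{cgluing}, $K_4$-coveredness, degree at least $6$), yet $\rc_2^1(G,v)=3-\frac15$. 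Such a $v$ is not simplicial and lies in no clique of size at least $7$, so your charging scheme never absorbs its deficit.

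The missing idea, which is the heart of the paper's proof, is to charge each such $v$ to the hub $v^*$. First, 6-connectivity gives $\deg_G(v^*)\ge 7$, since otherwise $N_G(v)\setminus\{v^*\}$ separates $\{v,v^*\}$ from the rest. Second, $\deg_G(v^*)=7$ is impossible: a $K_4$ through $v^*$ and its one extra neighbour would meet the $K_5$ or the $K_4$ at $v$ in three vertices, contradicting closedness via Lemma~\ref{cgluing}. Write $\sigma^{-1}(u)$ for the set of such $v$ with $v^*=u$. Each $v\in\sigma^{-1}(u)$ contributes two edges to the set $J$ of Lemma~\ref{lem:cof:bounds}(b) for $G[N_G(u)]$. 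Hence $|J|\ge|\sigma^{-1}(u)|$ and $\rc_2^1(G,u)\ge 3+|\sigma^{-1}(u)|/5$. When $|\hXX_u|=1$ you instead use part (a) together with $\deg_G(u)\ge 8$ and $|\sigma^{-1}(u)|\le 4$.

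The simplicial part is also only stated as an aim, although it can be completed. Suppose $X$ has $a\ge 6$ attachment vertices and $s=|X|-a$ simplicial ones. Spreading their total deficit $6s/|X|$ evenly charges each attachment vertex $\frac6a-\frac6{|X|}\le 1-\frac6{|X|}$. If $|\hXX_u|=1$, this is covered by $\frac m2-\frac6{|X|}$; otherwise it is covered by the term $\frac94-\frac6{|X|}$ in Lemma~\ref{lem:cof:bounds}(b). You would then still have to check that these charges are compatible with the hub charges above.

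The paper avoids this altogether by taking a counterexample with $|V|$ minimal. Deleting a simplicial vertex preserves 6-connectivity and $K_4$-coveredness, and a 0-extension restores rigidity, so there are no simplicial vertices.

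Finally, adding edges does not preserve $K_4$-coveredness in general. For the $\cC_2^1$-closure it does hold, by Theorem~\ref{thm:CJT:K5}: each added edge is redundant in the closure and hence lies in a $K_5$.
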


\begin{proof}
Suppose, for a contradiction,  that $G=(V,E)$ is a counterexample such that $|V|$ is as small as possible and,
subject to this condition, $|E|$ is as large as possible. It follows from the maximality of $|E|$ and Theorem \ref{thm:CJT:K5} that $G$ is $\cC_2^1$-closed.
Furthermore,   the minimality of $|V|$ implies that  
$G$ has no simplicial vertices.
Let us call a vertex $v\in V$ {\it deficient} if $\rc_2^1(G,v)<3$ holds.

\begin{claim}\label{claim:deficient}
Let $v\in V$ be a deficient vertex. Then:
\begin{enumerate}[(a)]
\item $\deg_G(v)=6$ and $\rc_2^1(G,v)=3-\frac{1}{5}$;
\item $G[N_G(v)]$ is the union of a copy of $K_4$ and a copy of $K_3$ with exactly one common vertex, which we denote by $v^*$; 
\item $\deg_G(v^*)\geq 8.$
\end{enumerate}
\end{claim}
\begin{proof} It follows from the 6-connectivity of $G$ and Lemma \ref{lem:cof:bounds}(a) that $|\hXX_v|\geq 1$. On the other hand, Lemma \ref{lem:cof:bounds}(c) implies that $|\hXX_v|\leq 1.$ Hence
$|\hXX_v|= 1$. Let $\hXX_v=\{X\}$. Since $G$ is a $\cC_2^1$-closed, $K_4$-covered graph and $v$ is not a simplicial vertex of $G$, the Gluing Lemma for cofactor matroids 
(Lemma \ref{cgluing}) implies that $\deg_G(v)\geq |X|+1$. Thus, by Lemma \ref{lem:cof:bounds}(a), 
$$\rc_2^1(G,v)=\frac{\deg_G(v)}{2}+\Big(\frac{7}{2}-\frac{6}{|X|}-\frac{|X|}{2}\Big)\geq 4-\frac{6}{|X|}.$$
This bound and
$\rc_2^1(G,v)<3$ imply that $|X|=5$ and $\deg_G(v)=6$, from which (a) follows.
The fact that $G$ is a $\cC_2^1$-closed $K_4$-covered graph now implies (b).

To prove (c), observe that the 6-connectivity of $G$ implies that $\deg_G(v^*)\geq 7$, since otherwise $N_G(v)\setminus \{v^*\}$ would be a vertex separator of size five in $G$. 
Let $H_1$ and $H_2$ denote the copies of $K_5$ and $K_4$ in $G[N_G(v)\cup \{v\}]$. 

Suppose that $\deg_G(v^*)=7$. Let $w$ be the unique neighbour of $v^*$
which does not belong to $H_1$ or $H_2$. Since $G$ is a $K_4$-covered graph, the edge
$wv^*$ belongs to a $K_4$-subgraph $H$. 
The assumption that $\deg_G(v^*)=7$ now gives $H-w\subseteq H_1\cup H_2$.
By (b), $H$
cannot intersect both $H_1-H_2$ and $H_2-H_1$. Hence $H$ intersects $H_i$
in three vertices for some $i=1,2$.  The Gluing Lemma for cofactor matroids now implies that $H\cup H_i$ is $C_2^1$-rigid, contradicting the facts that $H_i$ is a maximal clique in $G$ and $G$ is $\cC_2^1$-closed. This proves (c). 
\end{proof}

Let $D$
be the set of deficient vertices in $G$, and let $S=V\setminus D.$ %
By Claim \ref{claim:deficient}, the map $v\mapsto v^*$ defines a function $\sigma:D\to S$. For each $u\in S$, let $\sigma^{-1}(u)=\{v\in D : v^*=u\}$.
We shall show that the rank contribution of each vertex $u\in S$ is 
sufficiently large to ensure that the average rank contribution of the vertices of $G$ is at least three.

\begin{claim}\label{claim:surplus}
For each vertex  $u\in S$, \begin{equation}\label{eq:3+s/5}
\rc_2^1(G,u)\geq 3+\frac{|\sigma^{-1}(u)|}{5}.
\end{equation}
\end{claim}

\begin{proof}
If $\sigma^{-1}(u)=\emptyset$, then the statement follows from the fact that $u$ is not deficient. Hence we may assume  that $|\sigma^{-1}(u)|\geq 1$. 
Let $H=G[N_G(u)]$, and let $J$ denote the set of edges induced by $N_G(u)$ that are not contained in a copy of $K_4$ in $H$.
It follows from Claim \ref{claim:deficient}(b) that each vertex in $ \sigma^{-1}(u)$ is incident with two edges in $J$. Since each edge in $J$ is incident with at most two vertices in $\sigma^{-1}(u)$, this gives
$|J|\geq |\sigma^{-1}(u)|$. 
Furthermore, since $G$ is $\cC_2^1$-closed and $N_G(u)$ is not a clique in $G$, the graph $H$ is not $\mathcal{R}_2$-rigid by Theorem \ref{thm:coning_cof}. Hence $2\deg_G(u)-\rr_2(H)\geq 4$.  Lemma \ref{lem:cof:bounds}(b) now gives 
$$\rc_2^1(G,u)\geq \frac{4+|\sigma^{-1}(u)|}{4} +\sum_{X\in \hXX_u}\Big(\frac{9}{4}-\frac{6}{|X|}\Big)> 1+ \frac{|\sigma^{-1}(u)|}{4}+|\hXX_u|,$$
since $\frac{9}{4}-\frac{6}{|X|}>1$ for all $X\in  \hXX_u$. This verifies
the claim  when $|\hXX_u|\geq 2$.

It remains to consider the case $|\hXX_u|\leq 1$. Then, since $|\sigma^{-1}(u)|\geq 1$, it follows from Claim \ref{claim:deficient}(b) that $\hXX_u= \{X\}$ for some $X\in \hXX$ with $|X|=5$ and 
$|\sigma^{-1}(u)|\leq |X|-1= 4$.
Lemma \ref{lem:cof:bounds}(a) and Claim \ref{claim:deficient}(c) now give
$$\rc_2^1(G,u)= \frac{\deg_G(u)}{2} + \Big(\frac{7}{2}-\frac{6}{5}-\frac{5}{2}\Big) \geq 3+\frac{4}{5} \geq 3+\frac{|\sigma^{-1}(u)|}{5}\,.$$
This completes the proof of the claim.
\end{proof}

By Claims \ref{claim:deficient}(a) and \ref{claim:surplus}, for each $u\in S$, we have
$$\sum_{v \, \in \, \sigma^{-1}(u)\cup \{u\}} \rc_2^1(G,v)\geq 3\cdot \big|\sigma^{-1}(u)\cup \{u\}\big|.$$
Hence,
\begin{gather*}
\rr_2^1(G)=\sum_{v\,\in\, V}\rc_2^1(G,v)=\sum_{\substack{u\, \in\, S\\v\,\in\, \sigma^{-1}(u)\cup \{u\}}}\rc_2^1(G,v)
\geq \sum_{u\,\in\, S}3\cdot\big|\sigma^{-1}(u)\cup\{u\}\big|=3|V|.
\end{gather*}
This contradicts the bound $\rr_2^1(G)\leq 3|V|-6$ and completes the proof of the theorem.   
\end{proof}

\begin{example}\label{ex:kette}
The following graph $G$ shows that the connectivity hypothesis in Theorem \ref{thm:zeo:general} is best possible.
For an integer $t\geq 5$, let $H$ be a cyclic sequence of $t$
copies of $K_4$ and $t$ copies of $K_5$, alternating around the
cycle, such that consecutive cliques in this cyclic sequence intersect in
a pair of vertices, while nonconsecutive cliques are disjoint. More precisely, with subscripts taken modulo $2t$, let
$$
V(H)=\{u_j,v_j:1\leq j\leq2t\}
\cup\{w_{2i-1}:1\leq i\leq t\},
$$
with
$
A_i=H[\{u_{2i-2},v_{2i-2},u_{2i-1},v_{2i-1}\}]
\cong K_4
$
and
 $B_i=H[\{u_{2i-1},v_{2i-1},w_{2i-1},
u_{2i},v_{2i}\}]
\cong K_5
$ for each $1\leq i\leq t$, and let the edge set of $H$ be the union of the edge sets of the graphs $A_i$ and $B_i$. 
Now construct $G$ from the disjoint union of five copies of $H$ by 
adding the edges of a $K_5$ on the five copies of the vertex $w_{2i-1}$ for each $1\leq i\leq t$. Then $G$ is a 5-connected $K_4$-covered graph. The maximal cliques of $G$ consist of $5t$ copies of $K_4$ and $6t$ (pairwise edge-disjoint) copies of $K_5$.
Moreover, $G[N_G(v)]$ is the union of two
complete graphs with one vertex in common, for all $v\in V(G)$.
Thus
$G$ is locally $\cR_2$-closed, and hence
Theorem~\ref{thm:rcdress:ineq} gives
 $$\rr_2^1(G)\leq \cval(G)=|F(G)|+\sum_{X\in \hXX(G)}(3\cdot |X|-6)= 5t\cdot 4+6t\cdot 9=74t.$$
On the other hand, $|V(G)|=5(5t)=25t$. So we have $\rr_2^1(G)<3|V(G)|-6$ whenever $t\geq 7$.
\end{example}

\subsection{$\mathcal{C}_2^1$-rigidity of body-pin frameworks} 
A {\em body-pin graph} is  a graph $G_H$ obtained from a multigraph $H=(W,E)$
by replacing each vertex $w\in W$ by a complete graph $B_w$ on at least $d_H(w)+4$ vertices,  called the {\em body of $w$}, and then, for each edge $e=xy\in E$,  contracting a vertex of $B_x$ and a vertex of $B_y$ into a single {\em pin vertex}
in such a way that the pin vertices  are pairwise distinct in $G_H$.
We will verify the following cofactor version of Conjecture \ref{con:body-pin}.

A conjectured characterisation of body-pin graphs which are rigid in $\R^3$ was
posed independently by the first two authors and Tanigawa in 2009 and 2011, respectively.
It was eventually published in 2019 by Kir\'aly and Tanigawa \cite[Conjecture 5]{KT}. 
The conjecture is     in terms of partitions $\cP$
of the vertex set of the multigraph $H=(W,E)$.
For two disjoint sets $X,Y\subseteq W$ let 
$d_H(X,Y)$ denote the number of edges of $H$ between $X$ and $Y$, and put
$$
\ell_H(X,Y)=
\begin{cases}
6 \mbox{ if $d_H(X,Y)\geq 3$,}\\
5 \mbox{ if $d_H(X,Y)=2$,}\\
3 \mbox{ if $d_H(X,Y)=1$,}\\
0 \mbox{ if $d_H(X,Y)=0$.}
\end{cases}
$$

\begin{conjecture}\label{con:body-pin}
Let $G_H$ be a body-pin graph defined by a multigraph $H=(W,E)$. Then 
$G_H$ is rigid in $\R^3$
if and only if
$$\sum_{1\leq i<j\leq t}
\ell_H(P_i,P_j) \geq 6(|{\cal P}|-1)$$
for all partitions ${\cal P}=\{P_1,P_2,\dots,P_t\}$ of $W$.
\end{conjecture}

\noindent
We shall prove that the following $\cC_2^1$-cofactor version of Conjecture \ref{con:body-pin} holds. 
In view of Conjecture \ref{con:walter}, this gives strong evidence in support of Conjecture \ref{con:body-pin}.

\begin{theorem}\label{thm:body-pin_C12}
Let $G_H$ be a body-pin graph defined by a multigraph $H=(W,E)$. Then 
$G_H$ is $\cC_2^1$-rigid
if and only if
$$\sum_{1\leq i<j\leq t}
\ell_H(P_i,P_j) \geq 6(|{\cal P}|-1)$$
for all partitions ${\cal P}=\{P_1,P_2,\dots,P_t\}$ of $W$.
\end{theorem}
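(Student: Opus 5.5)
The plan is to prove necessity by a motion-counting argument, and sufficiency by passing to the $\cC_2^1$-closure and evaluating its rank with Theorem~\ref{thm:cjt}. Throughout, $V$ is the vertex set of $G_H$. For a part $P_i$ of a partition $\cP$ of $W$, let $V_i$ be the union of the bodies $B_w$ with $w\in P_i$. Two sets $V_i,V_j$ share exactly the $d_H(P_i,P_j)$ pin vertices of the edges between $P_i$ and $P_j$. Every pin lies in exactly two bodies, and different edges give different pins.

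\emph{Necessity.} Suppose $\sum_{i<j}\ell_H(P_i,P_j)<6(t-1)$ for some partition $\cP=\{P_1,\dots,P_t\}$. Let $G'$ be the union of the complete graphs $K(V_i)$; it contains $G_H$, so it suffices to show $\rr_2^1(G')<3|V|-6$. I would argue via the kernel of the cofactor matrix, i.e.\ the ``cofactor motions''. The cofactor motions of each $K(V_i)$ form a $6$-dimensional space of trivial motions $T_i$. A motion of $G'$ is then a tuple $(\tau_1,\dots,\tau_t)$ with $\tau_i\in T_i$ that agree at the shared pins. The set $V_i\cap V_j$ consists of $k=d_H(P_i,P_j)$ generic points. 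Agreement at these points imposes exactly $\ell_H(P_i,P_j)$ independent linear conditions on $\tau_i-\tau_j$: here $3,5,6$ for $k=1,2,\ge 3$, because a trivial motion fixing one point (resp.\ two points) has $3$ (resp.\ $1$) degrees of freedom. It follows that the motion space has dimension at least $6t-\sum_{i<j}\ell_H(P_i,P_j)>6$. Hence $G'$ has more than the $6$ trivial motions, so $\rr_2^1(G')<3|V|-6$ and $G'$ is not $\cC_2^1$-rigid. The $3,5,6$ counts can be checked using the coning correspondence (Theorem~\ref{thm:coning_cof}) or directly from the cofactor matrix.

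\emph{Sufficiency.} Let $\bar G$ be the $\cC_2^1$-closure of $G_H$, so that $\rr_2^1(G_H)=\rr_2^1(\bar G)=\cval(\bar G)$ by Theorem~\ref{thm:cjt}. I would establish the following structure of $\bar G$.
\begin{enumerate}[(i)]
\item Each body $B_w$ is a clique of size $\ge 5$, so it lies in a maximal clique of $\bar G$. Since $\bar G$ is closed, Lemma~\ref{cgluing} implies that distinct maximal cliques share at most two vertices. Hence each body lies in a unique maximal clique, and these cliques determine a partition $\cP=\{P_1,\dots,P_t\}$ of $W$ with clique vertex sets $X_i\supseteq V_i$.
\item In fact $X_i=V_i$, and $\bar G$ has no further edges or cliques. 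That is, $\bar G=\bigcup_i K(V_i)$, with $|V_i\cap V_j|=d_H(P_i,P_j)\le 2$.
\end{enumerate}
Given (i) and (ii), the value $\cval(\bar G)$ is a direct count. We have $F=\emptyset$ and $\sum_i 3|V_i|=3|V|+3\sum_{i<j}d_H(P_i,P_j)$. Also, each pair of parts with $d_H(P_i,P_j)=2$ contributes exactly one hinge. Therefore
\[
\cval(\bar G)=3|V|-6t+\sum_{i<j}\ell_H(P_i,P_j)=3|V|-6-\Big(6(t-1)-\sum_{i<j}\ell_H(P_i,P_j)\Big).
\]
This is at least $3|V|-6$ by the hypothesis, so $G_H$ is $\cC_2^1$-rigid.

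The main obstacle is step (ii), namely ruling out closure edges that join two different clusters $V_i,V_j$ other than through pins, together with any extra maximal cliques built from such edges. I would first try to show that $G^*=\bigcup_i K(V_i)$ is already $\cC_2^1$-closed. The point is that each vertex of $G^*$ lies in at most two maximal cliques, and any two of these cliques share at most two vertices. So for every $v$, the graph $G^*[N(v)]$ is a union of at most two cliques meeting in at most one vertex, which is $\cR_2$-closed; thus $G^*$ is locally $\cR_2$-closed. I would then use Theorem~\ref{thm:CJT:K5}, or the rank-contribution bounds of Theorem~\ref{thm:rcdress:ineq:rc}, to show that adding any non-edge $xy$ raises $\rr_2^1$, since $x$ and $y$ lie in distinct clusters. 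If this route fails, a fallback is to avoid (ii) altogether. By Theorem~\ref{thm:rcdress:ineq} we have $\rr_2^1(G^*)\le\cval(G^*)$, and the motion count from the necessity part gives the matching lower bound $\rr_2^1(G^*)\ge\cval(G^*)$. Together with the choice of $\cP$ from the closure, this again yields the formula above.
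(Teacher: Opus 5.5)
\textbf{Necessity is fine; sufficiency has a genuine gap at claim (ii).}

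Your necessity argument is correct. Viewing the cofactor motions of $\bigcup_i K(V_i)$ as tuples of trivial motions that agree on the shared pins gives $\dim\ker\ge 6t-\sum_{i<j}\ell_H(P_i,P_j)$. This is the dual of the base-counting argument in Lemma~\ref{lem:lb}.

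For sufficiency, your evaluation of $\cval(\bar G)$ is correct if (ii) holds, but (ii) is never proved, and the whole direction rests on it.
\begin{itemize}
\item Since $G_H\subseteq G^*\subseteq\bar G$ for $G^*=\bigcup_i K(V_i)$, saying that $G^*$ is $\cC_2^1$-closed is just (ii) again. Once internal vertices are handled by symmetry, what must be excluded is a pin rigidly attached to a cluster whose bodies do not contain it (so $X_i\supsetneq V_i$), or a clique of at least five pins lying in no $X_i$.
\item Neither tool you name can exclude this. Theorem~\ref{thm:CJT:K5} assumes the graph is already closed. Theorem~\ref{thm:rcdress:ineq:rc}, like Theorem~\ref{thm:rcdress:ineq}, gives only upper bounds, so it cannot show that adding a non-edge raises the rank.
\item Your stated reason for local $\cR_2$-closedness is also false. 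Three clusters pairwise meeting in single pins $p,q,s$ create a further maximal clique $\{p,q,s\}$, so a vertex can lie in three maximal cliques.
\item The fallback has the inequality the wrong way round. $\dim\ker\ge 6t-\sum\ell$ means $\rr_2^1(G^*)\le 3|V|-6t+\sum\ell$, the same direction as Theorem~\ref{thm:rcdress:ineq}. The lower bound you need is that the agreement conditions at the different shared pin sets are jointly independent. That is precisely the content of sufficiency, and it fails for arbitrary partitions, e.g.\ the trivial partition of three pairwise hinged bodies.
\end{itemize}

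\textbf{Why (ii) cannot simply be assumed, and what the paper does instead.} The paper first contracts every pair $x,y$ for which $V_x\cup V_y$ is a clique of $\bar G_H$, so the closure partition becomes trivial. Even then it must allow for deficient pins $v$ of a single edge $xy$ with $|\bar V_x\cap\bar V_y|=2$. There the second common vertex $v^*$ is a pin of an edge $zz'$ with $z\notin\{x,y\}$ (Claim~\ref{claim:pin:deficient}). Such a configuration would be exactly a failure of (ii).

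The missing idea is to bound the rank from below without knowing the clique structure of the closure. The paper does this as follows.
\begin{itemize}
\item It uses $\rc_2^1(\bar G_H,v)=\cval(\bar G_H,v)$ (Theorem~\ref{thm:rc:cofactor}) in the form of Lemma~\ref{lem:cof:bounds}(b). There every clique of size at least five at $v$, including any unexpected one, contributes positively.
\item It compares these bounds with a weight $f$ satisfying $\sum_v f(v)=3|V|-6-\val_H(\cP_0)$.
\item It charges the deficit of each deficient pin $v$ to $v^*$. Lemma~\ref{lem:2dimbodies} (Tay plus Nash-Williams) limits how many deficient pins can charge the same $v^*$.
\end{itemize}
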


To this end, we define the {\em value} of a partition $\cP=\{P_1,P_2,\dots,P_t\}$ of the vertex set of a multigraph $H$ to be

$$\val_H({\cal P})=6(|{\cal P}|-1) -\sum_{1\leq i<j\leq t} \ell_H(P_i,P_j).$$

Then Theorem \ref{thm:body-pin_C12} tells us that $G_H$ is $\cC_2^1$-rigid if and only if every partition of $W$ has value at most zero. We will show more generally that the maximum value of a partition of $W$ determines the $C_2^1$-rank of $G_H$, $r_2^1(G_H)$. For our purposes, it will be easier to work with the {\em $C_2^1$-degree of freedom of $G_H$} which we define as
$$\dof_2^1(G_H)=3|V(G_H)|-6-r_2^1(G_H).$$
Note that $\dof_2^1(G_H)$ is completely determined by $H$ since changing the size of any of the bodies of $G_H$ will not affect $\dof_2^1(G_H)$. (More generally replacing a $C_2^1$-rigid subgraph of a graph $G$ by a different $C_2^1$-rigid graph with the same vertices of attachment in $G$ will not change $\dof_2^1(G)$.) We will show:

\begin{theorem}\label{thm:BP} Let $H=(W,E)$ be a multigraph, and $G_H$ be a body-pin graph of $H$.  Then $$\dof_2^1(G_H)=\max \{\val_H(\cP):\mbox{$\cP$ is a partition  of $W$}\}.$$
\end{theorem}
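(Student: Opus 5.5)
The plan is to prove the two inequalities $\dof_2^1(G_H)\geq \val_H(\cP)$ for every partition $\cP$, and $\dof_2^1(G_H)\leq \val_H(\cP_0)$ for one suitable partition $\cP_0$. For each $P\subseteq W$, let $V_P$ denote the union of the vertex sets of the bodies $B_w$ with $w\in P$.

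\emph{Lower bound.}
\begin{enumerate}[(1)]
\item First reduce to partitions in which $d_H(P_i,P_j)\leq 2$ for all $i\neq j$. If $d_H(P_i,P_j)\geq 3$, merge $P_i$ and $P_j$. This decreases $6(|\cP|-1)$ by $6$ and removes the term $\ell_H(P_i,P_j)=6$. For every other part $Q$, use the subadditivity $\ell_H(P_i\cup P_j,Q)\leq \ell_H(P_i,Q)+\ell_H(P_j,Q)$, which follows from a case check on the definition of $\ell_H$. Hence merging does not decrease the value, and it suffices to treat partitions with all $d_H(P_i,P_j)\leq 2$.
\item For such a partition, let $G'\supseteq G_H$ be obtained by making each $V_{P_i}$ a clique. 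Two of these cliques meet exactly in the $d_H(P_i,P_j)$ pins joining $P_i$ and $P_j$, and a pin lies in only two bodies.
\item Bound the rank of $G'$ by taking a base of each clique $K(V_{P_i})$, of rank $3|V_{P_i}|-6$. When $d_H(P_i,P_j)=2$, the edge joining the two shared pins lies in both cliques, so the union of these bases has at most $\sum_i(3|V_{P_i}|-6)-\#\{\{i,j\}:d_H(P_i,P_j)=2\}$ elements. This gives that number as an upper bound on $r_2^1(G')$.
\item Use $|V|=\sum_i|V_{P_i}|-\sum_{i<j}d_H(P_i,P_j)$. A pair with $d=1$ contributes $3=\ell_H$ and a pair with $d=2$ contributes $3\cdot 2-1=5=\ell_H$. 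Hence $3|V|-6-r_2^1(G')\geq 6(t-1)-\sum_{i<j}\ell_H(P_i,P_j)=\val_H(\cP)$.
\item Monotonicity of rank, $r_2^1(G_H)\leq r_2^1(G')$, then gives $\dof_2^1(G_H)\geq\val_H(\cP)$.
\end{enumerate}

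\emph{Upper bound.}
\begin{enumerate}[(1)]
\item Let $\bar G$ be the $\cC_2^1$-closure of $G_H$. Each body is a clique on at least five vertices, so it lies in a unique member of $\hXX(\bar G)$.
\item Define $\cP_0$ by putting $w,w'$ in the same part when $B_w,B_{w'}$ lie in the same maximal clique of $\bar G$.
\item Show that the members of $\hXX(\bar G)$ are exactly the sets $V_{P}$ for $P\in\cP_0$. Non-pin vertices should acquire no new neighbours outside their clique; I would argue this with Theorem~\ref{thm:CJT:K5} together with the observation that a non-pin vertex together with its body forms a rigid piece attached only through that body.
\item The Cofactor Gluing Lemma~\ref{cgluing} then forces $|X\cap X'|\leq 2$ for distinct $X,X'\in\hXX(\bar G)$. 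Since every pin lies in only two bodies, such intersections consist of at most two pins.
\item Theorem~\ref{thm:cjt} gives $r_2^1(G_H)=r_2^1(\bar G)=\cval(\bar G)$, and $\cval(\bar G)$ can be computed from these data. The clique terms give $\sum_P(3|V_P|-6)$. The hinges are the pin pairs between parts with $d_H=2$.
\item The remaining terms are the edges of $F(\bar G)$, which are $\cC_2^1$-bridges joining different cliques. I expect these to be exactly pin--pin edges between parts with $d_H(P,P')=1$ that are not already inside a clique, or else to be absent.
\item Once the structure of $F(\bar G)$ is pinned down, the same counting identity as in the lower bound should yield $3|V|-6-\cval(\bar G)=\val_H(\cP_0)$.
\end{enumerate}

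\emph{Main obstacle.} The hard part is the structural analysis of $\bar G$ in steps (3)--(6) of the upper bound. One must show that no closure edge creates extra coupling beyond what the partition records: edges of $F(\bar G)$, and pins shared in configurations the count does not match. If a direct analysis is messy, my fallback is to use rank contributions instead. Theorem~\ref{thm:rc:cofactor} evaluates $\rc_2^1(\bar G,v)$ via $\cval(\bar G,v)$, and summing this vertex by vertex organises the bookkeeping locally: non-pin vertices are simplicial, and each pin sees at most two cliques plus possible $F$-edges.
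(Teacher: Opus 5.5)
Your lower bound is correct and is a mild variant of Lemma~\ref{lem:lb}. Where the paper uses disjoint classes $Q_i$ and bounds cross edges by $\ell_H(P_i,P_j)$, you use the overlapping cliques $K(V_{P_i})$ and dispose of pairs with $d_H\geq 3$ by merging; your subadditivity check is right. In step (3) you must choose the bases to contain the hinge edges. This is possible because the hinge edges inside $V_{P_i}$ form a matching, which is $\cC_2^1$-independent. Your choice of $\cP_0$ (classes of bodies lying in a common clique of $\bar G$) is also the right one: the paper's induction contracts $x,y$ whenever $V_x\cup V_y$ is a clique of $\bar G_H$, and it ends at exactly this partition.

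The gap is the upper bound. Steps (3)--(6) are asserted, not proved, and they carry the whole difficulty.

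\emph{Core vertices.} Your suggested justification only handles non-pin vertices, and that case is easy. The core vertices of a body $B_y$ all have the same neighbourhood in $\bar G$. So a maximal clique containing one of them contains all $\geq 4$ of them, and by Lemma~\ref{cgluing} it coincides with the clique of $B_y$.

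\emph{Pins.} Nothing in the proposal controls pins. A maximal clique $X\supseteq V_P$ might contain a pin $p\in V_a\cap V_b$ with $a,b\notin P$. Then $X\neq V_P$, the clique terms change, and two classes with $d_H=1$ may acquire a second common vertex, i.e.\ a hinge that $\val_H$ does not record. Similarly, there might be maximal cliques of size at least five made only of pins, each meeting every body in at most two vertices. Either possibility invalidates the identity $3|V|-6-\cval(\bar G)=\val_H(\cP_0)$ in step (7). You give no argument excluding them, and the paper does not exclude them either. After its contraction reduction, it explicitly allows $\bar V_x\cap\bar V_y=\{v,v^*\}$, where $v$ is the single pin of $xy$ and $v^*$ is a pin of a third body $B_z$. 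Thus $v^*$ is an extra pin of $\bar V_x$ or $\bar V_y$.

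\emph{Step (6).} It targets the wrong thing. Assume bodies have at least five vertices, which you may since enlarging bodies does not change $\dof_2^1$. Then no edge of $\bar G$ is a bridge, so $F(\bar G)=\emptyset$ by Theorem~\ref{thm:CJT:K5}.

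\emph{Fallback.} Your rank-contribution fallback has the same hole. ``Each pin sees at most two cliques'' fails exactly at such a $v^*$, which lies in at least three.

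\emph{What the paper does.} It compares $\rc_2^1(\bar G_H,v)$ with an explicit weight $f(v)$ whose sum is $3|V|-6-\val_H(\cP_0)$. Using Lemma~\ref{lem:cof:bounds}, it shows that every deficient vertex is a non-hinge pin whose two cliques share a second vertex $v^*$, and that $v^*$ is a pin in at least three large cliques. It then transfers the deficit (at most $\tfrac12$) of each deficient pin to its $v^*$. Bounding how many deficient pins can charge a single $v^*$ requires Lemma~\ref{lem:2dimbodies} (Tay plus Nash-Williams), applied via coning to the bodies around $v^*$. None of these ingredients appears in your proposal.
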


We first show that $\val_H({\cal P})$ gives a lower bound on $\dof_2^1(G_H)$ for each partition $\cP$ of $W$. We will need to introduce some new terminology to do this. Given a vertex $w\in W$ we will refer to the vertices of $G_H$ in the body $B_w$ which are not pin vertices as {\em internal vertices of $B_w$}. In addition we will refer to the set of all internal vertices of $B_w$ as the {\em core of $B_w$}. 

\begin{lemma}\label{lem:lb} Let $H=(W,E)$ be a multigraph and ${\cal P}=\{P_1,P_2,\dots,P_t\}$ 
be a partition of $W$. Then $\dof_2^1(G_H) \geq \val_H({\cal P}).$
\end{lemma}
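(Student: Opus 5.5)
We plan to bound $\rr_2^1(G_H)$ from above by an explicit count built from the partition $\cP$.

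\textbf{Setup.} For each $i$, let $G_i$ be the subgraph of $G_H$ consisting of the bodies $B_w$, $w\in P_i$. Form $G'$ from $G_H$ by adding all edges on $V(G_i)$ for every $i$, so that each $G_i$ becomes a clique $K(V(G_i))$. Since adding edges does not decrease rank, $\rr_2^1(G_H)\le \rr_2^1(G')$.

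\textbf{Reducing to one clique per part.} Every edge of $G_H$ lies in some body, so $G'$ consists of:
\begin{itemize}
\item the cliques $K_i$ on $V(G_i)$;
\item for each edge $xy$ of $H$ between different parts, the body edges joining its pin vertex to the rest of $B_x$ and of $B_y$.
\end{itemize}
We simplify further by also adding, for each pair $i<j$, all edges between the pin vertices of $P_i$–$P_j$ edges and the vertices of $K_i$ and $K_j$. Equivalently, each such pin $p$ is joined to all of $V(K_i)\cup V(K_j)$. This again only increases the rank.

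\textbf{Counting in the enlarged graph.} Let $Q_{ij}$ be the set of pins on edges between $P_i$ and $P_j$, so $|Q_{ij}|=d_H(P_i,P_j)$. Assign each pin of $Q_{ij}$ to $K_i$, and consider the union of the cliques $K_i':=K(V(K_i)\cup \bigcup_j Q_{ij}\cdots)$. It is simpler to bound the rank via submodularity as a sum over parts, using the gluing structure. Take $L_i$ to be the clique on $V(G_i)$, where pin vertices shared between parts are counted in both incident parts. Then
\[
\rr(G')\le \sum_i \rr(L_i) + \sum_{i<j}\bigl(\rr(L_i\cup L_j) - \rr(L_i)-\rr(L_j)\bigr),
\]
but submodularity runs the wrong way for this, so instead we will use the upper bound from Theorem~\ref{thm:rcdress:ineq} directly on a locally $\cR_2$-closed supergraph. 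The natural candidate is the graph $G''$ whose maximal cliques are $X_i=V(G_i)$, with $|X_i\cap X_j|=d_H(P_i,P_j)$.

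\textbf{Where the count goes wrong, and the fix.} $\cval$ only charges hinge edges, which arise when $|X_i\cap X_j|\ge 2$, and this does not reproduce $\ell_H$. The plan is therefore a different, more explicit upper bound:
\[
\rr_2^1(G_H)\le \sum_i (3|X_i|-6) - \sum_{i<j} c(d_{ij}),
\qquad c(d)=\rr(K_d)=3d-\ell(d)\ \text{(capped at 6)}.
\]
We derive it by adding the cliques $K(X_i)$ one at a time in the order $i=1,\dots,t$. Each new clique raises the rank by at most
\[
3|X_i|-6-\rr\bigl(K(X_i\cap \textstyle\bigcup_{j<i}X_j)\bigr).
\]
This works directly when the $X_j$ are pairwise disjoint apart from the pins. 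The difficulty is that $\rr$ of the intersection is not additive over $j$. To handle this, we will use the following:
\begin{itemize}
\item the intersections $X_i\cap X_j$ for distinct $j$ are disjoint sets of pins;
\item $\rr(K_d)=3d-\ell(d)$ for $d\le 3$ (values $0,1,3$ correspond to $d=1,2,3$);
\item the quantity that matters is $\ell$, and the rank deficit only increases when intersection sets are merged.
\end{itemize}

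\textbf{Main obstacle.} Making this sequential bound yield exactly $\sum_{i<j}\ell$ rather than something weaker is the step we expect to be hardest. We will try to handle it via a closure argument: first add each $K(X_i)$ together with its pins, then glue pairs using the cofactor gluing lemma (Lemma~\ref{cgluing}) to identify rigid unions when $d_{ij}\ge 3$.

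Once the bound is in place, the conclusion follows by comparing with $3|V(G_H)|-6$. Since $|V(G_H)|=\sum_i|X_i|-\sum_{i<j}d_{ij}$, we get
\[
\dof_2^1(G_H)\ge 6(t-1)-\sum_{i<j}\ell_H(P_i,P_j)=\val_H(\cP).
\]
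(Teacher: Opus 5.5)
\textbf{There is a genuine gap at the central inequality.} The inequality you aim for, $\rr_2^1(G_H)\le\sum_i(3|X_i|-6)-\sum_{i<j}\rr(K(X_i\cap X_j))$ with $\rr(K_d)=3d-\ell(d)$, is exactly what is needed, and your closing count from it is correct. But the proposal never proves it, and the per-step bound you state is false. When $K(X_i)$ is added, submodularity bounds the increase using the rank of the edge set $E(K(X_i))\cap\bigcup_{j<i}E(K(X_j))$. It does not give a bound using $\rr\bigl(K(X_i\cap\bigcup_{j<i}X_j)\bigr)$, and that larger clique can contain edges lying in none of the earlier cliques.

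For example, let $H$ be the path $xzy$, $\cP=\{\{x\},\{y\},\{z\}\}$, $X_1=V(B_x)$, $X_2=V(B_y)$, $X_3=V(B_z)$. Then $X_3\cap(X_1\cup X_2)$ is the pair of pins, so your bound allows an increase of at most $3|X_3|-7$. However, $K(X_1)\cup K(X_2)\cup K(X_3)$ is a chain of cliques glued at single vertices, of rank $\sum_i(3|X_i|-6)$, so the actual increase is $3|X_3|-6$.

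You then defer the real issue to a closure argument based on Lemma~\ref{cgluing}, which cannot work. The gluing lemma certifies rigidity, i.e.\ it gives lower bounds on rank, whereas you need an upper bound. Your remark that submodularity ``runs the wrong way'' is also mistaken: it is exactly the right tool.

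\textbf{The missing observation} is that the true intersection is easy to evaluate. An edge lies in both $K(X_i)$ and $K(X_j)$ if and only if both its ends lie in $X_i\cap X_j$, so $E(K(X_i))\cap\bigcup_{j<i}E(K(X_j))=\bigcup_{j<i}E(K(X_i\cap X_j))$. Each pin lies in exactly two bodies, hence in at most two of the sets $X_j$. So for fixed $i$ the sets $X_i\cap X_j$ ($j\neq i$) are pairwise disjoint, of sizes $d_{ij}=d_H(P_i,P_j)$. The rank of this vertex-disjoint union of cliques is therefore $\sum_{j<i}(3d_{ij}-\ell_H(P_i,P_j))$. Apply submodularity as the cliques are added one at a time, and use $\rr(K(X_i))=3|X_i|-6$ (valid since $X_i$ contains a whole body). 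This gives your target inequality, and your last paragraph then completes the proof.

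\textbf{Comparison with the paper.} Repaired this way, your argument differs from the paper's. The paper partitions $V(G_H)$ into disjoint sets $Q_i$, placing each cross pin on one side, and makes each $Q_i$ a clique. It then extends the union of bases of these cliques to a base $F$ and bounds $|F\cap E(Q_i,Q_j)|$ by $\ell_H(P_i,P_j)$ pair by pair. Your overlapping cliques make the accounting automatic. With disjoint $Q_i$ some care is needed: an edge of a body $B_w$ with $w\notin P_i\cup P_j$ that joins two of its pins placed in $Q_i$ and $Q_j$ lies in $E(Q_i,Q_j)$. Yet it is not incident to a pin of any edge between $P_i$ and $P_j$.
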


\begin{proof}
Let ${\cal Q}=\{Q_1,Q_2,\dots,Q_t\}$ be a partition of $V(G_H)$ such that, for each part  $P_i$ of $\cP$,
$Q_i$ contains the core of each body $B_w$ with $w\in P_i$,  as well as
the pin vertices which correspond to an edge of $H$ which is induced by $P_i$.
In addition, each pin vertex of $G_H$ which corresponds to an edge of $H$
between two distinct parts $P_i$ and $P_j$ of $\cP$ belongs to either $Q_i$ or $Q_j$.
Construct a graph $G_H^+$ by adding edges to $G_H$ so that each subgraph $G_H[Q_i]$ becomes a clique. Then $\dof_2^1(G_H)\geq\dof_2^1(G_H^+)$ so
it will suffice to show that $\dof_2^1(G_H^+) \geq \val_H({\cal P})$.

For each $1\leq i\leq t$, let $G_i=G_H^+[Q_i]$ and
let $F_i$ be a base of $\cC^1_2(G_i)$. Then $|F_i|=3|Q_i|-6$ for all $1\leq i\leq t$. Since the sets $Q_i$ are pairwise disjoint, $\bigcup_{i=1}^t F_i$ is $\cC^1_2$-independent, and we may choose  a base $F$ of $\cC^1_2(G_H^+)$ which contains
$\bigcup_{i=1}^t F_i$. We claim that 
\begin{equation}\label{eq:harom}
|F\cap E_{G_H^+}(Q_i,Q_j)|\leq \ell_H(P_i,P_j) \mbox{ for all $1\leq i<j\leq t$. }
\end{equation}
This follows since $G_H^+[F_i]$ and $ G_H^+[F_j]$ are both $\cC_2^1$-rigid and  hence $\ell_H(P_i,P_j)$ is the maximum number of edges  we can add between $G_H^+[F_i]$ and $G_H^+[F_j]$, which are incident to the pin vertices for $E_H(P_i,P_j)$, and preserve the $\cC_2^1$-independence of $F_i\cup F_j$.

We can use (\ref{eq:harom}) to deduce that 
\begin{align*}
r_2^1(G_H^+)=|F|=&\sum_{i=1}^t |F_i| + \sum_{1\leq i<j\leq t} |F\cap E_{G_H^+}(Q_i,Q_j)|\\
\leq&\sum_{i=1}^t (3|Q_i|-6) + \sum_{1\leq i<j\leq t} \ell_H(P_i,P_j)\\
=& 3|V(G_H)|-6t + \sum_{1\leq i<j\leq t} \ell_H(P_i,P_j).
\end{align*}
Hence
$$\dof_2^1(G_H)=3|V(G_H)|-6-r_2^1(G_H)\geq 6(|{\cal P}|-1) -\sum_{1\leq i<j\leq t} \ell_G(P_i,P_j)=\val_H({\cal P}).$$
\end{proof}

We will need the
following lemma on the 2-dimensional 
rigidity matroid of the body-pin graph $G_H$ of a multigraph $H$ to verify the reverse inequality.
By a classical result of Tay \cite{tay} on $d$-dimensional body-hinge frameworks, $G_H$ is rigid in $\R^2$ if and only if the multigraph
$2H$ obtained from $H$
by doubling each edge of $H$, contains three edge-disjoint spanning trees. 

\begin{lemma}\label{lem:2dimbodies}
Let $H=(W,E)$ be a multigraph and suppose that, for every subgraph $H'$ of $H$ with at least two vertices, $G_{H'}$ is not rigid in $\R^2$. Then 
$2|E|\leq 3|W|-4$.
\end{lemma}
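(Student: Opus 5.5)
We argue via Tay's criterion quoted just before the statement. By that result, for a multigraph $H'$ with at least two vertices, $G_{H'}$ is rigid in $\R^2$ if and only if $2H'$ contains three edge-disjoint spanning trees. The hypothesis therefore says that no subgraph $H'$ of $H$ with $|V(H')|\geq 2$ has this property. We aim to turn this into the edge count $2|E|\le 3|W|-4$ using the Nash-Williams--Tutte theorem together with induction on $|W|$.

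The plan is induction on $|W|$. When $|W|=1$, $H$ has no edges, since pin vertices must be distinct and there are no loops; the bound $0\le 3-4$ fails, so we restrict to $|W|\ge 2$. (Alternatively the statement is implicitly about multigraphs with at least two vertices; the $|W|=1$ case would have to be excluded or treated separately.) When $|W|=2$, the multigraph $2H$ has $2|E|$ parallel edges. It contains three edge-disjoint spanning trees iff $2|E|\ge 3$, i.e. iff $|E|\ge 2$. The hypothesis thus forces $|E|\le 1$, and $2\le 6-4$ holds.

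For the general step, apply Nash-Williams--Tutte to $2H$. Since $2H$ has no three edge-disjoint spanning trees, there is a partition $\cP=\{P_1,\dots,P_t\}$ of $W$ with $t\ge 2$ such that the number of edges of $2H$ between distinct parts is at most $3(t-1)-1$. In other words, $2\,d_H(\cP)\le 3t-4$, where $d_H(\cP)$ counts the edges of $H$ crossing $\cP$.

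We then split into two cases according to $t$.

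If $t=|W|$, all parts are singletons, and we obtain $2|E|\le 3|W|-4$ directly.

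Otherwise some part has size at least two. Apply induction to each induced subgraph $H[P_i]$ with $|P_i|\ge 2$; each such $H[P_i]$ still satisfies the hypothesis, since its subgraphs are subgraphs of $H$. Parts with $|P_i|=1$ contribute no edges. Summing gives
\[
2|E|\le \sum_{|P_i|\ge2}(3|P_i|-4)+3t-4 .
\]
Let $s\ge1$ be the number of parts of size at least two. The right-hand side then equals
\[
3|W|-3(t-s)-4s+3t-4=3|W|-s-4\le 3|W|-4,
\]
because each singleton part contributes $3\cdot 1$ to $3|W|$ but nothing to the induction sum.

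The main point requiring care is the use of Tay's theorem as stated: rigidity of $G_{H'}$ in $\R^2$ is equivalent to $2H'$ containing three edge-disjoint spanning trees. This is invoked as quoted in the paper. The only other delicate point is the convention for $|W|=1$, which is excluded by the hypothesis "at least two vertices" applied to $H$ itself; we would state explicitly that $|W|\ge 2$ is assumed.
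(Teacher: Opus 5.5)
Your proof is correct, but it takes a different route from the paper's.

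The paper argues by contradiction, without induction. It assumes $2|E|\ge 3|W|-3$ and splits into two cases:
\begin{itemize}
\item If every subgraph of $2H$ satisfies $|E(H')|\le 3|V(H')|-3$, then $2H$ has exactly $3|W|-3$ edges. Nash-Williams' forest-covering theorem then makes $2H$ a union of three edge-disjoint spanning trees.
\item Otherwise it takes a minimal subgraph $H'$ of $2H$ with $3|V(H')|-2$ edges. Such an $H'$ contains three edge-disjoint spanning trees, so $2H[V(H')]$ does too. Tay's theorem then makes $G_{H[V(H')]}$ rigid in $\R^2$.
\end{itemize}

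You instead use the Tutte--Nash-Williams tree-packing theorem to get a partition $\cP$ of $W$ with at most $3|\cP|-4$ crossing edges of $2H$, and then recurse into the parts by induction. Your count $3|W|-s-4$ is right. It also covers the all-singleton case ($s=0$), so your case split on $t=|W|$ is unnecessary.

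The paper's argument exhibits, in one step, a single induced subgraph $H[W']$ that violates the hypothesis. Yours reads the bound directly off the partition certificate and gives the slightly sharper $2|E|\le 3|W|-4-s$, at the cost of an induction.

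Your remark about $|W|=1$ is a genuine point. As written, the lemma fails there, since $0\le -1$ is false. The paper's proof also tacitly needs $|W|\ge 2$: at $|W|=1$ its first case produces a ``rigid'' $G_H$, which the hypothesis does not forbid. This is harmless, because the lemma is only applied to $H_u$ with $E_u\neq\emptyset$. Your phrasing is slightly off, though. The case $|W|=1$ is not \emph{excluded} by the hypothesis; the hypothesis is vacuous there. So $|W|\ge 2$ has to be added as an explicit assumption, which is what you end up doing.
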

\begin{proof}
Suppose, for a contradiction, that $2|E|\geq 3|W|-3$.  %
If $|E(H')|\leq 3|V(H')|-3$ for all subgraphs $H'$ of $2H$, then $|E(2H)|=2|E|=3|W|-3$ and $2H$ is the edge-disjoint union of three spanning trees by a theorem of
Nash-Williams \cite{NW}.  This would imply that $G_H$ is rigid in $\R^2$,
by the above mentioned result of Tay, and
contradict the hypothesis of the lemma. Hence we may choose a minimal subgraph  $H'=(W',E')$ of $2H$ such that  $|E'| = 3|W'|-2$. Then $|W'|\geq 2$ and $H'$ contains three edge-disjoint spanning trees. Let $H''=H[W']$. Since $H'$ is a spanning subgraph of $2H''$, $2H''$ also contains three edge-disjoint spanning trees
and  hence $G_{H''}$
is rigid in $\R^2$, a contradiction.
\end{proof}

\subsubsection*{Proof of Theorem \ref{thm:BP}}
We have $\dof_2^1(G_H) \geq \max\{\val_H(\cP):\mbox{ $\cP$ is a partition of $W$}\}$
by Lemma \ref{lem:lb}, so it will suffice to 
show that there exists a partition $\cP$ of $W$ such that $\dof_2^1(G_H) \leq \val_H(\cP)$. We proceed by induction on $|W|$. The case $|W|=1$ is obvious so we may assume that $|W|\geq 2$. 
Since increasing the size of a body does not change the degrees of freedom of $G_H$,
we may also assume that each body of $G_H$ has exactly  $k$ vertices,
for some integer $k\geq 120$. 
(This assumption will simplify some of our counting arguments.)
For each $w\in W$, let $V_w$ and $U_w$ be the sets of vertices and internal vertices, respectively, of the body $B_w$ and put $V=V(G_H)$.
Let $\bar G_H$ denote the $\cC_2^1$-closure of $G_H$. 

Suppose that $V_x\cup V_y$ induces a clique in $\bar G_H$ for two distinct vertices $x,y\in W$, that is, $\dof_2^1(G_H)=\dof_2^1(G_H+K(V_x\cup V_y))$.
Let $H'=(W',E')$ be the multigraph obtained from $H$ by contracting the pair
$\{x,y\}$ into a new vertex $w$.
By induction, there is a 
partition $\mathcal{P'}$ of  $W'$ such that 
$\dof_2^1(G_{H'})\leq \val_{H'}(\mathcal{P'})$. 
Let $\mathcal{P}$ be the %
partition of $W$ obtained from $\mathcal{P'}$ by replacing the part $P'$  of $\mathcal{P}'$ that contains $w$ by $(P'-w)\cup \{x,y\}$. Then, $\val_{H}(\mathcal{P})= \val_{H'}(\mathcal{P}').$ Moreover, we have $\dof_2^1(G_H+K(V_x\cup V_y))=\dof_2^1(G_{H'})$. Thus we obtain 
$$\dof_2^1(G_H)=\dof_2^1(G_H+K(V_x\cup V_y))= \dof_2^1(G_{H'})\leq \val_{H'}(\mathcal{P}')= \val_{H}(\mathcal{P})
,$$ as required.
Hence, we may assume that
\begin{equation}\label{eq:bodyp:cluster}
\mbox{ $V_x\cup V_y $ does not induce a clique in $ \bar G_H$  for all distinct  $x,y\in W$. } 
\end{equation} 
This implies, in particular, that $H$ has no edges of multiplicity greater than two. We will refer to a pair of pin vertices $\{x,y\}$ of $G_H$ which correspond to a pair of parallel edges of  $H$ as a {\em hinge pair} of $G_H$. Let $h(H)$ denote the number of edges of $H$ of multiplicity two, i.e. the number of hinge pairs in $G_H$.

We next define a function $f:V\to \R$.
For $v\in V$, put
$$
f(v)=
\begin{cases}
3-\frac{6}{k}&  \mbox{if $v$ is not a pin vertex of $G_H$,} \\ 
6-\frac{12}{k}& \mbox{if  $v$ is a pin vertex  which belongs to no hinge pair of $G_H$},\\
5.5-\frac{12}{k}& \mbox{if  $v$ is a pin vertex  which belongs to a hinge pair of $G_H$}. 
\end{cases}
$$
A straightforward counting argument tells us that the function $f$ can be used to determine the value of the trivial partition $\cP_0$ of $W$ into $|W|$ singletons: we have
\begin{equation}\label{eq:negy}
\sum_{v\in V}f(v) =\sum_{w\in W}\sum_{v\in V_w}\left(3-\frac{6}{k}\right)-h(H)=(3|V|-6)-\val_H(\cP_0).
\end{equation}   

Since 
$\sum_{v\in V} \rc_2^1(\bar G,v)=\rr_2^1(\bar G)=\rr_2^1(G_H)$ by Lemma \ref{lem:rc_sum},  (\ref{eq:negy}) gives
\begin{equation}\label{eq:hat}
\val_H(\cP_0)+\left(\sum_{v\in V}f(v)\right)\;=\; 3|V|-6\;=\;\dof_2^1(G_H)+\left(\sum_{v\in V}\rc_2^1(\bar G, v)\right).  
\end{equation}
The remainder of the proof will be devoted to showing that 
\begin{equation}\label{eq:het}
\sum_{v\in V}\rc_2^1(\bar G, v)\geq  \sum_{v\in V} f(v).
\end{equation}
Combined with (\ref{eq:hat}), this will imply that the trivial partition $\cP_0$ satisfies the required inequality $\dof_2^1(G_H)\leq \val_H(\cP_0)$.

For $w\in W$, let $\bar B_w$ denote the maximal clique of $\bar G_H$ that contains the body $B_w$, and let $\bar V_w=V(\bar B_w)$. Then  (\ref{eq:bodyp:cluster}) implies that $\bar B_x\neq \bar B_y$ for all distinct 
$x,y\in W$.  Also note that, if $e=xy\in E$ and $v$ is the pin vertex of $G_H$ corresponding to $e$, then $\bar B_x$ and $\bar B_y$ are two distinct maximal cliques of $\bar G_H$ containing $v$. For $v\in V$, we will continue to use the notation from Section \ref{sec:dress} that $\hat \cX_v(\bar G_H)$ denotes the set of maximal cliques of $\bar G_H$ which have at least five vertices and contain $v$.

We say that a vertex $v\in V$ is {\it deficient} if $\rc_2^1(\bar G_H, v)< f(v)$. Let $D$ be the set of all deficient vertices in $V$. We may assume that $D\neq \emptyset$  since otherwise (\ref{eq:het}) holds trivially.

\begin{claim}\label{claim:pin:deficient}
For each  $v\in D$ we have:
\begin{enumerate}[(a)]
\item  %
$\rc_2^1(\bar G_H,v)\geq 5.5-\frac{12}{k}$;            
\item $v$ is a pin vertex of $G_H$ which does not belong to a hinge pair and  $\hXX_v(\bar G_H)=\{\bar V_x,\bar V_y\}$ where  $xy$ is the edge of $H$ corresponding to $v$; 
\item there is a unique vertex $v^*\in V-v$ such that $\bar V_x, \bar V_y\in \hXX_{v^*}(\bar G_H).$

\item $v^*$ is a pin vertex of $G_H$ and there exists a vertex $z\in W\setminus \{x,y\}$ such that $\bar V_z\in \hXX_{v^*}(\bar G_H)$. %
\end{enumerate}
\end{claim}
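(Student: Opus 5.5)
The plan is to compute $\rc_2^1(\bar G_H,v)$ for every vertex using Lemma \ref{lem:cof:bounds}, applied to the $\cC_2^1$-closed graph $\bar G_H$, and to compare it with $f(v)$.

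\emph{Internal vertices are never deficient.} Let $v$ be an internal vertex of $B_w$. Then $v$ lies in $\bar V_w$, and $|\bar V_w|\geq k$. If $v$ is simplicial in $\bar G_H$, Lemma \ref{prop:rc:probsum} gives $\rc_2^1(\bar G_H,v)=3-6/|\bar V_w|\geq 3-6/k=f(v)$. Otherwise $H_v=\bar G_H[N(v)]$ is not $\cR_2$-rigid (Theorem \ref{thm:coning_cof}), so $\dof_2(H_v-J)\geq 1$. The second equality of Lemma \ref{lem:cof:bounds}(b) then gives $\rc_2^1\geq 1+(9/4-6/k)>f(v)$.

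\emph{Pin vertices.} Let $v$ be the pin vertex of an edge $xy$. Then $\bar V_x\neq\bar V_y$ both lie in $\hXX_v$ by (\ref{eq:bodyp:cluster}), and both have size at least $k$. So $v$ is not simplicial, $\dof_2(H_v-J)\geq 1$, and Lemma \ref{lem:cof:bounds}(b) gives
$$\rc_2^1(\bar G_H,v)\geq \frac{\dof_2(H_v-J)+3}{4}+2\Big(\frac94-\frac6k\Big)\geq 5.5-\frac{12}{k},$$
which is (a). Hence a hinge-pair pin is never deficient. If $|\hXX_v|\geq 3$, the same formula gives at least $1+3(9/4-6/k)>6$, so a deficient $v$ has $\hXX_v=\{\bar V_x,\bar V_y\}$, which is (b). Moreover $\rc_2^1<6-12/k$ forces $\dof_2(H_v-J)\leq 2$, hence $\dof_2(H_v-J)=1$ since dof is an integer.

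\emph{The graph $H_v-J$.} Its only cliques on at least four vertices are $\bar V_x-v$ and $\bar V_y-v$. Every other vertex of $H_v$ is isolated in $H_v-J$. These two cliques share at most one vertex: three common vertices of $\bar V_x,\bar V_y$ in $\bar G_H$ would merge them by the Cofactor Gluing Lemma and closedness. An isolated vertex adds $2$ to the dof, and two disjoint rigid pieces have dof $3$. So $\dof_2(H_v-J)=1$ forces $V(H_v)=(\bar V_x\cup\bar V_y)-v$, with the two cliques meeting in exactly one vertex $v^*$. Any $u\neq v$ with $\bar V_x,\bar V_y\in\hXX_u$ lies in both cliques of $H_v$, so $u=v^*$. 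This gives (c).

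\emph{Part (d), the expected main obstacle.} Every vertex of $\bar G_H$ is in some original body. Since $xy$ has multiplicity one in the case at hand, $v$ is the only vertex of $G_H$ in $V_x\cap V_y$, so $v^*$ is not originally in both bodies. I would argue by contradiction. Suppose the only members of $\hXX_{v^*}$ coming from bodies are among $\bar V_x,\bar V_y$; then $v^*$ is an internal vertex of, say, $B_x$, or a pin not joined to a third body. In that case $\{v,v^*\}$ acts as a hinge joining $\bar B_x$ and $\bar B_y$. I would then show that the subgraph $G_H[V_x\cup V_y]$ together with the closure edges at $v^*$ cannot produce the adjacency of $v^*$ to all of $\bar V_y$. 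The idea is that the rank of $\bar G_H[\bar V_x\cup\bar V_y]$ would come only from a single pin, and that $v^*$'s link to $\bar V_y$ must be witnessed through $\cC_2^1$-rigid subgraphs passing through another body $B_z$. The first thing I would try is to apply the computation above to $v^*$ itself: if $\hXX_{v^*}=\{\bar V_x,\bar V_y\}$ and $v^*$ is internal, bound $\rc_2^1(\bar G_H,v^*)$ and compare with the total rank via Lemma \ref{lem:lb}-type counting. The aim is to derive a contradiction with (\ref{eq:bodyp:cluster}), namely that $V_x\cup V_y$ would become a clique. I expect this step to be the delicate part of the argument.
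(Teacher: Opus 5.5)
Your arguments for (a)--(c) are essentially sound and close to the paper's. The paper first uses Theorem~\ref{thm:CJT:K5} to get $N_{\bar G_H}(v)=(\bar V_x\cup\bar V_y)\setminus\{v\}$, and then excludes $\bar V_x\cap\bar V_y=\{v\}$ because that would give $\dof_2=3$. Your count reaches the same conclusion without that theorem: you allow $m$ isolated vertices and obtain $\dof_2(H_v-J)\in\{1+2m,\,3+2m\}$. Two slips need fixing. First, a third member of $\hXX_v(\bar G_H)$ need not contain a body, so it contributes only at least $\frac94-\frac65=\frac{21}{20}$, not $\frac94-\frac6k$; the conclusion $\rc_2^1(\bar G_H,v)>6-\frac{12}{k}$ still holds. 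Second, $\dof_2(H_v-J)\le 2$ does not give $\dof_2(H_v-J)=1$ ``since dof is an integer''. That value is forced only by your later structural description, so you should invoke it after that description, not before.

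Part (d) is a genuine gap. You give no argument for it. Moreover, your contradiction hypothesis only negates the existence of $z$, so it does not address the case where $v^*$ is an internal vertex of a third body $B_z$; there $z$ exists but $v^*$ is not a pin.

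The missing idea is symmetry. Any permutation of the internal vertices of a body $B_w$ is an automorphism of $G_H$. It is therefore an automorphism of $\bar G_H$, because the rank function of $\cC_2^1$ is invariant under relabelling vertices. Such a permutation maps $B_x$ and $B_y$ onto themselves. Hence it fixes $\bar V_x$ and $\bar V_y$, since these are the unique maximal cliques of $\bar G_H$ containing $B_x$ and $B_y$.

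Each body has at least four internal vertices. So if $v^*$ were internal to some $B_w$, swapping it with another internal vertex $u$ of $B_w$ would give $u\in(\bar V_x\cap\bar V_y)\setminus\{v,v^*\}$, contradicting the uniqueness in (c). Hence $v^*$ is the pin vertex of some edge $zz'\ne xy$ of $H$. Since $v$ lies in no hinge pair, $\{z,z'\}\ne\{x,y\}$, so we may take $z\notin\{x,y\}$. Then $v^*\in B_z\subseteq\bar V_z$ and $|\bar V_z|\ge k$ give $\bar V_z\in\hXX_{v^*}(\bar G_H)$.

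The rank-counting route you sketch, bounding $\rc_2^1(\bar G_H,v^*)$ or using Lemma~\ref{lem:lb}, is not needed. As stated, it is also not clear that it would yield a contradiction.
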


\begin{proof} We have $\deg_{\bar G}(v)\geq k-1>3$, and hence $\rc_2^1(\bar G,v)\geq 3-6/k$ by Lemma \ref{prop:rc:probsum}. The hypothesis that  $v\in D$ and the definition of $f(v)$ now imply that $v$ is a pin vertex of $G_H$. Let $xy$ be the edge of $H$ corresponding to $v$. Then  $\bar B_x,\bar B_y$ are distinct cliques in $\hXX_v(\bar G_H)$ by (\ref{eq:bodyp:cluster}). Put $G_0=\bar G_H[N_{\bar G_H}(v)]$, and let $J\subseteq E(G_0)$ denote the set of edges in $G_0$ that are not contained in a copy of $K_4$ in $G_0$.  
Since $\bar G_H$ is $\cC_2^1$-closed, $G_0$ is not $\cR_2$-rigid by (\ref{eq:bodyp:cluster}) and Theorem \ref{thm:coning_cof}. Thus $\dof_2(G_0-J)\geq 1$.
Lemma \ref{lem:cof:bounds}(b) and the facts that $|\bar V_x|\geq k$, $|\bar V_y|\geq k$ and $|X|\geq 5$ for all  $X\in \hXX_v(\bar G_H)\setminus\{\bar V_x, \bar V_y\}$ now give 
\begin{align}\label{eq:barGv:bound}
\rc_2^1(\bar G_H,v)&= \frac{\dof_2(G_0-J)+3}{4}
+\sum_{X\in \hXX_v(\bar G_H)}\bigg(\frac{9}{4}-\frac{6}{|X|}\bigg)\\
&\geq \frac{\dof_2(G_0-J)-1}{4}+ 5.5-\frac{12}{k}+\frac{21}{20}\cdot \big|\hXX_v(\bar G_H)\setminus\big\{\bar V_x, \bar V_y\big\}\big|.\label{eq:barGv:bound2}
\end{align}
Substituting $\dof_2(G_0-J)\geq 1$ into  \eqref{eq:barGv:bound2} proves (a). Since $v\in D$, \eqref{eq:barGv:bound2} 
also implies (b). In addition, Theorem \ref{thm:CJT:K5} 
and
the fact that $\hXX_v(\bar G_H)=\{\bar V_x,\bar V_y\}$ gives  $$N_{\bar G_H}(v)=(\bar V_x\cup \bar V_y)\setminus\{v\}.$$
Since $\bar G_H$ is $\cC_2^1$-closed, $G_0-J$ is the union of the two complete graphs $\bar B_x-v$ and $\bar B_y-v$. Thus, if $\bar V_x \cap  \bar V_y=\{v\}$, then $\dof_2(G_0-J)= 3$, which contradicts $\rc_2^1(\bar G,v)< f(v)$ and (\ref{eq:barGv:bound2}). Hence, we have $|\bar V_x \cap  \bar V_y|=2,$ which implies (c).

For each $w\in W$, the vertices in $U_{w}$ have the same neighbour set in $G_H$. This, and the fact that $\rr_2^1(G_1)=\rr_2^1(G_2)$ for any two isomorphic graphs $G_1,G_2$, implies  that in each body of $G_H$ all internal vertices have the same neighbour set in $\bar G_H$. The uniqueness of $v^*$ now implies that $v^*$ cannot be an internal vertex of $G_H$. 
Thus, $v^*$ is a pin vertex of $G_H$ corresponding to some edge $zz'$ of $H$. In addition, $zz'$ and $xy$ cannot be parallel edges, since $v$ does not belong to a hinge pair. Hence we may assume that $z\notin \{x,y\}$.
Then $\bar V_z\in \hXX_{v^*}(\bar G_H)$, which proves (d).
\end{proof}

Note that Claim \ref{claim:pin:deficient} implies that $v^*\not\in D$ for each $v\in D$ (since $|\hXX_{v^*}(\bar G_H)|\geq 3$ by (c) and (d), so $v^*$ does not  satisfy (b)). Hence the map $v\mapsto v^*$ defines a function $\sigma:D\to V\setminus D$. 
For each $u\in V\setminus D$, let $\sigma^{-1}(u)=\{v\in D: v^*=u\}$. Let
$S=\sigma(D)$ and $T=V\setminus (D\cup S).$

\begin{claim}\label{claim:pin:surplus}
For each $u\in S$, 
\begin{equation*}
\rc_2^1(\bar G_H,u)\geq 
6-\frac{12}{k}+\frac{|\sigma^{-1}(u)|}{2}.
\end{equation*}
\end{claim}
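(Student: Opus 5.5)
We plan to combine the formula of Lemma~\ref{lem:cof:bounds}(b) with a linear upper bound on $|\sigma^{-1}(u)|$ coming from Lemma~\ref{lem:2dimbodies}. Fix $u\in S$ and put $s=|\sigma^{-1}(u)|\geq 1$. Let $W_u=\{w\in W: u\in \bar V_w\}$ and $m=|W_u|$. Each $\bar V_w$ with $w\in W_u$ is a clique of size at least $k\geq 5$ in $\hXX_u(\bar G_H)$, and the $\bar V_w$ are distinct by (\ref{eq:bodyp:cluster}). By Claim~\ref{claim:pin:deficient}(c),(d), for any $v\in\sigma^{-1}(u)$ with corresponding edge $xy$ we have $x,y,z\in W_u$ with $z\notin\{x,y\}$, so $m\geq 3$.

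Lemma~\ref{lem:cof:bounds}(b) with $\dof_2\geq 0$ will then give
$$\rc_2^1(\bar G_H,u)\geq \frac34+m\Big(\frac94-\frac6k\Big).$$

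The main step is to bound $s$ linearly in $m$. Each $v\in\sigma^{-1}(u)$ is a pin vertex for an edge $xy$ of $H$ with $x,y\in W_u$, and $\bar V_x\cap\bar V_y=\{u,v\}$. Distinct $v$ give distinct pairs $\{x,y\}$: two such pins for the same pair would be parallel edges of $H$, hence a hinge pair, which Claim~\ref{claim:pin:deficient}(b) excludes. Let $H_u$ be the multigraph (in fact a simple graph) on $W_u$ whose edges are these $s$ pairs.

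We then claim that for every subgraph $H'$ of $H_u$ with at least two vertices, $G_{H'}$ is not rigid in $\R^2$. Suppose some $G_{H'}$ were rigid. Realise it inside $G_0=\bar G_H[N_{\bar G_H}(u)]$ by taking the cliques $\bar V_w-u$ for $w\in V(H')$, glued at the pins $v$ (enlarging bodies does not affect 2-rigidity). Then $\bigcup_{w\in V(H')}(\bar V_w-u)$ would induce an $\cR_2$-rigid subgraph of $G_0$. By coning with $u$ (Theorem~\ref{thm:coning_cof}) its union with $u$ would be $\cC_2^1$-rigid. Since $\bar G_H$ is $\cC_2^1$-closed, $\bar V_x\cup\bar V_y$ would then induce a clique for some distinct $x,y\in V(H')$, contradicting (\ref{eq:bodyp:cluster}). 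Lemma~\ref{lem:2dimbodies} applied to $H_u$ therefore gives $2s\leq 3m-4$.

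It remains to compare the two bounds. We need
$$\frac34+\frac{9m}{4}-\frac{6m}{k}\;\geq\; 6-\frac{12}{k}+\frac{3m-4}{4}.$$
This is equivalent to $\frac{3m}{2}-\frac{6m}{k}+\frac{12}{k}\geq \frac{17}{4}$. Using $k\geq 120$, the left side is at least $1.45m\geq 4.35>4.25$ for $m\geq 3$, which proves the claim. The main obstacle we expect is the rigidity-transfer step: one must check carefully that 2-rigidity of the body-pin graph $G_{H'}$ really yields an $\cR_2$-rigid subgraph of $G_0$ built from the cliques $\bar V_w-u$. In particular, one must verify that the gluing vertices are exactly the pins $v$ and that the unequal clique sizes are harmless; we will handle this via the remark that replacing rigid subgraphs with the same attachment vertices does not change rigidity.
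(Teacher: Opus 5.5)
Your argument follows the paper's route. Lemma~\ref{lem:cof:bounds}(b) gives a lower bound on $\rc_2^1(\bar G_H,u)$ that is linear in $m=|W_u|$. You set this against the bound from Lemma~\ref{lem:2dimbodies}, whose hypothesis you check by coning with $u$ and invoking (\ref{eq:bodyp:cluster}).

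The one structural difference is where you apply Lemma~\ref{lem:2dimbodies}. You use the graph on all of $W_u$ whose edge set is $E_u$, the edges of $H$ with pins in $\sigma^{-1}(u)$. The paper works on the endpoints of $E_u$ only, which forces it to treat $|\sigma^{-1}(u)|\le 3$ separately. Your version gives a single inequality valid for every $s$, and your arithmetic, even with only $\dof_2\ge 0$, is correct.

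The step you flagged cannot be justified the way you plan. The cliques $\bar V_w-u$, $w\in V(H')$, are not glued only at the pins of $E_u$:
\begin{itemize}
\item $\bar V_x\supseteq V_x$ contains every pin of $x$. So if $xx'\in E(H)\setminus E_u$ with $x,x'\in V(H')$, then $\bar V_x-u$ and $\bar V_{x'}-u$ share that pin.
\item The closure may create further common vertices. The gluing lemma only gives $|\bar V_x\cap\bar V_{x'}|\le 2$.
\end{itemize}
So you cannot verify that ``the gluing vertices are exactly the pins''. $G_{H'}$ does not sit inside $G_0$ as a subgraph. The remark about replacing rigid subgraphs with the same attachment vertices does not cover this, since identifying vertices does not preserve $2$-rigidity for general graphs.

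Either of two fixes works.
\begin{enumerate}
\item[(i)] Do as the paper does and use the original bodies $V_w-u\subseteq N_{\bar G_H}(u)$. Their union induces in $G_H\subseteq\bar G_H$ the body-pin graph of $H[V(H')]$, minus the edge whose pin is $u$ if present. That edge is not in $E_u$, because $u\notin\sigma^{-1}(u)$, so this multigraph contains $H'$. Rigidity in $\R^2$ then passes from $G_{H'}$ to it by Tay's criterion (three edge-disjoint spanning trees in the doubled multigraph), which is monotone under adding edges. Coning with $u$ and using (\ref{eq:bodyp:cluster}) finishes as you intended.
\item[(ii)] Argue with body motions. At a generic realisation of $\bigcup_{w\in V(H')}(\bar V_w-u)$, each clique moves as a rigid body and the pins of $E(H')$ sit at generic points. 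When $G_{H'}$ is rigid, those pin constraints alone force every infinitesimal motion to be trivial. Extra shared vertices only add constraints.
\end{enumerate}
With either fix your proof is complete.
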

\begin{proof}
Let $$W_u=\{w\in W: \bar V_w\in \hXX_u(\bar G_H)\}.$$
For every $w\in W$, we have 
${6}/|\bar V_w|\leq {6}/{k}\leq {1}/{20}$,
since $k\geq 120$. Since $u\in S$, we have $u=v^*$ for some $v\in D$ and Claim \ref{claim:pin:deficient}(c),(d) implies that $|W_u|\geq 3$. Hence $\bar G[N_{\bar G}(u)]$ is not $\cR_2$-rigid by (\ref{eq:bodyp:cluster}) and Theorem \ref{thm:coning_cof}. We can now use Lemma \ref{lem:cof:bounds}(b) to deduce that
\begin{equation}\label{eq:bpin:surpl:bound}
\rc_2^1(\bar G_H, u)\geq 1+\frac{22}{10}\cdot |W_u|. 			\end{equation}
If $|\sigma^{-1}(u)|\leq 3$, then  (\ref{eq:bpin:surpl:bound}) and the fact that $|W_u|\geq 3$ give $$\rc_2^1(\bar G_H, u)\geq \frac{76}{10} >  6-\frac{12}{k} + \frac{|\sigma^{-1}(u)|}{2},$$ as required. Hence we may assume that 
$|\sigma^{-1}(u)|\geq 4$. We will show that the claim holds in this case by applying Lemma \ref{lem:2dimbodies} to a subgraph of $H[W_u]$.

For each edge $e$ of $H$, let $v_e$ be the pin vertex of $G_H$ corresponding to $e$.
Let $E_u=\{e\in E:v_e\in \sigma^{-1}(u)\}$, let $W'_u$ be the set of vertices of $H$ incident to the edges in $E_u$ and  put  $H_u=H[W'_u]$. Note that if $w\in W_u'$ then $w$ is incident with an edge $e\in E_u$. This implies that $v_e\in D$ and $v_e^*=u$. We can now use Claim \ref{claim:pin:deficient}(c) to deduce that $w\in W_u$ and hence $W_u'\subseteq W_u$.

Let
$V'_u=\bigcup_{w\in W'_u} (V_w-u)$ and put $G_u=G_H[V'_u]$.
Then $G_u$ is the body-pin graph of $H_u$. 
In addition, $V'_u\subseteq N_{\bar G_H}(u)$.
This implies that, for any pair  of distinct vertices $x,y\in W_u'$, $(V_x-u)\cup (V_y-u)$ is not contained in an $\mathcal{R}_2$-rigid subgraph of $G_u$; otherwise, $V_x\cup V_y$ would be contained in a $\cC_2^1$-rigid subgraph of $\bar G_H$ by Theorem \ref{thm:coning_cof}, which would contradict (\ref{eq:bodyp:cluster}). We can now apply
Lemma \ref{lem:2dimbodies} to $H_u$ to deduce that
\begin{equation}\label{eq:A0F0:bound}
3|W'_u|-4\geq 2|E_u| =2|\sigma^{-1}(u)|.
\end{equation}
By using \eqref{eq:bpin:surpl:bound} and \eqref{eq:A0F0:bound}, and the fact that $W_u'\subseteq W_u$, we  obtain
$$\rc_2^1(\bar G_H, u)\geq 1+\frac{22}{10}\cdot |W'_u|\geq 1+\frac{22}{10}\cdot\frac{2 |\sigma^{-1}(u)|+4}{3}\geq 6+\frac{|\sigma^{-1}(u)|}{2},$$
where the 
last inequality follows since $|\sigma^{-1}(u)|\geq 4$. This completes the proof of the claim.
\end{proof}

By Claims \ref{claim:pin:deficient}(a) and 
\ref{claim:pin:surplus} and the definition of $f$, for every $u\in S$, we have
$$	\sum_{v \, \in \,\sigma^{-1}(u) \cup \{u\}} \rc_2^1(\bar G,v) \geq 
6-\tfrac{12}{k} +\tfrac{|\sigma^{-1}(u)|}{2}+\sum_{v \, \in \, \sigma^{-1}(u)}(5.5-\tfrac{12}{k})
\geq \sum_{v \, \in \,\sigma^{-1}(u) \cup \{u\}}f(v).$$
Since $T\cap D=\emptyset$, this gives
$$\sum_{v\in V}\hspace{0cm}\rc_2^1(\bar G, v)=\hspace{-0.4cm} \sum_{\substack{u\, \in\, S\\v\,\in\, \sigma^{-1}(u)\cup \{u\}}}\hspace{-0.4cm}\rc_2^1(\bar G,v) + \sum_{v\in T} \rc_2^1(\bar G,v) \geq
\hspace{-0.4cm}\sum_{\substack{u\, \in\, S\\v\,\in\, \sigma^{-1}(u)\cup \{u\}}}\hspace{-0.4cm}f(v) + \sum_{v\in T} f(v)
= \hspace{-0.2cm}\sum_{v\,\in\, V}\hspace{0cm}f(v),$$
which completes the proof of the theorem.
\qed

\section{Rigidity and global rigidity of highly connected $K_t $-covered graphs in $\mathbb{R}^3$}

We will show that every 5-connected, $K_5$-covered graph is globally rigid in $\R^3$, that every 7-connected, $K_4$-covered graph is rigid in $\R^3$ and that every 5-connected, $K_4$-covered graph with no $\cR_3$-bridges is rigid  in $\R^3$. 

Throughout this section we will only be concerned with the 3-dimensional rigidity matroid, so will simplify notation by suppressing the subscript 3 in our notation for these matroids. For example we will write $\cR(G)$, $\rr(G)$ and $\rc(G,v)$ instead of $\cR_3(G)$, $\rr_3(G)$ and $\rc_3(G,v)$. Similarly we will say that a graph $G$ is (globally) rigid to mean $G$ is (globally) rigid in $\R^3$. 

\subsection{The degree of freedom of a set of vertices in a graph}

Given a graph $G=(V,E)$ and $U\subseteq V$, the {\em degree of freedom of $U$ in $G$} is given by
$$\dof_G(U)=\rr(G+K(U))-\rr(G).$$
In this subsection, we will study the parameter $\dof_{G-v}(N_G(v))$, for a vertex  $v\in V$, and its relation to $\rr(G)-\rr(G-v)$.
We begin with some elementary observations.

\begin{lemma}
\label{dofeasy}
Let $G=(V,E)$ be a graph, let $F$ be a set of edges on the vertex set $V$, let $U,U'\subseteq V$ with $U\subseteq U'$, and let $k$ be a positive integer. Then\\
(a) $\dof_{G+F}(U)+|F|\geq \dof_G(U) \geq \dof_{G+F}(U)$, \\
(b)  $\rr(G+F)\geq \rr(G)+k$ whenever $\dof_G(U) \geq \dof_{G+F}(U) + k$,  and \\
(c) $\dof_G(U')\geq \dof_G(U).$
\end{lemma}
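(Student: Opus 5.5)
All three parts follow from elementary properties of the rank function $\rr$ of $\cR(K_n)$: it is monotone, it is submodular, and adding a single edge raises it by at most one. The plan is to write out each degree of freedom as a difference of ranks and compare terms.

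For (a), expand both quantities:
$$\dof_G(U)=\rr(G+K(U))-\rr(G),\qquad \dof_{G+F}(U)=\rr(G+F+K(U))-\rr(G+F).$$
\emph{Lower bound.} Apply submodularity to the edge sets $E\cup K(U)$ and $E\cup F$. Their union is $E\cup F\cup K(U)$ and their intersection contains $E$, so
$$\rr(G+K(U))+\rr(G+F)\geq \rr(G+F+K(U))+\rr(G).$$
Rearranging gives $\dof_G(U)\geq \dof_{G+F}(U)$. This is the same diminishing-returns argument used in Lemma~\ref{lem:new_edge}(a).

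\emph{Upper bound.} By monotonicity, $\rr(G+F+K(U))\geq \rr(G+K(U))$. Since each added edge raises the rank by at most one, $\rr(G+F)\leq \rr(G)+|F|$. Combining these,
$$\dof_{G+F}(U)\geq \rr(G+K(U))-\rr(G)-|F|=\dof_G(U)-|F|,$$
which is the left-hand inequality of (a).

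For (b), take the identity $\rr(G+F)-\rr(G)=\big(\rr(G+F+K(U))-\rr(G+K(U))\big)+\dof_G(U)-\dof_{G+F}(U)$, which is just a rearrangement of the definitions. The bracketed term is nonnegative by monotonicity, so $\rr(G+F)-\rr(G)\geq \dof_G(U)-\dof_{G+F}(U)\geq k$, as required. This is really a sharpened form of the upper-bound argument in (a).

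For (c), apply submodularity to $E\cup K(U')$ and $E\cup K(U)$. Since $U\subseteq U'$, we have $K(U)\subseteq K(U')$, so the union is $E\cup K(U')$ and the intersection is $E\cup K(U)$. This gives only the tautology $\rr(G+K(U'))\geq \rr(G+K(U))$, which is monotonicity; to compare \emph{differences} I would instead rely on the closure behaviour of the two complete subgraphs. Writing
$$\dof_G(U')=\rr(G+K(U'))-\rr(G)\geq \rr(G+K(U))-\rr(G)=\dof_G(U)$$
uses only monotonicity, since both quantities subtract the same $\rr(G)$. So (c) is immediate.

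None of the three parts should present a real obstacle. The only point needing care is the direction of submodularity in the lower bound of (a), together with the observation that $F$ may overlap $K(U)$ or $E$. That overlap is harmless, because the intersection of the two edge sets then contains $E$ and monotonicity absorbs the excess.
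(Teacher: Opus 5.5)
Your proof is correct and follows essentially the paper's route: the paper obtains (a) and (b) from the diminishing-returns property of the rank function (an edge of $F$ that raises the rank of a larger graph also raises the rank of a smaller one), which is your submodularity inequality applied edge by edge together with your rank identity, and it obtains (c) from monotonicity alone. The aside in (c) about needing ``closure behaviour'' is unnecessary and can be deleted, since the monotonicity argument in your final display is already complete.
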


\begin{proof}
Parts (a) and (b) follow from the observation that, for any two graphs $H_1\subseteq H_2$ on $V$,  adding an edge $f\in F$  to $H_2$ increases $r(H_2)$ by  at most one, and if this does increase $r(H_2)$ by one, then adding $f$ to $H_1$ increases $r(H_1)$ by one.  
Part (c) follows from the monotonicity of the rank function.
\end{proof}

\begin{lemma}\label{lem:dof:NGv:bound}
Let $G=(V,E)$ be a graph and let $v\in V$ be  a vertex with $\deg_G(v)\geq 3$. Then
$$\dof_{G-v}(N_G(v))-\dof_{G}(N_G(v))=\rr(G)-\rr(G-v)-3.$$
\end{lemma}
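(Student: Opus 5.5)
Write $U=N_G(v)$ and unfold the definitions. We have $\dof_{G-v}(U)=\rr(G-v+K(U))-\rr(G-v)$ and $\dof_G(U)=\rr(G+K(U))-\rr(G)$. The claimed identity is therefore equivalent to
$$\rr(G+K(U))-\rr(G-v+K(U))=3.$$
So the whole task reduces to one fact: adding the vertex $v$, joined to all of $U$, to the graph $(G-v)+K(U)$ raises the rank by exactly $3$. Note that $G+K(U)$ is precisely $((G-v)+K(U))$ with $v$ adjoined by edges to every vertex of the clique $U$. Once this equality is in hand, subtracting the two $\dof$ expressions and rearranging gives the statement.

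For the lower bound, I choose three vertices $u_1,u_2,u_3\in U$, which is possible since $\deg_G(v)\geq 3$. Lemma~\ref{lem:1ext}(b) is a $0$-extension result. Applied at a generic realisation, where $p(v),p(u_1),p(u_2),p(u_3)$ are affinely independent, it shows that adding $v$ with the edges $vu_1,vu_2,vu_3$ increases the rank by $3$. Equivalently, $\cR_3$ has the $3$-dimensional $0$-extension property, so a base of $\cR((G-v)+K(U))$ together with these three edges stays independent. Monotonicity then gives $\rr(G+K(U))\geq \rr(G-v+K(U))+3$.

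For the upper bound, I argue that every remaining edge $vu$ with $u\in U\setminus\{u_1,u_2,u_3\}$ lies in the closure of the set $K(U)\cup\{vu_1,vu_2,vu_3\}$. The reason is that $\{v,u_1,u_2,u_3,u\}$ induces a copy of $K_5$ in $G+K(U)$, and $K_5$ is an $\cR_3$-circuit. Deleting $vu$ from this $K_5$ leaves edges all of which lie in $K(U)\cup\{vu_1,vu_2,vu_3\}$, so $vu$ is spanned by them. Alternatively, the $K_4$ on $\{v,u_1,u_2,u_3\}$ is rigid and shares three vertices with the rigid clique on $U$, so the Gluing Lemma (Lemma~\ref{gluing}) makes their union rigid. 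Either way, $\rr(G+K(U))\leq \rr(G-v+K(U))+3$.

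I do not expect a real obstacle here. The only point needing care is that the degree hypothesis $\deg_G(v)\ge 3$ is what guarantees three distinct neighbours $u_1,u_2,u_3$, and hence the full increase of $3$ in the lower bound.
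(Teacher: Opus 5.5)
Your proposal is correct and matches the paper's proof. Both unfold the definition of $\dof$ to reduce the identity to $\rr(G+K(N_G(v)))-\rr(G-v+K(N_G(v)))=3$, which the paper justifies in one line by noting that $v$ is a simplicial vertex of $G+K(N_G(v))$; your $0$-extension lower bound and $K_5$-circuit (or gluing) upper bound just spell out why a simplicial vertex of degree at least $3$ raises the rank by exactly $3$.
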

\begin{proof}
By using the definition of the degree of freedom of a subset we obtain
\begin{align*}
&\dof_{G-v}(N_G(v))-\dof_G(N_G(v))\\
&=\bigl[\rr(G-v+K(N_G(v)))-\rr(G-v)\bigr]
 -\bigl[\rr(G+K(N_G(v)))-\rr(G)\bigr]\\
&=\rr(G)-\rr(G-v)-3.
\end{align*}
where the last equality follows from the fact that $v$ is a simplicial vertex in $G+K(N_G(v))$.
\end{proof}

The next lemma follows  from the 3-dimensional case of 
Lemma \ref{lem:1ext}(a).  

\begin{lemma}\label{lem:01ext:dof}
Let $G=(V,E)$ be a graph.
Let $k\in \{0,1\}$ and $v\in V$ with $\deg_G(v)\geq 3+k$. If $\dof_{G-v}(N_G(v))\geq k$, then
$\rr(G)-\rr(G-v)\geq 3+k$. 
\end{lemma}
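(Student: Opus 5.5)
The plan is to pass to a suitable subgraph and then apply the 3-dimensional extension results of Lemma \ref{lem:1ext}.

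First, the case $k=0$. Since $\deg_G(v)\geq 3$, I choose three neighbours $v_1,v_2,v_3$ of $v$. Let $G_0$ be the graph obtained from $G-v$ by adding $v$ together with the three edges $vv_1,vv_2,vv_3$, so $G_0$ arises from $G-v$ by a 3-dimensional $0$-extension. For a generic $p$, the points $p(v),p(v_1),p(v_2),p(v_3)$ are affinely independent. Lemma \ref{lem:1ext}(b) then gives $\rr(G_0)=\rr(G-v)+3$. Since $G_0\subseteq G$, monotonicity of $\rr$ yields $\rr(G)\geq \rr(G-v)+3$. Note that the hypothesis $\dof_{G-v}(N_G(v))\geq 0$ holds trivially here.

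Next, the case $k=1$. From $\dof_{G-v}(N_G(v))\geq 1$ we have $\rr(G-v+K(N_G(v)))>\rr(G-v)$, so some pair $u,w\in N_G(v)$ satisfies $\rr(G-v+uw)=\rr(G-v)+1$. In particular $uw\notin E(G)$. Because $\deg_G(v)\geq 4$, I pick two further neighbours $y_1,y_2$ of $v$, distinct from $u$ and $w$.

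I then work with independent sets rather than with rigidity. Let $B$ be a base of $\cR(G-v+uw)$ that contains $uw$; one exists because $uw$ is not a loop. Then $|B|=\rr(G-v)+1$ and $B-uw\subseteq E(G-v)$. Form $B'=(B-uw)\cup\{vu,vw,vy_1,vy_2\}$ on $V$. This is exactly a 3-dimensional $1$-extension of the independent graph $(V-v,B)$.

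The key step is that $1$-extensions preserve $\cR_3$-independence. This is the standard fact underlying Lemma \ref{lem:1ext}(a); it can also be obtained from that lemma by extending $B$ to a rigid graph, since the rank count then forces independence. I would cite it in the form given in \cite{Wlong}. It gives $\rr(G)\geq |B'|=\rr(G-v)+1-1+4=\rr(G-v)+4$, which proves the claim for $k=1$.

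I expect the main obstacle to be only this passage from the rigidity form of Lemma \ref{lem:1ext}(a) to the independence form. It is routine, because the 1-extension property is well known.
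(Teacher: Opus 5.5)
Your proof is correct and is essentially the paper's argument: the paper simply asserts that the lemma follows from the 0- and 1-extension results of Lemma~\ref{lem:1ext}, and you have filled in the details. For $k=0$ you use the 0-extension rank count, and for $k=1$ a 1-extension of a base $B$ containing the loose edge $uw$. Your reduction of the independence form of the 1-extension property to Lemma~\ref{lem:1ext}(a) is also valid: you extend $B$ to a base of $\cR(K(V-v))$, apply the 1-extension, and observe that the resulting rigid graph has exactly $3|V|-6$ edges.
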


Note that the extension of Lemma \ref{lem:01ext:dof} to the case when $k=2$ does not hold in general:
consider, for example, the graph obtained by gluing two complete graphs $K_5$ and $K_2$ along a vertex $v$. We shall show in Lemma \ref{lem:2ext:unified}  however that this extension does hold when the neighbour set of $v$ in $G$ is the union of two distinct cliques of cardinality at least three. We first establish a preliminary lemma.

\begin{lemma}\label{lem:veebridge}
Let $G=(V,E)$ be a graph and $V_1, V_2$ be the vertex sets of two  disjoint cliques in $G$ with 
$|V_i|\geq 3$ for $i=1,2$. Let $D=\{xy\in \cl(G): x\in V_1,y\in V_2\}$. Suppose that $|D|\leq 2$ and 
$\dof_{G}(V_1\cup V_2)\geq 2$.
Then there exist two adjacent edges $e,f$ joining  $V_1$ and $V_2$ such that $\rr(G+e+f)=\rr(G)+2$.
\end{lemma}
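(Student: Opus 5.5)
The plan is to work in the closure $\bar G=\cl(G)$, which has the same rank as $G$, and to argue by exchange in the matroid $\cR_3$. First observe that $\dof_G(V_1\cup V_2)\geq 2$ and Lemma~\ref{dofeasy}(a) give $\dof_{G+e}(V_1\cup V_2)\geq 1$ for any single edge $e$. Call a pair $xy$ with $x\in V_1$, $y\in V_2$ a \emph{cross pair}. Since there are at least $9$ cross pairs and $|D|\leq 2$, we can pick a cross pair $e=xy\notin D$. Then $\rr(G+e)=\rr(G)+1$.

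If some cross edge $f$ sharing an endpoint with $e$ satisfies $f\notin\cl(G+e)$, we are done with this $e,f$. So assume every cross pair through $x$ or through $y$ lies in $\cl(G+e)$. Since $\dof_{G+e}(V_1\cup V_2)\geq 1$, not all cross pairs lie in $\cl(G+e)$. Hence there is a cross pair $g=x'y'\notin\cl(G+e)$, and by the assumption $x'\neq x$ and $y'\neq y$.

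Now use the exchange trick. Suppose first that $x'y\notin D$, i.e. $x'y\notin\cl(G)$. We have $x'y\in\cl(G+e)$ by the assumption. The matroid exchange property then gives $e\in\cl(G+x'y)$, hence $\cl(G+x'y)=\cl(G+e)$. Since $g\notin\cl(G+e)$, we get $\rr(G+x'y+x'y')=\rr(G)+2$. The edges $x'y$ and $x'y'$ share the vertex $x'$, so they are the required adjacent pair. Symmetrically, if $xy'\notin D$, the pair $xy'$, $x'y'$ (sharing $y'$) works by the same argument.

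The main obstacle is the remaining case $D=\{x'y,\,xy'\}$, where both substitute edges are already in $\cl(G)$. This is where $|D|\leq 2$ and $|V_i|\geq 3$ must be used, and I would handle it by restarting with a different initial edge. Choose $x''\in V_1\setminus\{x,x'\}$ and $y''\in V_2\setminus\{y,y'\}$, which exist because $|V_i|\geq 3$. Note that $D$ is now a matching on $\{x,x'\}\times\{y,y'\}$. I would restart the argument with a cross pair $e'$ that meets neither $x,x'$ nor $y,y'$, for instance $e'=x''y''$. For such an $e'$, every cross pair sharing an endpoint with $e'$ avoids $D$, because each edge of $D$ uses one of $x,x'$ and one of $y,y'$. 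The substitute-edge step needs one extra check: the pairs $x'''y''$ and $x''y'''$ produced by the rerun must also avoid $D$. Edges of $D$ have the form $x'y$ or $xy'$, so they can never contain $x''$ or $y''$, and the check succeeds.

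Therefore the exchange step always applies on the second attempt. Some care is needed to confirm that the new $g$ from the rerun can be chosen with its endpoints distinct from those of $e'$, but this follows exactly as before from $\dof_{G+e'}(V_1\cup V_2)\geq 1$.
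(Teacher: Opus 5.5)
Your proof is correct and is essentially the paper's argument. The paper picks, from the start, a vertex $x\in V_1$ incident with no pair of $D$ (possible since $|V_1|\geq 3$ and $|D|\leq 2$), which is what your restart with $x''$ achieves, and then uses a cross pair through $x$ adjacent to both of two non-adjacent cross pairs that are independent over $G$, exactly as in your substitute-edge step. Also, you do not need the exchange property there: $x'y\in\cl(G+e)$ already gives $\cl(G+x'y)\subseteq\cl(G+e)$, and hence $g\notin\cl(G+x'y)$.
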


\begin{proof}
Since $|V_1|\geq 3$ and $|D|\leq 2$, there is a vertex $x\in V_1$ such that  $xv\not\in \cl(G)$ for all $v\in V_2$. Let $a$ be an arbitrary vertex in $V_2$. Since $\rr(G+K(V_1\cup V_2))\geq \rr(G)+2$, there exist vertices $y\in V_1$ and  $b\in V_2$ such that $\rr(G+xa+yb)= \rr(G)+2$. If $x=y$ or $a=b$, then we are done, so we may assume that this is not the case. Put $e=xb$. Then $e\not\in \cl(G)$ by the choice of $x$ and $\{xa,yb\}\not\subseteq \cl(G+e)$ since $\rr(G+xa+yb)\geq \rr(G)+2$. Hence 
$\rr(G+e+f)=\rr(G)+2$ for some $f\in \{xa,yb\}$, and the lemma holds. 
\end{proof}

\begin{lemma}\label{lem:2ext:unified}
Let $G=(V,E)$ be a graph and $v\in V$ with $\deg_G(v)\geq 5$. 
Let $V_1,V_2$ be the vertex sets of two distinct cliques of $G[N_G(v)]$ with $|V_i|\geq 3$ for $i=1,2$, and suppose that $\dof_{G-v}(V_1\cup V_2)\geq 2$. Then
$\rr(G)-\rr(G-v)\geq 5$.
\end{lemma}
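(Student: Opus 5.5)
The plan is to realise $v$, together with five of its edges, as an X-replacement (Lemma~\ref{lem:coplanar}) or a V-replacement (Lemma~\ref{lem:2ext:split}) applied to a graph obtained from $G-v$ by adding two edges. Put $G_0=G-v$. Since adding edges of $\cl(G_0)$ changes neither $\rr(G_0)$ nor $\dof_{G_0}(V_1\cup V_2)$, I will work with $G_0$ replaced by its $\cR_3$-closure restricted to $V-v$. This may enlarge the cliques: $V_i\subseteq X_i$, where $X_i$ is a clique of $\cl(G_0)$. I will, however, keep $v$ attached only to its true neighbours in $G$.

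The final step is as follows. Suppose I find two edges $e,f$ inside $K(V_1\cup V_2)$ such that $\rr(G_0+e+f)=\rr(G_0)+2$. Then $e$ and $f$ are $\cR_3$-bridges of $G_1=G_0+e+f$. Now build $G'$ from $G_1-e-f=G_0$ by adding $v$ joined to five suitable vertices of $V_1\cup V_2\subseteq N_G(v)$. Then $G'\subseteq G$, and each lemma gives $\rr(G')=\rr(G_1)+3=\rr(G_0)+5$, so $\rr(G)-\rr(G-v)\ge 5$ by monotonicity.

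The two lemmas cover the two shapes of $\{e,f\}$.
\begin{enumerate}
\item If $e=xy$ and $f=zy$ share the endpoint $y\in V_1$ with $x,z\in V_2$, I use Lemma~\ref{lem:2ext:split}. The vertices $y_1,y_2$ are two further vertices of $V_1$, which exist since $|V_1|\ge 3$, and all five of $x,y,y_1,y_2,z$ lie in $N_G(v)$.
\item If $e=x_ey_e$ and $f=x_fy_f$ are disjoint, with $x_e,x_f\in V_1$ and $y_e,y_f\in V_2$, I use Lemma~\ref{lem:coplanar}. The required common neighbour of $x_e,y_e$ is a third vertex $z\in V_1\setminus\{x_e,x_f\}$, which is adjacent to $x_e$ inside the clique; I will need to arrange that it is also adjacent to $y_e$. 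The cleaner route is to insist on the adjacent configuration, which Lemma~\ref{lem:veebridge} delivers directly.
\end{enumerate}

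To produce the adjacent pair I would apply Lemma~\ref{lem:veebridge} to two disjoint cliques of size at least three. This needs $|D|\le 2$, where $D$ is the set of closure edges between them. I get this by a reduction. A vertex $y\in V_2$ with at least three $\cl(G_0)$-neighbours in the clique $X_1$ gives a rigid $0$-extension of $X_1$, so $y$ is joined to all of $X_1$ in the closure, and $y$ can be absorbed into $X_1$ without changing $\dof$. Repeating this, the closure-edges between the current $V_1$-part and the current $V_2$-part form a graph of maximum degree at most $2$ on the $V_2$ side. If all of $V_2$ were absorbed, then $V_1\cup V_2$ would be a clique in $\cl(G_0)$ and $\dof_{G_0}(V_1\cup V_2)=0$, a contradiction. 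I expect that, after choosing the vertex sets suitably and possibly shrinking to three-element subcliques, one can reach $|D|\le 2$; the dof hypothesis survives shrinking to three-element subcliques only provided the added pair can still be chosen across the surviving parts.

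The main obstacle is the case where $V_1$ and $V_2$ intersect, or have many closure edges between them. If $|V_1\cap V_2|\ge 2$, then $V_1\cup V_2$ is the union of two cliques of size at least three sharing at least two vertices. Coning from $v$ turns these into cliques sharing at least three vertices, and the aim is to reach a contradiction with the hypothesis $\dof_{G_0}(V_1\cup V_2)\ge 2$.
\begin{itemize}
\item The case $|V_1\cap V_2|=1$ is the genuinely delicate one, since Lemma~\ref{lem:veebridge} needs disjoint cliques.
\item For it, I would first try to remove the shared vertex $w$ from the side that still has at least three vertices afterwards, and then run the argument above.
\item If both cliques become too small after removing $w$, I would instead take $y=w$ as the common endpoint in Lemma~\ref{lem:2ext:split}. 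Here $x,z$ come from $V_1\setminus V_2$ and $V_2\setminus V_1$, $y_1,y_2$ are further clique vertices, and I check directly that two such cross edges raise the rank by two.
\end{itemize}
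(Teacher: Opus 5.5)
\textbf{There is a genuine gap in how you obtain the adjacent loose pair.}

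You only close $G-v$ within itself. That never forces $|D|\le 2$ in Lemma~\ref{lem:veebridge}: your absorption step bounds the degrees in the bipartite graph of closure edges between the two cliques, not its number of edges, and the hope of ``choosing the vertex sets suitably'' is not justified. In fact it fails. Let $G-v$ be the triangular prism with triangles on $V_1=\{x_1,x_2,x_3\}$ and $V_2=\{y_1,y_2,y_3\}$ and matching edges $x_iy_i$, and join $v$ to all six vertices.
\begin{itemize}
\item The prism plus any single $x_iy_j$ with $i\neq j$ lies in an octahedron. So $G-v$ is $\cR_3$-independent and $\cR_3$-closed, with $\dof_{G-v}(V_1\cup V_2)=3$.
\item Here $D$ is the whole perfect matching. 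Nothing can be absorbed and nothing can be shrunk, so your argument stops.
\item Yet the lemma holds: $\rr(G)=15$ and $\rr(G-v)=9$.
\end{itemize}
Your fallback for $V_1\cap V_2=\{w\}$ also does not work. With $y=w$, the edges $wx$ and $wz$ (where $x\in V_1\setminus V_2$, $z\in V_2\setminus V_1$) already belong to $G-v$. So they cannot be the two bridges whose addition raises the rank by $2$.

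\textbf{The missing idea is to close with respect to $G$ itself.} Add $F=E(\cl(G)-v)-E(G-v)$, which does not change $\rr(G)$, and split into three cases.
\begin{itemize}
\item If $\rr(G+F-v)\ge\rr(G-v)+2$, you are done by the 0-extension property.
\item If $\rr(G+F-v)=\rr(G-v)+1$, Lemma~\ref{dofeasy}(b) gives $\dof_{G+F-v}(V_1\cup V_2)\ge 1$. The 1-extension property (Lemma~\ref{lem:01ext:dof}) then gives $\rr(G)\ge\rr(G+F-v)+4$.
\item Otherwise you may assume $G-v=\cl(G)-v$, with $V_1,V_2$ maximal cliques of $G[N_G(v)]$. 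Then $|V_1\cap V_2|\le 1$, because two cliques sharing two vertices leave at most one degree of freedom.
\end{itemize}

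In the last case, suppose first that $V_1\cap V_2=\emptyset$. Then $V_1\cup\{v\}$ and $V_2\cup\{v\}$ are rigid bodies sharing $v$. A vertex with two neighbours across would have three neighbours in the opposite body, and three disjoint cross edges would make $V_1\cup V_2\cup\{v\}$ rigid. Either outcome contradicts maximality, so $|D|\le 2$ and Lemma~\ref{lem:veebridge} applies. For the prism, it is already the 0-extension case that finishes the proof.

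Now suppose $V_1\cap V_2=\{w\}$. No bound on $D$ is needed. The hypothesis $\dof\ge 2$ directly gives cross edges $e,f$ between $V_1-w$ and $V_2-w$ with $\rr(G-v+e+f)=\rr(G-v)+2$. Since $w$ is adjacent to both ends of $e$, it is exactly the common neighbour $z$ you were missing in Lemma~\ref{lem:coplanar}. The adjacent case is handled by Lemma~\ref{lem:2ext:split}, with $w$ as one of $y_1,y_2$.
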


\begin{proof}
Let $F=E(\cl(G)-v)-E(G-v)$. 
If 
$\rr(G+F-v)\geq \rr(G-v)+2$, then the 0-extension property (c.f. Lemma \ref{lem:1ext})
implies that $\rr(G)\geq \rr(G+F-v)+3\geq \rr(G-v)+5$, as required. Hence we may assume that $\rr(G+F-v)\leq \rr(G-v)+1$. Similarly, if
$\rr(G+F-v)=\rr(G-v)+1$, then Lemma \ref{dofeasy}(b)  and the hypothesis that $\dof_{G-v}(V_1\cup V_2)\geq 2$ imply that $\dof_{G+F-v}(V_1\cup V_2)\geq 1$,
and the 1-extension property (c.f. Lemma \ref{lem:1ext})
implies that $\rr(G)\geq \rr(G+F-v)+4\geq \rr(G-v)+5$.
Hence we may assume that $\rr(G+F-v)=\rr(G-v)$. This implies, again by Lemma \ref{dofeasy}(b), that
$\dof_{\cl(G)-v}(V_1\cup V_2)=\dof_{G-v}(V_1\cup V_2)$. Replacing $G$ by $G+F$ if necessary, we may assume that $G-v=\cl(G)-v$. By Lemma \ref{dofeasy}(c), we may also assume that $V_1$ and $V_2$ induce maximal cliques in $G[N_G(v)]$.
Since  $\dof_{G-v}(V_1\cup V_2)\geq 2$, we have
$|V_1\cap V_2|\leq 1$. 

First suppose that $|V_1\cap V_2|=  1$ and let $V_1\cap V_2=\{w\}$. 
Since $\dof_{G-v}(V_1\cup V_2)\geq 2$, we may choose a pair $e,f$ of edges between $V_1-\{w\}$ and $V_2-\{w\}$ such that $\rr(G-v+e+f)= \rr(G-v)+2$. We can now apply Lemma \ref{lem:coplanar} (when $e,f$ are disjoint) or Lemma  \ref{lem:2ext:split} (when $e,f$ have a common vertex) to deduce that $\rr(G)\geq  \rr(G-v)+5$.

Next suppose that $|V_1\cap V_2|=0$. 
Let $D=\{xy\in E: x\in V_1, y\in V_2\}$. Since $G-v=\cl(G)-v$ and $V_1,V_2$ are maximal cliques in $G[N_G(v)]$, we have $|D|\leq 2$.
By applying  Lemma~\ref{lem:veebridge} to $G-v$ we can now deduce  that there exist two adjacent edges $e,f$ between $V_1$ and $V_2$ such that $\rr(G-v+e+f)\geq \rr(G-v)+2$.
Then Lemma \ref{lem:2ext:split} gives  $\rr(G)\geq \rr(G-v)+5$, as required.
\end{proof}

\begin{lemma}\label{lem:useful}
Let $G=(V,E)$ be a graph, $v\in V$ and $V_1,V_2$ be the vertex sets of two distinct  cliques of $G[N_G(v)]$ with $|V_i|\geq 3$ for $i=1,2$. Suppose that $\dof_{G}(V_1\cup V_2)\geq 1$.
Then $\dof_{G-v}(V_1\cup V_2)\geq 3$ and $\rr(G)\geq \rr(G-v)+5$.
\end{lemma}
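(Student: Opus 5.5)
The second assertion follows from the first via Lemma \ref{lem:2ext:unified}: if $\dof_{G-v}(V_1\cup V_2)\geq 3\geq 2$ and $\deg_G(v)\geq 5$, then $\rr(G)-\rr(G-v)\geq 5$. The degree condition holds because $|V_1\cup V_2|\geq 4$, and in fact $|V_1\cup V_2|\geq 5$. Indeed, if $|V_1\cup V_2|=4$, then $V_1\cup V_2$ together with $v$ spans at most a $K_5$ minus an edge. This graph is rigid: it is two $K_4$'s sharing three vertices, which is rigid by the Gluing Lemma \ref{gluing}. Hence $\dof_G(V_1\cup V_2)=0$, contradicting the hypothesis. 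So the real work is showing $\dof_{G-v}(V_1\cup V_2)\geq 3$.

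To do this, I would apply Lemma \ref{lem:dof:NGv:bound}, but to a subgraph. Let $G'$ be obtained from $G$ by deleting all edges from $v$ to $N_G(v)\setminus (V_1\cup V_2)$. Then $N_{G'}(v)=V_1\cup V_2=:U$ and $G'-v=G-v$, so Lemma \ref{lem:dof:NGv:bound} gives
\[
\dof_{G-v}(U)=\dof_{G'}(U)+\rr(G')-\rr(G-v)-3 .
\]
Deleting edges cannot decrease $\dof$ of $U$ by Lemma \ref{dofeasy}(a). Hence $\dof_{G'}(U)\geq \dof_G(U)\geq 1$.

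Next I need $\rr(G')-\rr(G-v)\geq 5$, or more weakly a lower bound combining with the $\dof$ term to reach $3$. This becomes a bootstrapping argument. By Lemma \ref{lem:01ext:dof} with $k=1$, since $\dof_{G-v}(U)\geq \dof_{G'}(U)\geq 1$ (using Lemma \ref{dofeasy}(a) again, as $G-v\subseteq G'$ on $U$-edges), we get $\rr(G')-\rr(G-v)\geq 4$. Substituting gives $\dof_{G-v}(U)\geq 1+4-3=2$. Now Lemma \ref{lem:2ext:unified} applies to $G'$, since $V_1,V_2$ are cliques in $G'[N_{G'}(v)]$ of size at least three and $\deg_{G'}(v)=|U|\geq 5$. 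This yields $\rr(G')-\rr(G-v)\geq 5$, and substituting once more gives $\dof_{G-v}(U)\geq 1+5-3=3$.

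The main obstacle I expect is the direction of monotonicity in Lemma \ref{dofeasy}(a). I need $\dof_{G'}(U)\geq \dof_G(U)$ when $G'\subseteq G$, which is exactly the inequality $\dof_G(U)\geq\dof_{G+F}(U)$ with $G'$ in the role of $G$. I also need $\dof_{G-v}(U)\geq \dof_{G'}(U)$, which is the same monotonicity because $G-v\subseteq G'$. Both are correct applications. The remaining point is to double-check that $|U|\geq 5$ follows from $\dof_G(U)\geq 1$, as argued in the first paragraph.
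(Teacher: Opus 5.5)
Your proposal is correct and follows essentially the same route as the paper: reduce to $N_G(v)=V_1\cup V_2$ by deleting edges at $v$, then combine Lemma~\ref{lem:dof:NGv:bound} with Lemmas~\ref{lem:01ext:dof} and~\ref{lem:2ext:unified}, unrolling as a direct bootstrap ($\dof\geq 1$, rank jump $\geq 4$, $\dof\geq 2$, rank jump $\geq 5$, $\dof\geq 3$) what the paper phrases as a contradiction from assuming $\dof_{G-v}(V_1\cup V_2)\leq 2$. Your explicit check that $|V_1\cup V_2|\geq 5$, which is needed for the degree hypothesis of Lemma~\ref{lem:2ext:unified}, is a detail the paper leaves implicit, and your argument for it via the Gluing Lemma is correct.
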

\begin{proof} 
By deleting edges incident with $v$, if necessary,
we may assume that $N_G(v)=V_1\cup V_2$. 
Let $k=\dof_{G-v}(V_1\cup V_2)$ and suppose, for a contradiction,  that $k\leq 2$. Then
Lemmas \ref{lem:dof:NGv:bound}, \ref{lem:01ext:dof}, and \ref{lem:2ext:unified} imply that
$$k-1=			\dof_{G-v}(V_1\cup V_2)-1
\geq \dof_{G-v}(N_G(v))-\dof_{G}(N_G(v))
=  r(G)-r(G-v)-3 \geq k,$$
a contradiction.
Thus $\dof_{G-v}(V_1\cup V_2)\geq 3$ and hence, by Lemma 
\ref{lem:2ext:unified}, we have $\rr(G)\geq \rr(G-v)+5$.
\end{proof}

\subsection{Rank contributions of vertices in $K_5$-covered graphs}

We will obtain
lower bounds on the rank contribution of a
vertex in a $K_5$-covered graph. Our results immediately imply that every 5-connected $K_5$-covered graph is rigid in $\R^3$. Extensions of this result to global rigidity and to $K_4$-covered graphs are given in Sections \ref{sec:globrigidK5} and \ref{subsec:7conn}, respectively.

Given a graph $G=(V,E)$ and $U\subseteq V$ we say that $U$ is a {\em rigid cluster} of $G$ if $\dof_G(U)=0$, or equivalently, if $U$ induces a clique in $\cl(G)$.

\begin{lemma}\label{lem:rc:twocliques}
Let $G=(V,E)$ be a graph and $v\in V$ such that $G-v=\cl(G) -v$.
Suppose that $N_G(v)=V_1\cup V_2$, where
$|V_i|=4$ and $G[V_i]$ is a  maximal clique of $G[N_G(v)]$  for $i=1,2$.
Then $\rc(G,v)\geq 3$.
\end{lemma}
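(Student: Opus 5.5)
The plan is to compute $\rc(G,v)$ through Lemma~\ref{prop:rc:probsum} with $d=3$. Put $k=\deg_G(v)=|V_1\cup V_2|$. Then
$$\rc(G,v)=3-\frac{6}{k+1}+\sum_{i\geq 4}\Prob(\rc(G,v,\pi)\geq i),$$
so it suffices to show $\Prob(\rc\geq 4)+\Prob(\rc\geq 5)\geq 6/(k+1)$.

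\emph{Step 1: what $k$ can be.} Since $V_1,V_2$ are distinct maximal cliques of $G[N_G(v)]$, we have $|V_1\cap V_2|\le 3$. If $|V_1\cap V_2|=3$, then two $K_4$'s sharing a triangle form a rigid graph (Lemma~\ref{gluing}). As $G-v$ is closed, $V_1\cup V_2$ would then be a clique, contradicting maximality. Hence $|V_1\cap V_2|\in\{0,1,2\}$, i.e.\ $k\in\{8,7,6\}$, and the target is $2/3$, $3/4$ or $6/7$ respectively.

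\emph{Step 2: a formula for $\rc(G,v,\pi)$.} Fix $\pi$, put $T=T_v^\pi$, $N=N_v^\pi$ and $G'=G[T\cup\{v\}]$. Note that $G[T]$ is an induced subgraph of the closed graph $G-v$, so it is closed. Lemma~\ref{lem:dof:NGv:bound} applied to $G'$ gives
$$\rc(G,v,\pi)=3+\dof_{G[T]}(N)-\dof_{G'}(N)\qquad\text{whenever } |N|\ge 3.$$
I will bound the probabilities using the following facts.

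(i) If $|N|\geq 4$ and $N$ is not contained in a single $V_i$, then $N$ is not a clique in the closed graph $G[T]$. Hence $\dof_{G[T]}(N)\ge1$, and Lemma~\ref{lem:01ext:dof} with $k=1$ gives $\rc\ge4$. The same bound holds if $N$ is not a clique in $G[T]$ but $G'$ makes it rigid, for instance when $|V_1\cap V_2|=2$ and $N$ contains the common pair plus a vertex of each $V_i-V_j$, since then $G'$ contains two $K_5$'s glued along a triangle.

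(ii) If $|N\cap V_1|\geq3$, $|N\cap V_2|\geq3$ and $\dof_{G[T]}(N\cap V_1\cup N\cap V_2)\ge2$, then Lemma~\ref{lem:2ext:unified} gives $\rc\ge5$.

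\emph{Step 3: the probability count.} Using $\Prob(N=U)=\frac{1}{k+1}\binom{k}{|U|}^{-1}$, I count case by case.

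For $k=8$: $\Prob(\rc\ge4)\ge\frac19\bigl(\frac{68}{70}+4\bigr)$. Moreover $\Prob(\rc\ge5)\ge\frac19\bigl(\frac{16}{28}+1+1\bigr)$, since subsets of size $6,7,8$ with at least three vertices in each $V_i$ all qualify. The total is about $0.84>2/3$.

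For $k=7$: the analogous count, with triangles sharing at most the one common vertex, should exceed $3/4$.

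For $k=6$ with $|V_1\cap V_2|=2$: the union of two $K_4$'s on a common edge has only one degree of freedom, so fact (ii) is unavailable. Here the gain must come from (i). I would use that, for $|N|\ge3$, $\dof_{G[T]}(N)-\dof_{G'}(N)\geq1$ whenever $N$ meets both $V_1-V_2$ and $V_2-V_1$ and contains the hinge pair. I would also look for events giving $\rc\ge 5$, where $v$ together with the hinge makes $N$ rigid while $G[T]$ leaves a two-degree-of-freedom gap. Then I would sum over $|N|$. If this count falls short of $6/7$, the fallback is to compare with Lemma~\ref{lem:cliques}: the graph $G[N_G(v)\cup\{v\}]$ has $\rc(\cdot,v)$ computable exactly, and one would need to show that the outside graph $G-v$ being closed cannot decrease $\rc$ below $3$. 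Corollary~\ref{cor:new_edge}, however, goes the wrong way, which is why the direct count is preferred.

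\emph{Main obstacle.} The hard step is verifying the hypothesis $\dof_{G[T]}(\cdot)\ge2$ of Lemma~\ref{lem:2ext:unified} for two triangles, one inside each $V_i$. I would argue as follows. Because $G-v$ is closed and $V_1,V_2$ are maximal cliques of $G[N_G(v)]$, there are at most two edges between $V_1-V_2$ and $V_2-V_1$ in $G-v$, since a third would force extra adjacencies. Then Lemma~\ref{lem:veebridge} and gluing arguments show that two such triangles with at most two connecting edges cannot have $\dof\le1$ in a closed graph, since otherwise one added edge would create a rigid cluster enlarging $V_1$ or $V_2$. The other delicate point is the tight $k=6$ count.
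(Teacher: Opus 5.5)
There is a genuine gap, and it sits in Step 1. You only use that $G-v$ is closed, so you keep the case $|V_1\cap V_2|=2$ ($k=6$) and leave it unfinished. Under that weaker assumption no count can rescue this case, because the statement is false there. Take $G=K_7-\{ce,cf,de,df\}$ on $\{v,a,b,c,d,e,f\}$, with $V_1=\{a,b,c,d\}$ and $V_2=\{a,b,e,f\}$. Then $G-v$ is two $K_4$'s sharing the edge $ab$, which is $\cR_3$-closed, and $V_1,V_2$ are maximal cliques of $G[N_G(v)]$. However, $G$ is two $K_5$'s glued along $\{v,a,b\}$, so $\rr(G)=15$. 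The simplicial vertices $c,d,e,f$ each have $\rc=3-\frac65$ by Lemma~\ref{prop:rc:probsum}, and $v,a,b$ are symmetric, so $\rc(G,v)=\frac{13}{5}<3$.

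The missing idea is to use the hypothesis $G-v=\cl(G)-v$ in full, with $v$ itself serving as a gluing vertex. The hypothesis says $N_G(v)$ is not a rigid cluster of $G$, since otherwise $K(N_G(v))\subseteq \cl(G)-v=G-v$ and $N_G(v)$ would be a clique. But if $|V_1\cap V_2|\ge 2$, then $G[V_1\cup\{v\}]$ and $G[V_2\cup\{v\}]$ are copies of $K_5$ sharing at least three vertices, and Lemma~\ref{gluing} makes $V_1\cup V_2$ a rigid cluster. Hence $|V_1\cap V_2|\le 1$, only $k\in\{7,8\}$ occur, and your ``tight'' case disappears. Your $k=8$ count is fine. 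For $k=7$ you must still actually carry out the count; it gives $\frac48-\frac{2}{8\binom74}+\frac18\bigl(\frac{9}{\binom75}+2\bigr)>\frac34$.

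The same gluing through $v$ is what your Step 2 needs, and closedness of $G-v$ alone does not supply it. Take $V_1,V_2$ disjoint and add $ae,af,be,bf$ with $a,b\in V_1$, $e,f\in V_2$. Then $G-v$ is a chain of three $K_4$'s glued along $ab$ and $ef$, which is still closed. Yet $\{a,b,e,f\}$ is a $4$-clique not contained in either $V_i$, which breaks fact (i), and there are four edges between $V_1$ and $V_2$, which breaks your ``at most two edges'' claim. With the full hypothesis, a clique $N$ with $|N|\ge 4$ and $N\ne V_i$ would glue with some $G[V_i\cup\{v\}]$ along at least three vertices, contradicting maximality.

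Finally, your ``main obstacle'' does not require proving $\dof\ge 2$ by hand. Apply Lemma~\ref{lem:useful} to $G[T\cup\{v\}]$ with the cliques $N\cap V_1$ and $N\cap V_2$. It only needs $\dof_{G[T\cup\{v\}]}(N)\ge 1$, which holds because if $N$ were a rigid cluster it would glue with both $G[V_i\cup\{v\}]$ and make $V_1\cup V_2$ rigid in $G$. The lemma then yields $\rc(G,v,\pi)\ge 5$ whenever $|N\cap V_i|\ge 3$ for $i=1,2$. This is how the paper's proof proceeds.
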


\begin{proof} %
Let $\pi$ be a uniformly random ordering of $V$.
Lemma \ref{prop:rc:probsum} gives
\begin{equation}\label{eqnew:100}
\rc(G,v)\geq 3-\frac{6}{\deg_G(v)+1}+\sum_{i=4}^5\Prob(\rc(G,v,\pi)\geq i).
\end{equation}
Our goal is to obtain lower bounds on the probabilities of the events $\rc(G,v,\pi)\geq 4$ and $\rc(G,v,\pi)\geq 5$. Recall that 
$T_v^\pi$ denotes the set of all vertices  which precede $v$ in the order $\pi$ and $N_v^\pi$ is the set of neighbours of $v$ in $T_v^\pi$.
Let $Q$ be the event that $|N_v^\pi|\geq 4$ and $N_v^\pi\neq V_i$ for $i=1,2$.
If $Q$ occurs, then $N_v^\pi$ does not induce a clique in $G$; otherwise we would contradict the maximality of $V_1$ or $V_2$.
We can now use the hypothesis that $G-v=\cl(G)-v$ and
Lemma \ref{lem:1ext}(a) to deduce  that $\rc(G,v,\pi)\geq 4$ whenever $Q$ occurs.
Let $Q'$ be the event that $|N_v^\pi\cap V_i|\geq 3$ for $i=1,2$. 
We may apply Lemma \ref{lem:useful} to $G[T_v^\pi\cup \{v\}]$ to deduce that $\rc(G,v,\pi)\geq 5$ whenever $Q'$ occurs.

The fact that $N_G(v)$ is not a rigid cluster in $G$ implies that $|V_1\cap V_2|\in \{0,1\}.$ We consider these two possibilities separately.

\medskip
\noindent {\bf Case 1.} $|V_1\cap V_2|=1$. 
Then $|V_1\cup V_2|=7$ and, since $\pi$ is a uniformly random ordering, $v$ is equally likely to appear at any position in the ordering $\pi$ induces on the eight vertices in $N_G(v)\cup\{v\}.$
Hence
\begin{equation}\label{fq2}
\Prob(\rc(G, v, \pi)\geq 4) \geq \Prob(Q)=\Prob\big(|N_v^{\pi}|\geq 4\big)- \sum_{i=1}^2\Prob\big(N_v^{\pi}= V_i\big)
= \frac{4}{8}-\frac{1}{8}\cdot \frac{2}{\binom{7}{4}},
\end{equation}
and
\begin{equation}\label{fq3}
\Prob(\rc(G, v, \pi)\geq 5) \geq \Prob(Q')= \sum_{i=5}^7
\frac{1}{8}\cdot\Prob\big(Q'\,\big|\,|N_v^{\pi}|=i\big) =\frac{1}{8}\cdot\frac{9}{\binom{7}{5}}+\frac{2}{8}.
\end{equation}
\Cref{eqnew:100,fq2,fq3}  and 
$\deg_G(v)= 7$
now give
\begin{align*}
\rc(G,v)\geq 3- \frac{6}{8}+\frac{4}{8}+\frac{2}{8}+\frac{1}{8}\left(\frac{9}{\binom{7}{5}}-\frac{2}{\binom{7}{4}}\right)> 3.
\end{align*}

\medskip
\noindent {\bf Case 2.} $|V_1\cap V_2|=0$.
\medskip
We proceed as in Case 1.
    We have $|V_1\cup V_2|=8$, 
    \begin{equation}\label{fq5}
        \Prob(\rc(G, v, \pi)\geq 4) \geq\Prob(Q)= \frac{5}{9}-\frac{1}{9}\cdot \frac{2}{\binom{8}{4}},
    \end{equation}
    and
    \begin{equation}\label{fq6}
        \Prob(\rc(G, v, \pi)\geq 5) \geq \Prob(Q')\geq \Prob\big(|N_v^\pi|\geq 7)=\frac{2}{9}.
    \end{equation}
    \Cref{eqnew:100,fq5,fq6} together with 
    $\deg_G(v)= 8$
    give
    \begin{equation*}
        \rc(G,v)\geq 3 -\frac{6}{9}+\frac{5}{9} +\frac{2}{9}-\frac{1}{9}\cdot \frac{2}{\binom{8}{4}}> 3. \qedhere
    \end{equation*}
\end{proof}

\begin{lemma}\label{lem:mainC5lemma}
    Let $G=(V,E)$ be a graph and $v\in V$ with $\deg_G(v)=k\geq 4$.
    Suppose that  $G-v+K(N_G(v))$ is rigid but $G$ is not. If each edge incident with $v$ belongs to a copy of $K_5$ in $G$, then
    $\rc(G,v)\geq 3$.
\end{lemma}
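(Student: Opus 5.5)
First reduce to a normalised setting, as in Corollary \ref{coro:rc:geq:d}. Let $G'$ be obtained from $G$ by adding every edge of $\cl(G)$ not incident with $v$. Then $N_{G'}(v)=N_G(v)$, so Corollary \ref{cor:new_edge} gives $\rc(G,v)\geq \rc(G',v)$. The hypotheses also survive the passage to $G'$:
\begin{itemize}
\item $G'-v+K(N_G(v))$ is rigid, since it contains $G-v+K(N_G(v))$;
\item $G'$ is not rigid, since $\rr(G')=\rr(G)$;
\item every edge at $v$ still lies in a copy of $K_5$.
\end{itemize}
So we may assume $G-v=\cl(G)-v$. Because $G-v+K(N_G(v))$ is rigid but $G$ is not, $N_G(v)$ is not a rigid cluster of $G-v$; in particular $N_G(v)$ is not a clique.

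Next, describe $H=G[N_G(v)]$. Each edge $vu$ lies in a $K_5$ through $v$, so every $u\in N_G(v)$ lies in a clique of $H$ of size at least $4$. Let $\mathcal{Y}$ be the set of maximal cliques of $H$ with at least four vertices; these cover $N_G(v)$.

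Two members of $\mathcal{Y}$ meet in at most two vertices. Indeed, if $|Y_1\cap Y_2|\geq 3$, the Gluing Lemma (Lemma \ref{gluing}) makes $Y_1\cup Y_2\cup\{v\}$ rigid in $G$. Since $G-v$ is closed, $Y_1\cup Y_2$ would then be a clique, contradicting maximality.

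If $|\mathcal{Y}|=1$, then $N_G(v)$ is that single clique, which is excluded. So $|\mathcal{Y}|\geq 2$, and any two members $V_1,V_2\in\mathcal{Y}$ satisfy $|V_i|\geq 4\geq 3$. Moreover, in all but degenerate cases $V_1\cup V_2$ has positive degree of freedom in the appropriate subgraph, so Lemma \ref{lem:useful} applies.

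The main estimate follows the template of Lemma \ref{lem:rc:twocliques}. Start from Lemma \ref{prop:rc:probsum}:
$$\rc(G,v)\geq 3-\frac{6}{k+1}+\Prob(\rc(G,v,\pi)\geq 4)+\Prob(\rc(G,v,\pi)\geq 5).$$
Consider two events:
\begin{itemize}
\item Event $Q$: $|N_v^\pi|\geq 4$ and $N_v^\pi$ is not a clique. By Lemma \ref{lem:1ext}(a) and the closedness of $G-v$, $Q$ forces $\rc(G,v,\pi)\geq 4$.
\item Event $Q'$: $N_v^\pi$ contains at least three vertices from each of two members of $\mathcal{Y}$ whose union is not a rigid cluster of $G[T_v^\pi]$. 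Applying Lemma \ref{lem:useful} to $G[T_v^\pi\cup\{v\}]$, $Q'$ forces $\rc(G,v,\pi)\geq 5$.
\end{itemize}
The goal is then $\Prob(Q)+\Prob(Q')\geq \frac{6}{k+1}$.

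The bound for $Q$ should be easy. Given $|N_v^\pi|=i\geq 4$, the event fails only when $N_v^\pi$ lies in a single clique of $H$. Since the cliques pairwise meet in at most two vertices, that conditional probability is small, and $\Prob(|N_v^\pi|\geq 4)=\frac{k-3}{k+1}$.

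For large $k$ the bound from $Q$ alone almost suffices: the quantity $\frac{k-3}{k+1}-\varepsilon$ exceeds $\frac{6}{k+1}$ once $k\geq 10$ or so. Then $Q'$ covers the remaining deficit, because conditioned on $|N_v^\pi|\geq k-1$ it holds essentially always.

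\textbf{Main obstacle.} The hard part is the small-degree cases, roughly $k\leq 9$, where $\mathcal{Y}$ consists of a few cliques of size $4$ to $6$ with small overlaps. These need an explicit finite case analysis, as in Cases 1 and 2 of Lemma \ref{lem:rc:twocliques}; that lemma already handles two $4$-cliques with $k\in\{7,8\}$.

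There is one further danger. Two members of $\mathcal{Y}$ may meet in two vertices, in which case their union might be a rigid cluster in some $G[T_v^\pi]$. So $Q'$ must be applied only to pairs of cliques that are disjoint or share one vertex, and one must show that such a pair exists and is hit often enough.

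To handle this, I would first show that if every pair in $\mathcal{Y}$ shared two vertices, then all members would share the same pair $\{w_1,w_2\}$; this is the configuration of Lemma \ref{lem:cliques}. In that configuration, rigidity of $G-v+K(N_G(v))$ together with non-rigidity of $G$ should force extra structure that can be counted directly, with $Q$ alone giving enough.
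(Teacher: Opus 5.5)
Your normalisation to $G-v=\cl(G)-v$ is correct and matches how the paper begins. After that, however, the proposal does not yet contain a proof. The small-degree configurations ($k\le 9$) are left to an unspecified finite case analysis. The large-$k$ step relies on the claim that, given $|N_v^\pi|=i\ge 4$, the probability that $N_v^\pi$ lies in a single clique of $H$ is small. This fails for small $i$: if some $Y\in\mathcal{Y}$ has $|Y|=k-3$, then for $i=4$ the conditional probability tends to $1$, and $\Prob(N_v^\pi\subseteq Y)=\frac14$ for every $k$.

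Your structural bound is also too weak. Two members $Y_1,Y_2\in\mathcal{Y}$ meet in at most \emph{one} vertex. If $|Y_1\cap Y_2|\ge 2$, the cliques $Y_1\cup\{v\}$ and $Y_2\cup\{v\}$ of $G$ share at least three vertices, so Lemma \ref{gluing} and $G-v=\cl(G)-v$ force $Y_1\cup Y_2$ to be a clique, contradicting maximality. Hence the union of two members of $\mathcal{Y}$ is never a rigid cluster of $G$, and the ``degenerate'' Lemma \ref{lem:cliques} configuration you planned to treat separately cannot occur. Also, Lemma \ref{lem:useful} applied to $G[T_v^\pi\cup\{v\}]$ needs the union not to be a rigid cluster of that graph, which contains $v$. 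This follows from its not being one in $G$, not from a condition on $G[T_v^\pi]$.

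The missing idea is that you never need to estimate $\rc(G,v)$ over the whole neighbourhood. By Lemma \ref{lem:new_edge}(b), deleting edges incident with $v$ can only decrease $\rc(\cdot,v)$, so you may shrink $N_G(v)$ to exactly the setting of Lemma \ref{lem:rc:twocliques}.

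\begin{enumerate}
\item Since $N_G(v)$ is not a clique, choose non-adjacent $w_1,w_2\in N_G(v)$. Take maximal cliques $C_1,C_2$ of $G$ with $|V(C_i)|\ge 5$ and $v,w_i\in V(C_i)$. These are distinct, and gluing plus closedness give $|V(C_1)\cap V(C_2)|\le 2$.
\item Choose $4$-sets $V_i\subseteq V(C_i)-\{v\}$, each containing $(V(C_1)\cap V(C_2))-\{v\}$. Delete every edge $vz$ with $z\notin V_1\cup V_2$ to obtain $G^-$.
\item Then $G^--v=G-v$ is still closed. Moreover, $V_1,V_2$ are maximal cliques of $G^-[N_{G^-}(v)]$. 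Indeed, a vertex $x\in V_2-V_1$ adjacent to all of $V_1$ would give a $K_6$ on $V_1\cup\{v,x\}$ sharing five vertices with $C_1$. Closedness would then extend $C_1$ by $x$, contradicting maximality.
\end{enumerate}

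Lemma \ref{lem:rc:twocliques} now gives $\rc(G,v)\ge\rc(G^-,v)\ge 3$. The only probability computations needed are the two already carried out there, for degrees $7$ and $8$.
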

\begin{proof}

    By 
    Lemma \ref{lem:new_edge}(a), we may assume that 
    $G-v=\cl(G)-v$.
    Since  $\deg_G(v)\geq 4$,
    $G-v+K(N_G(v))$ is rigid, and $G$ is not rigid, it follows that $N_G(v)$ is not a rigid cluster in $G$. 
    Together with $G-v=\cl(G)-v$, this implies that
    there exist $w_1,w_2\in N_G(v)$ such that $w_1w_2\notin E$.  
    Since each edge incident with $v$ belongs to a copy of $K_5$,
    $G$ contains two distinct maximal cliques $C_i=(V_i',E_i)$ with $vw_i\in E_i$
    and $|V_i'|\geq 5$ for $i=1,2$.
    We have $|V_1'\cap V_2'|\leq 2$ since $\cl(G)-v=G-v$ and $w_1w_2\not\in E$.
    Let $V_i\subset V_i'-\{v\}$ with $|V_i|=4$ and $(V_1'\cap V_2')-\{v\}\subseteq V_i$ for both  $i=1,2$. Let $G^-$ be the graph obtained from $G$ by deleting each edge $vz$ with $z\in N_G(v)-(V_1\cup V_2)$. Then $G^-$ satisfies the hypotheses of Lemma \ref{lem:rc:twocliques}, and hence $\rc(G^-,v)\geq 3$ holds. 
     This gives $\rc(G,v)\geq \rc(G^-,v)\geq 3$ by Lemma \ref{lem:new_edge}.
\end{proof}

Lemma \ref{lem:mainC5lemma} implies:

\begin{theorem}\label{coro:c5c5:rigid}
    Every 5-connected $K_5$-covered graph 
    is rigid.
\end{theorem}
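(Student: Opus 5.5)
We argue by a minimal counterexample, in the same way as in the proofs of Theorem \ref{K3}(a) and Theorem \ref{thm:zeo:general}. Let $G=(V,E)$ be a 5-connected, $K_5$-covered graph which is not rigid, with $|V|$ as small as possible and, subject to this, $|E|$ as large as possible.

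Adding edges preserves 5-connectivity and the $K_5$-covered property, and does not change $\rr$ once we add edges of $\cl(G)$. So by the maximality of $|E|$, $G$ is $\cR_3$-closed. Then $N_G(v)$ is not a clique for any $v\in V$. Otherwise $G-v$ would be 5-connected (if $|V|\geq 7$), and it would still be $K_5$-covered, because an edge of $G-v$ lying in a $K_5$ through $v$ also lies in a $K_5$ inside $N_G(v)$ whenever $\deg_G(v)\geq 5$. Minimality would then make $G-v$ rigid, and $G$ would be rigid by a 0-extension/gluing argument. The small cases $|V|\leq 6$, where $G$ is complete, are trivial.

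Now fix $v\in V$ and check the hypotheses of Lemma \ref{lem:mainC5lemma}. We have $\deg_G(v)\geq 5\geq 4$, and $G$ is not rigid. Every edge at $v$ lies in a copy of $K_5$ because $G$ is $K_5$-covered. The remaining hypothesis is that $G-v+K(N_G(v))$ is rigid, and this is the main point to settle.

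My plan for it is as follows. The graph $G-v+K(N_G(v))$ is obtained from $G$ by deleting the simplicial-after-completion vertex $v$. It is 4-connected, and it is $K_5$-covered: each edge of $G-v$ lies in a $K_5$ of $G$, which either avoids $v$ or has its other four vertices in $N_G(v)$, and in the latter case it lies in a $K_5$ of the completed graph using a fifth neighbour of $v$. However, 5-connectivity may fail, so induction does not apply directly.

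To get around this, I would instead apply induction to $G'=G-v+K(N_G(v))$ in a form that keeps 5-connectivity. Any 4-separator $S$ of $G'$ gives a separator $S\cup\{v\}$-type structure of $G$ only if $N_G(v)$ meets both sides; but $N_G(v)$ is a clique in $G'$, so it lies in one side together with $S$. Then $S$ separates $G$ as well (since $v$ attaches only to that side), contradicting 5-connectivity. Hence $G'$ is 5-connected provided $|V(G')|\geq 6$. Since $G'$ is $K_5$-covered with fewer vertices, it is rigid by minimality.

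So Lemma \ref{lem:mainC5lemma} gives $\rc(G,v)\geq 3$ for every $v$, and Lemma \ref{lem:rc_sum} yields $\rr(G)\geq 3|V|$, contradicting (\ref{gl}). With this separator argument in hand, the reduction to non-simplicial vertices is unnecessary: we may apply the lemma to every vertex directly, which is why the paper says Lemma \ref{lem:mainC5lemma} implies the result immediately. The step I expect to need the most care is the verification that $G'$ remains 5-connected and $K_5$-covered.
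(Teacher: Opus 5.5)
Your final argument is correct and is essentially the paper's proof: you take a vertex-minimal counterexample, observe that $G-v+K(N_G(v))$ is 5-connected (your separator argument, using that $N_G(v)$ becomes a clique) and $K_5$-covered, hence rigid by minimality, then apply Lemma~\ref{lem:mainC5lemma} at every vertex and sum via Lemma~\ref{lem:rc_sum}. The preliminary detour should be deleted rather than repaired: the claim that adding edges of $\cl(G)$ preserves the $K_5$-covered property is false in general, because an added edge need not lie in a $K_5$. Neither the resulting $\cR_3$-closedness, nor the non-simplicial reduction, nor the worry that 5-connectivity ``may fail'' is needed.
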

\begin{proof}
    Suppose, for a contradiction, that $G=(V,E)$ is a 5-connected $K_5$-covered graph
    but $G$ is not rigid.
    We may
    assume that $G$ has been chosen such that $|V|$ is as small as possible.
    For each $v\in V$, the graph $G-v+K(N_G(v))$ is a 5-connected graph in which each edge belongs to a $K_5$. Hence $G-v+K(N_G(v))$ is rigid by the minimality of $|V|$. We can now apply Lemma \ref{lem:mainC5lemma} to deduce that $\rc(G,v)\geq 3$ for each $v\in V$. Lemma \ref{lem:rc_sum} now gives
    $\rr(G)\geq 3|V|$, a contradiction.
\end{proof}

\begin{example} \label{ex:egy}
We can modify the construction in Example \ref{ex:kette} to show that the connectivity bound in Theorem \ref{coro:c5c5:rigid} 
is best possible. 
Consider the graph $H$ which is a cycle of $K_5$'s
$H_1,H_2,\ldots,H_{2t}$ with $V(H_i)=\{u_i,v_i,w_i,u_{i+1},v_{i+1}\}$,
$1\leq i\leq 2t-1$, $V(H_{2t})=\{u_{2t},v_{2t},w_{2t},u_{1},v_{1}\}$. 
Now construct $G$ from the disjoint union of five copies of $H$ by adding the edges of a $K_5$ on the five copies of the vertex $w_{2i-1}$ for each $1\leq i\leq t$. Then $G$ is essentially 5-connected for $t\geq 5$ (where a graph is  {\em essentially 5-connected} if it is 4-connected and every vertex separator of size four is the neighbour set of a vertex of degree four).
To see that $G$ is not rigid, consider the maximal clique cover $\cX$ of $G$. This consists of $11t$ copies of $K_5$ and has $10t$ hinges of degree two.
In addition, the hinge pairs induce an independent set of edges in $\cR_3(G)$, so can be extended to a basis $B$ of $\cR_3(G)$. Since $B$ can contain at most 9 edges from each copy of $K_5$ this gives 
$\rr(G)=|B|\leq  9(11t)-10t=89t$. On the other hand, $|V(G)|=5(6t)=30t$. So we have $\rr(G)<3|V(G)|-6$ when $t\geq 7$.
\end{example}

We close this subsection with  a `redundant rigidity' result on  5-connected $K_5$-covered graphs which we will need  in the next subsection to show that  every 5-connected $K_5$-covered graph 
    is globally rigid. 
    
    Given a graph  $G=(V,E)$, a {\em rigid cluster} of $G$ is a subset $U\subseteq V$ which induces a clique in $\cl(G)$, or equivalently  a subset $U$ satisfying $\dof_G(U)=0$. We say that $G$ is 
    {\it clique bipartitionable} if
there is a partition $V=V_1\cup V_2$ such that $G[V_i]$ is a clique
for both $i=1,2$. Note that if $G$ is clique bipartitionable, $|V|\geq 4$,
and $G$ has no isolated vertices, then $G$ has a  clique bipartition $V=V_1\cup V_2$
with $|V_i|\geq 2$ for $i=1,2$.

\begin{lemma}\label{lem:canbecovered}
    Let $H$ be a 5-connected $K_5$-covered graph  and let $w$ be a vertex of $H$. 
    Suppose that  $H[N_{H}(w)]$
    is clique bipartitionable.
    Then $H-w$ is rigid.
\end{lemma}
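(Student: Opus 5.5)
The plan is a minimal counterexample combined with the rank-contribution method of Lemma \ref{lem:mainC5lemma} and Theorem \ref{coro:c5c5:rigid}, now applied to $G=H-w$. Suppose the statement fails. Choose a counterexample $(H,w)$ with $|V(H)|$ minimum, and let $N_H(w)=V_1\cup V_2$ be a clique bipartition with $|V_i|\geq 2$. This is possible since $\deg_H(w)\geq 5$ by 5-connectivity. Put $G=H-w$. I would show $\rc(G,v)\geq 3$ for every $v\in V(G)$. Lemma \ref{lem:rc_sum} would then give $\rr(G)\geq 3|V(G)|$, contradicting (\ref{gl}). Before this I record two facts. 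First, $H$ itself is rigid by Theorem \ref{coro:c5c5:rigid}. Second, $H-w+K(N_H(w))$ is 5-connected and $K_5$-covered. Any separator $S$ of it cannot contain all of $N_H(w)$, and $N_H(w)\setminus S$ lies in one component, so $S$ would separate $H$. So this graph is rigid. By Lemma \ref{lem:dof:NGv:bound}, $G$ is rigid if and only if $\dof_G(N_H(w))=0$. This gives an alternative route if the counting below is tight.

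For a vertex $v\notin N_H(w)\cup\{w\}$, I would reduce to the minimality hypothesis. The graph $H_v=H-v+K(N_H(v))$ is again 5-connected and $K_5$-covered, by the same separator argument as above. Also $N_{H_v}(w)=N_H(w)$ and $H_v[N_H(w)]$ is still clique bipartitionable, since adding edges only merges cliques or leaves them unchanged; if an added edge joins $V_1$ and $V_2$, a bipartition still exists after checking that the new cliques still cover. By minimality, $H_v-w=G-v+K(N_G(v))$ is rigid. If $G$ is rigid we are done; otherwise every edge at $v$ lies in a $K_5$ of $G$, because $v\notin N_H(w)$ means its $K_5$'s in $H$ avoid $w$. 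Lemma \ref{lem:mainC5lemma} then gives $\rc(G,v)\geq 3$.

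The main obstacle is the vertices $v\in N_H(w)$, say $v\in V_1$. The $K_5$'s covering edges $vx$ of $H$ that passed through $w$ are lost, so Lemma \ref{lem:mainC5lemma} need not apply directly. Every $K_5$ of $H$ through $wv$ meets $N_H(w)$ in a $K_4$ containing $v$. Since $V_1\cup V_2$ is a clique bipartition, this $K_4$ either lies in $V_1$ or contains a large part of $V_2$. So $v$ sees a clique of size at least $4$ inside $V_1$, or a large clique formed by $V_1$, $V_2$ and the crossing edges. I would first reduce as in Lemma \ref{lem:mainC5lemma} to $G-v=\cl(G)-v$ via Lemma \ref{lem:new_edge}(a). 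Then I would locate two maximal cliques $V'_1,V'_2$ of $G[N_G(v)]$ of size at least $4$, or else one of size at least $4$ together with a $K_5$ of $G$ through another edge at $v$. The goal is to apply Lemma \ref{lem:rc:twocliques}, or to rerun its probabilistic count using Lemmas \ref{lem:useful} and \ref{lem:2ext:unified} for the events $\rc(G,v,\pi)\geq 4,5$.

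When $N_G(v)$ is itself a rigid cluster, I would use Lemma \ref{prop:rc:probsum} together with $\deg_G(v)\geq 4$ instead. If some $v\in V_1$ still falls short, I would try to compensate it by a surplus at vertices of $V_2$, in the style of the $\sigma$-charging in Theorem \ref{thm:zeo:general}. I expect the small cases $|V_1|\in\{2,3\}$ to need the separate argument via $\dof_G(N_H(w))$ noted above.
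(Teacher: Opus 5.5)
\textbf{Gap: the vertices of $N_H(w)$.} Your treatment of the vertices $v\notin N_H(w)$ matches the paper: minimality gives $G-v+K(N_G(v))$ rigid, and then Lemma~\ref{lem:mainC5lemma} applies. The gap is at $v\in N_H(w)$, and it starts with your target. You aim for $\rc(G,v)\geq 3$ for every $v$, hence $\rr(G)\geq 3|V(G)|$, but for $v\in V_1$ this bound cannot come from the local structure at $v$. The $K_5$'s of $H$ through $vw$ become only $K_4$'s of $G$, i.e.\ triangles in $G[N_G(v)]$, so Lemmas~\ref{lem:rc:twocliques} and~\ref{lem:mainC5lemma} do not apply. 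For instance, suppose both $K_5$'s at $v$ pass through $w$, so that $G[N_G(v)]$ is two triangles sharing a vertex. Then $\deg_G(v)=5$, and Lemma~\ref{prop:rc:probsum} gives $\rc(G,v)\leq 2+\frac{2}{6}+\frac{1}{6}=\frac{5}{2}$. Your fallbacks do not close this. The charging of deficient vertices onto $V_2$ is not specified, and the remark that $G$ is rigid if and only if $\dof_G(N_H(w))=0$ only restates what has to be proved.

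\textbf{Missing idea: you only need $\rr(G)\geq 3|V(G)|-6$.} The $6$ is a budget of $3$ for each of the two cliques $V_1,V_2$. The paper proceeds as follows.
\begin{enumerate}
\item It also chooses $H$ with $|E(H)|$ maximal, so that every rigid cluster of $G$ with at least five vertices induces a clique. Your per-vertex closure via Lemma~\ref{lem:new_edge}(a) would do the same job.
\item Minimality applies to $v\in V_1$ as well. Indeed $N_{H_v}(w)$ is the disjoint union of the cliques $N_H(v)-w$ and $V_2\setminus N_H(v)$. Hence $G-v+K(N_G(v))$ is rigid, and Corollary~\ref{coro:rc:geq:d} gives $\rc(G,v)\geq \frac{7}{2}-\frac{6}{\deg_G(v)}$.
\item It shows $\deg_G(v)\geq |V_1|+1$. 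Take $x\in N_G(v)$ outside a maximal clique $V_1'\supseteq V_1$ of $G$. A $K_5$ of $H$ through $vx$ gives a clique $V'\ni v,x$ of $G$ with $|V'|\geq 4$. If $|V'\cap V_1|\geq 3$, the gluing lemma makes $V_1'\cup V'$ a rigid cluster with at least five vertices, hence a clique, contradicting the maximality of $V_1'$. So $|V'\setminus V_1|\geq 2$.
\item With $n=|V_1|$ this gives $\sum_{v\in V_1}\rc(G,v)\geq n\bigl(\frac{7}{2}-\frac{6}{n+1}\bigr)=3n-3+\frac{(n-2)(n-3)}{2(n+1)}\geq 3n-3$. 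The same holds for $V_2$.
\end{enumerate}
Combined with $\rc(G,v)\geq 3$ for the remaining vertices, this yields $\rr(G)\geq 3|V(G)|-6$, contradicting the assumption that $G$ is not rigid.
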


\begin{proof}
    Suppose, for a contradiction, that the lemma is false. Let $H$ be a counterexample with as few vertices as possible and, subject to this condition, as many edges as possible.
    Let $G=(V,E)=H-w$. Then $G$ is not rigid and
    the maximality of $|E(H)|$ implies that
    \begin{equation}\label{eq:rigcluster}
        \mbox{each rigid cluster of $G$ of size at least five induces a clique in $G$.}
    \end{equation}
    For each vertex $v\in V$ let $H_v=H-v+K(N_{H}(v))$. Then $H_v$ is a 5-connected $K_5$-covered graph, for which $H_v[N_{H_v}(w)]$ is clique bipartitionable.
    Thus the minimality  of $|V(H)|$ implies that  $H_v-w=G-v+K(N_G(v))$ is rigid.  
    We will complete the proof by calculating the rank contributions of the vertices of $G$.
    
    We first consider a vertex $v\in V-N_{H}(w)$.
    Since $H$ is a $K_5$-covered graph, 
    each edge incident with $v$ belongs to a copy of $K_5$ in $G$.
    Lemma \ref{lem:mainC5lemma} now implies that 
    \begin{equation}
        \label{out}
        \mbox{$\rc(G,v)\geq 3$   for each $v\in V-N_{H}(w)$.}
    \end{equation}
    
It remains to consider a vertex  $v\in N_H(w)$. 
    Since $H[N_{H}(w)]$
    is clique bipartitionable, and $H$ is a $5$-connected $K_5$-covered graph,
    there is a bipartition 
    $N_H(w)=V_1\cup V_2$, such that $|V_1|,|V_2|\geq 2$,
    and $G[V_1]$ and $G[V_2]$ are both cliques.
    By relabelling, if necessary, we may assume that $v\in V_1$.
    Since $G-v+K(N_G(v))$ is rigid, $G$ is not rigid, and $\deg_G(v)\geq 4$,
    it follows that
    $K(N_G(v))\not\subseteq G$. 
    Let $V_1'$ be a maximal subset of $V$ with $V_1\subseteq V_1'$ for which
    $G[V_1']$ is a clique and let $x\in N_G(v)-V_1'$.
    Since $H$ is a $K_5$-covered graph, 
    there exists a set $V'\subseteq V$ such that $V'$ induces a maximal clique in $G$,  $|V'|\geq 4$, and  $v,x\in V'$.
    We can now use \eqref{eq:rigcluster} to deduce that $|V_1\cap V'|\leq 2$.
    Thus $|V'-V_1|\geq 2$, and hence 
    $\deg_G(v)\geq |V_1|+1$. 
By applying
    Corollary \ref{coro:rc:geq:d}
    and using that $|V_1|\geq 2$ we can now deduce that
$\rc(G,v)\geq \left(3+\frac{1}{2}-\frac{6}{|V_1|+1}\right)$ and hence
$$\sum_{v\in V_1}\rc(G,v)\geq |V_1|\left(3+\frac{1}{2}-\frac{6}{|V_1|+1}\right)\geq 3|V_1|+\left(\frac{|V_1|}{2}-\frac{6|V_1|}{|V_1|+1}\right) \geq 3|V_1|-3.$$
    By symmetry, we also have $\sum_{v\in V_2}\rc(G,v)\geq 3|V_2|-3.$
    We can now apply Lemma \ref{lem:rc_sum} and (\ref{out}) to obtain
    \begin{align*}
        \rr(G)=\sum _{v\in V}\rc(G,v)&= \sum_{v\in V\setminus (V_1\cup V_2)}\rc(G,v)+\sum_{v\in V_1}\rc(G,v)+\sum_{v\in V_2}\rc(G,v)\\
        &\geq  3|V\setminus (V_1\cup V_2)| + \left(3|V_1|-3\right)+\left(3|V_2|-3\right)= 3|V|-6.
    \end{align*}
    This contradicts the assumption that $G$ is not rigid.
\end{proof}

\subsection{Global rigidity of $K_5$-covered graphs}\label{sec:globrigidK5}

We will extend Theorem \ref{coro:c5c5:rigid} by showing that every 5-connected $K_5$-covered graph is in fact globally rigid. 
We  use the following three results, concerning a local version
of global rigidity. These results hold in all dimensions $d$ but to be consistent with our assumption that $d=3$ throughout this section, we  only state these special cases.
Let $x,y$ be a pair of vertices in graph $G$.
We say that $\{x,y\}$ is {\em weakly globally linked in $G$} if
there exists a 
generic realisation $(G,p)$ of $G$ in $\R^3$, for which we have $\|p(x)-p(y)\|=\|q(x)-q(y)\|$ for all equivalent realisations $(G,q)$ of $G$ in $\R^3$.

\begin{lemma}\cite{JVwgl}
    \label{addwgl}
    Suppose that $G$
    is not globally rigid 
    and let $\{x,y\}$ be weakly globally
    linked for some $x,y\in V(G)$. Then $G+xy$ is not globally rigid.
\end{lemma}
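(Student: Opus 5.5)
The plan is to argue directly from the definitions, using the fact recalled in Section 2 that global rigidity of a generic framework in $\R^3$ depends only on the underlying graph. The key observation is that a generic realisation of $G$ is also a generic realisation of $G+xy$, because the two graphs have the same vertex set and genericity is a property of the point configuration $p$ alone.

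First, since $\{x,y\}$ is weakly globally linked in $G$, I will fix a generic realisation $(G,p)$ in $\R^3$ such that
$$\|p(x)-p(y)\|=\|q(x)-q(y)\|$$
for every realisation $(G,q)$ equivalent to $(G,p)$. Second, because $G$ is not globally rigid and $p$ is generic, the fact quoted above shows that $(G,p)$ is not globally rigid. So there is a realisation $(G,q)$ that is equivalent to $(G,p)$ but not congruent to it. By the choice of $p$, $q$ also preserves the distance between $x$ and $y$. Third, $q$ therefore preserves the lengths of all edges of $G+xy$, so $(G+xy,q)$ is equivalent to $(G+xy,p)$. It is still not congruent to it, since congruence does not depend on the edge set. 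Hence $(G+xy,p)$ is a generic framework that is not globally rigid, and the same fact gives that $G+xy$ is not globally rigid.

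The only step needing care is the appeal to the generic nature of global rigidity. We use it twice: once to pass from ``$G$ is not globally rigid'' to ``the particular generic $p$ witnessing weak global linkedness is not globally rigid'', and once in the converse direction for $G+xy$. Both uses are covered by the statement in Section 2 that a graph is globally rigid if every, or equivalently some, generic framework is globally rigid. I expect this dependence on the cited theorem, rather than any computation, to be the main point to state carefully.
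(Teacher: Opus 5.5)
Your proof is correct and is the standard direct argument for this lemma, which the paper does not prove but quotes from \cite{JVwgl}. You take the generic $p$ in which $\{x,y\}$ is globally linked, use the fact that global rigidity is a generic property to obtain an equivalent but non-congruent $(G,q)$, and note that the same $q$ shows the generic framework $(G+xy,p)$ is not globally rigid. Only your first appeal to genericity needs that deep fact; the second just applies the ``every generic framework'' form of the definition.
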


\begin{lemma}\cite{JVwgl}\label{lem:wgl_path}
    Let $G=(V,E)$ be a graph, $X\subset V$ and $u,v\in X$ such that %
                           $\{u,v\}$ is $\cR$-linked in $G[X]$.
If there exists a $uv$-path in $G$ that is internally disjoint from $X$, then $\{u,v\}$ is weakly globally linked in $G$.
\end{lemma}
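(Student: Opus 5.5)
The plan is to build a generic realisation in which the path $P$ acts as a taut string between $p(u)$ and $p(v)$. It will stop $u$ and $v$ from moving farther apart, while the linkedness of $\{u,v\}$ in $G[X]$ leaves only finitely many possible distances. Put $P=u x_1\dots x_m v$ with $x_i\notin X$. By the definition of weakly globally linked, it suffices to exhibit one generic $p:V\to\R^3$ such that every equivalent $(G,q)$ satisfies $\|q(u)-q(v)\|=\|p(u)-p(v)\|$.

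First I will use the hypothesis that $\{u,v\}$ is $\cR$-linked in $G[X]$. Then for a generic $p_0:X\to\R^3$ the squared distance $\|q(u)-q(v)\|^2$ is algebraic over the field generated by the edge lengths of $(G[X],p_0)$. Since $uv$ lies in the closure, adding it does not change the rank, so the fibre of the edge-length map projects to a finite set of values of this distance. Hence the set $D(p_0)=\{\|q(u)-q(v)\|:(G[X],q)\ \text{equivalent to}\ (G[X],p_0)\}$ is finite. I will then show the following: on a nonempty open set $U$ of generic configurations of $X$, the elements of $D(p)$ are given by finitely many continuous algebraic branches $\delta_1(p)<\dots<\delta_s(p)$, and $\|p(u)-p(v)\|=\delta_1(p)$. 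That is, $p$ itself realises the minimum distance.

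To obtain $U$, take any generic $p_0$ and an equivalent realisation $q_0$ of $G[X]$ attaining $\min D(p_0)$. The configurations equivalent to $p_0$ form a variety, and the component containing $q_0$ is defined over the field of edge lengths. Near $q_0$ I can therefore find points that are generic in the sense needed, perhaps after a small perturbation of the edge lengths, since the branch values vary continuously. I expect this to be the main obstacle: making rigorous that the minimum branch is realised by an honestly generic configuration, on an open set. I would try to handle it by working with the edge-length map $f_{G[X]}$ near a regular value. Over a small ball of regular values the fibre splits into finitely many sheets, and $\|q(u)-q(v)\|$ is constant on the components relevant for linkedness. Choosing the sheet with the minimal value and a generic point on it should give $U$.

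Having fixed $p|_X\in U$ with $d=\delta_1(p)$ and gap $\varepsilon=\delta_2(p)-d>0$ (or $\varepsilon=\infty$ if $s=1$), I place $p(x_1),\dots,p(x_m)$ generically inside a thin neighbourhood of the segment $[p(u),p(v)]$. I choose them so that the total length $L$ of $P$ satisfies $L<d+\varepsilon$. I place all remaining vertices generically, and I perturb everything within these open conditions so that the whole of $p$ is generic. Now let $(G,q)$ be equivalent to $(G,p)$. Then $(G[X],q|_X)$ is equivalent to $(G[X],p|_X)$, so $\|q(u)-q(v)\|\in D(p)$, and every element of $D(p)$ other than $d$ is at least $d+\varepsilon$. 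On the other hand, the edge lengths along $P$ are preserved, so the triangle inequality gives $\|q(u)-q(v)\|\le L<d+\varepsilon$. Hence $\|q(u)-q(v)\|=d=\|p(u)-p(v)\|$, and $\{u,v\}$ is weakly globally linked in $G$.
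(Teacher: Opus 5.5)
The paper gives no proof of this lemma. It is quoted from \cite{JVwgl}, so there is no in-paper argument to compare yours with. Judged on its own, your strategy is the right one: fix a generic configuration of $G[X]$ that realises the smallest of the finitely many possible $uv$-distances, run $P$ nearly straight so that its length $L$ is below the second smallest one, and conclude by the triangle inequality. That final deduction is correct.

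It can also be simplified. You need only \emph{one} generic $p|_X$ with the minimality property, not an open set $U$ or continuity of $\delta_2$. Once that $p|_X$ is fixed, choose the coordinates of all vertices outside $X$ algebraically independent over $\mathbb{Q}(p|_X)$. Such choices are dense, so they can be made inside the open region where $L<d+\varepsilon$, and the resulting $p$ is generic.

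The step you flag is the only real work. As written, your sketch assumes what needs proof, namely continuous branches with $\delta_1<\dots<\delta_s$ on an open set. It can be closed as follows.

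Restrict to the component of $G[X]$ containing $u,v$. Let $f$ be its edge-length map, $g(q)=\|q(u)-q(v)\|^2$, $F=(f,g)$, $r=r_3(G[X])$, and let $M,M'$ be the Zariski closures of the images of $f,F$. Linkedness gives $\dim M'=\dim M=r$. Since the rank of a polynomial Jacobian is maximised at generic points, $\rank dF\le r$ everywhere.

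(i) For generic $p_0$, the point $x_0=f(p_0)$ is a generic point of $M$. Hence it is not a critical value of $f$, because the critical values lie in a proper subvariety of $M$ defined over $\mathbb{Q}$. So each $q\in f^{-1}(x_0)$ has a neighbourhood $N_q$ on which $\rank df=\rank dF=r$. The constant rank theorem then gives $g=\varphi_q\circ f$ on $N_q$ for a continuous $\varphi_q$. By compactness of the fibre modulo translations, finitely many $N_{q_1},\dots,N_{q_s}$ contain (modulo translations) every fibre over points near $x_0$. So there the squared elements of $D(p)$ lie in $\{\varphi_1(f(p)),\dots,\varphi_s(f(p))\}$; these are your sheets.

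(ii) This does not yet order the sheets. The minimum value $c_1$ of $g$ on $f^{-1}(x_0)$ may be attained on several non-congruent components of the fibre. If their sheets crossed, a generic point on the sheet of the minimiser $q_0$ could have a strictly smaller competitor. Crossing is impossible because $(x_0,c_1)$ has transcendence degree $r=\dim M'$ over $\mathbb{Q}$, so it is a nonsingular point of $M'$. Near it, $M'(\R)$ is a single $r$-manifold, and each $F(N_{q_i})$ with $\varphi_i(x_0)=c_1$ is an open piece of it. So all these $\varphi_i$ agree near $x_0$, while the other sheets stay strictly above by continuity.

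Hence every configuration close enough to $q_0$, in particular a generic one, realises the minimum of its own distance set. With this supplied, your proof is complete.
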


The next lemma is a corollary of Lemma \ref{lem:wgl_path}.

\begin{lemma}\cite{JVwgl,Tani}
    \label{lem:tanigawa}
    Let $G=(V,E)$ be a graph and let $v\in V$ with $\deg_G(v)\geq 4$.
    Suppose that $G-v$ is rigid and $G-v+K(N_G(v))$ is globally rigid.
    Then $G$ is globally rigid.
\end{lemma}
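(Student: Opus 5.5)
We argue by contradiction, using Lemmas \ref{addwgl} and \ref{lem:wgl_path} to add the missing edges inside $N_G(v)$ one at a time without ever making the graph globally rigid. Suppose $G$ is not globally rigid. Enumerate the non-edges of $G[N_G(v)]$ as $e_1=x_1y_1,\dots,e_m=x_my_m$, and put $G_0=G$ and $G_i=G_{i-1}+e_i$. By induction on $i$ we show that $G_i$ is not globally rigid for every $0\le i\le m$; the case $i=0$ is our assumption.

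For the inductive step, suppose $G_{i-1}$ is not globally rigid, and apply Lemma \ref{lem:wgl_path} to $G_{i-1}$ with $X=V-\{v\}$ and the pair $\{x_i,y_i\}$.
\begin{itemize}
\item[(1)] $G_{i-1}[X]$ contains the rigid graph $G-v$ as a spanning subgraph, so it is rigid. Every pair of its vertices, in particular $\{x_i,y_i\}$, is therefore $\cR$-linked in $G_{i-1}[X]$.
\item[(2)] Since $x_i,y_i\in N_G(v)$, the path $x_ivy_i$ is an $x_iy_i$-path in $G_{i-1}$ that is internally disjoint from $X$.
\end{itemize}
Hence $\{x_i,y_i\}$ is weakly globally linked in $G_{i-1}$. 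Lemma \ref{addwgl} then shows that $G_i=G_{i-1}+x_iy_i$ is not globally rigid, completing the induction. Note that the hypotheses used in (1) and (2) do not deteriorate as edges are added, since adding edges keeps the subgraph on $X$ rigid and leaves the path through $v$ intact.

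Taking $i=m$, the graph $G_m=G+K(N_G(v))$ is not globally rigid. On the other hand, $G_m$ is obtained from the globally rigid graph $G-v+K(N_G(v))$ by adding the vertex $v$ joined to $\deg_G(v)\ge 4=d+1$ existing vertices. In a generic realisation, the images of any four of these neighbours are affinely independent. Consequently, in any equivalent realisation the position of $v$ is uniquely determined by its distances to these four points, once the rest of the framework is fixed up to congruence. So $G_m$ is globally rigid, which is the desired contradiction.

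The only step needing some care is this last, standard fact that attaching a vertex of degree at least $d+1$ preserves generic global rigidity; it follows from the affine independence argument just given. Everything else is a direct application of the cited lemmas.
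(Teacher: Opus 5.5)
Your proof is correct and follows the route the paper has in mind when it calls this lemma a corollary of Lemma~\ref{lem:wgl_path}. With $X=V-\{v\}$, each non-adjacent pair in $N_G(v)$ is weakly globally linked, and stays so as edges are added. Lemma~\ref{addwgl} then lets you add these edges without the graph becoming globally rigid. Finally, attaching $v$ to at least four affinely independent generic points makes $G+K(N_G(v))$ globally rigid, giving the contradiction. The paper gives no details beyond citing \cite{JVwgl,Tani}, so your argument is a complete write-up of the intended proof.
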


A set $X$ of vertices is said to be a {\it weakly globally linked cluster}
in $G$ 
if $\{x,y\}$ is weakly globally linked in $G$ 
for all $x,y\in X$.

We can now prove the main result of this section. 

\begin{theorem}\label{thm:5c5c:GR}
    Every 5-connected $K_5$-covered graph is globally rigid. 
\end{theorem}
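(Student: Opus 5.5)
We argue by contradiction, taking a counterexample $G=(V,E)$ with $|V|$ minimal and, subject to this, $|E|$ maximal. By Theorem \ref{coro:c5c5:rigid}, $G$ is rigid. For every $v\in V$, the graph $G-v+K(N_G(v))$ is again 5-connected and $K_5$-covered and has fewer vertices, so by minimality it is globally rigid. Hence Lemma \ref{lem:tanigawa} will give a contradiction as soon as we find a vertex $v$ with $G-v$ rigid (note $\deg_G(v)\geq 5$). The whole proof therefore reduces to the following claim: some vertex $v$ of $G$ has $G-v$ rigid.

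To produce such a vertex, we first exploit the edge-maximality. Since adding edges preserves 5-connectivity and the $K_5$-cover property, every proper supergraph of $G$ on $V$ that is $K_5$-covered is globally rigid. Suppose $\{x,y\}$ is a non-adjacent pair that is weakly globally linked in $G$. Then $G+xy$ is not globally rigid by Lemma \ref{addwgl}. If moreover $xy$ lies in a $K_5$ of $G+xy$, this contradicts maximality. Combining this with Lemma \ref{lem:wgl_path} and the 5-connectivity (which gives paths avoiding any set of fewer than five other vertices), we deduce that no small vertex set $X$ can have a non-adjacent pair $x,y$ that is $\cR$-linked in $G[X]$ and has three common neighbours forming a triangle. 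In particular, the neighbourhoods behave like $\cR_3$-closed sets locally: for every $w$, each rigid cluster of $G[N_G(w)\cup\{w\}]$ of size at least five should already be a clique.

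Next, fix a vertex $w$ and consider $H=G[N_G(w)]$. Using the local closedness just derived together with Lemma \ref{lem:cone0}, we aim to show that for a suitable choice of $w$, the graph $H$ is clique bipartitionable. Then Lemma \ref{lem:canbecovered} gives that $G-w$ is rigid, which finishes the proof.

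If this structural conclusion fails, we fall back on a rank-contribution count in $G-w$, in the spirit of the proof of Lemma \ref{lem:canbecovered}. Vertices outside $N_G(w)$ contribute at least $3$ by Lemma \ref{lem:mainC5lemma}. For vertices inside $N_G(w)$, we use Corollary \ref{coro:rc:geq:d} applied to the maximal cliques of $H$, aiming to show that the total deficit is at most $6$.

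The main obstacle is this last step: controlling the structure of $G[N_G(w)]$ when it is not clique bipartitionable, for instance when it consists of three or more $K_4$'s meeting in few vertices. There the deficit count could exceed $6$. My first attempt would be to choose $w$ so that $|\hXX_w|$ is minimal. I would then show, using weak global linkedness via paths through $w$ (Lemma \ref{lem:wgl_path}) and edge-maximality, that any two maximal cliques at $w$ must merge unless $N_G(w)$ splits into exactly two cliques.
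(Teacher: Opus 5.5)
Your reduction is correct as far as it goes. By minimality and Lemma \ref{lem:tanigawa} it suffices to find one vertex $v$ with $G-v$ rigid, and Lemma \ref{lem:canbecovered} supplies one whenever some $G[N_G(w)]$ is clique bipartitionable. There is a genuine gap, however: the proposal never proves that such a $w$ exists, and the mechanism you suggest cannot prove it. Edge-maximality together with Lemmas \ref{addwgl} and \ref{lem:wgl_path} forces two maximal cliques $V_1,V_2$ of $G[N_G(w)]$ to merge only when you can exhibit a rigid subgraph on a set such as $V_1\cup V_2\cup\{w\}$. This works when $|V_1\cap V_2|\ge 2$. When the two cliques meet in at most one vertex, the cone over them is flexible, so you get no $\cR$-linked pair across them to feed into Lemma \ref{lem:wgl_path}, and nothing forces a merge. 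So the situation in which every neighbourhood contains three or more maximal cliques of size at least four, pairwise meeting in at most one vertex, cannot be ruled out structurally. Minimising $|\hXX_w|$ does not help, since a minimal counterexample in fact has $|\hXX_w|\ge 3$ at every vertex. Your fallback count in $G-w$ also fails in exactly this situation, as you suspected. Corollary \ref{coro:rc:geq:d} only bounds the deficit by about $3$ per clique of $N_G(w)$, so with three cliques the deficit can reach $9$, not $6$.

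The missing idea is to run the argument the other way and count in $G$ itself, at every vertex. For every $v$, $G-v+K(N_G(v))$ is globally rigid by minimality, so Lemma \ref{lem:tanigawa} shows that $G-v$ is \emph{not} rigid. Hence, by Lemma \ref{lem:canbecovered}, no neighbourhood is clique bipartitionable. Maximality means every weakly globally linked cluster of size at least five induces a clique. From this, two maximal cliques of $G[N_G(v)]$ share at most one vertex. Also, three maximal cliques of size at least four have pairwise intersections covering at most two vertices. Taking four vertices from each of three such cliques gives $W\subseteq N_G(v)$ with $|W|\ge 10$.

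Now suppose at least five vertices of $W$ precede $v$ in a random ordering $\pi$. They do not induce a clique, so they are not a weakly globally linked cluster. By Lemma \ref{lem:wgl_path} with $X=V-\{v\}$ and the paths $xvy$, they are therefore not a rigid cluster of $G-v$, and so not one of $G[T_v^\pi]$. The $1$-extension property then gives $\rc(G,v,\pi)\ge 4$. This event has probability at least $1-\frac{5}{11}$. Lemma \ref{prop:rc:probsum}, with $\deg_G(v)\ge 10$, gives $\rc(G,v)\ge 3-\frac{6}{11}+\frac{6}{11}=3$ for every $v$. Lemma \ref{lem:rc_sum} then yields $\rr(G)\ge 3|V|$, a contradiction. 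The contradiction comes from counting, without ever exhibiting a vertex with $G-v$ rigid.
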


\begin{proof}
    Suppose, for a contradiction, that the theorem is false and 
    let $G=(V,E)$ be a counter-example, 
    with $|V|$ as small as possible, and subject to this condition, $|E|$ as
    large as possible. Since $K_6$ is globally rigid,
    we have $|V|\geq 7$.
    
    It follows from Lemma \ref{addwgl}
    that, for every weakly globally linked pair $\{x,y\}$ of $G$,
    the graph $G+xy$
    is not
    globally rigid. Hence
    the maximality of $|E|$ implies that
    \begin{equation}\label{eq:wglclosed}
        \mbox{
            each 
            weakly globally linked cluster of $G$ of size at least five induces a clique in $G$.
        }
    \end{equation}\

    \noindent
    We will obtain our desired contradiction by showing that the rank contribution of each vertex of $G$ is at least three.
    
    Choose $v\in V$. Since $|V|\geq 7$ and $\deg_G(v)\geq 5$, the graph $G-v+K(N_G(v))$ is a 5-connected $K_5$-covered graph.
    The minimality of $|V|$ now implies that $G-v+K(N_G(v))$ is globally rigid. By using
    Lemma \ref{lem:tanigawa}, we can deduce that $G-v$ is not rigid.
    Hence $N_G(v)$ cannot be 
    covered by the vertex sets of two cliques of $G$ by Lemma \ref{lem:canbecovered}.
    
    \begin{claim}
        \label{claim1}
        Let $C_1=(V_1,E_1)$, $C_2=(V_2,E_2)$ be two maximal cliques in $G[N_G(v)]$.
        Then $|V_1\cap V_2|\leq 1$.
    \end{claim}
    
    \begin{proof}
    Suppose, for a contradiction, that $|V_1\cap V_2|\geq 2$. If	$|V_1\cap V_2|\geq 3$,
        then $G[V_1\cup V_2\cup\{v\}]$ is globally rigid 
        and hence $V_1\cup V_2$ is a weakly globally linked cluster of
        size at least five in $G$. This would contradict the maximality of $C_1,C_2$ by \eqref{eq:wglclosed}. 
        Hence $|V_1\cap V_2|=2$. Then $G[V_1\cup V_2\cup \{v\}]$ is  rigid. By the 5-connectivity of $G$, there exists a path from 
        a vertex $x\in V_1-V_2$ to a vertex $y\in V_2-V_1$ 
        which is internally disjoint from $V_1\cup V_2\cup \{v\}$.
        Then $\{x,y\}$ is weakly globally linked in $G$ by Lemma \ref{lem:wgl_path}.
        It is easy to see that $G[V_1\cup V_2\cup \{v\}]+xy$ is globally rigid. Thus $V_1\cup V_2$ is a weakly globally linked cluster of $G$. 
        This again contradicts the maximality of $C_1,C_2$ by \eqref{eq:wglclosed}, and completes the proof of the claim.
    \end{proof}
    
    Since $G$ is a $K_5$-covered graph, each vertex of $N_G(v)$ belongs to
    a maximal clique of size at least four in $G[N_G(v)]$.
    By Claim \ref{claim1}, we can choose three distinct maximal cliques 
    $C_1,C_2,C_3$ of $G[N_G(v)]$ with $|V(C_i)|\geq 4$.
    Let $V_i=V(C_i)$, $1\leq i\leq 3$.
    
    \begin{claim}
        \label{claim2}
        $|(V_1\cap V_2)\cup (V_1\cap V_3)\cup (V_2\cap V_3)|\leq 2$.
    \end{claim}
    
    \begin{proof}
                Suppose, for a contradiction,  that $|(V_1\cap V_2)\cup (V_1\cap V_3)\cup (V_2\cap V_3)|\geq 3$.
        Then Claim \ref{claim1} implies that there is a vertex set $K=\{z_1,z_2,z_3\}$
        with $V_1\cap V_2=\{z_3\}$, $V_1\cap V_3=\{z_2\}$, and
        $V_2\cap V_3=\{z_1\}$. 
        Then $z_1z_2$ is an edge of $G[V_1\cup V_2\cup \{v\}]$, and 
        hence $G[V_1\cup V_2\cup \{v\}]$ is rigid. 
        In addition, the 5-connectivity of $G$ implies that there exists a path in $G-v-K$ that connects two vertices  $x\in V_1-K$ and $y\in V_2-K$, that is
        internally disjoint from $G[V_1\cup V_2\cup \{v\}]$.
        By Lemma \ref{lem:wgl_path}, $\{x,y\}$ is weakly globally linked in $G$.
        Since $G$ is not globally rigid, Lemma \ref{addwgl}
        implies that $G+xy$ is not globally rigid. 
        On the other hand, the graph $G[V_1\cup V_2\cup \{v\}]+xy$ is globally rigid. (For example, this follows from the fact that $G[V_1\cup V_2]+xy-z_3$ is 2-connected and hence is globally rigid in $\R$, followed by two applications of Theorem \ref{thm:coning}.)
        Hence, $V_1\cup V_2\cup \{v\}$ induces a weakly globally linked cluster in $G+xy$.
        Using Lemma \ref{addwgl} again, we obtain that $G+K(V_1\cup V_2\cup \{v\})$ is still not globally rigid, which contradicts the maximality of $|E|$.
    \end{proof}
    
    Choose $V_i'\subseteq V_i$ with $|V_i'|=4$ for $1\leq i\leq 3$ and
    let $W=V_1'\cup V_2'\cup V_3'$.
    By Claims \ref{claim1} and \ref{claim2}, we have $|W|\geq 10$. Let $\pi$ be a uniformly random ordering of $V$,
    and consider the set $\tilde W = N_v^{\pi}\cap W$.
    
    \begin{claim}
        \label{claim3}
        Suppose $|\tilde W|\geq 5$. Then $\rc(G,v,\pi)\geq 4$. 
    \end{claim}

    \begin{proof} The hypothesis that $|\tilde W|\geq 5$ implies that $N_v^{\pi} \setminus V'_j \not= \emptyset$ for all $1\leq j\leq 3$, and 
         $|N_v^{\pi}\cap V_i'|\geq 2$
        for some $i\in \{1,2,3\}$. 
        The maximality of
        $C_i$ and Claim \ref{claim1} now imply that
        $\tilde W$ does not induce a clique in $G$. 
        It follows from $\eqref{eq:wglclosed}$ that $\tilde W$ does not induce a 
        weakly globally linked cluster in $G$.
        We can now use  Lemma \ref{lem:wgl_path}, together with the fact that every vertex of  $\tilde W$ is adjacent to $v$, to deduce that $\tilde W$ is not a rigid cluster in $G-v$.
        Lemma \ref{lem:1ext} now implies that
        $\rc(G,v,\pi)\geq 4$. This completes the proof of the claim.
    \end{proof}
    
    Claim \ref{claim3} and the fact that $v$ is equally likely to appear in any position in the ordering of $W+v$ induced by $\pi$ gives
    \begin{equation*}
        \Prob(\rc(G,v,\pi)\geq 4)\geq 1-\Prob(|N_v^{\pi}\cap W|\leq 4) = 1-\frac{5}{|W|+1}\geq 1-\frac{5}{11}= \frac{6}{11}.
    \end{equation*}
    
    \noindent
    Lemma \ref{prop:rc:probsum} now gives
    \begin{equation*}
        \rc(G,v) \geq 3-\frac{6}{\deg_G(v)+1} + \Prob(\rc(G,v,\pi)\geq 4) \geq
        3-\frac{6}{11}+ \frac{6}{11} = 3
    \end{equation*}
        By summing over all $v\in V$ and using 
    Lemma \ref{lem:rc_sum}
    we obtain
    $\rr(G)\geq 3|V|$, a contradiction.
\end{proof}

Theorem \ref{thm:5c5c:GR} strengthens a result of \cite{Jsquare} which states that, if $G$ is a graph of minimum degree at least four and
$G^2$ is 5-connected, then $G^2$ is redundantly rigid.

\subsection{Rigidity of 7-connected $K_4$-covered graphs}
\label{subsec:7conn}

We showed in Section \ref{sec:cof}  that $6$-connected $K_4$-covered graphs are $C_2^1$-rigid.
In this subsection we show that a similar result holds for the 3-dimensional rigidity matroid, if we
strengthen the connectivity condition by one.

\begin{theorem}\label{thm:7c4c_rigid}
    Every 7-connected $K_4$-covered graph is rigid.
\end{theorem}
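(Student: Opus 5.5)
We argue by a minimal counterexample, combined with rank contributions, following the pattern of Theorem \ref{coro:c5c5:rigid} and Theorem \ref{thm:zeo:general}. Let $G=(V,E)$ be a 7-connected $K_4$-covered graph which is not rigid, with $|V|$ minimal. For each $v\in V$ the graph $G-v+K(N_G(v))$ is again 7-connected (if $|V|$ is large enough; small cases are complete graphs) and $K_4$-covered: new edges lie in the clique $K(N_G(v))$ of size at least $7$, and old edges keep their $K_4$'s, with $v$ replaced by a neighbour if needed. By minimality it is rigid. We may also replace $G-v$ by $\cl(G)-v$ when estimating $\rc(G,v)$, by Lemma \ref{lem:new_edge}(a). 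The goal is to show $\rc(G,v)\geq 3$ for every $v$, which contradicts $\rr(G)\leq 3|V|-6$ via Lemma \ref{lem:rc_sum}.

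Fix $v$ with $k=\deg_G(v)\geq 7$. Since $G$ is not rigid but $G-v+K(N_G(v))$ is, $N_G(v)$ is not a rigid cluster. Lemma \ref{prop:rc:probsum} then gives
\[
\rc(G,v)\geq 3-\frac{6}{k+1}+\Prob(\rc(G,v,\pi)\geq 4)+\Prob(\rc(G,v,\pi)\geq 5).
\]
By Lemma \ref{lem:1ext}(a) and closedness of $G-v$, $\rc(G,v,\pi)\geq 4$ whenever $|N_v^\pi|\geq 4$ and $N_v^\pi$ is not a rigid cluster of $G-v$. Since each edge at $v$ lies in a $K_4$, every vertex of $N_G(v)$ lies in a clique of size at least $3$ in $G[N_G(v)]$. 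Any two maximal such cliques (after closing) share at most two vertices, since otherwise the gluing lemma merges them.

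The main step is a case analysis on the clique structure of $G[N_G(v)]$, in the spirit of Lemma \ref{lem:rc:twocliques}.

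\textbf{Case 1: no clique covers most of $N_G(v)$.} The probability that $N_v^\pi$ with $|N_v^\pi|\geq 4$ is contained in a single rigid cluster is small. With $k\geq 7$, a counting bound like the one in Claim \ref{claim3} gives $\Prob(\rc\geq 4)\geq \frac{6}{k+1}$ directly.

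\textbf{Case 2: large cliques of size at least $3$.} Whenever $N_v^\pi$ meets two distinct such cliques in at least $3$ vertices each, Lemma \ref{lem:useful} (applied to $G[T_v^\pi\cup\{v\}]$) gives $\rc(G,v,\pi)\geq 5$. This supplies the extra $\Prob(\rc\geq 5)$ term, mirroring the computations in \eqref{fq2}--\eqref{fq6}.

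I expect the main obstacle to be the configuration where $N_G(v)$ is nearly one large rigid cluster $V_1$ together with one or two extra vertices. An example is $V_1$ of size $5$ or $6$ plus a triangle sharing a vertex, where the 1-extension probability is barely enough. Here the 7-connectivity must be used as in Claim \ref{claim:deficient}(c). The plan is a discharging argument: either prove the direct bound, or show that deficient vertices $v$ (with $\rc<3$) have a distinguished neighbour $v^*$ of high degree, whose surplus from Corollary \ref{coro:rc:geq:d} or Lemma \ref{lem:useful} pays for them, as in Claim \ref{claim:surplus}. I would first try to get $\rc(G,v)\geq 3$ directly for all $v$ using $k\geq 7$, which makes $\frac{6}{k+1}\leq \frac34$. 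I would fall back on charging only if a specific small configuration resists.
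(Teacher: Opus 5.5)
Your framework matches the paper's: a minimal counterexample, showing $\rc(G,v)\ge 3$ for every $v$ via Lemma~\ref{prop:rc:probsum}, 1-extension for $\rc(G,v,\pi)\ge 4$, and Lemma~\ref{lem:useful} for $\rc(G,v,\pi)\ge 5$. Replacing $G-v$ by $\cl(G)-v$ via Lemma~\ref{lem:new_edge}(a) is a fine substitute for edge-maximality. But the proposal stops where the proof actually begins, and what you do sketch would not work.

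Your Case~1 is impossible for the degrees that matter. We have $\Prob(\rc(G,v,\pi)\ge 4)\le\Prob(|N_v^\pi|\ge 4)=\frac{k-3}{k+1}<\frac{6}{k+1}$ for $k\in\{7,8\}$. So at a degree-$7$ vertex the terms $i\ge 5$ in Lemma~\ref{prop:rc:probsum} must contribute at least $\frac14$ in \emph{every} configuration, although $|N_v^\pi|\ge 5$ has probability only $\frac38$.

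The structure that makes a finite case analysis possible is also not pinned down. After closing, two maximal cliques of size at least three in $G[N_G(v)]$ share at most \emph{one} vertex, not two: with $v$ added they would otherwise share three and glue. Moreover, when they share a vertex there is no edge between their private parts. Together with $\deg_G(v)\ge 7$, this is what lets the paper pick $U\subseteq N_G(v)$ with $|U|\in\{7,8,9\}$ whose triangle-edges form one of seven fixed graphs, and then compute with $R_\pi=N_v^\pi\cap U$.

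More importantly, you have misidentified the obstacle. A $K_5$ and a triangle sharing a vertex is settled by 1-extension and Lemma~\ref{lem:useful} alone: the probabilities of the two events sum to exactly $\frac68=\frac{6}{|U|+1}$. No discharging or further use of 7-connectivity is needed, and every vertex gets $\rc(G,v)\ge 3$ individually.

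The configurations where your tools run out are two disjoint cliques, a $K_4$ on $U_1$ and a $K_3$ on $U_2=\{a,b,c\}$, and a path of three triangles. In the first, Lemma~\ref{lem:useful} applies only when $R_\pi\supseteq U_2$ and $|R_\pi\cap U_1|\ge 3$. This has probability $\frac18+\frac18\cdot\frac47=\frac{11}{56}<\frac14$.

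The missing idea is to bypass Lemma~\ref{lem:useful} and apply the replacement lemmas directly:
\begin{itemize}
\item 1-extension together with Lemma~\ref{lem:dof:NGv:bound} gives $\dof_{G-v}(U_2\cup\{u\})\ge 2$ for every $u\in U_1$.
\item This yields pairs of adjacent edges $\{xa,xb\}$ and $\{ya,yc\}$ with $x,y\in U_1$, each pair raising $\rr(G-v)$ by two.
\item Lemma~\ref{lem:2ext:split} then gives $\rc(G,v,\pi)\ge 5$ also for every $6$-set $R_\pi$ containing $a$, and for certain $5$-sets meeting $U_2$ in only two vertices.
\item This raises the probability to $\frac14+\frac1{56}$.
\end{itemize}
For the path of triangles, Lemma~\ref{lem:useful} fails when $R_\pi$ misses a junction vertex. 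There you need a second edge $g\notin\cl(G-v+e)$, obtained from $\dof_{G-v}(U)\ge 3$, and then Lemma~\ref{lem:coplanar} or Lemma~\ref{lem:2ext:split}. Without arguments of this kind, $\rc(G,v)\ge 3$ does not follow.
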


\begin{proof}
    Suppose, for a contradiction, that the theorem is false and let $G=(V,E)$
    be a counter-example with $|V|$ as small as possible and, subject to this, $|E|$
    as large as possible. Thus $G$ is a 7-connected $K_4$-covered graph which is not rigid and hence $|V|\geq 9$.
    Then, by the maximality of $|E|$, 
    \begin{equation}\label{eq:rigcluster:K4}
        \mbox{each rigid cluster of $G$ of size at least four induces a clique in $G$.}
    \end{equation}
    We will obtain the desired contradiction by showing that  $\rc(G,v)\geq 3$ for all
    $v\in V$. 
    
    Choose $v\in V$. 
    The minimality of $|V|$ implies that $N_G(v)$ is not a clique in $G$, since otherwise $G-v$ would be rigid by induction and hence $G$ would be rigid. 
      For $U\subseteq N_G(v)$, let 
      $\tilde G[U]$ denote the graph obtained from $G[U]$ by deleting all the edges that are not contained in a copy of $K_3$ in $G[U]$. Consider the graphs  ${{H}}_i$, $1\leq i\leq 7$,  shown in Figure \ref{fig:gammagraphs}. 
      Our first claim will allow us to split the proof that $\rc(G,v)\geq 3$ into three cases,
     depending on which of the graphs $H_i$ occur as an induced subgraph of  $\tilde G[N_G(v)]$.
    
    \begin{figure}[h]
        \begin{center}
            \includegraphics[width=1\textwidth]{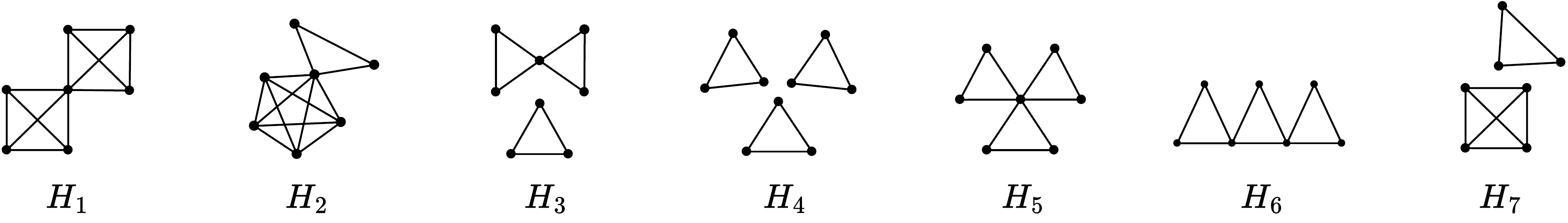}
            \caption{Possible configurations in the neighborhood of a vertex $v$ in the proof of Theorem~\ref{thm:7c4c_rigid}.}\label{fig:gammagraphs}
        \end{center}
    \end{figure}
    
    \begin{claim}\label{claim:gamma}
        There is a set $U\subseteq N_G(v)$ such that 
        $\tilde G[U]$ is isomorphic to ${{H}}_i$ for some $1\leq i\leq 7$.
    \end{claim}
    
    \begin{proof}
        Let $C_1, \dots, C_k$ denote the maximal cliques 
        of size at least three in $N_G(v)$. Since $G$ is $K_4$-covered, $N_G(v)=\bigcup_{i=1}^k V(C_i)$.
         Then $k\geq 2$, since $G[N_G(v)]$ is not a clique.
        Furthermore, Lemma \ref{gluing} and (\ref{eq:rigcluster:K4}) imply that 
        $|V(C_i)\cap V(C_j)|\leq 1$ for all $1\leq i\not= j\leq k$, and that there are no edges in $G$ from $C_i-C_j$
        to $C_j-C_i$ whenever $|V(C_i)\cap V(C_j)|=1$. 
        
        First, suppose that $V(C_i)\cap V(C_j)=\emptyset$ for all $1\leq i<j\leq k$. Then there is a set $U\subseteq N_G(v)$ such that either $\tilde G[U]={{H}}_7$ (when $k=2$), or $\tilde G[U]={{H}}_4$ (when $k\geq 3$).
        
        Next, suppose that there is a pair $1\leq i<j \leq k$ such that $|V(C_i)\cap V(C_j)|=1$. We may assume that $i=1$ and $j=2$. If $k=2$, then there is a set $U$ for which
        $\tilde G[U]\in \{{{H}}_1,{{H}}_2\}$.
        Hence we may assume that $k\geq 3$. Choose $U_i\subseteq V(C_i)$ such that $|U_i|=3$ for $1\leq i\leq 3$, and $V(C_1)\cap V(C_2)\subseteq U_1\cap U_2$. Put $U=U_1\cup U_2\cup U_3$. Then, $\tilde G[U]\in \{{{H}}_3,{{H}}_5, {{H}}_6\}$.
    \end{proof}

    Let $U$ be a fixed subset of $N_G(v)$ such that $H=\tilde G[U]$ is isomorphic to ${{H}}_i$ for some $1\leq i\leq 7$, and  put $R_\pi= N_v^\pi \cap U$ for some uniformly random ordering $\pi$ of $V$. Then
    Lemma \ref{prop:rc:probsum} gives
\begin{equation}\label{eqnew:17}
\rc(G,v)\geq 3-\frac{6}{|U|+1}+\sum_{i=4}^5\Prob(\rc(G,v,\pi)\geq i).
\end{equation}
    Let $Q$ be the event that $|R_\pi|\geq 4$ and $H[R_\pi]$ is not a clique. It follows from
\eqref{eq:rigcluster:K4} and
the fact that $H$ contains all edges which belong to cliques of size three in $G[U]$ that, if $Q$ occurs, then $R_\pi$ is not a rigid cluster in $G$. 
We can now apply  Lemma \ref{lem:1ext}
to deduce that 
\begin{equation}\label{eq:nyolc}
\mbox{$\rc(G,v,\pi)\geq 4$ whenever $Q$ occurs. } 
\end{equation}
    
Let $Q'$ be the event that $H$ contains two maximal cliques $C_1,C_2$ with $|V(C_1)\cap V(C_2)|= 1$ such that $|R_\pi\cap V(C_1)|\geq 3$ and $|R_\pi\cap (V(C_2)-V(C_1))|\geq 2$. We will show that 
\begin{equation}\label{eq:kilenc}
\mbox{$\rc(G,v,\pi)\geq 5$ whenever $Q'$ occurs. } 
\end{equation}
To see this, suppose that  $Q'$ occurs and 
let $G_0$ (resp.\ $G_1$) denote the graph obtained from $G$ by deleting each edge $zv$ with $z\in N_G(v)-R_\pi$ (resp.\ $z\in N_G(v)-(V(C_1) \cup V(C_2))$). 
Under $Q'$ we have $G_1\subseteq \cl(G_0)$ and hence $\rr(G_0)=\rr(G_1)$. Moreover, $V(C_1)\cup V(C_2)$ is not a rigid cluster in $G$ (and in $G_1$) by \eqref{eq:rigcluster:K4} 
and the maximality of $C_1,C_2$. We apply Lemma \ref{lem:useful} to $G_1$ to obtain $\rr(G_0)=\rr(G_1)\geq \rr(G-v) +5$. The submodularity of the rank function $\rr$ now gives 
     $$\rc(G,v,\pi)=\rr(G[T_v^\pi\cup \{v\}])-\rr(G[T_v^\pi])\geq \rr(G_0[T_v^\pi\cup \{v\}])-\rr(G[T_v^\pi])\geq \rr(G_0)-\rr(G-v) \geq 5,$$
     which verifies (\ref{eq:kilenc}).

\begin{claim}\label{claim:egy}
$\rc(G,v)\geq 3$.
\end{claim}
\begin{proof} Consider the following three cases.

    \paragraph{\boldmath Case A: $H={{H}}_i$ for some $i\in \{1,2,3,4,5\}$.}
    We will show that 
    \begin{equation}\label{eq:tiz}
     \Prob(Q)+\Prob(Q')\geq \frac{6}{|U|+1}.   
    \end{equation}
Together with \eqref{eqnew:17}, \eqref{eq:nyolc} and \eqref{eq:kilenc}, this  will give $\rc(G,v)\geq 3$.
    
    We first consider the subcase when $H={{H}}_2$. Let $U_1$ and $U_2$ denote the vertex sets of the copies of $K_5$ and $K_3$ in $H$, respectively. Then,
    \begin{equation}\label{eq10.5}
        \Prob(Q) = \Prob\big(|R_\pi|\geq 4\big)-\Prob\big((|R_\pi|\geq 4) \land (R_\pi\subseteq U_1)\big)
        = \frac{4}{8}-\left(\frac{1}{8}\cdot \frac{5}{\binom{7}{4}}+\frac{1}{8}\cdot \frac{1}{\binom{7}{5}}\right).
    \end{equation}
    Furthermore, $Q'$ occurs if and only if $|R_\pi\cap U_1|\geq 3$ and $|R_\pi\cap (U_2-U_1)|= 2$. Thus, we obtain
    \begin{equation}\label{eq11}
        \Prob(Q')= \sum_{t=5}^7\Prob\big(Q' \land (|R_\pi|=t)\big)= \frac{10}{8\cdot \binom{7}{5}}+ \frac{5}{8\cdot \binom{7}{6}}+\frac{1}{8}.
    \end{equation}
    By combining \eqref{eq10.5} and \eqref{eq11}, we obtain the desired bound $\Prob(Q)+\Prob(Q')= \frac{6}{8}=\frac{6}{|U|+1}.$\smallskip

    In the  remaining four subcases we can use similar, or even simpler calculations to verify \eqref{eq:tiz}. We omit the details but summarize the corresponding values:\medskip
    
    \noindent If $\displaystyle H={{H}}_1$, then 
    $\displaystyle \Prob(Q) = \frac{4}{8}- \frac{2}{8\cdot\binom{7}{4}}$ and 
    $\displaystyle \Prob(Q')=\frac{1}{8}\cdot\frac{15}{\binom{7}{5}}+\frac{2}{8}$.\medskip

    \noindent If $\displaystyle H={{H}}_3$, then 
    $\displaystyle \Prob(Q) = \frac{5}{9}$ and 
    $\displaystyle \Prob(Q') \geq \Prob\big(|R_\pi| = 8\big) = \frac{1}{9}$.\medskip
    
    \noindent If $\displaystyle H={{H}}_4$, then 
    $\displaystyle \Prob(Q) = \frac{6}{10}$ and $\Prob(Q')=0$.	\medskip
    
    \noindent If $\displaystyle H = {{H}}_5$, then 
    $\displaystyle \Prob(Q) = \frac{4}{8}$ and 
    $\displaystyle \Prob(Q') =   \frac{3}{8 \cdot \binom{7}{5}}+\frac{6}{8 \cdot \binom{7}{6}} + \frac{1}{8} = \frac{1}{4}$.

    \paragraph{\boldmath Case B: $H={{H}}_6$.}
    Since $H_6$ contains no clique of size four, we have
    \begin{equation}\label{eq:caseb1}
        \Prob(\rc(G,v,\pi)\geq 4)\geq \Prob(Q)=\Prob(|R_\pi|\geq 4) =\frac{4}{8}=\frac{1}{2}.
    \end{equation}
    It remains to give a lower bound on $\Prob(\rc(G,v,\pi)\geq 5)$. 
    We first note that Lemma \ref{lem:useful} gives 
    \begin{equation}\label{eq:dofGvU}
        \dof_{G-v}(U)\geq \dof_{G-v}(V_1\cup V_2)\geq 3,
    \end{equation}
    where $V_1,V_2\subseteq U$ denote the vertex sets of the two disjoint triangles in $H$.
    We will show that
    \begin{equation}\label{eq:tizenegy}
     \mbox{$\rc(G,v,\pi)\geq 5$ whenever $|R_\pi|=6$.}
     \end{equation}
    To see this, we assume that $|R_\pi|=6$. Then, up to symmetry, there are three cases to consider,
    depending on which vertex of ${{H}}_6$ is not included in $R_\pi$. In all cases except the one depicted in Figure \ref{fig:mount_part}, $\rc(G,v,\pi)\geq 5$ follows directly 
    by applying Lemma \ref{lem:useful} to the graph $G[T_v^\pi\cup \{v\}]$. Hence we may assume that $R_\pi$ is as shown in Figure \ref{fig:mount_part}. Let 
    $e,f$ be the additional solid edges shown in Figure \ref{fig:mount_part} and let $J$ denote the set of additional dotted edges.
    Note that $e$ is not an edge of $\cl(G)$ since otherwise the vertices in the two copies of $K_3$ in $H$ which are incident with $e$ would be a rigid cluster in $G$.
    If all the edges in $ J$ were in $\cl(G-v+e)$ then, since
    $H+e+f+J$ is rigid, $U$ would be a rigid cluster in $G-v+e+f$. This would imply that $\dof_{G-v}(U)\leq 2$, and contradict \eqref{eq:dofGvU}.
    Hence, there is an edge $g\in J$ such that $g$ is not an edge of $\cl(G-v+e)$. Then  $\rr(G-v+e+g)=\rr(G-v)+2$.
    If $g$ is one of the two edges in $J$ that are adjacent to $e$, then we can apply Lemma \ref{lem:2ext:split} to deduce $\rc(G,v,\pi)\geq 5$. If $e$ and $g$ are disjoint, then we use Lemma \ref{lem:coplanar} to deduce $\rc(G,v,\pi)\geq 5$.
    This proves the claim, and hence we obtain 
    \begin{equation}\label{eq:caseb2}
        \Prob(\rc(G,v,\pi)\geq 5)\geq \Prob(|R_\pi|\geq 6)=\frac{1}{4}.
    \end{equation}
    \Cref{eqnew:17,eq:caseb1,eq:caseb2} give
    $$\rc(G,v)\geq \Big(3-\frac{6}{8}\Big)+\frac{1}{2}+\frac{1}{4}= 3.$$

    \begin{figure}[!t]
        \begin{center}
            \includegraphics[scale=0.3]{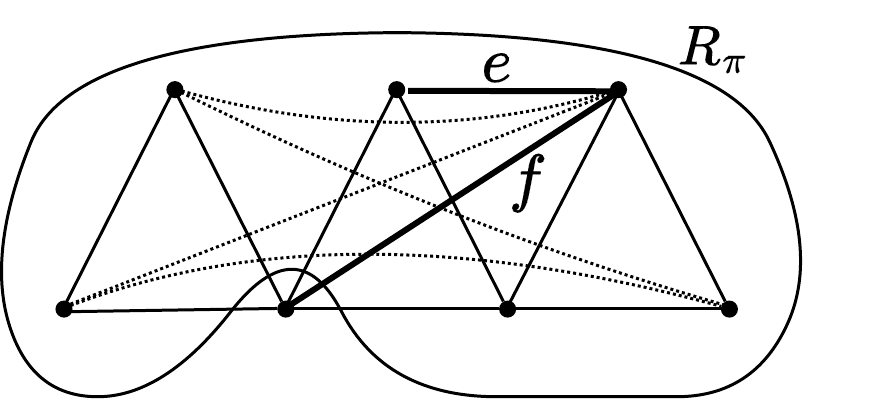}
            \caption{A possible arrangement of $R_\pi$ in Case B of the proof of Theorem \ref{thm:7c4c_rigid}. For this arrangement, Lemma \ref{lem:useful} does not directly imply that $\rc(G,v,\pi)\geq 5.$}\label{fig:mount_part}
        \end{center}
    \end{figure}

    \paragraph{\boldmath Case C: $H={{H}}_7$.} Let $U_1$ and $U_2$ denote the vertex sets of the copies of $K_4$ and $K_3$ in $H$, respectively. %
    Note that if $|R_\pi|\geq 4$ and $R_\pi\neq U_1$, then  $Q$ occurs. Thus, we have
    \begin{equation}\label{eq19}
        \Prob(\rc(G,v,\pi)\geq 4)\geq \Prob(Q)\geq  \frac{4}{8}-\frac{1}{8}\cdot \frac{1}{\binom{7}{4}}.
    \end{equation}
        
        Choose a vertex  $u_1\in U_1$. It follows from \eqref{eq:rigcluster:K4}  that $\dof_{G}(U_2\cup \{u_1\})\geq 1$. Consider the subgraph $G_0$ of $G$ obtained by deleting all edges $vz$ for $z\in N_G(v)-(U_2\cup \{u_1\})$.
        By Lemma \ref{lem:1ext}, we have $\rr(G_0)= \rr(G_0-v)+4$. Lemma \ref{lem:dof:NGv:bound} and the fact that $u_1$ is an arbitrary vertex in $U_1$ now give \begin{equation}\label{eq:220}\dof_{G-v}(U_2\cup \{u_1\})
            \geq 2
             \quad \mbox{for every $u_1\in U_1$.}
        \end{equation}
        
        We say that a pair of adjacent edges $\{e,f\}$ of $K(V)$ is a {\it loose pair in $G-v$} if %
        $\rr(G-v+e+f)=\rr(G-v)+2$.
        Fix a vertex $x\in U_1$. Then (\ref{eq:220}) implies that there exist $a,b\in U_2$ such that $(xa,xb)$ is a loose pair. 
        Let $c\in U_2-\{a,b\}$. By \eqref{eq:rigcluster:K4}, there exists a vertex $y\in U_1-\{x\}$ such that 
        $r(G+cy)=r(G)+1$.
        By (\ref{eq:220}), $U_2\cup \{y\}$ is not a rigid cluster in $G_0-v+yc$. This implies that 
        $\{yc,yd\}$ is a loose pair in $G-v$ for some vertex $d\in \{a,b\}$ . By symmetry, we may assume $a=d$.
         We can now apply Lemma \ref{lem:2ext:split} using the loose pairs $\{xa,xb\}$ and $\{ya,yc\}$ to deduce that $\rc(G,v,\pi)\geq 5$  whenever\\
        $\bullet$  $|R_\pi|=7$, or\\
        $\bullet$ $|R_\pi|=6$ and $a\in R_\pi$, or\\
        $\bullet$ $|R_\pi|=5$, $|R_\pi\cap U_1|= 3$ and either $\{x,a,b\}\subseteq R_\pi$ or $\{y,a,c\}\subseteq R_\pi$.\\
        Thus, we obtain 
        \begin{equation}\label{eq:case:c:5}
            \Prob(\rc(G,v,\pi)\geq 5)\geq \left(\frac{1}{8}\right)+\left(\frac{1}{8}\cdot \frac{6}{7}\right)+\left(\frac{1}{8}\cdot\frac{6}{\binom{7}{5}}\right)=\frac{2}{8}+\frac{1}{56}.
        \end{equation}
        A combination of \cref{eqnew:17,eq19,eq:case:c:5} gives 
        $$\rc(G,v)\geq \Big(3-\frac{6}{8}\Big)+\Big( \frac{4}{8}-\frac{1}{8}\cdot \frac{1}{\binom{7}{4}}\Big)+\Big(\frac{2}{8}+\frac{1}{56}\Big)>3.$$
This completes the discussion of Case C and the proof of the claim.                                \end{proof}	

Since $v$ is an arbitrary vertex of $G$, we can sum the inequality in Claim \ref{claim:egy}	over all $v\in V$ and then apply 
    Lemma \ref{lem:rc_sum}
    to obtain 
    $\rr(G)\geq 3|V|$. This final contradiction completes the proof of the theorem.
\end{proof}

The graph $G$ in Example \ref{ex:kette} shows that 5-connectivity is not sufficient to imply that a $K_4$-covered graph is rigid. (The argument used to show that this graph is not $C_2^1$-rigid applies equally well to rigidity in $\R^3$.)
On the other hand, 5-connectivity is sufficient to imply rigidity for 
squares of graphs with minimum degree at least three (which are necessarily $K_4$-covered),
see \cite[Theorem 3.1]{Jsquare}.

\subsection{Rigidity of 5-connected, $\mathcal{R}$-bridgeless, $K_4$-covered graphs} \label{sec:rigidK4}

We show that the connectivity hypothesis of Theorem \ref{thm:7c4c_rigid}  can be weakened to 5-connectivity for  $K_4$-covered graphs $G$ which have no   $\mathcal{R}$-bridges.

A key step in our proof is to obtain a sufficient condition for an $\mathcal{R}$-bridgeless, $K_4$-covered graph to have a simplicial vertex. %
We will need  the following two results to do this. The first is a special case of \cite[Lemma 5.6]{CJT}.
\begin{lemma}\cite{CJT}\label{lem:cyclic:vertex}
    Let $G=(V,E)$ be an $\mathcal{R}$-bridgeless graph. Then there exists a vertex $v\in V$ such that $\rr(G)\leq \rr(G-v)+4$.
\end{lemma}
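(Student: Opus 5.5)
The plan is to reduce the statement to choosing a good base of $\cR(G)$. For any base $B$ of $\cR(G)$ and any $v\in V$, the set $B-E_v$ is independent in $G-v$. Hence
$$\rr(G)-\rr(G-v)\leq |B|-|B-E_v| = d_B(v),$$
where $d_B(v)$ is the number of edges of $B$ incident with $v$. It therefore suffices to find a base $B$ and a vertex $v$ with $d_B(v)\leq 4$.

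\textbf{Step 1: averaging.} Fix a base $B$. Applying (\ref{gl}) to each component of $(V,B)$ with at least four vertices, and checking small components directly, gives $|B|\leq 3|V'|-6$ on the non-isolated vertex set $V'$. So the average $B$-degree is below $6$, and some vertex has $d_B(v)\leq 5$. Among all bases and vertices, choose the pair minimising $d_B(v)$. If this minimum is at most $4$ we are done, so assume every base has minimum degree exactly $5$ on its non-isolated vertices.

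\textbf{Step 2: basis exchange.} Take $v$ with $d_B(v)=5$ and use that $G$ is $\cR$-bridgeless. Each edge $f\in B\cap E_v$ lies in some circuit, and therefore in the fundamental circuit $C(e,B)$ of some non-base edge $e$. Suppose some such $e$ is not incident with $v$. Then $B-f+e$ is a base in which $v$ has degree $4$, contradicting minimality. So assume that every non-base edge whose fundamental circuit meets $B\cap E_v$ is itself incident with $v$. Then $B-E_v$ spans $E(G-v)$, and so
$$\rr(G)-\rr(G-v)=5 \quad\mbox{and}\quad \rr(G-v)=|B|-5.$$
In this configuration every circuit through an edge at $v$ uses at least two edges at $v$, and the edges at $v$ are ``linked to each other only through $v$''.

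\textbf{Step 3: the main obstacle.} We must derive a contradiction from this configuration, or else move to another vertex. I would try the following. Since $G-v$ has $B-E_v$ as a base, we get $\dof_{G-v}(N_G(v))\geq 2$ by Lemma \ref{lem:dof:NGv:bound} (the rank drop is $5=3+2$). Because every edge at $v$ is redundant, $\deg_G(v)\geq 6$.

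Now pass to another vertex $u\in N_G(v)$ and iterate the exchange argument. Equivalently, run the whole argument on the subgraph $G'=G-v$ with base $B-E_v$, using induction on $|V|$. The difficulty is that $G-v$ need not be bridgeless. To handle this, I would strengthen the induction hypothesis to the form of \cite[Lemma 5.6]{CJT}: every graph whose bridges form a ``small'' set has a vertex of rank drop at most $4$ that avoids those bridges. Making this strengthened hypothesis consistent with the counting in Step 1 is where I expect the real work to lie. The fallback is to invoke \cite[Lemma 5.6]{CJT} directly, as the statement's attribution indicates.
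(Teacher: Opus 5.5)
\paragraph{Verdict.} The proposal has a genuine gap: Steps 1 and 2 are correct but never use the hypothesis in a way that could finish the proof, and Step 3, where all the content would have to be, contains no argument.

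\paragraph{Why Steps 1 and 2 get nowhere.} The averaging in Step 1 gives a vertex with $\rr(G)-\rr(G-v)\le 5$. This holds for every graph, bridgeless or not. Your reformulation via bases is only an equivalent restatement: a base $B$ with $d_B(v)\le 4$ exists if and only if $\rr(G)-\rr(G-v)\le 4$.

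The exchange argument in Step 2 therefore only re-derives what the assumption already forces. If no vertex has drop at most $4$, then at your chosen $v$ the drop is at least $5$ and at most $d_B(v)=5$. The extra facts you obtain, $\deg_G(v)\ge 6$ and $\dof_{G-v}(N_G(v))\ge 2$, are not contradictory on their own. Take $G$ to be two copies of $K_5$ sharing an edge $vw$. Then $G$ is $\cR$-bridgeless, $\deg_G(v)=7$, all edges at $v$ are redundant, and $\rr(G)-\rr(G-v)=17-12=5$. So no argument confined to the neighbourhood of one vertex can succeed. The contradiction has to come from the assumption that \emph{every} vertex has drop at least $5$, used globally.

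\paragraph{Why Step 3 does not close the gap.} ``Pass to another vertex and iterate'' is not specified. Induction on $G-v$ breaks down because $G-v$ need not be bridgeless. In the example above, $G-v$ is two copies of $K_4$ sharing a vertex, so every edge of $G-v$ is an $\cR$-bridge. The strengthened hypothesis you say you would need is never formulated, let alone checked against the counting.

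\paragraph{Relation to the paper.} The paper does not prove this lemma; it quotes it as a special case of \cite[Lemma 5.6]{CJT}. Your fallback is exactly what the paper does, but then Steps 1--3 add nothing, and as a self-contained proof the proposal is incomplete.

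\paragraph{If you want a direct proof.} The case where $G$ is a single $\cR$-circuit $C$ is easy. For $v\in V(C)$ the set $E(C)-E_v$ is independent, so $\rr(C)-\rr(C-v)=\deg_C(v)-1$. Since $|E(C)|\le 3|V(C)|-5$, some vertex of $C$ has degree at most $5$, giving drop at most $4$. The real difficulty, which your proposal does not touch, is passing from a single circuit to an arbitrary union of overlapping circuits.
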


The {\em $K_4$-closure} of a graph $G$ is the graph  ${\rm cl}^{K_4}(G)$  obtained from ${\rm cl}(G)$ by deleting every edge in $\cl(G)-E(G)$ which is not contained in a copy of $K_4$.  We say 
that $G$ is 
{\it $\mathcal{R}^{K_4}$-closed} if $G={\rm cl}^{K_4}(G)$.

\begin{lemma}\label{lem:3or5}
    Let $G=(V,E)$ be an $\mathcal{R}^{K_4}$-closed, 
    $K_4$-covered graph 
    and let $v$ be a non-isolated vertex of $G$. Then either $\rr(G)\geq \rr(G-v)+5$ or $\rr(G)=\rr(G-v)+3$.
\end{lemma}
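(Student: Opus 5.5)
The plan is to split according to whether $v$ is simplicial, after first establishing a structural consequence of $\mathcal{R}^{K_4}$-closedness.

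\emph{Structural fact.} Every rigid cluster $U$ of $G$ with $|U|\geq 4$ induces a clique in $G$. Indeed, $U$ induces a clique in $\cl(G)$. Since $|U|\geq 4$, every edge of $\cl(G)[U]$ lies in a copy of $K_4$ inside $\cl(G)[U]$. So no edge of $\cl(G)[U]$ is deleted when forming ${\rm cl}^{K_4}(G)$, and $G={\rm cl}^{K_4}(G)$ gives $K(U)\subseteq G$.

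Since $G$ is $K_4$-covered and $v$ is not isolated, each edge $vx$ lies in a copy of $K_4$. Hence $\deg_G(v)\geq 3$, and every $x\in N_G(v)$ lies in a triangle of $G[N_G(v)]$.

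\emph{Case 1: $N_G(v)$ induces a clique.} Here I will show $\rr(G)=\rr(G-v)+3$. The lower bound $\rr(G)\geq \rr(G-v)+3$ follows from the $0$-extension property (Lemma \ref{lem:1ext}), applied after restricting to three edges at $v$. For the upper bound, $G[N_G(v)\cup\{v\}]$ is a complete graph on at least four vertices, so it is rigid and
$$\rr(G[N_G(v)\cup\{v\}])-\rr(G[N_G(v)])=3.$$
Submodularity of $\rr$, used exactly as in the proof of Theorem \ref{thm:rcdress:ineq}, then gives
$$\rr(G)-\rr(G-v)\leq \rr(G[N_G(v)\cup\{v\}])-\rr(G[N_G(v)])=3.$$

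\emph{Case 2: $N_G(v)$ is not a clique.} Here I will show $\rr(G)\geq\rr(G-v)+5$ via Lemma \ref{lem:useful}. Choose non-adjacent $x,y\in N_G(v)$. By $K_4$-coverage, the edges $vx$ and $vy$ lie in copies of $K_4$, which yield triangles $V_1\ni x$ and $V_2\ni y$ in $G[N_G(v)]$. These are distinct cliques of $G[N_G(v)]$ with $|V_i|\geq 3$.

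The set $V_1\cup V_2$ has size at least four and does not induce a clique, because $xy\notin E$. By the structural fact, $V_1\cup V_2$ is therefore not a rigid cluster of $G$, so $\dof_G(V_1\cup V_2)\geq 1$. Lemma \ref{lem:useful} then gives $\rr(G)\geq \rr(G-v)+5$.

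\emph{Main obstacle.} I expect the only delicate point to be the structural fact: one must check that $\mathcal{R}^{K_4}$-closedness, rather than full $\cR$-closedness, still forces rigid clusters of size at least four to be cliques. The argument above handles this by noting that every edge of a clique on at least four vertices lies in a $K_4$ of that clique, so none of its edges are removed in passing from $\cl(G)$ to ${\rm cl}^{K_4}(G)$. All the rank-increase work is already contained in Lemma \ref{lem:useful}.
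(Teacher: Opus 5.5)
Your proposal is correct and follows essentially the same route as the paper: if $N_G(v)$ is a clique then $\rr(G)=\rr(G-v)+3$, and otherwise two distinct cliques of size at least three in $G[N_G(v)]$ whose union is not a clique (hence, by $\mathcal{R}^{K_4}$-closedness, not a rigid cluster) are fed into Lemma \ref{lem:useful}. The differences are cosmetic: you pick triangles through a non-adjacent pair instead of maximal cliques, and you write out the justification of the clique case and of the rigid-cluster fact, which the paper leaves implicit.
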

\begin{proof}
    Let $v\in V$. If $N_G(v)$ induces a clique in $G$, then $\rr(G)=\rr(G-v)+3$. Hence we may assume that this is not the case. Since $G$ is the union of copies of $K_4$, there exist distinct vertex sets $V_1,V_2\subseteq N_G(v)$ such that $G[V_i]$ is a maximal clique of $G[N_G(v)]$ and $|V_i|\geq 3$ for $i=1,2$. Since $G$ is $\mathcal{R}^{K_4}$-closed, $V_1\cup V_2$ is not a rigid cluster in $G$. Hence, by Lemma \ref{lem:useful}, $\rr(G)\geq \rr(G-v)+5$.
\end{proof}

\begin{corollary}\label{coro:simplicial}
    Let $G=(V,E)$ be an $\mathcal{R}^{K_4}$-closed, $\cR$-bridgeless, 
    $K_4$-covered graph. Then $G$ has a simplicial vertex.
\end{corollary}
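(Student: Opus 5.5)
The plan is to combine Lemma \ref{lem:cyclic:vertex} with Lemma \ref{lem:3or5}. Since $G$ is $\cR$-bridgeless, Lemma \ref{lem:cyclic:vertex} gives a vertex $v\in V$ with $\rr(G)\leq \rr(G-v)+4$.

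First I will dispose of isolated vertices. An isolated vertex is trivially simplicial, since its neighbour set is empty and so induces a (vacuous) clique. Hence we may assume that $v$ is non-isolated.

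Lemma \ref{lem:3or5} applies to $v$, because $G$ is $\mathcal{R}^{K_4}$-closed and $K_4$-covered. It says that either $\rr(G)\geq \rr(G-v)+5$ or $\rr(G)=\rr(G-v)+3$. The first alternative contradicts the choice of $v$, so $\rr(G)=\rr(G-v)+3$.

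It remains to turn this equality into simpliciality. Looking back at the proof of Lemma \ref{lem:3or5}, the conclusion $\rr(G)\geq\rr(G-v)+5$ was derived exactly under the assumption that $N_G(v)$ does not induce a clique. So the dichotomy is really the statement that $N_G(v)$ is a clique or $\rr(G)\geq\rr(G-v)+5$. If $N_G(v)$ were not a clique, that argument would give $\rr(G)\geq \rr(G-v)+5>\rr(G-v)+4$, which is impossible. Hence $N_G(v)$ induces a clique and $v$ is simplicial.

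The only delicate point is that Lemma \ref{lem:3or5} as stated does not literally say that the value $3$ forces a clique neighbourhood. I therefore plan to cite its proof rather than its statement, and argue directly with Lemma \ref{lem:useful}, as that proof does. Concretely: because $G$ is $K_4$-covered and $N_G(v)$ is not a clique, there are two distinct maximal cliques $V_1,V_2$ of $G[N_G(v)]$, each of size at least $3$. Because $G$ is $\mathcal{R}^{K_4}$-closed, $V_1\cup V_2$ is not a rigid cluster of $G$. Lemma \ref{lem:useful} then gives $\rr(G)\geq \rr(G-v)+5$, the required contradiction.
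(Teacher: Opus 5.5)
Your proof is correct and follows the paper's route: Lemma~\ref{lem:cyclic:vertex} gives $v$ with $\rr(G)\leq\rr(G-v)+4$, and you then apply the dichotomy of Lemma~\ref{lem:3or5}. The only difference is the last step. The paper turns $\rr(G)=\rr(G-v)+3$ into simpliciality via Lemma~\ref{lem:01ext:dof} (so $N_G(v)$ is a rigid cluster in $G-v$) and $\mathcal{R}^{K_4}$-closedness, whereas you reuse the implication ``$N_G(v)$ not a clique $\Rightarrow \rr(G)\geq\rr(G-v)+5$'' from inside the proof of Lemma~\ref{lem:3or5}; this is equally valid and makes the equality case unnecessary.
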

\begin{proof}
    We may assume that $G$ has no isolated vertices. Since $G$ is  $\cR$-bridgeless, Lemma \ref{lem:cyclic:vertex} implies there exists a vertex $v\in V$ with $\rr(G)\leq \rr(G-v)+4$. Lemma \ref{lem:3or5} now gives $\rr(G)= \rr(G-v)+3$. We can now use  Lemma \ref{lem:01ext:dof} to deduce that $\rr(G-v+K(N_G(v)))=\rr(G-v)$.  
    The hypothesis that $G$ is $\mathcal{R}^{K_4}$-closed now implies that $N_G(v)$ induces a clique in $G$ and hence $v$ is a simplicial vertex of $G$.
\end{proof}

\begin{theorem}\label{thm:K4cyclic}
    Every 5-connected, $\cR$-bridgeless, $K_4$-covered graph 
    is rigid in $\R^3$.
\end{theorem}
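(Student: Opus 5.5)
We argue by minimal counterexample. Let $G=(V,E)$ be a 5-connected, $\cR$-bridgeless, $K_4$-covered graph which is not rigid, with $|V|$ as small as possible. The strategy is to replace $G$ by a closure that keeps all hypotheses, find a simplicial vertex there using Corollary \ref{coro:simplicial}, delete it, and apply induction to what remains.

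\textbf{Passing to the $K_4$-closure.} Let $G'={\rm cl}^{K_4}(G)$. We check that $G'$ still satisfies every hypothesis.
\begin{itemize}
\item $G'$ has the same vertex set and contains $G$, so it is 5-connected.
\item Every added edge lies in a copy of $K_4$, so $G'$ is $K_4$-covered.
\item $\rr(G')=\rr(G)$, since all added edges lie in $\cl(G)$. Hence $G'$ is not rigid.
\item $G'$ is $\cR$-bridgeless. Each original edge lies in an $\cR$-circuit of $G\subseteq G'$. Each added edge $e\in \cl(G)-E(G)$ lies in a circuit of $G+e$, so it is redundant in $G'$.
\item $G'$ is $\mathcal{R}^{K_4}$-closed. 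Since $\cl(G')=\cl(G)$, every edge of $\cl(G')$ lying in a copy of $K_4$ in $\cl(G)$ is already in $G'$.
\end{itemize}
So we may assume that $G$ itself is $\mathcal{R}^{K_4}$-closed. Corollary \ref{coro:simplicial} then gives a simplicial vertex $v$ of $G$.

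\textbf{Deleting the simplicial vertex.} Since $v$ is simplicial with $\deg_G(v)\ge 5$, it is a 0-extension of $G-v$, so $G$ is rigid if $G-v$ is. It therefore suffices to show that $G-v$ satisfies the hypotheses, or that $G$ is small enough to be rigid directly.
\begin{itemize}
\item \textbf{$K_4$-covered.} An edge $xy$ of $G-v$ lies in some $K_4$ of $G$. If that $K_4$ contains $v$, then $x,y\in N_G(v)$, and $N_G(v)$ is a clique of size at least 5. So $xy$ lies in a $K_4$ inside $N_G(v)$, which avoids $v$.
\item \textbf{$\cR$-bridgeless.} Deleting a simplicial vertex of degree at least $3$ removes a 0-extension. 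Hence $\cl(G-v)=\cl(G)-v$, and more precisely the redundancy status of each edge of $G-v$ is the same as in $G$. Each edge of $G$ not incident with $v$ was in a circuit, and 0-extension edges never lie in circuits together with other edges in a way that matters; the circuit can be taken in $G-v$ because a 0-extension vertex lies in no circuit of the independent part.
\end{itemize}
I expect the bridgeless step to need care. The cleaner route is $\rr(G-v)=\rr(G)-3$ and $\rr(G-v-e)\ge \rr(G-e)-3$, with equality by Lemma \ref{lem:1ext}(b) since $v$ still has at least 3 neighbours. This gives $\rr(G-v)-\rr(G-v-e)=\rr(G)-\rr(G-e)=0$.

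\textbf{The main obstacle: connectivity.} $G-v$ need not be 5-connected. If it is, minimality makes $G-v$ rigid and we get a contradiction. Otherwise $G-v$ has a separator $S$ with $|S|=4$, and $S\cup\{v\}$ separates $G$. Since $N_G(v)$ is a clique, it lies on one side together with $S$. The plan for this case is as follows.
\begin{enumerate}
\item Take a smallest separator and split $G$ into the two sides $G_1,G_2$, with $G_1$ the side containing $v$.
\item Add the clique $K(S\cup\{v\})$ to each side, or add $K(S)$ alone to the side not containing $v$, when $v\notin S$.
\item Check that each piece is smaller, 5-connected, $K_4$-covered and bridgeless, where the added clique of size 5 supplies redundancy, and then apply induction to each piece.
\item Glue the pieces with the Gluing Lemma \ref{gluing}. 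This works because $|S\cup\{v\}|=5\ge 3$, and the clique edges are recovered because $S\cup\{v\}$ becomes linked through the other rigid side.
\end{enumerate}
The delicate points are two. First, showing each piece is 5-connected when the side is small. Second, handling the possibility that $v$'s neighbourhood forces a trivial side, in which case $G-v$ is essentially a small clique and rigid directly. If this decomposition becomes too messy, the fallback is to choose the simplicial vertex more cleverly: apply Corollary \ref{coro:simplicial} to an end-block of a 4-separation to find a simplicial vertex deep inside it.
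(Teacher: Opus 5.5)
Your overall route is the paper's: pass to ${\rm cl}^{K_4}(G)$, use Corollary \ref{coro:simplicial} to get a vertex $v$ whose neighbourhood is a clique, check the hypotheses for the smaller graph, induct, and finish with a 0-extension. The paper works with $H=G-v+K(N_G(v))$; in your closed graph this is just $G-v$.

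The gap is in your ``main obstacle'' section. You leave the case where $G-v$ is not 5-connected to an unverified decomposition plan, and that plan fails as stated. In step 4, induction only tells you that each piece $G_i+K(S\cup\{v\})$ is rigid. To conclude that $G_1\cup G_2$ is rigid, you need $S\cup\{v\}$ to be a rigid cluster in one side \emph{without} the added clique. Your phrase ``linked through the other rigid side'' assumes exactly this, and nothing in the plan provides it. It is also unclear that the pieces remain $\cR$-bridgeless, since $\cR$-circuits of $G$ may cross the separator.

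The missing idea is that this case cannot occur. Take any $S\subseteq V-v$ with $|S|\leq 4$. Since $N_G(v)$ is a clique with $|N_G(v)|\geq 5$, the set $N_G(v)-S$ is nonempty and lies in a single component of $G-v-S$. Every other component of $G-v-S$ has no neighbour of $v$, so it is also a component of $G-S$, contradicting the 5-connectivity of $G$. Hence $G-v$ is 5-connected whenever $|V|\geq 7$. When $|V|=6$ we have $G=K_6$, which is rigid. This is the implicit reason the paper's $H$ is 5-connected, and with it your whole decomposition can be discarded.

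There is also a smaller point in your bridgeless step. You need $\rr(G-e)\leq \rr(G-v-e)+3$. This does not follow from $v$ having at least three neighbours: that fact, via the 0-extension property (Lemma \ref{lem:1ext}), only gives the reverse inequality. The bound holds because $N_G(v)$ still induces $K_m$ or $K_m-e$ with $m\geq 5$ in $G-v-e$, and so is a rigid cluster there. Your first, vaguer justification of that bullet should be dropped.
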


\begin{proof}
    Let $G=(V,E)$ be a 5-connected, $\cR$-bridgeless, $K_4$-covered graph.
    We prove that $G$ is rigid by induction on $|V|$. Let $G'={\rm cl}^{K_4}(G)$. By Corollary \ref{coro:simplicial}, there is a vertex $v$, such that $N_{G'}(v)$ is a clique in $G'$. It follows that $N_G(v)$ is a rigid cluster in $G$. 
    
    Our aim is to apply induction to $H=G-v+K(N_G(v))$. It is easy to see that $H$ is a 5-connected, $K_4$-covered graph. 
    To see that  $H$ is $\cR$-bridgeless, we suppose for a contradiction that $H$ contains an $\cR$-bridge $e=xy$. Let $p$ be a generic realisation of  $G$ in $\R^3$. Then $(H-e,p|_H)$ has a continuous motion $q$ which changes the distance between $x$ and $y$.   
    Since $N_G(v)$ is a clique in $H$, $q$ induces a continuous motion of $(G-e,p)$ which changes  the distance between $x$ and $y$. This contradicts the fact that $e$ is not an $\cR$-bridge in $G$.
    
    Hence $H$ satisfies the hypotheses of the theorem and we may apply induction to deduce that $H=G-v+K(N_G(v))$ is rigid. We can now use the 0-extension property and the fact that $K(N_G(v))\subset \cl(G)$ to deduce that $G$ is also rigid. 
\end{proof}

 Theorem \ref{thm:K4cyclic} immediately implies Theorem \ref{coro:c5c5:rigid}, since $K_5$-covered graphs are necessarily $\cR$-bridgeless, and Example \ref{ex:egy} shows that the connectivity condition in Theorem \ref{thm:K4cyclic} is best possible.

\section{Concluding remarks}

We believe that the connectivity bound in Theorem  \ref{thm:7c4c_rigid} can be reduced by one. 

\begin{conjecture}
\label{con:zeo}
Every $6$-connected $K_4$-covered graph is rigid in $\R^3$. 
\end{conjecture}

\noindent 
Note that the analogous  statement for the $\cC_2^1$-cofactor matroid follows from Theorem \ref{thm:zeo:general}. Hence Conjecture \ref{con:zeo} would follow from  Conjecture \ref{con:walter}. Note also that Example \ref{ex:kette} shows that Conjecture \ref{con:zeo} would become false if we weakened the connectivity hypothesis to 5-connectivity. On the other hand, 
Theorem \ref{thm:K4cyclic} implies that Conjecture \ref{con:zeo} holds for edge-transitive $K_4$-covered graphs (since an edge-transitive 6-connected graph on $n$ vertices has at least $3n$ edges so contains at least one $\cR_3$-circuit and hence, by edge transitivity, is $\cR_3$-bridgeless).

A $d$-dimensional {\it combinatorial zeolite} is the line graph  of a $(d+1)$-regular graph. (Thus every $d$-dimensional combinatorial zeolite is a $K_{d+1}$-covered graph.)
It was shown in \cite{Jzeo} that a $2$-dimensional  combinatorial zeolite is
globally rigid in $\R^2$ if and only if it is 3-connected.
We conjecture that a similar sufficient condition holds in $\R^3$.

\begin{conjecture}
\label{con:zeo2}
Every $6$-connected, 3-dimensional combinatorial zeolite is globally rigid in $\R^3$.    \end{conjecture}

A stronger conjecture, stating that 
every $4$-connected, 3-dimensional combinatorial zeolite is globally rigid in $\R^3$ 
was posed in \cite{SST}. However, \cite[Figure 2]{Jzeo} shows that it is false. An example of a 6-connected 3-dimensional combinatorial zeolite is given in Figure \ref{fig:torus}. It is edge transitive and hence generically rigid in $\R^3$ by Theorem \ref{thm:K4cyclic}.\medskip

\begin{figure}[!t]
        \begin{center}
            \includegraphics[scale=0.3]{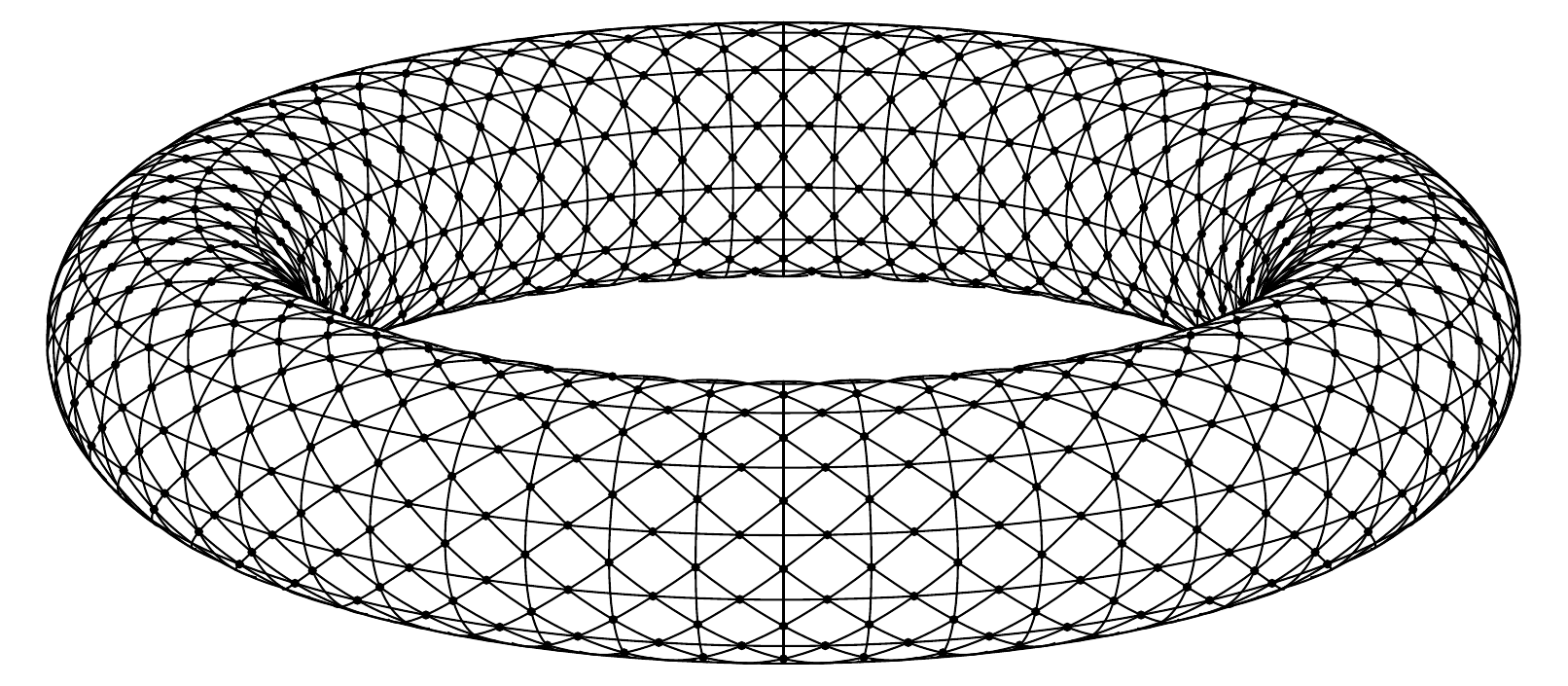}
            \caption{The line graph of a 13 $\times$ 56 toroidal grid. This graph is an edge-transitive 3-dimensional combinatorial zeolite.}\label{fig:torus}
        \end{center}
\end{figure}

We close the paper by asking whether the partition formula in Theorem \ref{thm:BP}
can be converted to a deterministic polynomial algorithm for computing 
 the rank of a body-pin graph in the $\cC_2^1$-cofactor matroid.
 The same question has been posed for the NP$\cap$CoNP characterisation of the rank of an arbitrary graph in the $\cC_2^1$-cofactor matroid given in \cite{CJT} but the special case for body-pin graphs may be more tractable.
 We remark that an efficient randomized algorithm is available by computing the cofactor matrix at a random realisation and evaluating its rank.

\section*{Acknowledgements}

This work was supported by the National Research, Development and Innovation Office
of Hungary, grant no. Advanced 152786, the MTA Distinguished Guest Scientist Fellowship Programme 2025,
and the MTA-ELTE Momentum Matroid Optimization Research Group.

\end{document}